\tikzset{knotarrow/.pic={ \draw[edge, <-] (0,0) -- +(-.001,0);}}
\tikzset{edge/.style={line width=0.8}}
\tikzset{wall/.style={very thick}}
\tikzset{->-/.style n args={2}{decoration={markings, mark=at position #1 with {\arrow{#2}}}, postaction={decorate}}} 
\tikzset{-o-/.code 2 args={\ifstreqF{#2}{} 
{\ifstreqTF{#2}{>}
   {\pgfkeysalso{decoration={markings,mark=at position #1 with {\arrow[scale=0.8]{#2}}}
                    ,postaction={decorate}}
    }
   {\ifstreqTF{#2}{<}
       {\pgfkeysalso{decoration={markings,mark=at position #1 with {\arrow[scale=0.8]{#2}}}
                    ,postaction={decorate}}
        }
       {\pgfkeysalso{decoration={markings,
                    mark=at position #1 with
                    {\draw[black, fill={#2}] circle[radius=2pt];}}
                    ,postaction={decorate}}
        }
     }
  }}}
\newtheorem{theorem}{Theorem}[section]
\newtheorem{lemma}[theorem]{Lemma}
\newtheorem{definition}[theorem]{Definition}
\newtheorem{corollary}[theorem]{Corollary}
\newtheorem{proposition}[theorem]{Proposition}
\newtheorem{rem}[theorem]{Remark}
\definecolor{ligreen}{rgb}{0.0, 0.3, 0.0}
\definecolor{darkblue}{rgb}{0.0, 0.0, 0.55}
\definecolor{anti-flashwhite}{rgb}{0.55, 0.57, 0.68}
\newcommand{\cev}[1]{\reflectbox{\ensuremath{\vec{\reflectbox{\ensuremath{#1}}}}}}
\def\cY{\mathcal Y}
\def\cS{\mathscr S}
\def\N{\mathcal{N}}
\newcommand{\beq}{\begin{equation}}
\newcommand{\eeq}{\end{equation}}
\begin{document}
\bibliographystyle{plain}

\title{The classical limit for stated $SL_n$-skein modules}
\author{Zhihao Wang}

\keywords{Skein theory, $SL_n$, classical limit, coordinate ring}

 \maketitle


\begin{abstract}

Let $(M,\mathcal{N})$ be a marked  3-manifold. We use $S_n(M,\mathcal{N},v)$ to denote the stated $SL_n$-skein module of $(M,\mathcal{N})$ where $v$ is a nonzero complex number.
We establish a surjective algebra homomorphism from $S_n(M,\mathcal{N},1)$ to the coordinate ring of some algebraic set, and prove its kernel consists of all nilpotents. We prove the universal representation algebra of $\pi_1(M)$ is isomorphic to $S_n(M,\mathcal{N},1)$ when $M$ is connected and $\N$ has only one component. Furthermore, we  show  $S_n(M,\mathcal{N}',1)$ is isomorphic to
$S_n(M,\mathcal{N},1)\otimes O(SL_n)$ as algebras, where $(M,\mathcal{N})$ is a connected marked 3-manifold with $\mathcal{N}\neq\emptyset$, and $\mathcal{N}'$ is obtained from $\mathcal{N}$ by adding one extra marking.
 
\end{abstract}

\tableofcontents{}

\section{Introduction}

In this paper we will work over the complex field $\mathbb{C}$ with a distinguished nonzero element $v=q^{\frac{1}{2n}}$ where $n$ is a positive integer. When we say algebra (or vector space), we just mean an algebra (or vector space) over  $\mathbb{C}$. For any other  ring or algebra $A$, we will use $A$-algebra to mean an algebra over $A$. 
 For any commutative algebra $A$, we use MaxSpec$(A)$ to denote the set of maximal ideals of $A$.

For any topological space $X$, we  use $cl(X)$ to denote the closure of $X$, and use $int(X)$ to denote the interior of $X$, and use $\sharp X$ to denote the number of components of $X$.

 A {\bf marked  3-manifold} is  a pair $(M,\mathcal{N})$, where $M$ is a smooth oriented 3-manifold, and $\N\subset\partial M$ is a one dimensional submanifold of $M$ consisting of oriented open intervals such that  there is no intersection between the closures of any two components of $\N$. Note that we allow $\N$ to be empty. The marked 3-manifold $(M,\mathcal{N})$ is said to be {\bf connected} if $M$ is connected.

L{\^e} and Sikora defined the stated $SL_n$-skein module for marked 3-manifolds \cite{le2021stated}, which is a generalization for the classical case when $n=2$ \cite{lestatedsurvery}. Let $(M,\mathcal{N})$ be a marked 3-manifold, we will use 
$S_n(M,\mathcal{N},v)$ to denote the  stated $SL_n$-skein module of $(M,\mathcal{N})$.

If $(M,\mathcal{N})$ is the disjoint union of $(M_1,\mathcal{N}_1)$ and $(M_2,\mathcal{N}_2)$, we can easily get $S_n(M,\mathcal{N},v)=
S_n(M_1,\mathcal{N}_1,v)\otimes S_n(M_2,\mathcal{N}_2,v)$. Then we can reduce general marked 3-manifolds to connected marked 3-manifolds. So sometimes for convenience, we will assume the marked 3-manifold is connected (the corresponding results can be easily generalized to general marked 3-manifolds).

For the classical $SL_2$-skein theory, Bullock stablished a surjective algebra homomorphism from $S_2(M,\emptyset,v^{2}=-1)$, which is isomorphic to $S_2(M,\emptyset,1)$ \cite{barrett1999skein}, to the coordinate ring of some algebraic set, and showed the kernel of this map consists of all nilpotents \cite{BL1997rings}. This is an important development since it showed the connection between the skein theory and the character variety. Moreover it offers a way to interpret $S_2(M,\emptyset,1)$. It is also useful to understand the  representation theory for $SL_2$-skein algebras \cite{representation1,representation2,representation3}.


We generalize Bullock's work to stated $SL_n$-skein modules for marked 3-manifolds. 
We construct a surjective algebra homomorphism from $S_n(M,\mathcal{N},1)$ to the coordinate ring of some algebraic set and calculate the kernel of this map. Costantino and L{\^e} have constructed this map when $n=2$ and $(M,\mathcal{N})$ is the thickening of an essentially bordered pb surface (please refer to subsection \ref{subb2.4} for the definition of the essentially bordered pb surface),  and proved it is an isomorphism \cite{CL2022stated}.
Motivated by Costantino and L{\^e}, we choose the algebraic set to be the homomorphism from the groupoid $\pi_1(M,\mathcal{N})$ to $SL(n,\mathbb{C})$, whose coordinate ring is denoted as $R_n(M,\mathcal{N})$.

\begin{theorem}
Let $(M,\mathcal{N})$ be a marked 3-manifold. There exists a surjective algebra homomorphism 
$$\Phi^{(M,\mathcal{N})} : S_n(M,\mathcal{N},1)\rightarrow R_n(M,\mathcal{N}).$$
\end{theorem}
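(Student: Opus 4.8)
The plan is to define $\Phi^{(M,\mathcal{N})}$ at the level of stated tangle diagrams by ``evaluating holonomies'', and then to check that this descends to the skein module. A point of the algebraic set underlying $R_n(M,\mathcal{N})$ is a groupoid homomorphism $\rho\colon\pi_1(M,\mathcal{N})\to SL(n,\mathbb{C})$. Given a stated framed oriented tangle $\alpha$ in $M$ with endpoints on $\mathcal{N}$, I would first isotope each component so that it represents data of the fundamental groupoid: an arc becomes a path whose endpoints lie on two (possibly equal) components of $\mathcal{N}$, read off according to the orientation, while a closed component determines a conjugacy class in $\pi_1(M,\mathcal{N})$. Applying $\rho$ to each arc yields a matrix in $SL(n,\mathbb{C})$ (the strand carrying the standard representation $\mathbb{C}^n$ or, for the opposite orientation, its dual, i.e.\ the inverse transpose). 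I then contract these matrices following the combinatorics of the diagram, using the state at each endpoint to select a row or column index according to whether the strand points out of or into the marking, and assigning to each closed component the trace of $\rho$ on it, which is well defined because the trace is conjugation invariant. The result is a regular function of $\rho$, so $\alpha\mapsto\bigl(\rho\mapsto\Phi^{(M,\mathcal{N})}(\alpha)(\rho)\bigr)$ defines a linear map from the free module on stated tangles into $R_n(M,\mathcal{N})$.

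The bulk of the work is to show that this linear map annihilates all the defining relations of $S_n(M,\mathcal{N},1)$, so that it descends to $S_n(M,\mathcal{N},1)$ itself. The crucial simplification at $v=1$ is that the quantum $R$-matrix degenerates to the permutation operator, so the two crossing relations collapse and a crossing may be resolved without any quantum correction; concretely this means $\Phi^{(M,\mathcal{N})}$ is insensitive to over/under information and to the framing, matching the fact that holonomy depends only on the homotopy class of a path. I would then check the remaining local relations one at a time against standard multilinear-algebra identities for matrices in $SL(n,\mathbb{C})$: the relation evaluating a contractible unknot corresponds (up to the conventional sign) to $\mathrm{tr}(\Id)=n$; the determinant and antisymmetrization relations, built from the totally antisymmetric tensor on $n$ strands, correspond to $\det\rho(\cdot)=1$ together with the Laplace expansion of the determinant; and the boundary ``state exchange'' and ``trivial arc'' relations correspond to the selected matrix entries of a holonomy and of its inverse transforming correctly, using $\rho(g^{-1})=\rho(g)^{-1}$.

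Since the skein product is given by stacking tangles, and stacking composes the underlying groupoid morphisms and hence multiplies the corresponding holonomy matrices, the induced map is an algebra homomorphism; here I use that $R_n(M,\mathcal{N})$ is commutative and that at $v=1$ reordering strands along a marking costs nothing. For surjectivity I would exhibit enough elements of the image to generate the target. The ring $R_n(M,\mathcal{N})$ is generated by the matrix-entry functions $\rho\mapsto\rho(g)_{ij}$ as $g$ ranges over a generating set of morphisms of $\pi_1(M,\mathcal{N})$ and $1\le i,j\le n$, so it suffices to realize each such function as $\Phi^{(M,\mathcal{N})}$ of a single stated arc: take an embedded arc representing $g$, with its two endpoints pushed onto the appropriate markings and labelled by the states $i$ and $j$. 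Evaluating $\Phi^{(M,\mathcal{N})}$ on this arc returns exactly $\rho(g)_{ij}$, so the image contains all generators and $\Phi^{(M,\mathcal{N})}$ is onto.

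I expect the main obstacle to be the verification of well-definedness against the full list of stated skein relations, and in particular the pinning down of the correct normalization. Several relations in the L\^{e}--Sikora presentation carry powers of $v$ and signs (framing factors, the antisymmetrizer normalization, and height-ordering conventions at the markings); setting $v=1$ removes the genuinely quantum terms, but one must still confirm that the residual signs produced by antisymmetrization are exactly those appearing in the Laplace and determinant identities for $SL(n,\mathbb{C})$, and that the convention assigning a matrix versus its inverse transpose to a strand is consistent across every boundary relation. Making the contraction rule precise enough that these checks are unambiguous, rather than the topological input, is the delicate point.
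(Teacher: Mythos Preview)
Your overall strategy---evaluate holonomies of a representation on arcs and knots, check the skein relations reduce to multilinear identities, then argue surjectivity on matrix-entry generators---is the right shape and matches the paper's outline. But there is a genuine gap in the well-definedness step, and it is precisely the point you flag as delicate at the end.

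You write that at $v=1$ the map is ``insensitive to over/under information and to the framing, matching the fact that holonomy depends only on the homotopy class of a path.'' Insensitivity to crossings is correct (relation \eqref{w.cross} degenerates), but insensitivity to framing is \emph{false} when $n$ is even. At $v=1$ the kink relation \eqref{w.twist} reads $(\text{kinked arc}) = t\cdot(\text{arc})$ with $t=(-1)^{n-1}$, and the unknot relation \eqref{w.unknot} gives $(-1)^{n-1}n$, not $n$. So a map that factors through $\pi_1(M,\mathcal{N})$---forgetting the framing entirely---cannot respect these relations unless $n$ is odd. Your recipe, as stated, sends a framed arc and the same arc with an extra kink to the same value, while the skein module says they differ by $(-1)^{n-1}$.

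The paper's fix is exactly to \emph{not} forget the framing: it works with the unit tangent bundle $UM$ and the ``twisted'' set $\tilde\chi_n(M,\mathcal{N})=\{\tilde\rho:\pi_1(UM,\tilde{\mathcal N})\to SL(n,\mathbb{C})\mid \tilde\rho(\vartheta)=(-1)^{n-1}I\}$, lifting each framed arc or knot $\alpha$ to a path $\tilde\alpha$ in $UM$ using the framing and velocity as the first two vectors. The condition $\tilde\rho(\vartheta)=(-1)^{n-1}I$ is precisely what makes relation \eqref{w.twist} hold. A relative spin structure then identifies $\tilde\chi_n(M,\mathcal{N})$ with $\chi_n(M,\mathcal{N})$, so the target is still $R_n(M,\mathcal{N})$; but the definition of $\Phi$ must go through $UM$. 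There is also a fixed sign matrix $A$ (with $A^2=(-1)^{n-1}I$) in the formula $tr_{\alpha_{i,j}}(\tilde\rho)=[A\tilde\rho(\tilde\alpha)]_{\bar i,\bar j}$, which is what makes the cap/cup relations \eqref{wzh.six}--\eqref{wzh.seven} come out with the correct signs $(-1)^{n-i}$. None of this is visible if one works only with $\pi_1(M,\mathcal{N})$.

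A secondary point: your proposal treats ``tangles'' but the $SL_n$ webs have $n$-valent sinks and sources. The paper deals with these by first applying relation \eqref{wzh.five} to push every vertex to the boundary, reducing to arcs and knots, and then proving (Subsection~\ref{sub4.2}) that the result is independent of how this is done---a nontrivial determinant computation. You would need an analogous step.
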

 We  show $\Phi^{(M,\mathcal{N})}$ commutes with the splitting map, Theorem \ref{the_split}. Our construction is compatible with  the construction by Costantino and L{\^e}  when $n=2$ and $(M,\mathcal{N})$ is the thickening of an essentially bordered pb surface,  subsection \ref{newsec}.

In order to calculate Ker\,$\Phi^{(M,\mathcal{N})}$, we give an explicit presentation for $S_n(M,\mathcal{N},1)$. We do this in two steps. First we consider the case when $\N$ has only one component. 

\def \N {\mathcal{N}}

\begin{theorem}
Let $(M,\mathcal{N})$ be a connected marked 3-manifold with $\N$ consisting of one component. Then we have $S_n(M,\mathcal{N},1)\simeq \Gamma_n(M)$, where $\Gamma_n(M)$ is the universal presentation algebra of $\pi_1(M)$, Definition \ref{df3.4}.
\end{theorem}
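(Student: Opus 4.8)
The plan is to construct two algebra maps between $\Gamma_n(M)$ and $S_n(M,\mathcal{N},1)$ and show one is a two‑sided inverse of the other. Since $\Gamma_n(M)$ is characterized by the universal property that it corepresents the functor $B\mapsto \mathrm{Hom}\bigl(\pi_1(M),SL(n,B)\bigr)$ on commutative algebras, carrying a universal representation $\rho^{\mathrm{univ}}\colon\pi_1(M)\to SL\bigl(n,\Gamma_n(M)\bigr)$, one direction will come for free once I exhibit a representation of $\pi_1(M)$ inside $SL\bigl(n,S_n(M,\mathcal{N},1)\bigr)$; the other will be a ``classical shadow'' map sending a stated skein to the contraction of the matrix entries of $\rho^{\mathrm{univ}}$ along its components. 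A preliminary step is to record that $S_n(M,\mathcal{N},1)$ is commutative: at $v=1$ both braidings of the $SL_n$‑skein theory degenerate to the (signed) flip, so strands may be isotoped across one another, and because there is a single marking the height‑reordering of boundary points on $\mathcal{N}$ introduces no correction terms. This commutativity is what licenses applying the universal property of $\Gamma_n(M)$ with target $S_n(M,\mathcal{N},1)$.

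For the map $j\colon\Gamma_n(M)\to S_n(M,\mathcal{N},1)$ I realize each class $g\in\pi_1(M)$ by a loop based on $\mathcal{N}$, which is legitimate because $M$ is connected and $\mathcal{N}$ is a single contractible interval. For states $i,j\in\{1,\dots,n\}$ I let $\rho(g)_{ij}\in S_n(M,\mathcal{N},1)$ be the class of the arc running along this loop, with endpoint states $i$ and $j$ placed on $\mathcal{N}$ in a fixed height order. The stated‑skein state‑sum (splitting) relation yields $\sum_k \rho(g)_{ik}\rho(h)_{kj}=\rho(gh)_{ij}$, the trivial arc gives $\rho(e)_{ij}=\delta_{ij}$, and the $SL_n$ antisymmetrizer relation, specialized at $v=1$, gives $\det\rho(g)=1$; hence $\rho$ is a homomorphism into $SL\bigl(n,S_n(M,\mathcal{N},1)\bigr)$, and the universal property produces $j$ with $j\bigl(\rho^{\mathrm{univ}}(g)_{ij}\bigr)=\rho(g)_{ij}$. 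Surjectivity of $j$ follows by showing these entries generate $S_n(M,\mathcal{N},1)$: any skein can be cut into arcs based at the marking, and any closed component can be opened by joining it to $\mathcal{N}$, again using connectedness, so every module generator is a product of the $\rho(g)_{ij}$.

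For the inverse I build $\psi\colon S_n(M,\mathcal{N},1)\to\Gamma_n(M)$ sending a stated tangle to the contraction of the entries of $\rho^{\mathrm{univ}}$ along its components, exactly as in the construction of $\Phi^{(M,\mathcal{N})}$ but valued in the full (possibly non‑reduced) algebra $\Gamma_n(M)$ rather than in the coordinate ring $R_n(M,\mathcal{N})$. Granting that $\psi$ is well defined, the composite $\psi\circ j$ fixes each generator $\rho^{\mathrm{univ}}(g)_{ij}$, so $\psi\circ j=\mathrm{id}_{\Gamma_n(M)}$; combined with the surjectivity of $j$ this forces $j$ to be bijective, and $j\circ\psi=\mathrm{id}$ then holds automatically. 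One also recovers the compatibility $\Phi^{(M,\mathcal{N})}=\pi\circ\psi$ with the reduction $\pi\colon\Gamma_n(M)\to R_n(M,\mathcal{N})$, so that the theorem upgrades the surjection $\Phi^{(M,\mathcal{N})}$ to an isomorphism onto the scheme‑level algebra.

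The main obstacle is the well‑definedness of $\psi$: I must check that \emph{every} defining relation of the stated $SL_n$‑skein module at $v=1$ is respected at the level of the representation \emph{scheme} $\Gamma_n(M)$, not merely after passing to the reduced ring $R_n(M,\mathcal{N})$, since it is precisely the nilpotents that distinguish the two. Concretely this means matching, relation by relation, the crossing‑resolution relations, the antisymmetrizer/determinant relations, and the boundary state‑exchange relations near $\mathcal{N}$ against the relations $\rho(g)\rho(h)=\rho(gh)$ and $\det\rho(g)=1$ in $\Gamma_n(M)$. I expect to organize this by fixing a handle (or CW) decomposition of $M$ adapted to a presentation $\pi_1(M)=\langle x_a\mid r_m\rangle$: the $1$‑handles supply the generating entries $\rho(x_a)_{ij}$, each $2$‑handle forces the relator holonomy $\rho(r_m)$ to be the identity, and the remaining task is to verify that the skein relations impose exactly these identities and no further ones. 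Carrying out this bookkeeping so that it is faithful at the scheme level is the crux of the argument.
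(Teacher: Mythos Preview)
Your overall strategy matches the paper's: construct algebra maps in both directions and verify they are mutually inverse. The paper's maps are called $F$ and $G$, and the argument that $\psi\circ j=\mathrm{id}$ together with surjectivity of $j$ forces bijectivity is logically equivalent to checking both composites.

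However, two concrete steps in your construction of $\rho\colon\pi_1(M)\to SL(n,S_n(M,\mathcal{N},1))$ fail as written. First, the assignment ``realize $g$ by a loop based on $\mathcal{N}$ and take the stated arc'' is not well defined: you must choose a framing, and at $v=1$ a kink contributes the factor $t=(-1)^{n-1}$, so for even $n$ two framings of the same homotopy class give distinct skein elements. The paper resolves this by fixing a relative spin structure $h$ and normalizing the framing so that $h(\tilde{\hat\alpha})=0$; without this (or an equivalent device) $\rho(g)$ is ambiguous up to sign. Second, your claimed identities $\rho(e)_{ij}=\delta_{ij}$ and $\sum_k\rho(g)_{ik}\rho(h)_{kj}=\rho(gh)_{ij}$ are not what the skein relations produce. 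Relation~(\ref{wzh.six}) at $v=1$ gives the trivial arc the value $\delta_{\bar j,i}(-1)^{n-i}$, i.e.\ the matrix $d_nA$ where $A_{i,j}=(-1)^{i+1}\delta_{\bar i,j}$, not the identity; and relation~(\ref{wzh.seven}) inserts the signs $(c_{\bar k})^{-1}=(-1)^{k+1}$ into the gluing sum. The paper absorbs these signs by working with the twisted matrices $AS^{[\alpha]}$: it is $AS^{[\beta*\alpha]}=AS^{[\beta]}\cdot AS^{[\alpha]}$ and $AS^{[o]}=I$ that hold (Proposition~\ref{prop5.2}), and accordingly $F$ is defined by $F([\alpha]_{i,j})=(AS^{[\alpha]})_{i,j}$ rather than $S^{[\alpha]}_{i,j}$.

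Once you incorporate the spin normalization and the $A$-twist, your $j$ becomes exactly the paper's $F$, and the verification that $\psi$ (the paper's $G$) respects the skein relations is done relation by relation exactly as in Subsections~\ref{sub4.2}--\ref{sub4.3}, not via a handle decomposition; the handle approach you sketch would have to reprove those same local identities anyway.
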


\def\MN {(M,\mathcal{N})}


We use $O(SL_n)$ to denote the coordinate ring of $SL(n,\mathbb{C})$. Then 
$$O(SL_n)=\mathbb{C}[x_{i,j}\mid 1\leq i,j\leq n]/(\text{det}(X) = 1)$$
where $X$ is an $n$ by $n$ matrix such that $X_{i,j} = x_{i,j},1\leq i,j\leq n.$ 
We have $X^{-1}$ makes sense and is an $n$ be $n$ matrix in $O(SL_n)$ because det$(X) = 1$.

\begin{theorem}
Suppose $(M,\mathcal{N})$ is a connected marked 3-manifold with   $\N\neq\emptyset$, and $\mathcal{N}'$ is obtained from $\N$ by adding one extra marking, please refer to Definition \ref{dddddddd}. Then we have $S_n(M,\mathcal{N}',1)\simeq
S_n(M,\mathcal{N},1)\otimes O(SL_n)$ as algebras.
\end{theorem}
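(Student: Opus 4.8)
The plan is to fix once and for all a path $\delta$ in $M$ from a point of the existing marking set $\mathcal{N}$ to the new marking $e$ — here the hypothesis $\mathcal{N}\neq\emptyset$ enters — and to write down an algebra homomorphism
$$\Psi : S_n(M,\mathcal{N},1)\otimes O(SL_n)\longrightarrow S_n(M,\mathcal{N}',1).$$
On the first factor I would take the map $\alpha$ induced by the inclusion $\mathcal{N}\subset\mathcal{N}'$, under which a stated skein avoiding $e$ is reread as a skein of $(M,\mathcal{N}')$. On the second factor I would use a regular neighborhood of $\delta$, which is a thickened bigon $\mathcal{B}$ with one marked edge on $\mathcal{N}$ and the other on $e$: its embedding induces a map $\beta$ from the bigon algebra $S_n(\mathcal{B})=O_q(SL_n)$, equal to $O(SL_n)$ at $v=1$, sending the coordinate $x_{i,j}$ to the single arc across $\mathcal{B}$ with boundary states $i$ and $j$. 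At the classical value $v=1$ all three algebras are commutative, since positive and negative crossings agree and the superposition product on $S_n(M,\mathcal{N}',1)$ is thereby well defined and commutative; hence the rule $a\otimes f\mapsto \alpha(a)\beta(f)$ is automatically an algebra homomorphism once each factor map is one.

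Two points then need care. First, well-definedness of $\beta$ reduces to the relation $\det(X)=1$, which I would verify from the L\^e--Sikora determinant boundary relation: the fully antisymmetrized family of $n$ parallel strands running across $\mathcal{B}$ equals the empty skein, so the image of $\det(X)$ is represented by $1$. Second, for surjectivity I would comb every strand that ends on $e$ along the chosen path $\delta$; cutting such a strand as it enters $\mathcal{B}$ and summing over the intermediate states writes it — at $v=1$, where crossings may be ignored — as a product of connecting arcs across $\mathcal{B}$, which lie in the image of $\beta$, with loops based in $\mathcal{N}$ and disjoint from $e$, which lie in the image of $\alpha$. This is the skein-theoretic shadow of the groupoid identity $\gamma=\delta\,(\delta^{-1}\gamma\delta)\,\delta^{-1}$, which factors every morphism touching $e$ through $\delta$ and the old groupoid $\pi_1(M,\mathcal{N})$; it shows $\Psi$ is onto and again uses $\mathcal{N}\neq\emptyset$ essentially.

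Injectivity is the genuinely hard step and I expect it to be the main obstacle, because I must show that the strands across $\mathcal{B}$ generate a \emph{free} tensor copy of $O(SL_n)$ with no hidden relations tying them to the rest of the module. My primary tool would be the L\^e--Sikora splitting homomorphism for a disk that cuts the neighborhood $\mathcal{B}$, which is injective; reading off its $O(SL_n)$-component from the states placed on the cut and its $S_n(M,\mathcal{N},1)$-component from the remaining skein should yield an explicit left inverse $\Lambda$ with $\Lambda\circ\Psi=\mathrm{id}$, the technical heart being to fold the extra marking created by the cut back into $\mathcal{N}$. As an independent check that also locates the statement inside the character-variety picture, I would compare $\Psi$ against the surjections of the first theorem: the square built from $\Phi^{(M,\mathcal{N})}\otimes\mathrm{id}$ and $\Phi^{(M,\mathcal{N}')}$ commutes on generators, and classically the representation space of $\pi_1(M,\mathcal{N}')$ is that of $\pi_1(M,\mathcal{N})$ times one free $SL_n$-factor, namely the value on $\delta$, so $R_n(M,\mathcal{N}')\simeq R_n(M,\mathcal{N})\otimes O(SL_n)$. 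Since $O(SL_n)$ is a domain this identifies the reduced quotients, and because the kernels of both $\Phi$-maps are precisely the nilradicals, it confirms that no collapse can occur beyond what the left inverse $\Lambda$ already forbids.
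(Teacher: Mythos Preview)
Your forward map $\Psi$ coincides with the paper's $\imath$ (Lemma~\ref{lmm5.9}), and your surjectivity argument via combing along $\delta$ is the content of Lemma~\ref{lmm5.8}. For the inverse direction, however, the paper does \emph{not} go through the splitting map. Instead it writes down an explicit two-sided inverse $\jmath$ (Lemma~\ref{lmm5.10}): for each endpoint of a web landing on the new marking $e$, attach a copy of $\delta^{\pm 1}$ (according to the orientation of that end) to reroute it back to $\mathcal{N}$, sum over the state $k$ placed at the junction, and record a matching factor in $O(SL_n)$. The real work is then the verification that this prescription respects the boundary relations (\ref{wzh.five})--(\ref{wzh.seven}) at $e$, done by direct computation; once that is checked, $\jmath\circ\imath=\mathrm{id}$ and $\imath\circ\jmath=\mathrm{id}$ are immediate on generators (Theorem~\ref{thm5.11}).

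Your proposed route through the splitting homomorphism has two gaps. First, you assert that the splitting map along a disk in a 3-manifold is injective, but this is not established here (nor in L\^e--Sikora for general marked 3-manifolds), so you cannot simply invoke it. Second, even granting injectivity, the target of $\Theta_{(D,\beta)}$ is $S_n(M',\mathcal{N}'',1)$ where $\mathcal{N}''$ contains \emph{two} fresh markings $\beta_1,\beta_2$ from the cut, neither of which is your original $\mathcal{N}$; the ``folding back'' you correctly flag as the technical heart is then precisely the construction of $\jmath$ itself, so the splitting detour gains you nothing. A smaller point: commutativity of $S_n(M,\mathcal{N}',1)$ at $v=1$ needs more than the crossing relation (\ref{w.cross}) and the same-orientation height exchange (\ref{wzh.eight}); you also need the height exchange at the boundary for strands with \emph{opposite} orientations, which is Proposition~\ref{prop3.1} and is not automatic.
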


Let $(M,\mathcal{N})$ be a connected marked 3-manifold with  $\N\neq \emptyset$. Combining the above two Theorems we have 
$$S_n(M,\mathcal{N},1)\simeq \Gamma_n(M)\otimes O(SL_n)^{\otimes(\sharp \N-1)}$$
as algebras.

For a commutative algebra $A$, we use $\sqrt{0}_A$ to denote the ideal consisting of all nilpotents. We can  omit the subscript for $\sqrt{0}_A$ when there is no confusion with the algebra $A$.
There is a  projection from $\Gamma_n(M)\otimes O(SL_n)^{\otimes(\sharp \N-1)}$ to $R_n(M,\mathcal{N})$, whose kernel is $\sqrt{0}$. 
The isomorphism from $S_n(M,\mathcal{N},1)$ to $\Gamma_n(M)\otimes O(SL_n)^{\otimes(\sharp \N-1)}$ is compatible with $\Phi^{(M,\mathcal{N})}$, that is, the combination of this isomorphism and the projection from $\Gamma_n(M)\otimes O(SL_n)^{\otimes(\sharp \N-1)}$ to $R_n(M,\mathcal{N})$ is $\Phi^{(M,\mathcal{N})}$. Thus we have 
the following theorem.


\begin{theorem}
For any marked 3-manifold $(M,\mathcal{N})$, we have Ker\,$\Phi^{(M,\mathcal{N})} = \sqrt{0}$. 
\end{theorem}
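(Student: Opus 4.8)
The plan is to first dispose of the easy inclusion and reduce to the connected case. Since $R_n(M,\mathcal{N})$ is the coordinate ring of an algebraic set it is reduced, so every nilpotent maps to $0$ and the inclusion $\sqrt{0}\subseteq \mathrm{Ker}\,\Phi^{(M,\mathcal{N})}$ is automatic; the whole content is the reverse inclusion, equivalently that the surjection $\Phi^{(M,\mathcal{N})}$ descends to an isomorphism $S_n(M,\mathcal{N},1)/\sqrt{0}\xrightarrow{\ \sim\ }R_n(M,\mathcal{N})$. To reduce to connected manifolds I would write $(M,\mathcal{N})=\bigsqcup_i (M_i,\mathcal{N}_i)$ and use the tensor decompositions $S_n(M,\mathcal{N},1)=\bigotimes_i S_n(M_i,\mathcal{N}_i,1)$ and $R_n(M,\mathcal{N})=\bigotimes_i R_n(M_i,\mathcal{N}_i)$ (the latter because the groupoid $\pi_1(M,\mathcal{N})$ splits over the components), checking that $\Phi^{(M,\mathcal{N})}=\bigotimes_i \Phi^{(M_i,\mathcal{N}_i)}$. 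Granting the theorem on each piece, the kernel of the tensor product of the $\Phi^{(M_i,\mathcal{N}_i)}$ is the ideal obtained by placing $\sqrt{0}$ in each factor, and this equals the nilradical of the whole tensor product because over $\mathbb{C}$ a tensor product of reduced algebras is again reduced.

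For a connected $(M,\mathcal{N})$ with $\mathcal{N}\neq\emptyset$ the statement is the computation already sketched in the introduction. By combining the two structural theorems there is an isomorphism $S_n(M,\mathcal{N},1)\cong \Gamma_n(M)\otimes O(SL_n)^{\otimes(\sharp\mathcal{N}-1)}$ intertwining $\Phi^{(M,\mathcal{N})}$ with the projection onto $R_n(M,\mathcal{N})$. The step to make precise is the identification $R_n(M,\mathcal{N})\cong \bigl(\Gamma_n(M)/\sqrt{0}\bigr)\otimes O(SL_n)^{\otimes(\sharp\mathcal{N}-1)}$: choosing a maximal tree in the groupoid $\pi_1(M,\mathcal{N})$ trivializes the extra markings into $\sharp\mathcal{N}-1$ free $SL_n$-factors, while the remaining loop coordinates cut out $\mathrm{Hom}(\pi_1(M),SL_n)$, whose reduced coordinate ring is $\Gamma_n(M)/\sqrt{0}$. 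Under this identification $\Phi^{(M,\mathcal{N})}$ is reduction on the first factor tensored with the identity, so its kernel is $\sqrt{0}_{\Gamma_n(M)}\otimes O(SL_n)^{\otimes(\sharp\mathcal{N}-1)}$; since $O(SL_n)$ is an integral domain and tensor products of reduced $\mathbb{C}$-algebras are reduced, this ideal is exactly $\sqrt{0}$.

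The remaining and genuinely separate case is a connected $(M,\mathcal{N})$ with $\mathcal{N}=\emptyset$, the classical Bullock situation, which is the main obstacle because every structural theorem above needs at least one marking. When $\partial M\neq\emptyset$ I would add a single marking $e$, so that $S_n(M,\{e\},1)\cong\Gamma_n(M)$, and exploit the natural algebra map $\iota\colon S_n(M,\emptyset,1)\to S_n(M,\{e\},1)$ that views a closed skein as a skein in the marked manifold, together with its counterpart $R_n(M,\emptyset)\to R_n(M,\{e\})$, which fit into a commuting square with the two $\Phi$'s. If $x\in\mathrm{Ker}\,\Phi^{(M,\emptyset)}$ then $\iota(x)\in\mathrm{Ker}\,\Phi^{(M,\{e\})}=\sqrt{0}_{\Gamma_n(M)}$ by the previous case, hence $\iota(x)$ is nilpotent, and injectivity of $\iota$ forces $x$ to be nilpotent; for closed $M$ I would first delete an open ball, which leaves both $\pi_1$ and the skein module unchanged while producing a spherical boundary component on which to place $e$, thereby reducing to the bounded case. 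The decisive technical point is thus the injectivity of $\iota\colon S_n(M,\emptyset,1)\hookrightarrow S_n(M,\{e\},1)$, i.e.\ that adding one marking collapses no closed skein relation; proving this—presumably from a basis of $S_n(M,\emptyset,1)$ compatible with the one for the marked module—is where I expect the real work to lie, the rest being formal manipulations with reduced rings and tensor products over $\mathbb{C}$.
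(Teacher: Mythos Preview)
Your reduction to the connected case and your treatment of the case $\mathcal{N}\neq\emptyset$ are essentially the paper's own argument: the paper also verifies that under the isomorphism $S_n(M,\mathcal{N},1)\simeq \Gamma_n(M)\otimes O(SL_n)^{\otimes(\sharp\mathcal{N}-1)}$ the map $\Phi^{(M,\mathcal{N})}$ becomes the reduction $\Gamma_n(M)\to\Gamma_n(M)/\sqrt{0}$ tensored with the identity (this is the content of Lemmas \ref{lmm5.15} and \ref{lmm5.16}), and then concludes exactly as you do.

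Where you diverge is the case $\mathcal{N}=\emptyset$. The paper does not reduce this to the marked case; instead it invokes two prior results as black boxes (Subsection \ref{sub4.1}): Sikora's theorem that $S_n(M;\mathbb{C},1)\simeq G_n(M)$, and the L\^e--Sikora identification $S_n(M,\emptyset,1)\simeq S_n(M;\mathbb{C},1)$. Combined with the fact (Remark \ref{rem01}) that $G_n(M)\to R_n(M,\emptyset)$ has kernel $\sqrt{0}$, this gives $\mathrm{Ker}\,\Phi^{(M,\emptyset)}=\sqrt{0}$ directly, with no need to add a marking or delete a ball. Your proposed route via injectivity of $\iota:S_n(M,\emptyset,1)\to S_n(M,\{e\},1)$ would work, and the paper does prove this injectivity (Corollary \ref{Cor5.13}), but look at that proof: for $\mathcal{N}=\emptyset$ it uses precisely the Sikora isomorphism $S_n(M,\emptyset,1)\simeq G_n(M)$ together with the embedding $G_n(M)\hookrightarrow\Gamma_n(M)$. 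So the ``real work'' you correctly isolate---proving injectivity of $\iota$ from scratch---is equivalent to reproving Sikora's theorem, and the paper simply takes that theorem as input rather than redoing it. Your approach is not wrong, but it is a detour through a statement that is no easier than what it is meant to prove.
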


\def\MN{(M,\N)}

Costantino and L{\^e} also defined the  {\bf generalized marked 3-manifold} by allowing oriented closed circle in $\N$ \cite{CL2022TQFT}. 
A generalized marked 3-manifold is said to be connected if the corresponding 3-manifold is connected.
We can obviously define the stated $SL_n$-skein module for the generalized marked 3-manifold. For any generalized marked 3-manifold $\MN$, we  define $\Gamma_n(M,\mathcal{N})$ as a quotient algebra of $\Gamma_n(M)$ (Definition \ref{df7.3}).

\begin{theorem}
Let $(M,\mathcal{N})$ be a connected generalized marked 3-manifold with  $\N\neq \emptyset$.
Then $S_n(M,\mathcal{N},1)\simeq \Gamma_n(M,\mathcal{N})\otimes O(SL_n)^{\otimes(\sharp \N - 1)}$.
\end{theorem}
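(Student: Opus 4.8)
The plan is to reduce the generalized case to the ordinary marked $3$-manifold case already settled above, by turning each marked circle into a marked interval. Given a connected generalized marked $3$-manifold $(M,\mathcal{N})$ with $\mathcal{N}\neq\emptyset$, write its circle components as $c_1,\dots,c_k$ and delete one interior point $p_i$ from each $c_i$; the result is an ordinary marked $3$-manifold $(M,\mathcal{N}^{\circ})$ whose markings are all open intervals and which satisfies $\sharp\mathcal{N}^{\circ}=\sharp\mathcal{N}=:m$. Every stated tangle admissible for $(M,\mathcal{N}^{\circ})$ is admissible for $(M,\mathcal{N})$, and since the defining relations of $S_n(M,\mathcal{N}^{\circ},1)$ are among those of $S_n(M,\mathcal{N},1)$ (the circle merely allows the extra isotopies that slide an endpoint across the missing point $p_i$), the identity on diagrams descends to a surjective homomorphism of commutative algebras
\[
\Psi\colon S_n(M,\mathcal{N}^{\circ},1)\longrightarrow S_n(M,\mathcal{N},1),
\]
surjectivity being clear because any $(M,\mathcal{N})$-skein can be isotoped off the points $p_i$.

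By the combined formula for ordinary marked $3$-manifolds established above (the single-interval isomorphism together with the interval-adding isomorphism), we have $S_n(M,\mathcal{N}^{\circ},1)\simeq\Gamma_n(M)\otimes O(SL_n)^{\otimes(m-1)}$, compatibly with $\Phi$. Thus the theorem is equivalent to showing that, under this isomorphism, $\ker\Psi$ corresponds exactly to $J\otimes O(SL_n)^{\otimes(m-1)}$, where $J=\ker\bigl(\Gamma_n(M)\twoheadrightarrow\Gamma_n(M,\mathcal{N})\bigr)$ is the defining ideal of the quotient in Definition \ref{df7.3}. Granting this identification we obtain at once
\[
S_n(M,\mathcal{N},1)\simeq S_n(M,\mathcal{N}^{\circ},1)/\ker\Psi\simeq\bigl(\Gamma_n(M)/J\bigr)\otimes O(SL_n)^{\otimes(m-1)}=\Gamma_n(M,\mathcal{N})\otimes O(SL_n)^{\otimes(m-1)}.
\]
Here it matters that the relation contributed by each circle $c_i$ constrains only the class $[c_i]\in\pi_1(M)$ and hence lives in the $\Gamma_n(M)$ tensor factor; choosing the base marking that defines this factor to be fixed throughout guarantees that $\ker\Psi$ factors through the first tensor slot.

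The crux is therefore the precise computation of $\ker\Psi$. First I would write down, as explicit skein relations, what is gained by sliding an endpoint all the way around a circle $c_i$: tracking the states and the framing through the resulting annular sweep produces relations on the monodromy matrix attached to $[c_i]$, and I expect these to be exactly the generators of $J$ prescribed by Definition \ref{df7.3}. Establishing the two inclusions — that every slide-around relation maps into $J$, and that these relations generate all of $J$ — is where the real work lies; the compatibility of $\Psi$ and of the ordinary-case isomorphism with the maps $\Phi^{(M,\mathcal{N}^{\circ})}$ and $\Phi^{(M,\mathcal{N})}$ into the representation-variety coordinate rings should let me match generators after passing to the commutative classical-limit setting. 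Should the bookkeeping of several circles inside a single surjection prove unwieldy, an alternative is to induct one circle at a time: prove the single-circle base case $S_n(M,\{c\},1)\simeq\Gamma_n(M,\{c\})$ by adapting the proof of the single-interval isomorphism, and then peel off one copy of $O(SL_n)$ for each remaining marking while imposing the corresponding circle relation on the $\Gamma_n$ factor at each step. In either route the main obstacle is identical: showing that the skein-theoretic effect of closing a marked interval into a marked circle is measured precisely by the ideal defining $\Gamma_n(M,\mathcal{N})$.
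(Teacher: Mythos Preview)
Your plan is correct and is essentially the paper's own argument: cut each marked circle open to an interval (the paper removes the closure of a small subarc rather than a single point, but this is cosmetic), invoke the ordinary-case isomorphism $S_n(M,\mathcal{N}',1)\simeq\Gamma_n(M)\otimes O(SL_n)^{\otimes(m-1)}$, and identify the kernel of the resulting surjection with the ideal defining $\Gamma_n(M,\mathcal{N})$.

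For the step you flag as ``where the real work lies,'' the paper does not compute $\ker\Psi$ directly but instead builds mutually inverse maps. Proposition~\ref{pro7.1} first shows that the kernel of your $\Psi$ (their $l_U$) is generated by a single ``slide-across-the-gap'' relation. Lemma~\ref{lmm7.4} then checks that $l_U\circ L$ kills the ideal $(D)$ by verifying, via the matrix identities of Proposition~\ref{prop5.2}, that $A\,l_U(S^{[e_t^{\beta_t}]})=d_n^{h(\widetilde{e_t})}I$ in $S_n(M,\mathcal{N},1)$; this gives $\overline{L}$. Conversely, Lemma~\ref{lmm7.5} checks that $\pi\circ L^{-1}$ respects the slide relation, by a short case analysis (on which markings the two endpoints of a stated arc lie) that tracks the framing through the spin structure and uses $\pi(Q_{[e_t^{\beta_t}],\otimes})=d_n^{h(\widetilde{e_t})}I$; this gives $\overline{L^{-1}}$. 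Lemma~\ref{lmm7.6} confirms these are inverses. So your anticipated ``two inclusions'' are established, just packaged as the well-definedness of two maps rather than as a kernel computation.
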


{\bf Acknowledgements}:
The author would like to thank my supervisors Andrew James Kricker and  Roland van der Veen, and my colleague Jeffrey Weenink Andre for helpful discussion. The research is supported by the NTU  research scholarship.

\section{Preliminaries}

In this section, we will recall some definitions and results about the stated $SL_n$-skein modules,
 and also introduce some conventions for this paper.

\subsection{Stated $SL_n$-skein modules}
In this paper, we follow the definition in \cite{le2021stated} for stated $SL_n$-skein modules.  Here we briefly recall the definition.

\begin{definition}[\cite{le2021stated}]
 An $n$-web $l$ in a marked 3-manifold $(M,\mathcal{N})$ is a disjoint union of oriented closed paths and a directed finite  graph properly embedded into $M$. We also have the following requirements:

(1) $l$ only contains $1$-valent or $n$-valent vertices. Each $n$-valent vertex is a source or a  sink. The set of one valent vertices is denoted as $\partial l$, which are  called endpoints of $l$.

 (2) Every edge of the graph is an embedded  closed interval  in $M$. 

(3) $l$ is equipped with a transversal framing. 

(4) The set of half-edges at each $n$-valent vertex is equipped with a  cyclic order. 

(5) $\partial l$ is contained in $\N$ and the framing at these endpoints is the velocity  vector of $\N$.
\end{definition}



For any two points $a,b\in\partial l$, we say $a$ is higher than $b$ if they belong to a same component $e$ of $\N$ and the direction of $e$ is going from $b$ to $a$.

A state of an $n$-web $l$ is a map  $s: \partial l\rightarrow \{1,2,\dots,n\}$. If there is  such a map $s$ for $l$, we say $l$ is stated by $s$. For any point $a\in\partial l$, we say $a$ is stated by $s(a)$.

Recall that our ground ring is $\mathbb{C}$ with the parameter $v\in\mathbb{C}^{*}$. We set 
$q^{\frac{1}{2n}} = v$, and define the following constants:
\begin{align}
c_i&= (-q)^{n-i} q^{\frac{n-1}{2n}},\quad
t= (-1)^{n-1} q^{\frac{n^2-1}{n}}, \ t^{n/2} =  (-1)^{\frac{(n-1)n}2} q^{\frac{n^2-1}{2}} \label{e.t}\\
a &=   q^{\frac{n+1-2n^2}{4}},\quad 
d_n = (-1)^{n-1}. 
\end{align}

Note that
\begin{equation}\label{e.prodc} 
\prod_{i=1}^n c_i  = t^{n/2}= (-1)^{\frac{(n-1)n}2 } q^\frac{n^2-1}{2} 
\ \text{and}\ c_i\cdot c_{\bar i}=t, \ \text{for}\ i=1,\dots, n.
\end{equation}

We use $S_n$ to denote the permutation group on  $\{1,2,\dots,n\}$.


The stated $SL_n$-skein module of $(M,\mathcal{N})$, denoted as $S_n(M,\mathcal{N},v)$, is obtained in two steps. We first use all isotopy classes of stated 
$n$-webs in $(M,\mathcal{N})$ as a basis to generate a vector space, then  quotient the following relations.

\beq\label{w.cross}
q^{\frac{1}{n}} 
\raisebox{-.20in}{

\begin{tikzpicture}
\tikzset{->-/.style=

{decoration={markings,mark=at position #1 with

{\arrow{latex}}},postaction={decorate}}}
\filldraw[draw=white,fill=gray!20] (-0,-0.2) rectangle (1, 1.2);
\draw [line width =1pt,decoration={markings, mark=at position 0.5 with {\arrow{>}}},postaction={decorate}](0.6,0.6)--(1,1);
\draw [line width =1pt,decoration={markings, mark=at position 0.5 with {\arrow{>}}},postaction={decorate}](0.6,0.4)--(1,0);
\draw[line width =1pt] (0,0)--(0.4,0.4);
\draw[line width =1pt] (0,1)--(0.4,0.6);
\draw[line width =1pt] (0.4,0.6)--(0.6,0.4);
\end{tikzpicture}
}
- q^{-\frac {1}{n}}
\raisebox{-.20in}{
\begin{tikzpicture}
\tikzset{->-/.style=

{decoration={markings,mark=at position #1 with

{\arrow{latex}}},postaction={decorate}}}
\filldraw[draw=white,fill=gray!20] (-0,-0.2) rectangle (1, 1.2);
\draw [line width =1pt,decoration={markings, mark=at position 0.5 with {\arrow{>}}},postaction={decorate}](0.6,0.6)--(1,1);
\draw [line width =1pt,decoration={markings, mark=at position 0.5 with {\arrow{>}}},postaction={decorate}](0.6,0.4)--(1,0);
\draw[line width =1pt] (0,0)--(0.4,0.4);
\draw[line width =1pt] (0,1)--(0.4,0.6);
\draw[line width =1pt] (0.6,0.6)--(0.4,0.4);
\end{tikzpicture}
}
= (q-q^{-1})
\raisebox{-.20in}{

\begin{tikzpicture}
\tikzset{->-/.style=

{decoration={markings,mark=at position #1 with

{\arrow{latex}}},postaction={decorate}}}
\filldraw[draw=white,fill=gray!20] (-0,-0.2) rectangle (1, 1.2);
\draw [line width =1pt,decoration={markings, mark=at position 0.5 with {\arrow{>}}},postaction={decorate}](0,0.8)--(1,0.8);
\draw [line width =1pt,decoration={markings, mark=at position 0.5 with {\arrow{>}}},postaction={decorate}](0,0.2)--(1,0.2);
\end{tikzpicture}
},
\eeq 
\beq\label{w.twist}
\raisebox{-.15in}{
\begin{tikzpicture}
\tikzset{->-/.style=
{decoration={markings,mark=at position #1 with
{\arrow{latex}}},postaction={decorate}}}
\filldraw[draw=white,fill=gray!20] (-1,-0.35) rectangle (0.6, 0.65);
\draw [line width =1pt,decoration={markings, mark=at position 0.5 with {\arrow{>}}},postaction={decorate}](-1,0)--(-0.25,0);
\draw [color = black, line width =1pt](0,0)--(0.6,0);
\draw [color = black, line width =1pt] (0.166 ,0.08) arc (-37:270:0.2);
\end{tikzpicture}}
= t
\raisebox{-.15in}{
\begin{tikzpicture}
\tikzset{->-/.style=
{decoration={markings,mark=at position #1 with
{\arrow{latex}}},postaction={decorate}}}
\filldraw[draw=white,fill=gray!20] (-1,-0.5) rectangle (0.6, 0.5);
\draw [line width =1pt,decoration={markings, mark=at position 0.5 with {\arrow{>}}},postaction={decorate}](-1,0)--(-0.25,0);
\draw [color = black, line width =1pt](-0.25,0)--(0.6,0);
\end{tikzpicture}}
,
\eeq
\beq\label{w.unknot}
\raisebox{-.20in}{
\begin{tikzpicture}
\tikzset{->-/.style=
{decoration={markings,mark=at position #1 with
{\arrow{latex}}},postaction={decorate}}}
\filldraw[draw=white,fill=gray!20] (0,0) rectangle (1,1);
\draw [line width =1pt,decoration={markings, mark=at position 0.5 with {\arrow{>}}},postaction={decorate}](0.45,0.8)--(0.55,0.8);
\draw[line width =1pt] (0.5 ,0.5) circle (0.3);
\end{tikzpicture}}
= (-1)^{n-1} [n]\ 
\raisebox{-.20in}{
\begin{tikzpicture}
\tikzset{->-/.style=
{decoration={markings,mark=at position #1 with
{\arrow{latex}}},postaction={decorate}}}
\filldraw[draw=white,fill=gray!20] (0,0) rectangle (1,1);
\end{tikzpicture}}
,\ \text{where}\ [n]=\frac{q^n-q^{-n}}{q-q^{-1}},
\eeq
\beq\label{wzh.four}
\raisebox{-.30in}{
\begin{tikzpicture}
\tikzset{->-/.style=
{decoration={markings,mark=at position #1 with
{\arrow{latex}}},postaction={decorate}}}
\filldraw[draw=white,fill=gray!20] (-1,-0.7) rectangle (1.2,1.3);
\draw [line width =1pt,decoration={markings, mark=at position 0.5 with {\arrow{>}}},postaction={decorate}](-1,1)--(0,0);
\draw [line width =1pt,decoration={markings, mark=at position 0.5 with {\arrow{>}}},postaction={decorate}](-1,0)--(0,0);
\draw [line width =1pt,decoration={markings, mark=at position 0.5 with {\arrow{>}}},postaction={decorate}](-1,-0.4)--(0,0);
\draw [line width =1pt,decoration={markings, mark=at position 0.5 with {\arrow{<}}},postaction={decorate}](1.2,1)  --(0.2,0);
\draw [line width =1pt,decoration={markings, mark=at position 0.5 with {\arrow{<}}},postaction={decorate}](1.2,0)  --(0.2,0);
\draw [line width =1pt,decoration={markings, mark=at position 0.5 with {\arrow{<}}},postaction={decorate}](1.2,-0.4)--(0.2,0);
\node  at(-0.8,0.5) {$\vdots$};
\node  at(1,0.5) {$\vdots$};
\end{tikzpicture}}=(-q)^{\frac{n(n-1)}{2}}\cdot \sum_{\sigma\in S_n}
(-q^{\frac{1-n}n})^{\ell(\sigma)} \raisebox{-.30in}{
\begin{tikzpicture}
\tikzset{->-/.style=
{decoration={markings,mark=at position #1 with
{\arrow{latex}}},postaction={decorate}}}
\filldraw[draw=white,fill=gray!20] (-1,-0.7) rectangle (1.2,1.3);
\draw [line width =1pt,decoration={markings, mark=at position 0.5 with {\arrow{>}}},postaction={decorate}](-1,1)--(0,0);
\draw [line width =1pt,decoration={markings, mark=at position 0.5 with {\arrow{>}}},postaction={decorate}](-1,0)--(0,0);
\draw [line width =1pt,decoration={markings, mark=at position 0.5 with {\arrow{>}}},postaction={decorate}](-1,-0.4)--(0,0);
\draw [line width =1pt,decoration={markings, mark=at position 0.5 with {\arrow{<}}},postaction={decorate}](1.2,1)  --(0.2,0);
\draw [line width =1pt,decoration={markings, mark=at position 0.5 with {\arrow{<}}},postaction={decorate}](1.2,0)  --(0.2,0);
\draw [line width =1pt,decoration={markings, mark=at position 0.5 with {\arrow{<}}},postaction={decorate}](1.2,-0.4)--(0.2,0);
\node  at(-0.8,0.5) {$\vdots$};
\node  at(1,0.5) {$\vdots$};
\filldraw[draw=black,fill=gray!20,line width =1pt]  (0.1,0.3) ellipse (0.4 and 0.7);
\node  at(0.1,0.3){$\sigma_{+}$};
\end{tikzpicture}},
\eeq
where the ellipse enclosing $\sigma_+$  is the minimum crossing positive braid representing a permutation $\sigma\in S_n$ and $\ell(\sigma)=\mid\{(i,j)\mid 1\leq i<j\leq n, \sigma(i)>\sigma(j)\}|$ is the length of $\sigma\in S_n$.

\beq\label{wzh.five}
   \raisebox{-.30in}{
\begin{tikzpicture}
\tikzset{->-/.style=
{decoration={markings,mark=at position #1 with
{\arrow{latex}}},postaction={decorate}}}
\filldraw[draw=white,fill=gray!20] (-1,-0.7) rectangle (0.2,1.3);
\draw [line width =1pt](-1,1)--(0,0);
\draw [line width =1pt](-1,0)--(0,0);
\draw [line width =1pt](-1,-0.4)--(0,0);
\draw [line width =1.5pt](0.2,1.3)--(0.2,-0.7);
\node  at(-0.8,0.5) {$\vdots$};
\filldraw[fill=white,line width =0.8pt] (-0.5 ,0.5) circle (0.07);
\filldraw[fill=white,line width =0.8pt] (-0.5 ,0) circle (0.07);
\filldraw[fill=white,line width =0.8pt] (-0.5 ,-0.2) circle (0.07);
\end{tikzpicture}}
   = 
   a \sum_{\sigma \in S_n} (-q)^{\ell(\sigma)}\,  \raisebox{-.30in}{
\begin{tikzpicture}
\tikzset{->-/.style=
{decoration={markings,mark=at position #1 with
{\arrow{latex}}},postaction={decorate}}}
\filldraw[draw=white,fill=gray!20] (-1,-0.7) rectangle (0.2,1.3);
\draw [line width =1pt](-1,1)--(0.2,1);
\draw [line width =1pt](-1,0)--(0.2,0);
\draw [line width =1pt](-1,-0.4)--(0.2,-0.4);
\draw [line width =1.5pt,decoration={markings, mark=at position 1 with {\arrow{>}}},postaction={decorate}](0.2,1.3)--(0.2,-0.7);
\node  at(-0.8,0.5) {$\vdots$};
\filldraw[fill=white,line width =0.8pt] (-0.5 ,1) circle (0.07);
\filldraw[fill=white,line width =0.8pt] (-0.5 ,0) circle (0.07);
\filldraw[fill=white,line width =0.8pt] (-0.5 ,-0.4) circle (0.07);
\node [right] at(0.2,1) {$\sigma(n)$};
\node [right] at(0.2,0) {$\sigma(2)$};
\node [right] at(0.2,-0.4){$\sigma(1)$};
\end{tikzpicture}},
\eeq
\beq \label{wzh.six}
\raisebox{-.20in}{
\begin{tikzpicture}
\tikzset{->-/.style=
{decoration={markings,mark=at position #1 with
{\arrow{latex}}},postaction={decorate}}}
\filldraw[draw=white,fill=gray!20] (-0.7,-0.7) rectangle (0,0.7);
\draw [line width =1.5pt,decoration={markings, mark=at position 1 with {\arrow{>}}},postaction={decorate}](0,0.7)--(0,-0.7);
\draw [color = black, line width =1pt] (0 ,0.3) arc (90:270:0.5 and 0.3);
\node [right]  at(0,0.3) {$i$};
\node [right] at(0,-0.3){$j$};
\filldraw[fill=white,line width =0.8pt] (-0.5 ,0) circle (0.07);
\end{tikzpicture}}   = \delta_{\bar j,i }\,  c_i\ \raisebox{-.20in}{
\begin{tikzpicture}
\tikzset{->-/.style=
{decoration={markings,mark=at position #1 with
{\arrow{latex}}},postaction={decorate}}}
\filldraw[draw=white,fill=gray!20] (-0.7,-0.7) rectangle (0,0.7);
\draw [line width =1.5pt](0,0.7)--(0,-0.7);
\end{tikzpicture}},
\eeq
\beq \label{wzh.seven}
\raisebox{-.20in}{
\begin{tikzpicture}
\tikzset{->-/.style=
{decoration={markings,mark=at position #1 with
{\arrow{latex}}},postaction={decorate}}}
\filldraw[draw=white,fill=gray!20] (-0.7,-0.7) rectangle (0,0.7);
\draw [line width =1.5pt](0,0.7)--(0,-0.7);
\draw [color = black, line width =1pt] (-0.7 ,-0.3) arc (-90:90:0.5 and 0.3);
\filldraw[fill=white,line width =0.8pt] (-0.55 ,0.26) circle (0.07);
\end{tikzpicture}}
= \sum_{i=1}^n  (c_{\bar i})^{-1}\, \raisebox{-.20in}{
\begin{tikzpicture}
\tikzset{->-/.style=
{decoration={markings,mark=at position #1 with
{\arrow{latex}}},postaction={decorate}}}
\filldraw[draw=white,fill=gray!20] (-0.7,-0.7) rectangle (0,0.7);
\draw [line width =1.5pt,decoration={markings, mark=at position 1 with {\arrow{>}}},postaction={decorate}](0,0.7)--(0,-0.7);
\draw [line width =1pt](-0.7,0.3)--(0,0.3);
\draw [line width =1pt](-0.7,-0.3)--(0,-0.3);
\filldraw[fill=white,line width =0.8pt] (-0.3 ,0.3) circle (0.07);
\filldraw[fill=black,line width =0.8pt] (-0.3 ,-0.3) circle (0.07);
\node [right]  at(0,0.3) {$i$};
\node [right]  at(0,-0.3) {$\bar{i}$};
\end{tikzpicture}},
\eeq
\beq\label{wzh.eight}
\raisebox{-.20in}{

\begin{tikzpicture}
\tikzset{->-/.style=

{decoration={markings,mark=at position #1 with

{\arrow{latex}}},postaction={decorate}}}
\filldraw[draw=white,fill=gray!20] (-0,-0.2) rectangle (1, 1.2);
\draw [line width =1.5pt,decoration={markings, mark=at position 1 with {\arrow{>}}},postaction={decorate}](1,1.2)--(1,-0.2);
\draw [line width =1pt](0.6,0.6)--(1,1);
\draw [line width =1pt](0.6,0.4)--(1,0);
\draw[line width =1pt] (0,0)--(0.4,0.4);
\draw[line width =1pt] (0,1)--(0.4,0.6);
\draw[line width =1pt] (0.4,0.6)--(0.6,0.4);
\filldraw[fill=white,line width =0.8pt] (0.2 ,0.2) circle (0.07);
\filldraw[fill=white,line width =0.8pt] (0.2 ,0.8) circle (0.07);
\node [right]  at(1,1) {$i$};
\node [right]  at(1,0) {$j$};
\end{tikzpicture}
} =q^{-\frac{1}{n}}\left(\delta_{{j<i} }(q-q^{-1})\raisebox{-.20in}{

\begin{tikzpicture}
\tikzset{->-/.style=

{decoration={markings,mark=at position #1 with

{\arrow{latex}}},postaction={decorate}}}
\filldraw[draw=white,fill=gray!20] (-0,-0.2) rectangle (1, 1.2);
\draw [line width =1.5pt,decoration={markings, mark=at position 1 with {\arrow{>}}},postaction={decorate}](1,1.2)--(1,-0.2);
\draw [line width =1pt](0,0.8)--(1,0.8);
\draw [line width =1pt](0,0.2)--(1,0.2);
\filldraw[fill=white,line width =0.8pt] (0.2 ,0.8) circle (0.07);
\filldraw[fill=white,line width =0.8pt] (0.2 ,0.2) circle (0.07);
\node [right]  at(1,0.8) {$i$};
\node [right]  at(1,0.2) {$j$};
\end{tikzpicture}
}+q^{\delta_{i,j}}\raisebox{-.20in}{

\begin{tikzpicture}
\tikzset{->-/.style=

{decoration={markings,mark=at position #1 with

{\arrow{latex}}},postaction={decorate}}}
\filldraw[draw=white,fill=gray!20] (-0,-0.2) rectangle (1, 1.2);
\draw [line width =1.5pt,decoration={markings, mark=at position 1 with {\arrow{>}}},postaction={decorate}](1,1.2)--(1,-0.2);
\draw [line width =1pt](0,0.8)--(1,0.8);
\draw [line width =1pt](0,0.2)--(1,0.2);
\filldraw[fill=white,line width =0.8pt] (0.2 ,0.8) circle (0.07);
\filldraw[fill=white,line width =0.8pt] (0.2 ,0.2) circle (0.07);
\node [right]  at(1,0.8) {$j$};
\node [right]  at(1,0.2) {$i$};
\end{tikzpicture}
}\right),
\eeq
where   
$\delta_{j<i}= \left \{
 \begin{array}{rr}
     1,                    & j<i\\
     0,                                 & \text{otherwise}
 \end{array}
 \right.,
\delta_{i,j}= \left \{
 \begin{array}{rr}
     1,                    & i=j\\
     0,                                 & \text{otherwise}
 \end{array}
 \right.$. Each shaded rectangle in the above relations is the projection of a cube in $M$. The lines contained in the shaded rectangle represent parts of stated $n$-webs with framing  pointing to  readers. The thick line in the edge of shaded rectangle is part of the marking. For detailed explanation for the above relations, please refer to \cite{le2021stated}.


\subsection{Functoriality}

For any two marked 3-manifolds $(M,\mathcal{N}),(M',\N')$, if an orientation preserving  embedding $f:M\rightarrow M'$ maps 
$\N$ to $\N'$ and preserves  orientations between $\N$ and $\N'$, we call $f$ an embedding from $(M,\mathcal{N})$ to $(M',\N')$. Clearly $f$ induces a linear map $f_{*}:S_n(M,\mathcal{N},v)\rightarrow S_n(M',\mathcal{N}',v)$ \cite{le2021stated}.

\subsection{Splitting map}

Let $(M,\mathcal{N})$ be any marked 3-manifold, and $D$ be a properly embedded disk in $M$ such  that there is no intersection between $D$ and the closure of $\N$.
 After removing a collar open neighborhood of $D$, we get a new 3-manifold $M'$. And $\partial M'$ contains two copies $D_1$ and 
$D_2$ of $D$ such that we can get $M$ from $M'$ by gluing $D_1$ and $D_2$. We use pr to denote the obvious projection from $M'$ to $M$.

\def \cN{\mathbb{N}}
\def \MN {(M,\mathcal{N})}

Let $\beta\subset D$ be an oriented open interval. Suppose $\text{pr}^{-1}(\beta) = \beta_{1}\cup \beta_2$ with
   $\beta_1\in D_1$ and $\beta_2\in D_2$. We cut $(M,\mathcal{N})$ along $(D,\beta)$ to  obtain a new marked 3-manifold   $(M', \N')$, where $\N' = \N\cup \beta_1\cup \beta_2$. We will  denote   $(M', \N')$ as  Cut$_{(D,\beta)}(M, \N )$.
 It is easy to see that Cut$_{(D,\beta)}(M,\mathcal{N})$ is defined up to isomorphism.
If $\beta'$ is another oriented open interval in $D$, obviously we have Cut$_{(D,\beta)}(M,\mathcal{N})$ is isomorphic to Cut$_{(D,\beta')}(M,\mathcal{N})$.

For a stated $n$-web $l$ in $\MN$, we say $l$
 is {\bf $(D,\beta)$-transverse} if the vertices of $l$ are not in $D$, $l\cap D = l\cap \beta$, and the framing of $l$ at each point in $l\cap\beta$ is given by the  velocity vector of $\beta$.  For any map $s:l\cap \beta\rightarrow\{1,2,\dots,n\}$, let $l_s$, which is a stated $n$-web in $\text{Cut}_{(D, \beta)}(M,\mathcal{N})$, be the lift of $l$ such that for every $P\in l\cap\beta$ the two newly created boundary points corresponding to $P$ both  have the state $s(P)$.  There is a linear homomorphism \cite{le2021stated}, called the splitting map, $\Theta_{(D,\beta)}:S_n(M,\mathcal{N},v)\rightarrow S_n(M',\cN',v)$  defined by
\begin{equation}
\Theta_{(D,\beta)}(l) = \sum_{s:l\cap \beta\rightarrow\{1,2,\dots,n\}}l_s.
\end{equation}
 When there is no confusion we can omit the subscript for $\Theta_{(D,\beta)}$.

\subsection{Reversing orientations of $n$-webs}

Let $\cev{\alpha}$ denote an $n$-web $\alpha$ with its orientation reversed (and unchanged framing). 
\begin{corollary}[\cite{le2021stated}]\label{c.orient-rev}
$$\cev {\,\cdot\,}: S_n(M,\mathcal{N},v)\to S_n(M,\mathcal{N},v)$$
is a well defined linear automorphism.
\end{corollary}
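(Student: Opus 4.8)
The plan is to define $\cev{\,\cdot\,}$ first on the free vector space spanned by isotopy classes of stated $n$-webs and then to show that it descends to the quotient by the defining relations \eqref{w.cross}--\eqref{wzh.eight}. On a generator $\alpha$ I reverse the orientation of every edge; this interchanges sources and sinks at the $n$-valent vertices, and I further stipulate that the cyclic order at each vertex is reversed and that each boundary state $i$ is replaced by $\bar i = n+1-i$. Fixing these last two conventions is the only genuine choice in the construction, and it is forced by the boundary relations discussed below. The resulting map is $\mathbb{C}$-linear by construction, and because $i\mapsto\bar i$ is an involution and reversing orientations and cyclic orders twice restores the original web, the map squares to the identity on generators. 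Hence, once well-definedness is established, $\cev{\,\cdot\,}$ is automatically an involution and therefore a linear automorphism, so all of the content lies in well-definedness.

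Well-definedness amounts to checking that each defining relation is carried into the subspace generated by the defining relations. Since every relation is supported in a ball, with the rest of the web held fixed, and since orientation reversal acts locally strand by strand, it suffices to treat the relations one type at a time. The relations \eqref{w.cross}, \eqref{w.twist}, and \eqref{w.unknot} carry no boundary states: for \eqref{w.cross} I use that reversing the orientations of both strands at a crossing preserves its sign, so the positive crossing, the negative crossing, and the pair of parallel strands are each sent to their orientation reversals while the coefficients $q^{\pm 1/n}$ and $q-q^{-1}$ are untouched; for \eqref{w.unknot} the value $(-1)^{n-1}[n]$ does not depend on orientation; and for \eqref{w.twist} I use that the ribbon twist agrees on the standard representation and its dual, so the coefficient $t$ of \eqref{e.t} is unchanged. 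Thus each of these three relations is carried to itself with orientations reversed.

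The delicate relations are those attached to the marking, \eqref{wzh.five}, \eqref{wzh.six}, \eqref{wzh.seven}, and \eqref{wzh.eight}, together with the internal vertex relation \eqref{wzh.four}. Reversal interchanges a cap with a cup and simultaneously replaces each state $i$ by $\bar i$, so I expect \eqref{wzh.six} to be carried to a consequence of \eqref{wzh.seven} and conversely. Here the coefficient bookkeeping is controlled precisely by the identities $c_i\,c_{\bar i}=t$ and $\prod_{i=1}^n c_i=t^{n/2}$ of \eqref{e.prodc}: these ensure that the factor $c_i$ produced by a reversed cap matches the factor $(c_{\bar i})^{-1}$ demanded by a cup, up to the framing constant $t$, and that the global constant $a$ of \eqref{wzh.five} transforms consistently. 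For \eqref{wzh.four} and \eqref{wzh.five} the reversal swaps the source and the sink and reverses the cyclic order, which turns the minimal positive braid $\sigma_+$ representing $\sigma$ into the minimal positive braid representing $\sigma^{-1}$; since $\ell(\sigma^{-1})=\ell(\sigma)$, a reindexing of the sum over $S_n$ leaves the coefficients $(-q)^{n(n-1)/2}(-q^{(1-n)/n})^{\ell(\sigma)}$ invariant.

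I expect the main obstacle to be exactly this last bookkeeping: pinning down the convention for the cyclic order and the state involution $i\mapsto\bar i$ so that all of \eqref{wzh.four}--\eqref{wzh.eight} transform correctly at once, and checking that the constants $c_i$, $t$, and $a$ conspire through \eqref{e.t} and \eqref{e.prodc}. Once every relation has been verified, the map descends to a linear endomorphism of $S_n(M,\mathcal{N},v)$, and the involution property established in the first step upgrades it to a linear automorphism, which is the assertion of the corollary.
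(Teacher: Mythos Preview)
The paper does not prove this statement at all; it is cited from \cite{le2021stated}. So there is no ``paper's own proof'' to compare against, and your task was really to reproduce the argument from that reference.

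More importantly, you have altered the map. The corollary, as stated immediately above it in the text, says that $\cev{\alpha}$ is $\alpha$ \emph{with its orientation reversed (and unchanged framing)}; nothing else is changed. In particular the boundary states are left alone. Your proposal replaces each state $i$ by $\bar i = n+1-i$, and you assert that this is ``forced by the boundary relations.'' It is not. The defining relations \eqref{wzh.five}--\eqref{wzh.eight} are written with white/black dots precisely because each one comes in two orientation versions, and reversing the orientation of every strand simply carries one version of a relation to the other. For instance, relation \eqref{wzh.six} with the arrow pointing one way is sent to \eqref{wzh.six} with the arrow pointing the other way, same states, same coefficient $\delta_{\bar j,i}c_i$; this is exactly how the map is used later in the proof of Proposition~\ref{prop3.1} (``the other case can be obtained by reversing all the orientations of stated $n$-webs''), where the states visibly do not change.

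So your argument, while internally plausible, establishes the well-definedness of a \emph{different} involution (orientation reversal composed with the state involution $i\mapsto\bar i$), not the map $\cev{\,\cdot\,}$ of the corollary. To fix this, drop the state involution and the claim that it is forced; then the verification of \eqref{w.cross}--\eqref{w.unknot} goes through as you wrote, and for \eqref{wzh.four}--\eqref{wzh.eight} you must instead check that each relation is carried to its orientation-reversed twin, which is again a relation of the skein module. The involution property (hence automorphism) then follows as you said.
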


\subsection{Punctured bordered surfaces and stated $SL_n$-skein algebras}\label{subb2.4}


A {\bf punctured bordered surface} $\Sigma$ is $\overline{\Sigma}-U$ where $\overline{\Sigma}$ is a compact oriented surface and $U$ is a finite set of $\overline{\Sigma}$ such that every component of $\partial \overline{\Sigma}$ intersects $U$.  
For simplicity, we will call a punctured bordered surface a {\bf pb surface}.


A pb surface $\Sigma$ is called an 
 {\bf essentially bordered pb surface} if every component of $\Sigma$ has non-empty boundary.

The stated $SL_n$-skein algebra, denoted as $S_n(\Sigma,v)$, of a pb surface $\Sigma $ is defined as following: For every  component $c$ of $\partial \Sigma$, we choose a point $x_c$. Let $M = \Sigma\times[-1,1]$ and $\N=\cup_{c}\, x_c \times (-1,1)$  where $c$ goes over all components of $\partial \Sigma$. Then we define $S_n(\Sigma,v) $ to be $S_n(M,\mathcal{N},v)$. We will call $(M,\mathcal{N})$  the thickening of the pb surface $\Sigma$.
Obviously $S_n(\Sigma,v) $ admits an algebra structure. For any two stated $n$-webs $l_1$ and $l_2$ in  the thickening of $\Sigma$, we define $l_1l_2\in S_n({\Sigma},v)$ to be the result of stacking $l_1$ above $l_2$. 


\subsection{Conventions}

An {\bf oriented arc} $\alpha$ in $(M,\mathcal{N})$ is a smooth embedding from $[0,1]$ to $M$ with the orientation given by the positive direction of $[0,1]$ such that $\alpha\cap\partial M
= \{\alpha(0),\alpha(1)\}\cap \N$. A {\bf  framed oriented arc} is an oriented arc with transversal framing
such that the framings at $\{\alpha(0),\alpha(1)\}\cap \N$, if not empty, are given by the velocity  vectors of $\N$.
 An {\bf  oriented circle} in $(M,\mathcal{N})$ is a smooth embedding  $\beta: S^{1}\rightarrow M$ with a chosen orientation such that $\beta\subset int(M)$. A {\bf  framed oriented knot} is an oriented circle with transversal framing.

If both two ends of a framed oriented arc lie in $\N$ 
we call it a  framed oriented boundary arc of $(M,\mathcal{N})$, or just {\bf framed oriented boundary arc} when there is no confusion with $(M,\mathcal{N})$. If the two ends of a framed oriented boundary arc are both stated, we call it a {\bf stated framed oriented boundary arc}.

 For a framed oriented boundary arc $\alpha$, we use $\alpha_{i,j}$ to denote $\alpha$ with two ends stated by $s(\alpha(0 )) = j$ and $s(\alpha(1)) = i$. For a stated framed oriented boundary arc $\alpha$, suppose 
$s(\alpha(0 )) = j$ and $s(\alpha(1)) = i$, we can also use $\alpha_{i,j}$ to denote $\alpha$ to indicate the information that $s(\alpha(0 )) = j$ and $s(\alpha(1)) = i$.
 
For two framed oriented arcs $\alpha,\beta$, we say $\alpha*\beta$ is well-defined if (1) $\alpha\cap \beta=\{\alpha(0) \} = \{\beta(1)\}$  and they have the same framing and velocity vector at point $\beta(1) = \alpha(0)$, or (2) $\alpha\cap \beta=\{\alpha(0),\alpha(1) \} = \{\beta(0),\beta(1)\}$, where $\alpha(0) = \beta(1)$ and
$\alpha(1) = \beta(0)$, and they have the same framings and velocity vectors at their intersecting points. And we use $\alpha*\beta$ to denote the new framed oriented arc (or framed oriented knot) obtained by connecting $\alpha$ and $\beta$ at their intersecting points. Note that it is possible that $\alpha*\beta$ is not a well-defined framed oriented arc (or framed oriented knot)
 because the intersecting points could be contained in $\N$. If this happens, we just isotopicly push  the parts nearby 
intersecting points  to the inside of $M$. Then we obtain a well-defined framed oriented arc (or framed oriented knot), which is still denoted as $\alpha*\beta$.
Note that for case (2), we have both $\alpha*\beta$ and $\beta*\alpha$ are well-defined and they represent the same framed oriented
knot.

Suppose $R_1,R_2$ are two algebras, and $f$ is map from $R_1$ to $R_2$. Let $A=(a_{i,j})$ be a $k_1$ by $k_2$ matrix in $R_1$ where $k_1,k_2$ are two positive integers. We define $f(A)$ to be a $k_1$ by $k_2$ matrix in $R_2$
with $[f(A)]_{i,j} = f(a_{i,j})$. If $f$ is an algebra homomorphism, we have 
$f(I)= I, f(A_1 A_2) = f(A_1)f(A_2), f(A_3+A_4) = f(A_3) + f(A_4), f(A_5^{-1}) = (f(A_5))^{-1}$
where $I$ is the identity matrix in any size and $A_t,1\leq t\leq 5,$ are matrices in $R_1$ such that the above operations for $A_t$ make sense.

In this paper, when we talk about 3-manifold, we always mean a 3-manifold with a chosen orientation and a chosen
 Riemannian metric.

\section{The commutative algebra structure for $S_n(M,\mathcal{N},1)$ and the coordinate ring}\label{sec3}

In this section we will give a commutative algebra structure to $S_n(M,\mathcal{N},1)$, and introduce an algebraic set related to $(M,\mathcal{N})$. The main goal of this section is to construct a surjective algebra homomorphism from $S_n(M,\mathcal{N},1)$ to the coordinate ring of this algebraic set. In next section, we will prove the well-definedness and surjectivity of this algebra homomorphism.

Recall that for any positive integer $k$, we define 
$$[k]=\frac{q^k-q^{-k}}{q-q^{-1}}.$$

\begin{lemma}
For any positive integer $k$, we have 
$$\sum_{\sigma\in S_k} (q^2)^{\ell(\sigma)} = [k]! q^{\frac{k(k-1)}{2}}$$
where $[k]! =\prod_{1\leq i\leq k}[i]$.
\end{lemma}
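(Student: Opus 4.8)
The plan is to prove the identity by induction on $k$, exploiting the standard description of permutations in terms of inversions. Since $\ell(\sigma)$ is precisely the number of inversions of $\sigma$, the left-hand side is the inversion generating function of $S_k$ evaluated at $q^2$, and the identity is an incarnation of the classical fact that this generating function is the Gaussian $q$-factorial. I would make the argument self-contained rather than merely citing that fact. For the base case $k=1$ both sides equal $1$ (the unique permutation has length $0$, and $[1]!\,q^0=1$).

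For the inductive step I would use the bijection $S_k\cong S_{k-1}\times\{0,1,\dots,k-1\}$ obtained by deleting the largest letter $k$ from a permutation written in one-line notation: given $\tau\in S_{k-1}$ and a slot index $j\in\{0,\dots,k-1\}$, insert $k$ so that exactly $j$ letters lie to its right. The key observation is that this insertion creates exactly $j$ new inversions (each letter to the right of $k$ is smaller, hence forms an inversion with it) and alters none of the inversions among the remaining letters, so the new permutation has length $\ell(\tau)+j$.

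Summing over the fibre then factorizes the generating function:
$$\sum_{\sigma\in S_k}(q^2)^{\ell(\sigma)} = \Big(\sum_{j=0}^{k-1}(q^2)^{j}\Big)\sum_{\tau\in S_{k-1}}(q^2)^{\ell(\tau)} = \frac{1-q^{2k}}{1-q^2}\cdot [k-1]!\,q^{\frac{(k-1)(k-2)}{2}},$$
where the last equality uses the inductive hypothesis. It then remains only to simplify, and the one algebraic identity I need is
$$\frac{1-q^{2k}}{1-q^2} = q^{k-1}\,\frac{q^k-q^{-k}}{q-q^{-1}} = q^{k-1}[k],$$
obtained by factoring $q^k$ out of the numerator and $q$ out of the denominator. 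Substituting this yields $q^{k-1}[k]\,[k-1]!\,q^{\frac{(k-1)(k-2)}{2}} = [k]!\,q^{(k-1)+\frac{(k-1)(k-2)}{2}}$, and the exponent collapses to $\frac{k(k-1)}{2}$, completing the induction.

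I expect no serious obstacle here; the only points requiring genuine care are the inversion bookkeeping in the inductive step — verifying that inserting the maximal letter into the $j$-th slot contributes exactly $j$ inversions while leaving the other inversions untouched — together with the routine exponent arithmetic at the end.
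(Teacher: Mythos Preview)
Your proof is correct and follows essentially the same inductive approach as the paper: both factor the sum by peeling off a geometric series and invoking the inductive hypothesis. The only cosmetic difference is that the paper partitions $S_{k+1}$ according to the value $\sigma(k+1)$, whereas you partition $S_k$ according to the position of the largest letter $k$; these are dual descriptions of the same coset decomposition and lead to the identical computation.
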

\begin{proof}
We prove this by using mathmatical induction on $k$. Obviously it is true for $k=1$. 

Suppose we have $\sum_{\sigma\in S_k} (q^2)^{\ell(\sigma)} = [k]! q^{\frac{k(k-1)}{2}}.$ Then 
\begin{align*}
&\sum_{\sigma\in S_{k+1}} (q^2)^{\ell(\sigma)} = \sum_{1\leq i\leq k+1}\left (
\sum_{\sigma\in S_{k+1},\sigma(k+1) 
=i} q^{2\ell(\sigma)}\right ) \\
=&(\sum_{1\leq i\leq k+1} q^{2(k+1-i)}) [k]! q^{\frac{k(k-1)}{2}}=
[k+1]! q^{\frac{(k+1)(k)}{2}}.
\end{align*}
\end{proof}

\subsection{Hight exchange relations for boundary arcs with opposite orientations}  In this subsection, we try to derive the relation between $\raisebox{-.20in}{

\begin{tikzpicture}
\tikzset{->-/.style=

{decoration={markings,mark=at position #1 with

{\arrow{latex}}},postaction={decorate}}}
\filldraw[draw=white,fill=gray!20] (-0,-0.2) rectangle (1, 1.2);
\draw [line width =1.5pt,decoration={markings, mark=at position 1 with {\arrow{>}}},postaction={decorate}](1,-0.2)--(1,1.2);
\draw [line width =1pt,decoration={markings, mark=at position 0.5 with {\arrow{>}}},postaction={decorate}](0,0.8)--(1,0.8);
\draw [line width =1pt,decoration={markings, mark=at position 0.5 with {\arrow{<}}},postaction={decorate}](0,0.2)--(1,0.2);
\node [right]  at(1,0.8) {$j$};
\node [right]  at(1,0.2) {$i$};
\end{tikzpicture}
}$ and $\raisebox{-.20in}{

\begin{tikzpicture}
\tikzset{->-/.style=

{decoration={markings,mark=at position #1 with

{\arrow{latex}}},postaction={decorate}}}
\filldraw[draw=white,fill=gray!20] (-0,-0.2) rectangle (1, 1.2);
\draw [line width =1.5pt,decoration={markings, mark=at position 1 with {\arrow{>}}},postaction={decorate}](1,1.2)--(1,-0.2);
\draw [line width =1pt,decoration={markings, mark=at position 0.5 with {\arrow{>}}},postaction={decorate}](0,0.8)--(1,0.8);
\draw [line width =1pt,decoration={markings, mark=at position 0.5 with {\arrow{<}}},postaction={decorate}](0,0.2)--(1,0.2);
\node [right]  at(1,0.8) {$j$};
\node [right]  at(1,0.2) {$i$};
\end{tikzpicture}
}$
(between $\raisebox{-.20in}{

\begin{tikzpicture}
\tikzset{->-/.style=

{decoration={markings,mark=at position #1 with

{\arrow{latex}}},postaction={decorate}}}
\filldraw[draw=white,fill=gray!20] (-0,-0.2) rectangle (1, 1.2);
\draw [line width =1.5pt,decoration={markings, mark=at position 1 with {\arrow{>}}},postaction={decorate}](1,-0.2)--(1,1.2);
\draw [line width =1pt,decoration={markings, mark=at position 0.5 with {\arrow{<}}},postaction={decorate}](0,0.8)--(1,0.8);
\draw [line width =1pt,decoration={markings, mark=at position 0.5 with {\arrow{>}}},postaction={decorate}](0,0.2)--(1,0.2);
\node [right]  at(1,0.8) {$j$};
\node [right]  at(1,0.2) {$i$};
\end{tikzpicture}
}$ and $\raisebox{-.20in}{

\begin{tikzpicture}
\tikzset{->-/.style=

{decoration={markings,mark=at position #1 with

{\arrow{latex}}},postaction={decorate}}}
\filldraw[draw=white,fill=gray!20] (-0,-0.2) rectangle (1, 1.2);
\draw [line width =1.5pt,decoration={markings, mark=at position 1 with {\arrow{>}}},postaction={decorate}](1,1.2)--(1,-0.2);
\draw [line width =1pt,decoration={markings, mark=at position 0.5 with {\arrow{<}}},postaction={decorate}](0,0.8)--(1,0.8);
\draw [line width =1pt,decoration={markings, mark=at position 0.5 with {\arrow{>}}},postaction={decorate}](0,0.2)--(1,0.2);
\node [right]  at(1,0.8) {$j$};
\node [right]  at(1,0.2) {$i$};
\end{tikzpicture}
}$).

\begin{proposition}\label{prop3.1}
Let $(M,\mathcal{N})$ be any marked 3-manifold with $\N\neq \emptyset$. In $S_n(M,\mathcal{N},v)$, we have 

\begin{align*}
\raisebox{-.20in}{
\begin{tikzpicture}
\tikzset{->-/.style=
{decoration={markings,mark=at position #1 with
{\arrow{latex}}},postaction={decorate}}}
\filldraw[draw=white,fill=gray!20] (-0,-0.2) rectangle (1, 1.2);
\draw [line width =1.5pt,decoration={markings, mark=at position 1 with {\arrow{>}}},postaction={decorate}](1,-0.2)--(1,1.2);
\draw [line width =1pt](0,0.8)--(1,0.8);
\draw [line width =1pt](0,0.2)--(1,0.2);
\filldraw[fill=black,line width =0.8pt] (0.2 ,0.8) circle (0.07);
\filldraw[fill=white,line width =0.8pt] (0.2 ,0.2) circle (0.07);
\node [right]  at(1,0.8) {$j$};
\node [right]  at(1,0.2) {$i$};
\end{tikzpicture}
}
=
\raisebox{-.20in}{
\begin{tikzpicture}
\tikzset{->-/.style=
{decoration={markings,mark=at position #1 with
{\arrow{latex}}},postaction={decorate}}}
\filldraw[draw=white,fill=gray!20] (-0,-0.2) rectangle (1, 1.2);
\draw [line width =1.5pt,decoration={markings, mark=at position 1 with {\arrow{>}}},postaction={decorate}](1,1.2)--(1,-0.2);
\draw [line width =1pt](0.6,0.6)--(1,1);
\draw [line width =1pt](0.6,0.4)--(1,0);
\draw[line width =1pt] (0,0)--(0.4,0.4);
\draw[line width =1pt] (0,1)--(0.4,0.6);
\draw[line width =1pt] (0.4,0.6)--(0.6,0.4);
\filldraw[fill=white,line width =0.8pt] (0.2 ,0.2) circle (0.07);
\filldraw[fill=black,line width =0.8pt] (0.2 ,0.8) circle (0.07);
\node [right]  at(1,1) {$i$};
\node [right]  at(1,0) {$j$};
\end{tikzpicture}
}
&= \left \{
 \begin{array}{ll}
     q^{\frac{1-n}{n}}
\left(
\raisebox{-.20in}{
\begin{tikzpicture}
\tikzset{->-/.style=
{decoration={markings,mark=at position #1 with
{\arrow{latex}}},postaction={decorate}}}
\filldraw[draw=white,fill=gray!20] (-0,-0.2) rectangle (1, 1.2);
\draw [line width =1.5pt,decoration={markings, mark=at position 1 with {\arrow{>}}},postaction={decorate}](1,1.2)--(1,-0.2);
\draw [line width =1pt](0,0.8)--(1,0.8);
\draw [line width =1pt](0,0.2)--(1,0.2);
\filldraw[fill=black,line width =0.8pt] (0.2 ,0.8) circle (0.07);
\filldraw[fill=white,line width =0.8pt] (0.2 ,0.2) circle (0.07);
\node [right]  at(1,0.8) {$j$};
\node [right]  at(1,0.2) {$i$};
\end{tikzpicture}
}
+c_i (1-q^2)\sum_{j<k\leq n}
c_{\bar{k}}^{-1}
\raisebox{-.20in}{
\begin{tikzpicture}
\tikzset{->-/.style=
{decoration={markings,mark=at position #1 with
{\arrow{latex}}},postaction={decorate}}}
\filldraw[draw=white,fill=gray!20] (-0,-0.2) rectangle (1, 1.2);
\draw [line width =1.5pt,decoration={markings, mark=at position 1 with {\arrow{>}}},postaction={decorate}](1,1.2)--(1,-0.2);
\draw [line width =1pt](0,0.8)--(1,0.8);
\draw [line width =1pt](0,0.2)--(1,0.2);
\filldraw[fill=black,line width =0.8pt] (0.2 ,0.8) circle (0.07);
\filldraw[fill=white,line width =0.8pt] (0.2 ,0.2) circle (0.07);
\node [right]  at(1,0.8) {$k$};
\node [right]  at(1,0.2) {$\bar{k}$};
\end{tikzpicture}
}
\right),                    & \text{if }j=\bar{i},\\
     q^{\frac{1}{n}}
\raisebox{-.20in}{
\begin{tikzpicture}
\tikzset{->-/.style=
{decoration={markings,mark=at position #1 with
{\arrow{latex}}},postaction={decorate}}}
\filldraw[draw=white,fill=gray!20] (-0,-0.2) rectangle (1, 1.2);
\draw [line width =1.5pt,decoration={markings, mark=at position 1 with {\arrow{>}}},postaction={decorate}](1,1.2)--(1,-0.2);
\draw [line width =1pt](0,0.8)--(1,0.8);
\draw [line width =1pt](0,0.2)--(1,0.2);
\filldraw[fill=black,line width =0.8pt] (0.2 ,0.8) circle (0.07);
\filldraw[fill=white,line width =0.8pt] (0.2 ,0.2) circle (0.07);
\node [right]  at(1,0.8) {$j$};
\node [right]  at(1,0.2) {$i$};
\end{tikzpicture}
}
,     & \text{if }j\neq\bar{i},\\
 \end{array}
 \right.
\end{align*}

\end{proposition}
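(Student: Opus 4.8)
The plan is to reduce the opposite-orientation exchange to the same-orientation height exchange \eqref{wzh.eight}, which is already available, by reversing the orientation of one of the two strands with the help of the cup and cap relations \eqref{wzh.seven} and \eqref{wzh.six}. I would treat the two asserted equalities separately. For the first equality, relating the uncrossed diagram whose marking points up to the crossed diagram whose marking points down, I would apply the orientation-reversing automorphism of Corollary \ref{c.orient-rev} together with an isotopy that slides the crossing across the marking; because the two strands carry opposite orientations, this crossing can be created or removed without invoking the braiding coefficients of \eqref{w.cross}, so the two pictures represent the same element of $S_n(M,\mathcal{N},v)$. This leaves the computation of the crossed (middle) diagram, which carries all the genuine content.

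For that computation I would proceed in three moves. First, next to the marking I open a turnback on the lower strand using the cup relation \eqref{wzh.seven}: this replaces the outgoing strand carrying state $i$ by an incoming strand summed over an auxiliary state $k$ with weight $(c_{\bar k})^{-1}$, so that both strands now enter the marking with the same orientation. Second, the same-orientation exchange \eqref{wzh.eight} now applies and swaps the two heights, producing the overall $q^{-1/n}$ together with the two terms weighted by $\delta_{j<i}(q-q^{-1})$ and $q^{\delta_{i,j}}$. Third, I close the turnback with the cap relation \eqref{wzh.six}, which forces a Kronecker delta $\delta_{\bar{(\cdot)},(\cdot)}$ and contributes a constant $c_{(\cdot)}$; this is precisely the step that separates the two cases. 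When $j=\bar i$ the cap closes against the partner state and the correction sum over $j<k\le n$ survives with coefficient $c_i(1-q^2)c_{\bar k}^{-1}$, whereas when $j\neq\bar i$ only the straightened term remains. Collecting the accumulated powers of $q$ from the three relations, and simplifying the products of the constants $c_i,c_{\bar k}$ by means of $c_i c_{\bar i}=t$ from \eqref{e.prodc}, should collapse the prefactors to $q^{(1-n)/n}$ and $q^{1/n}$ respectively, matching the stated right-hand sides.

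The main obstacle I anticipate is bookkeeping rather than any conceptual difficulty: after applying \eqref{wzh.seven}, \eqref{wzh.eight} and \eqref{wzh.six} in turn, one must verify that the auxiliary summation over $k$ collapses exactly to the range $j<k\le n$ (and not to a sum over all states), and that the powers of $q$ coming from the three relations combine precisely into the claimed prefactors together with the factor $(1-q^2)$ in the $j=\bar i$ case. I would carry this out by fixing the states $i,j$ at the outset, expanding each relation with its explicit coefficient, and then comparing the two cases $j=\bar i$ and $j\neq\bar i$ term by term.
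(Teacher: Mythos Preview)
Your approach is genuinely different from the paper's, and in principle more elementary. The paper does not reduce to \eqref{wzh.eight} via a cup/cap trick. Instead it invokes an identity from \cite{le2021stated} (their equation (51)) that rewrites the opposite-orientation crossing as a $\delta_{\bar j,i}c_iq^{(1-n)/n}$ times a turnback plus a coefficient times a sink--source ``dumbbell'' diagram. The sink and source are then expanded with \eqref{wzh.five}, producing a double sum over $S_n\times S_n$; the cap relations force constraints linking the two permutations, and the cases $i\neq\bar j$ and $i=\bar j$ are separated at this stage. So the paper's computation is longer and uses heavier machinery (sinks/sources and an external identity) whereas your route stays with arcs and the relations \eqref{wzh.six}--\eqref{wzh.eight}. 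Your method, if carried out, would be a cleaner self-contained argument; the paper's method has the advantage that the external identity immediately produces the $\delta_{\bar j,i}$ split, so one does not have to discover the range $j<k\le n$ by hand.

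One correction to your outline: your justification of the first equality is off. That equality has nothing to do with the orientation-reversing automorphism of Corollary \ref{c.orient-rev}, nor with the strands having opposite orientations. It is a pure isotopy: reversing the direction of the marking arrow swaps the relative heights of the two endpoints, and redrawing the same $n$-web with the original height order requires the positive crossing shown. The paper treats this as obvious and simply writes the equality. Your remark that ``the crossing can be created or removed without invoking the braiding coefficients of \eqref{w.cross}'' is misleading; nothing about opposite orientations lets you erase crossings in general.

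For the second equality your plan is sound, but be aware that the bookkeeping you flag as the ``main obstacle'' really is the whole proof. After inserting the cup \eqref{wzh.seven} (sum over an auxiliary state with weight $c_{\bar k}^{-1}$), applying \eqref{wzh.eight} gives two terms whose $\delta$-conditions involve the auxiliary state, and only after closing with \eqref{wzh.six} do those conditions become constraints on $j$ and $k$. You should expect to have to use $c_ic_{\bar i}=t$ and the explicit form $c_k=(-q)^{n-k}q^{(n-1)/2n}$ to turn the surviving sum into one indexed by $j<k\le n$ with coefficient $c_i(1-q^2)c_{\bar k}^{-1}$; this does not fall out automatically from the Kronecker deltas alone.
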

\begin{proof}
We only prove one case. The other case can be obtained by reversing all the orientations of stated $n$-webs.

We have
\begin{equation}\label{eq2.1}
\raisebox{-.20in}{
\begin{tikzpicture}
\tikzset{->-/.style=
{decoration={markings,mark=at position #1 with
{\arrow{latex}}},postaction={decorate}}}
\filldraw[draw=white,fill=gray!20] (-0,-0.2) rectangle (1, 1.2);
\draw [line width =1.5pt,decoration={markings, mark=at position 1 with {\arrow{>}}},postaction={decorate}](1,-0.2)--(1,1.2);
\draw [line width =1pt,decoration={markings, mark=at position 0.5 with {\arrow{>}}},postaction={decorate}](0,0.8)--(1,0.8);
\draw [line width =1pt,decoration={markings, mark=at position 0.5 with {\arrow{<}}},postaction={decorate}](0,0.2)--(1,0.2);
\node [right]  at(1,0.8) {$j$};
\node [right]  at(1,0.2) {$i$};
\end{tikzpicture}}
=
\raisebox{-.20in}{
\begin{tikzpicture}
\tikzset{->-/.style=
{decoration={markings,mark=at position #1 with
{\arrow{latex}}},postaction={decorate}}}
\filldraw[draw=white,fill=gray!20] (-0,-0.2) rectangle (1, 1.2);
\draw [line width =1.5pt,decoration={markings, mark=at position 1 with {\arrow{>}}},postaction={decorate}](1,1.2)--(1,-0.2);
\draw [line width =1pt](0.6,0.6)--(1,1);
\draw [line width =1pt](0.6,0.4)--(1,0);
\draw[line width =1pt,decoration={markings, mark=at position 0.5 with {\arrow{<}}},postaction={decorate}] (0,0)--(0.4,0.4);
\draw[line width =1pt,decoration={markings, mark=at position 0.5 with {\arrow{>}}},postaction={decorate}] (0,1)--(0.4,0.6);
\draw[line width =1pt] (0.4,0.6)--(0.6,0.4);
\node [right]  at(1,1) {$i$};
\node [right]  at(1,0) {$j$};
\end{tikzpicture}}
=  \delta_{\bar j,i } c_i q^{\frac{1-n}{n}} \; 
\raisebox{-.20in}{
\begin{tikzpicture}
\tikzset{->-/.style=
{decoration={markings,mark=at position #1 with
{\arrow{latex}}},postaction={decorate}}}
\filldraw[draw=white,fill=gray!20] (-0.7,-0.7) rectangle (0,0.7);
\draw [line width =1.5pt](0,0.7)--(0,-0.7);
\draw [color = black, line width =1pt] (-0.7 ,-0.3) arc (-90:90:0.5 and 0.3);
\draw [line width =1pt,decoration={markings, mark=at position 0.5 with {\arrow{<}}},postaction={decorate}](-0.2,-0.02)--(-0.2,0.02);
\end{tikzpicture}}
-(-1)^{\frac{n(n-1)}{2}} ([n-2]!)^{-1}q^{\frac{1}{n}}
\raisebox{-.60in}{

\begin{tikzpicture}
\tikzset{->-/.style=

{decoration={markings,mark=at position #1 with

{\arrow{latex}}},postaction={decorate}}}

\filldraw[draw=white,fill=gray!20] (0,0) rectangle (2.6, 4);
\draw[line width =2pt,decoration={markings, mark=at position 1.0 with {\arrow{>}}},postaction={decorate}](2.6,4) --(2.6,0);
\draw [line width =0.8pt,decoration={markings, mark=at position 0.5 with {\arrow{>}}},postaction={decorate}](1.3,1)--(0,0.5);
\draw[line width =0.8pt,decoration={markings, mark=at position 0.5 with {\arrow{>}}},postaction={decorate}] (1.3,1)--(2.6,0.5)node[right]{{$j$}};
\draw [line width =0.8pt,decoration={markings, mark=at position 1.0 with {\arrow{>}}},postaction={decorate}](1.3,1)--(1.3,2) node[right]{{$n-2$}};
\draw [line width =0.8pt](1.3,2)--(1.3,3);
\draw [line width =0.8pt,decoration={markings, mark=at position 0.5 with {\arrow{>}}},postaction={decorate}](0,3.5)--(1.3,3);
\draw[line width =0.8pt,decoration={markings, mark=at position 0.5 with {\arrow{<}}},postaction={decorate}] (1.3,3)--(2.6,3.5) node[right]{{$i$}};
\end{tikzpicture}
}\hspace*{-.1in}.
\end{equation}
The second equality is because of the parallel equation of equation (51) in \cite{le2021stated}.
We have
$$
\raisebox{-.60in}{

\begin{tikzpicture}
\tikzset{->-/.style=

{decoration={markings,mark=at position #1 with

{\arrow{latex}}},postaction={decorate}}}

\filldraw[draw=white,fill=gray!20] (0,0) rectangle (2.6, 4);
\draw[line width =2pt,decoration={markings, mark=at position 1.0 with {\arrow{>}}},postaction={decorate}](2.6,4) --(2.6,0);
\draw [line width =0.8pt,decoration={markings, mark=at position 0.5 with {\arrow{>}}},postaction={decorate}](1.3,1)--(0,0.5);
\draw[line width =0.8pt,decoration={markings, mark=at position 0.5 with {\arrow{>}}},postaction={decorate}] (1.3,1)--(2.6,0.5)node[right]{{$j$}};
\draw [line width =0.8pt,decoration={markings, mark=at position 1.0 with {\arrow{>}}},postaction={decorate}](1.3,1)--(1.3,2) node[right]{{$n-2$}};
\draw [line width =0.8pt](1.3,2)--(1.3,3);
\draw [line width =0.8pt,decoration={markings, mark=at position 0.5 with {\arrow{>}}},postaction={decorate}](0,3.5)--(1.3,3);
\draw[line width =0.8pt,decoration={markings, mark=at position 0.5 with {\arrow{<}}},postaction={decorate}] (1.3,3)--(2.6,3.5) node[right]{{$i$}};
\end{tikzpicture}
}\hspace*{-.1in}=a^2
\sum_{\tau,\sigma\in S_n}  (-q)^{\ell(\tau)}(-q)^{\ell(\sigma)}
\raisebox{-.99in}{

\begin{tikzpicture}
\tikzset{->-/.style=

{decoration={markings,mark=at position #1 with

{\arrow{latex}}},postaction={decorate}}}

\filldraw[draw=white,fill=gray!20] (0,0.4) rectangle (2, 7.6);
\draw[line width =2pt,decoration={markings, mark=at position 1.0 with {\arrow{>}}},postaction={decorate}](2,7.6) --(2,0.4);
\draw [line width =0.8pt,decoration={markings, mark=at position 0.8 with {\arrow{<}}},postaction={decorate}](2,4.4) arc (90:270:0.8 and 0.4) node[right]{\small{$\tau(n)$}};
\draw [line width =0.8pt,decoration={markings, mark=at position 0.8 with {\arrow{<}}},postaction={decorate}](2,5.2) arc (90:270:1.2 and 1.2) node[right]{\small{$\tau(4)$}};
\draw [line width =0.8pt,decoration={markings, mark=at position 0.8 with {\arrow{<}}},postaction={decorate}](2,5.6) arc (90:270:1.4 and 1.6) node[right]{\small{$\tau(3)$}};
\draw [line width =0.8pt,decoration={markings, mark=at position 0.5 with {\arrow{<}}},postaction={decorate}](0,2)--(2,2)node[right]{\small{$\tau(2)$}};
\draw [line width =0.8pt,decoration={markings, mark=at position 0.5 with {\arrow{>}}},postaction={decorate}](0,6)--(2,6);
\draw [line width =0.8pt,decoration={markings, mark=at position 0.2 with {\arrow{>}}},postaction={decorate}](2,1.6) arc (90:270:0.8 and 0.4) node[right]{\small{$j$}};
\draw [line width =0.8pt,decoration={markings, mark=at position 0.8 with {\arrow{>}}},postaction={decorate}](2,7.2) arc (90:270:0.8 and 0.4) node[right]{\small{$\sigma(n)$}};
\node at(2.5,4.4) {\small $\sigma(1)$};
\node[right] at(2,5.2) {\small $\sigma(n-3)$};
\node [right] at(2,5.6) {\small $\sigma(n-2)$};
\node [right] at(2,6) {\small $\sigma(n-1)$};
\node [right] at(2,7.2) {\small $i$};
\node at(2.5,1.6) {\small $\tau(1)$};

\node at(1.7,4.8) {\vdots};
\end{tikzpicture}
}\hspace*{-.1in}$$
$$
=a^2 t^{\frac{n}{2}}
\sum_{\substack{\tau(1)=\bar{j},\sigma(n)=\bar{i}\\ \overline{\sigma(1)}=\tau(n),\dots,\overline{\sigma(n-2)}=\tau(3),\sigma,\tau\in S_n   }}(-q)^{\ell(\sigma)+\ell(\tau)} c_i c_{\tau(1)}c_{\sigma(n-1)}^{-1}c_{\sigma(n)}^{-1}
\;
\raisebox{-.20in}{

\begin{tikzpicture}
\tikzset{->-/.style=

{decoration={markings,mark=at position #1 with

{\arrow{latex}}},postaction={decorate}}}
\filldraw[draw=white,fill=gray!20] (-0,-0.2) rectangle (1, 1.2);
\draw [line width =1.5pt,decoration={markings, mark=at position 1 with {\arrow{>}}},postaction={decorate}](1,1.2)--(1,-0.2);
\draw [line width =1pt,decoration={markings, mark=at position 0.5 with {\arrow{>}}},postaction={decorate}](0,0.8)--(1,0.8);
\draw [line width =1pt,decoration={markings, mark=at position 0.5 with {\arrow{<}}},postaction={decorate}](0,0.2)--(1,0.2);
\node [right]  at(1,0.8) {$\sigma(n-1)$};
\node [right]  at(1,0.2) {$\tau(2)$};
\end{tikzpicture}
}
$$ where the first equality comes from relation \eqref{wzh.five}.
Then $j\in \{\sigma(n-1), \sigma(n)\}, i\in\{\tau(1),\tau(2)\},$\\$\{\tau(1),\tau(2)\}=\{\overline{\sigma(n-1)},\overline{\sigma(n)}\}$.

Case 1 when $i\neq \overline{j}$. Then $\tau(1) = \overline{j} \neq i =\overline{\sigma(n)}$, furthermore we have 
$\tau(1) = \overline{\sigma(n-1)}=\overline{j}$ and $\tau(2) = \overline{\sigma(n)} = i$.
Then we have 
\begin{align*}
&a^2 t^{\frac{n}{2}}
\sum_{\substack{\tau(1)=\bar{j},\sigma(n)=\bar{i}\\ \overline{\sigma(1)}=\tau(n),\dots,\overline{\sigma(n-2)}=\tau(3),\sigma,\tau\in S_n   }}(-q)^{\ell(\sigma)+\ell(\tau)} c_i c_{\tau(1)}c_{\sigma(n-1)}^{-1}c_{\sigma(n)}^{-1}
\;
\raisebox{-.20in}{
\begin{tikzpicture}
\tikzset{->-/.style=
{decoration={markings,mark=at position #1 with
{\arrow{latex}}},postaction={decorate}}}
\filldraw[draw=white,fill=gray!20] (-0,-0.2) rectangle (1, 1.2);
\draw [line width =1.5pt,decoration={markings, mark=at position 1 with {\arrow{>}}},postaction={decorate}](1,1.2)--(1,-0.2);
\draw [line width =1pt,decoration={markings, mark=at position 0.5 with {\arrow{>}}},postaction={decorate}](0,0.8)--(1,0.8);
\draw [line width =1pt,decoration={markings, mark=at position 0.5 with {\arrow{<}}},postaction={decorate}](0,0.2)--(1,0.2);
\node [right]  at(1,0.8) {$\sigma(n-1)$};
\node [right]  at(1,0.2) {$\tau(2)$};
\end{tikzpicture}
}
\\
=&a^2 t^{\frac{n}{2}}c_i^{2}c_j^{-2}
\sum_{\substack{\tau(1) = \overline{\sigma(n-1)}=\overline{j},\tau(2) = \overline{\sigma(n)} = i\\ \overline{\sigma(1)}=\tau(n),\dots,\overline{\sigma(n-2)}=\tau(3),\sigma,\tau\in S_n   }}(-q)^{l(\sigma)+l(\tau)} 
\;
\raisebox{-.20in}{
\begin{tikzpicture}
\tikzset{->-/.style=
{decoration={markings,mark=at position #1 with
{\arrow{latex}}},postaction={decorate}}}
\filldraw[draw=white,fill=gray!20] (-0,-0.2) rectangle (1, 1.2);
\draw [line width =1.5pt,decoration={markings, mark=at position 1 with {\arrow{>}}},postaction={decorate}](1,1.2)--(1,-0.2);
\draw [line width =1pt,decoration={markings, mark=at position 0.5 with {\arrow{>}}},postaction={decorate}](0,0.8)--(1,0.8);
\draw [line width =1pt,decoration={markings, mark=at position 0.5 with {\arrow{<}}},postaction={decorate}](0,0.2)--(1,0.2);
\node [right]  at(1,0.8) {$j$};
\node [right]  at(1,0.2) {$i$};
\end{tikzpicture}
}
\\
=&a^2 t^{\frac{n}{2}}c_i^{2}c_j^{-2}
(-q)^{2n + 2i -2j -3}[n-2]!q^{\frac{(n-2)(n-3)}{2}}
\;\raisebox{-.20in}{
\begin{tikzpicture}
\tikzset{->-/.style=
{decoration={markings,mark=at position #1 with
{\arrow{latex}}},postaction={decorate}}}
\filldraw[draw=white,fill=gray!20] (-0,-0.2) rectangle (1, 1.2);
\draw [line width =1.5pt,decoration={markings, mark=at position 1 with {\arrow{>}}},postaction={decorate}](1,1.2)--(1,-0.2);
\draw [line width =1pt,decoration={markings, mark=at position 0.5 with {\arrow{>}}},postaction={decorate}](0,0.8)--(1,0.8);
\draw [line width =1pt,decoration={markings, mark=at position 0.5 with {\arrow{<}}},postaction={decorate}](0,0.2)--(1,0.2);
\node [right]  at(1,0.8) {$j$};
\node [right]  at(1,0.2) {$i$};
\end{tikzpicture}
}\\
=&-(-1)^{\frac{n(n-1)}{2}}[n-2]!
\;
\raisebox{-.20in}{
\begin{tikzpicture}
\tikzset{->-/.style=
{decoration={markings,mark=at position #1 with
{\arrow{latex}}},postaction={decorate}}}
\filldraw[draw=white,fill=gray!20] (-0,-0.2) rectangle (1, 1.2);
\draw [line width =1.5pt,decoration={markings, mark=at position 1 with {\arrow{>}}},postaction={decorate}](1,1.2)--(1,-0.2);
\draw [line width =1pt,decoration={markings, mark=at position 0.5 with {\arrow{>}}},postaction={decorate}](0,0.8)--(1,0.8);
\draw [line width =1pt,decoration={markings, mark=at position 0.5 with {\arrow{<}}},postaction={decorate}](0,0.2)--(1,0.2);
\node [right]  at(1,0.8) {$j$};
\node [right]  at(1,0.2) {$i$};
\end{tikzpicture}
}.
\end{align*}
From equation (\ref{eq2.1}), we have 
$\raisebox{-.20in}{
\begin{tikzpicture}
\tikzset{->-/.style=
{decoration={markings,mark=at position #1 with
{\arrow{latex}}},postaction={decorate}}}
\filldraw[draw=white,fill=gray!20] (-0,-0.2) rectangle (1, 1.2);
\draw [line width =1.5pt,decoration={markings, mark=at position 1 with {\arrow{>}}},postaction={decorate}](1,-0.2)--(1,1.2);
\draw [line width =1pt,decoration={markings, mark=at position 0.5 with {\arrow{>}}},postaction={decorate}](0,0.8)--(1,0.8);
\draw [line width =1pt,decoration={markings, mark=at position 0.5 with {\arrow{<}}},postaction={decorate}](0,0.2)--(1,0.2);
\node [right]  at(1,0.8) {$j$};
\node [right]  at(1,0.2) {$i$};
\end{tikzpicture}}
=
\raisebox{-.20in}{
\begin{tikzpicture}
\tikzset{->-/.style=
{decoration={markings,mark=at position #1 with
{\arrow{latex}}},postaction={decorate}}}
\filldraw[draw=white,fill=gray!20] (-0,-0.2) rectangle (1, 1.2);
\draw [line width =1.5pt,decoration={markings, mark=at position 1 with {\arrow{>}}},postaction={decorate}](1,1.2)--(1,-0.2);
\draw [line width =1pt](0.6,0.6)--(1,1);
\draw [line width =1pt](0.6,0.4)--(1,0);
\draw[line width =1pt,decoration={markings, mark=at position 0.5 with {\arrow{<}}},postaction={decorate}] (0,0)--(0.4,0.4);
\draw[line width =1pt,decoration={markings, mark=at position 0.5 with {\arrow{>}}},postaction={decorate}] (0,1)--(0.4,0.6);
\draw[line width =1pt] (0.4,0.6)--(0.6,0.4);
\node [right]  at(1,1) {$i$};
\node [right]  at(1,0) {$j$};
\end{tikzpicture}}=  q^{\frac{1}{n}}\raisebox{-.20in}{
\begin{tikzpicture}
\tikzset{->-/.style=
{decoration={markings,mark=at position #1 with
{\arrow{latex}}},postaction={decorate}}}
\filldraw[draw=white,fill=gray!20] (-0,-0.2) rectangle (1, 1.2);
\draw [line width =1.5pt,decoration={markings, mark=at position 1 with {\arrow{>}}},postaction={decorate}](1,1.2)--(1,-0.2);
\draw [line width =1pt,decoration={markings, mark=at position 0.5 with {\arrow{>}}},postaction={decorate}](0,0.8)--(1,0.8);
\draw [line width =1pt,decoration={markings, mark=at position 0.5 with {\arrow{<}}},postaction={decorate}](0,0.2)--(1,0.2);
\node [right]  at(1,0.8) {$j$};
\node [right]  at(1,0.2) {$i$};
\end{tikzpicture}
}$.

Case 2 when $i=\overline{j}$. We have $\tau(1) =\overline{j} = i =\overline{\sigma(n)}, 
\tau(2) = \overline{\sigma(n-1)}$, futhermore $\tau(k) = \overline{\sigma(n+1-k)}, 1\leq k\leq n$.
Then we have $\ell(\tau) = \ell(\sigma)$. 
Thus 
\begin{align*}
&a^2 t^{\frac{n}{2}}
\sum_{\substack{\tau(1)=\bar{j},\sigma(n)=\bar{i}\\ \overline{\sigma(1)}=\tau(n),\dots,\overline{\sigma(n-2)}=\tau(3),\sigma,\tau\in S_n   }}(-q)^{\ell(\sigma)+\ell(\tau)} c_i c_{\tau(1)}c_{\sigma(n-1)}^{-1}c_{\sigma(n)}^{-1}
\;
\raisebox{-.20in}{
\begin{tikzpicture}
\tikzset{->-/.style=
{decoration={markings,mark=at position #1 with
{\arrow{latex}}},postaction={decorate}}}
\filldraw[draw=white,fill=gray!20] (-0,-0.2) rectangle (1, 1.2);
\draw [line width =1.5pt,decoration={markings, mark=at position 1 with {\arrow{>}}},postaction={decorate}](1,1.2)--(1,-0.2);
\draw [line width =1pt,decoration={markings, mark=at position 0.5 with {\arrow{>}}},postaction={decorate}](0,0.8)--(1,0.8);
\draw [line width =1pt,decoration={markings, mark=at position 0.5 with {\arrow{<}}},postaction={decorate}](0,0.2)--(1,0.2);
\node [right]  at(1,0.8) {$\sigma(n-1)$};
\node [right]  at(1,0.2) {$\tau(2)$};
\end{tikzpicture}
}
\\
=&a^2 t^{\frac{n}{2}} t^{-1} c_i^{3}
\sum_{\sigma\in S_n,\sigma(n) = \bar{i}}(-q)^{2\ell(\sigma)}c_{\sigma(n-1)}^{-1}
\;
\raisebox{-.20in}{
\begin{tikzpicture}
\tikzset{->-/.style=
{decoration={markings,mark=at position #1 with
{\arrow{latex}}},postaction={decorate}}}
\filldraw[draw=white,fill=gray!20] (-0,-0.2) rectangle (1, 1.2);
\draw [line width =1.5pt,decoration={markings, mark=at position 1 with {\arrow{>}}},postaction={decorate}](1,1.2)--(1,-0.2);
\draw [line width =1pt,decoration={markings, mark=at position 0.5 with {\arrow{>}}},postaction={decorate}](0,0.8)--(1,0.8);
\draw [line width =1pt,decoration={markings, mark=at position 0.5 with {\arrow{<}}},postaction={decorate}](0,0.2)--(1,0.2);
\node [right]  at(1,0.8) {$\sigma(n-1)$};
\node [right]  at(1,0.2) {$\overline{\sigma(n-1)}$};
\end{tikzpicture}
}
\\
=&a^2 t^{\frac{n}{2}} t^{-1} c_i^{3}\sum_{1\leq k\leq n,k\neq \bar{i}}\left (
\sum_{\sigma\in S_n,\sigma(n-1) = k,\sigma(n) = \bar{i}}(-q)^{2\ell(\sigma)}c_{k}^{-1}
\;
\raisebox{-.20in}{
\begin{tikzpicture}
\tikzset{->-/.style=
{decoration={markings,mark=at position #1 with
{\arrow{latex}}},postaction={decorate}}}
\filldraw[draw=white,fill=gray!20] (-0,-0.2) rectangle (1, 1.2);
\draw [line width =1.5pt,decoration={markings, mark=at position 1 with {\arrow{>}}},postaction={decorate}](1,1.2)--(1,-0.2);
\draw [line width =1pt,decoration={markings, mark=at position 0.5 with {\arrow{>}}},postaction={decorate}](0,0.8)--(1,0.8);
\draw [line width =1pt,decoration={markings, mark=at position 0.5 with {\arrow{<}}},postaction={decorate}](0,0.2)--(1,0.2);
\node [right]  at(1,0.8) {$k$};
\node [right]  at(1,0.2) {$\bar{k}$};
\end{tikzpicture}
}
 \right )\\
=&a^2 t^{\frac{n}{2}} t^{-1} c_i^{3}  \sum_{1\leq k\leq n,k\neq \bar{i}} c_{k}^{-1}
\;
\raisebox{-.20in}{
\begin{tikzpicture}
\tikzset{->-/.style=
{decoration={markings,mark=at position #1 with
{\arrow{latex}}},postaction={decorate}}}
\filldraw[draw=white,fill=gray!20] (-0,-0.2) rectangle (1, 1.2);
\draw [line width =1.5pt,decoration={markings, mark=at position 1 with {\arrow{>}}},postaction={decorate}](1,1.2)--(1,-0.2);
\draw [line width =1pt,decoration={markings, mark=at position 0.5 with {\arrow{>}}},postaction={decorate}](0,0.8)--(1,0.8);
\draw [line width =1pt,decoration={markings, mark=at position 0.5 with {\arrow{<}}},postaction={decorate}](0,0.2)--(1,0.2);
\node [right]  at(1,0.8) {$k$};
\node [right]  at(1,0.2) {$\bar{k}$};
\end{tikzpicture}
}
 \left (
\sum_{\sigma\in S_n,\sigma(n-1) = k,\sigma(n) = \bar{i}}(-q)^{2\ell(\sigma)} \right )  \\
=&a^2 t^{\frac{n}{2}} t^{-1} c_i^{3}  \sum_{1\leq k\leq n,k\neq \bar{i}} c_{k}^{-1} [n-2]! q^{\frac{(n-2)(n-3)}{2}} q^{4n -2\bar{i} - 2} q^{-2k + 2\delta_{k>\bar{i}}}
\;
\raisebox{-.20in}{
\begin{tikzpicture}
\tikzset{->-/.style=
{decoration={markings,mark=at position #1 with
{\arrow{latex}}},postaction={decorate}}}
\filldraw[draw=white,fill=gray!20] (-0,-0.2) rectangle (1, 1.2);
\draw [line width =1.5pt,decoration={markings, mark=at position 1 with {\arrow{>}}},postaction={decorate}](1,1.2)--(1,-0.2);
\draw [line width =1pt,decoration={markings, mark=at position 0.5 with {\arrow{>}}},postaction={decorate}](0,0.8)--(1,0.8);
\draw [line width =1pt,decoration={markings, mark=at position 0.5 with {\arrow{<}}},postaction={decorate}](0,0.2)--(1,0.2);
\node [right]  at(1,0.8) {$k$};
\node [right]  at(1,0.2) {$\bar{k}$};
\end{tikzpicture}
}
  \\
=&a^2 t^{\frac{n}{2}} t^{-2} c_i^{3} [n-2]!  q^{\frac{(n-2)(n-3)}{2}} q^{4n -2\bar{i} - 2}\sum_{1\leq k\leq n,k\neq \bar{i}} c_{\bar{k}}  q^{-2k + 2\delta_{k>\bar{i}}}
\;
\raisebox{-.20in}{
\begin{tikzpicture}
\tikzset{->-/.style=
{decoration={markings,mark=at position #1 with
{\arrow{latex}}},postaction={decorate}}}
\filldraw[draw=white,fill=gray!20] (-0,-0.2) rectangle (1, 1.2);
\draw [line width =1.5pt,decoration={markings, mark=at position 1 with {\arrow{>}}},postaction={decorate}](1,1.2)--(1,-0.2);
\draw [line width =1pt,decoration={markings, mark=at position 0.5 with {\arrow{>}}},postaction={decorate}](0,0.8)--(1,0.8);
\draw [line width =1pt,decoration={markings, mark=at position 0.5 with {\arrow{<}}},postaction={decorate}](0,0.2)--(1,0.2);
\node [right]  at(1,0.8) {$k$};
\node [right]  at(1,0.2) {$\bar{k}$};
\end{tikzpicture}
}
   \\
=&a^2 t^{\frac{n}{2}} t^{-2} c_i^{3} [n-2]!  q^{\frac{(n-2)(n-3)}{2}} q^{4n -2\bar{i} - 2}
q^{-1-\frac{1}{n}}
\sum_{1\leq k\leq n,k\neq \bar{i}} c_{\bar{k}}^{-1}  q^{ 2\delta_{k>\bar{i}}}
\;
\raisebox{-.20in}{
\begin{tikzpicture}
\tikzset{->-/.style=
{decoration={markings,mark=at position #1 with
{\arrow{latex}}},postaction={decorate}}}
\filldraw[draw=white,fill=gray!20] (-0,-0.2) rectangle (1, 1.2);
\draw [line width =1.5pt,decoration={markings, mark=at position 1 with {\arrow{>}}},postaction={decorate}](1,1.2)--(1,-0.2);
\draw [line width =1pt,decoration={markings, mark=at position 0.5 with {\arrow{>}}},postaction={decorate}](0,0.8)--(1,0.8);
\draw [line width =1pt,decoration={markings, mark=at position 0.5 with {\arrow{<}}},postaction={decorate}](0,0.2)--(1,0.2);
\node [right]  at(1,0.8) {$k$};
\node [right]  at(1,0.2) {$\bar{k}$};
\end{tikzpicture}
}
  \\
=&(-1)^{\frac{n(n-1)}{2}}c_i [n-2]! q^{-1}
\sum_{1\leq k\leq n,k\neq \bar{i}} c_{\bar{k}}^{-1}  q^{ 2\delta_{k>\bar{i}}}
\;\raisebox{-.20in}{
\begin{tikzpicture}
\tikzset{->-/.style=
{decoration={markings,mark=at position #1 with
{\arrow{latex}}},postaction={decorate}}}
\filldraw[draw=white,fill=gray!20] (-0,-0.2) rectangle (1, 1.2);
\draw [line width =1.5pt,decoration={markings, mark=at position 1 with {\arrow{>}}},postaction={decorate}](1,1.2)--(1,-0.2);
\draw [line width =1pt,decoration={markings, mark=at position 0.5 with {\arrow{>}}},postaction={decorate}](0,0.8)--(1,0.8);
\draw [line width =1pt,decoration={markings, mark=at position 0.5 with {\arrow{<}}},postaction={decorate}](0,0.2)--(1,0.2);
\node [right]  at(1,0.8) {$k$};
\node [right]  at(1,0.2) {$\bar{k}$};
\end{tikzpicture}
}   .\\
\end{align*}
From equation (\ref{eq2.1}) and relation (\ref{wzh.seven}), we have 
$$\raisebox{-.20in}{
\begin{tikzpicture}
\tikzset{->-/.style=
{decoration={markings,mark=at position #1 with
{\arrow{latex}}},postaction={decorate}}}
\filldraw[draw=white,fill=gray!20] (-0,-0.2) rectangle (1, 1.2);
\draw [line width =1.5pt,decoration={markings, mark=at position 1 with {\arrow{>}}},postaction={decorate}](1,-0.2)--(1,1.2);
\draw [line width =1pt,decoration={markings, mark=at position 0.5 with {\arrow{>}}},postaction={decorate}](0,0.8)--(1,0.8);
\draw [line width =1pt,decoration={markings, mark=at position 0.5 with {\arrow{<}}},postaction={decorate}](0,0.2)--(1,0.2);
\node [right]  at(1,0.8) {$j$};
\node [right]  at(1,0.2) {$i$};
\end{tikzpicture}}
=
\raisebox{-.20in}{
\begin{tikzpicture}
\tikzset{->-/.style=
{decoration={markings,mark=at position #1 with
{\arrow{latex}}},postaction={decorate}}}
\filldraw[draw=white,fill=gray!20] (-0,-0.2) rectangle (1, 1.2);
\draw [line width =1.5pt,decoration={markings, mark=at position 1 with {\arrow{>}}},postaction={decorate}](1,1.2)--(1,-0.2);
\draw [line width =1pt](0.6,0.6)--(1,1);
\draw [line width =1pt](0.6,0.4)--(1,0);
\draw[line width =1pt,decoration={markings, mark=at position 0.5 with {\arrow{<}}},postaction={decorate}] (0,0)--(0.4,0.4);
\draw[line width =1pt,decoration={markings, mark=at position 0.5 with {\arrow{>}}},postaction={decorate}] (0,1)--(0.4,0.6);
\draw[line width =1pt] (0.4,0.6)--(0.6,0.4);
\node [right]  at(1,1) {$i$};
\node [right]  at(1,0) {$j$};
\end{tikzpicture}}=  q^{\frac{1-n}{n}}\left(\raisebox{-.20in}{
\begin{tikzpicture}
\tikzset{->-/.style=
{decoration={markings,mark=at position #1 with
{\arrow{latex}}},postaction={decorate}}}
\filldraw[draw=white,fill=gray!20] (-0,-0.2) rectangle (1, 1.2);
\draw [line width =1.5pt,decoration={markings, mark=at position 1 with {\arrow{>}}},postaction={decorate}](1,1.2)--(1,-0.2);
\draw [line width =1pt,decoration={markings, mark=at position 0.5 with {\arrow{>}}},postaction={decorate}](0,0.8)--(1,0.8);
\draw [line width =1pt,decoration={markings, mark=at position 0.5 with {\arrow{<}}},postaction={decorate}](0,0.2)--(1,0.2);
\node [right]  at(1,0.8) {$j$};
\node [right]  at(1,0.2) {$i$};
\end{tikzpicture}
}+c_i (1-q^2)\sum_{j<k\leq n}
c_{\bar{k}}^{-1}
\raisebox{-.20in}{
\begin{tikzpicture}
\tikzset{->-/.style=
{decoration={markings,mark=at position #1 with
{\arrow{latex}}},postaction={decorate}}}
\filldraw[draw=white,fill=gray!20] (-0,-0.2) rectangle (1, 1.2);
\draw [line width =1.5pt,decoration={markings, mark=at position 1 with {\arrow{>}}},postaction={decorate}](1,1.2)--(1,-0.2);
\draw [line width =1pt,decoration={markings, mark=at position 0.5 with {\arrow{>}}},postaction={decorate}](0,0.8)--(1,0.8);
\draw [line width =1pt,decoration={markings, mark=at position 0.5 with {\arrow{<}}},postaction={decorate}](0,0.2)--(1,0.2);
\node [right]  at(1,0.8) {$k$};
\node [right]  at(1,0.2) {$\bar{k}$};
\end{tikzpicture}
}\right).$$

\end{proof}

\subsection{Commutative algebra structure for $S_n(M,\mathcal{N},1)$} In this subsection, we will give a commutative algebra structure for $S_n(M,\mathcal{N},1)$.
To do so, first we define the product of two stated $n$-webs as the disjoint union, that is, we first isotope two stated $n$-webs such that they have no intersection, then take their union as the product.  To prove this product is well defined, we have to show the product is independent of how we take the union of these two webs. 

\begin{corollary}\label{cccc3.2}
For any marked 3-manifold $\MN$, we have $S_n(M,\mathcal{N},1)$ is a commutative algebra under the above defined multiplication.
\end{corollary}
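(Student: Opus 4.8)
The plan is to specialize every defining relation to $v=1$, equivalently $q=1$, and to observe that the obstructions to commuting two webs past one another all vanish. First I would record the relevant simplifications: at $q=1$ one has $q^{\pm 1/n}=1$, $q-q^{-1}=0$, and $1-q^2=0$. Substituting into the crossing relation \eqref{w.cross} gives that a positive crossing equals a negative crossing, so any two strands may be passed through each other freely in the interior of $M$; there is no longer any distinction between over- and under-crossings. This is the mechanism that will let me move one web across another.

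Next I would treat the endpoints lying on the markings, where the height order along a component of $\mathcal{N}$ is what must be shown irrelevant. For two endpoints coming from equally oriented arcs I would invoke the height-exchange relation \eqref{wzh.eight}: at $q=1$ the term $\delta_{j<i}(q-q^{-1})(\cdots)$ drops out and the prefactors $q^{-1/n}$ and $q^{\delta_{i,j}}$ are both $1$, so the crossing equals the uncrossed configuration with the states transported, a single term. For two endpoints coming from oppositely oriented arcs I would use Proposition \ref{prop3.1}: its correction term carries the factor $c_i(1-q^2)$, which vanishes at $q=1$, while $q^{(1-n)/n}=q^{1/n}=1$, so again the two heights may be interchanged with no correction and no summation. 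Between them these two relations cover every way that two endpoints can interleave along a single marking.

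With these facts in hand, well-definedness and commutativity follow from the same geometric argument. Given stated webs $l_1,l_2$, any two ways of isotoping them to be disjoint are joined by an ambient isotopy which I may put in general position, so that along it $l_1$ meets $l_2$ only in finitely many transverse interior crossings and finitely many transverse slides of endpoints past one another on the markings. Each such elementary event changes the resulting class by one application of the crossing-change relation \eqref{w.cross} or of one of the two height-exchange relations, each of which is an equality at $q=1$; hence the class of $l_1\cup l_2$ is independent of the separating isotopy, and the product is well defined. Commutativity is the same argument applied to the particular isotopy carrying $l_1$ from one side of $l_2$ to the other, which is again a composition of interior crossing changes and endpoint slides, all free at $q=1$, giving $l_1\cdot l_2=l_2\cdot l_1$. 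Associativity and the fact that the empty web is a unit are immediate from the definition of the product as disjoint union.

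The step I expect to be the genuine obstacle is the general-position reduction in the well-definedness argument: one must verify that a generic isotopy of the pair really does decompose into exactly the elementary moves controlled by \eqref{w.cross}, \eqref{wzh.eight}, and Proposition \ref{prop3.1}, and in particular that the behavior near $\mathcal{N}$ is handled correctly, since there the framing must remain equal to the velocity vector of $\mathcal{N}$ throughout and the endpoints may only slide along, never leave, the markings. Once this bookkeeping is in place, the vanishing of the correction coefficients at $q=1$ does all the remaining work.
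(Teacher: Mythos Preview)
Your proposal is correct and follows essentially the same approach as the paper: specialize \eqref{w.cross} and \eqref{wzh.eight} to $q=1$ to handle interior crossings and same-orientation height exchanges, then invoke Proposition~\ref{prop3.1} with $1-q^2=0$ for the opposite-orientation case. The paper's own proof is the terse version of exactly this argument; your additional care about the general-position reduction is reasonable but not something the paper spells out.
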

\begin{proof}
Because of relations \eqref{w.cross} and \eqref{wzh.eight}, it suffices to show  for any two states $i,j$, we have 
$$\raisebox{-.20in}{
\begin{tikzpicture}
\tikzset{->-/.style=
{decoration={markings,mark=at position #1 with
{\arrow{latex}}},postaction={decorate}}}
\filldraw[draw=white,fill=gray!20] (-0,-0.2) rectangle (1, 1.2);
\draw [line width =1.5pt,decoration={markings, mark=at position 1 with {\arrow{>}}},postaction={decorate}](1,-0.2)--(1,1.2);
\draw [line width =1pt,decoration={markings, mark=at position 0.5 with {\arrow{>}}},postaction={decorate}](0,0.8)--(1,0.8);
\draw [line width =1pt,decoration={markings, mark=at position 0.5 with {\arrow{<}}},postaction={decorate}](0,0.2)--(1,0.2);
\node [right]  at(1,0.8) {$j$};
\node [right]  at(1,0.2) {$i$};
\end{tikzpicture}
}= \raisebox{-.20in}{
\begin{tikzpicture}
\tikzset{->-/.style=
{decoration={markings,mark=at position #1 with
{\arrow{latex}}},postaction={decorate}}}
\filldraw[draw=white,fill=gray!20] (-0,-0.2) rectangle (1, 1.2);
\draw [line width =1.5pt,decoration={markings, mark=at position 1 with {\arrow{>}}},postaction={decorate}](1,1.2)--(1,-0.2);
\draw [line width =1pt,decoration={markings, mark=at position 0.5 with {\arrow{>}}},postaction={decorate}](0,0.8)--(1,0.8);
\draw [line width =1pt,decoration={markings, mark=at position 0.5 with {\arrow{<}}},postaction={decorate}](0,0.2)--(1,0.2);
\node [right]  at(1,0.8) {$j$};
\node [right]  at(1,0.2) {$i$};
\end{tikzpicture}
},\;
\raisebox{-.20in}{
\begin{tikzpicture}
\tikzset{->-/.style=
{decoration={markings,mark=at position #1 with
{\arrow{latex}}},postaction={decorate}}}
\filldraw[draw=white,fill=gray!20] (-0,-0.2) rectangle (1, 1.2);
\draw [line width =1.5pt,decoration={markings, mark=at position 1 with {\arrow{>}}},postaction={decorate}](1,-0.2)--(1,1.2);
\draw [line width =1pt,decoration={markings, mark=at position 0.5 with {\arrow{<}}},postaction={decorate}](0,0.8)--(1,0.8);
\draw [line width =1pt,decoration={markings, mark=at position 0.5 with {\arrow{>}}},postaction={decorate}](0,0.2)--(1,0.2);
\node [right]  at(1,0.8) {$j$};
\node [right]  at(1,0.2) {$i$};
\end{tikzpicture}
}=\raisebox{-.20in}{
\begin{tikzpicture}
\tikzset{->-/.style=
{decoration={markings,mark=at position #1 with
{\arrow{latex}}},postaction={decorate}}}
\filldraw[draw=white,fill=gray!20] (-0,-0.2) rectangle (1, 1.2);
\draw [line width =1.5pt,decoration={markings, mark=at position 1 with {\arrow{>}}},postaction={decorate}](1,1.2)--(1,-0.2);
\draw [line width =1pt,decoration={markings, mark=at position 0.5 with {\arrow{<}}},postaction={decorate}](0,0.8)--(1,0.8);
\draw [line width =1pt,decoration={markings, mark=at position 0.5 with {\arrow{>}}},postaction={decorate}](0,0.2)--(1,0.2);
\node [right]  at(1,0.8) {$j$};
\node [right]  at(1,0.2) {$i$};
\end{tikzpicture}
}$$
 when $v=1$, which can be eaily derived from Proposition \ref{prop3.1}.

\end{proof}

\subsection{Relative spin structure and character variety}\label{sss3.3}
Let $(M,\mathcal{N})$ be any marked 3-manifold, and $\zeta: UM\rightarrow M$ be the unit tangent bundle. We know the fiber of this bundle is $SO(3)$, whose fundamental group
is $\mathbb{Z}_{2}$. For any point $P\in M$, we use $\vartheta_P$ to denote the nontrivial element in the fundamental group of $\zeta^{-1}(P)$. We have $\vartheta_P$ is homotopic to $\vartheta_Q$ for any two points $P,Q$ in a same component of $M$. For a component $Y$ of $M$, we use $\vartheta_Y$ to denote this homotopy type. When $M$ is connected, we will use $\vartheta$ to denote this unique homotopy type.

For any component $e\in \N$, $e$ has  a unique lift $\tilde{e}$ in $UM$. For any point $x\in e$, let $u_x$ be the unit velocity vector at point $x$, let $w_x$ be the unit tangent vector at $x$ such that $w_x$
is orthogonal to $\partial M$ and pointing inside of $M$. Then the orientation of $M$ determines the second unit tangent vector $v_x$ such that $(u_x, v_x, w_x)$ is the orientation of $M$. Obviously 
$\tilde{e} =\{(x,u_x,v_x,w_x)\mid x\in e\}$ is a smooth path in $UM$, which is diffeomorphic to $(0,1)$. Let $\tilde{\N} =\cup_{e}\tilde{e}$ where the union takes over all components $e$ of $\N$. Note that $\tilde{\N}
=\emptyset$ when $\N=\emptyset$.

From now on, for a topological space $X$, we will use $Com(X)$ (respectively $PCom(X)$) to denote the set of components of $X$ (the set of path connected components of $X$).

\begin{definition}
A {\bf relative spin structure} of a marked 3-manifold $(M,\mathcal{N})$ is defined to be  a group homomorphism $h:H_1(UM,\tilde{\N})\rightarrow \mathbb{Z}_2$ such that $h(\vartheta_Y)= 1$ for $Y\in Com(M)$.
\end{definition}

Note that when $\N=\emptyset$, $h$ is just the usual spin structure.

Suppose $(M,\mathcal{N})$ is the disjoint union of $(M_1,\mathcal{N}_1)$ and $(M_2,\mathcal{N}_2)$. Then we have 
$H_1(UM,\tilde{\N}) = H_1(UM_1,\tilde{\N_1})\oplus H_1(UM_2,\tilde{\N_2})$. For each $i=1,2$, let $h_i$
be a relative spin structure for $(M_i,\N_i)$. Then $(h_1,h_2): H_1(UM,\tilde{\N})\rightarrow \mathbb{Z}_2$, defined by $(h_1,h_2)(x_1,x_2) = h_1(x_1)+h_2(x_2)$, is a relative spin structure for $(M,\mathcal{N})$. Clearly every relative spin structure of $(M,\mathcal{N})$ is of  this form.

\begin{rem}\label{rrrmmm}
Let $X$ be any  path  connected topological space, and $P$ be a set of finite points in $X$. We suppose 
$P = \{p_0,p_1,\dots,p_{m-1}\}$ where $m$ is a positive integer. For each $1\leq i\leq m-1$, let $\alpha_{i}$ be a path connecting $p_{0}$ and $p_{i}$. Then 
$H_1(X,P) = H_1(P)\oplus \mathbb{Z}([\alpha_1])\oplus\dots\oplus \mathbb{Z}([\alpha_{n-1}])$.

Suppose $M$ is connected.
When $\N$ has only one component, we have $H_1(UM,\tilde{\N}) = H_1(UM)$. Thus in this case the relative spin structure for $(M,\mathcal{N})$ is just the usual spin structure for $M$. When $\sharp \N>1$, suppose the set of components of $\N$ is
$ \{e_0,e_1,\dots, e_{m-1}\}$. For each $1\leq i\leq m-1$, let $\alpha_{i}$ be a path connecting $\widetilde{e_0}$ and $\widetilde{e_i}$. Then we have 
$H_1(UM,\tilde{\N}) = H_1(UM)\oplus \mathbb{Z}([\alpha_1])\oplus\dots\oplus \mathbb{Z}([\alpha_{m-1}])$.
For any spin structure $h$ for $M$, we can extend $h$ to a relative spin structure for $(M,\mathcal{N})$ by defining
$h([\alpha_i]) = r_i\in \mathbb{Z}_{2}, 1\leq i\leq m-1,$ where $r_i,1\leq i\leq m-1,$ are $m-1$ arbitrary elements in $\mathbb{Z}_2$. Reversely, for any relative spin structure, we can restrict $h$ to $H_1(UM)$ to obtain a spin structure for $M$.

Suppose $\N'$ is obtained from $\N$ ($\N\neq \emptyset$) by adding one extra marking $e$ such that $cl(e)\cap cl(\N) = \emptyset$. Let $\alpha$ be a path connecting $\tilde{\N}$ and $\tilde{e}$ such that
$\alpha(0)\in \tilde{\N}$ and $\alpha(1)\in\tilde{e}$. Then $H_1(UM,\tilde{\N'}) = H_1(UM,\tilde{\N})\oplus
\mathbb{Z}([\alpha])$.
Any relative spin structure $h$ for $(M,\mathcal{N})$ can be extended to a relative spin structure for $(M,\mathcal{N}')$ by defining $h([\alpha]) = r$ where $r$ is an arbitrary element in $\mathbb{Z}_2$.
Reversely any relative spin structure $h$  for $(M,\mathcal{N}')$ can be restricted to $H_1(UM,\tilde{\N})$ to obtain a relative spin structure for $(M,\mathcal{N})$.
\end{rem}

For a path connected topological space $X$, we use
  $\pi_1(X)$ to denote the fundamental group for $X$. For  $[\alpha],[\beta] \in \pi_1(X)$, $[\alpha][\beta] = [\alpha*\beta]$ where $\alpha*\beta$ is obtained by first going through $\beta$, then going through $\alpha$. Note that here $\alpha*\beta$ is different with conventional definition.

\begin{definition}\label{df3.4}
For any path connected toplogical space $X$,
define 
$$\Gamma_n(X) = \mathbb{C}[[\alpha]_{i,j}\mid [\alpha]\in \pi_1(X),1\leq i,j\leq n]/(Q_{[\alpha]}Q_{[\beta]}
= Q_{[\alpha *\beta]}, det(Q_{[\alpha]}) = 1, Q_{[o]}=I)$$
where $[\alpha],[\beta]$ go through all elements in $\pi_1(X)$, $[o]$ is the trivial loop in $\pi_1(X)$
, $Q_{[\eta]} = ([\eta]_{i,j})_{1\leq i,j\leq n}$ for any element $[\eta]\in \pi_1(X)$.

Note that $\pi_1(X)$  has an action on $\Gamma_n(X)$, defined by 
$[\alpha]([\beta]_{i,j}) = [\alpha*\beta*\alpha^{-1}]_{i,j}$ for any $[\alpha],[\beta],1\leq i,j\leq n$.
We use $G_n(X)$ to denote the subalgebra of $\Gamma_n(X)$ fixed by this action.
\end{definition}

Note that Trace($Q_{[\alpha]})\in G_n(X)$ for any $[\alpha]\in \pi_1(X)$, and
 $G_n(X)$ is generated by Trace$(Q_{[\alpha]})$, $[\alpha]\in \pi_1(X)$ as an algebra \cite{S2001SLn}. 

\begin{rem}
We can generalize Definition \ref{df3.4} to general topological space. Let $X$ be a topological space. Suppose $PCom(X) = \{X_1,\dots,X_m\}$, then define 
$$\Gamma_n(X) = \Gamma_n(X_1)\otimes \dots \otimes \Gamma_n(X_m),\;G_n(X) = G_n(X_1)\otimes \dots \otimes G_n(X_m).$$
But $\Gamma_n(X), G_n(X)$ are only well-defined up to isomorphism, since different  order of $X_i$ give different algebras.
\end{rem}

\begin{definition}[\cite{CL2022stated}]\label{bf1}
Let $X$ be a  topological space and $\{E_j\}_{j\in J}$ be disjoint contractible subspaces of $X$. The fundamental groupoid $\pi_1(X, \cup_{j\in J}E_j)$  is the groupoid (i.e. a category with invertible morphisms) whose objects are $\{E_j\}_{j\in J}$  and whose morphisms are the homotopy classes of oriented paths in $X$ with end points  in $\cup_{j\in J}E_j$. A morphism of groupoids is a functor of the corresponding categories. We can regard the group as the groupoid consisting of only one object and all group elements being all morphisms.
\end{definition}

For a marked 3-manifold $\MN$, we use $\pi_1^{Mor}\MN$ to denote the set of morphisms in $\pi_1\MN$.

Let $\mathcal{A},\mathcal{B}$ be two categories. We try to define a new category   $\mathcal{A}\cup \mathcal{B}$. The objects of $\mathcal{A}\cup \mathcal{B}$ is the union of objects in $\mathcal{A}$ and objects in $\mathcal{B}$.
Let $U,V$ be any two objects in $\mathcal{A}\cup \mathcal{B}$. If $U,V$ both belong to $\mathcal{A}$
(respectively $\mathcal{B}$), then we define Hom${}_{\mathcal{A}\cup \mathcal{B}}(U,V)$ to be
Hom${}_{\mathcal{A}}(U,V)$ (respectively Hom${}_{\mathcal{B}}(U,V)$). Otherwise we define Hom${}_{\mathcal{A}\cup \mathcal{B}}(U,V) = \emptyset$.

Let $X$ be a  topological space. Suppose $PCom(X) = \{X_1,\dots,X_m\}$. For each $1\leq t\leq m$, let $\{E_j\}_{j\in J_{t}}$ be disjoint contractible subspaces of $X_t$.
Obviously we have 
$$\pi_1(X,\cup_{1\leq t\leq m}(\cup_{j\in J}E_j))= \pi_1(X_1, \cup_{j\in J_1}E_j)\cup \dots\cup
\pi_1(X_m, \cup_{j\in J_m}E_j) .$$

\begin{definition}
For a marked 3-manifold $(M,\mathcal{N})$ with every component of $M$ containing at least one marking,
define $$\chi_n(M,\mathcal{N}) = Hom(\pi_1(M,\mathcal{N}), SL(n,\mathbb{C})),$$ and 
$$\tilde{\chi}_n(M,\mathcal{N}) = \{\tilde{\rho}\in Hom(\pi_1(UM,\tilde{\N}), SL(n,\mathbb{C}))\mid\tilde{\rho}
(\vartheta_Y) = d_n I,\;\text{for all }Y\in Com(M)\}$$ where $I$ is the identity matrix.
\end{definition}

Suppose $(M,\mathcal{N})$ is the disjoint union of $(M_1,\mathcal{N}_1)$ and $(M_2,\mathcal{N}_2)$, then we have 
$$\chi_n(M,\mathcal{N})\simeq \chi_n(M_1,\mathcal{N}_1)\times \chi_n(M_2,\mathcal{N}_2),\;\tilde{\chi}_n(M,\mathcal{N})
\simeq \tilde{\chi}_n(M_1,\mathcal{N}_1)\times \tilde{\chi}_n(M_2,\mathcal{N}_2).$$
From Lemma 8.1 in \cite{CL2022stated}, if $M$ is connected we have 
$$\chi_n(M,\mathcal{N})\simeq Hom(\pi_1(M), SL(n,\mathbb{C}))\times SL(n,\mathbb{C})^{\sharp \N-1}.$$
 Then $\chi_n(M,\mathcal{N})$  is an affine algebraic set, whose 
coordinate ring is denoted as $R_n(M,\mathcal{N})$.

\begin{definition}\label{rrr}
When $M$ is connected and $\N$ is empty, we define $\chi_n(M,\mathcal{N})$ to be the 
$SL(n,\mathbb{C})$-character variety of $M$.
That is $$\chi_n(M,\mathcal{N})=Hom(\pi_1(M), SL(n, \mathbb{C}))/\simeq$$ where $\rho\simeq \rho'\in
Hom(\pi_1(M), SL(n, \mathbb{C}))$ if and only if 
$$\text{Trace}\rho([\alpha])=
\text{Trace}\rho'([\alpha])$$ for all $[\alpha] \in Hom(\pi_1(M), SL(n, \mathbb{C}))$.

Similarly we define 
$$\tilde{\chi}_n(M,\emptyset)=\{\tilde{\rho}\in Hom(\pi_1(UM), SL(n, \mathbb{C}))\mid \tilde{\rho}(\vartheta)
= d_n I\}/\simeq$$
where the definition for $\simeq$ is the same as above (that is two elements are considered the same if and only if they have the same Trace).
\end{definition}

\begin{rem}\label{rem01}
From \cite{S2001SLn}, we know $\chi_n(M,\emptyset)$ is an affine algebraic set. We also denote its  coordinate ring 
as $R_n(M,\emptyset)$. 
 There is a
surjective algebra homomorphism $\cY: G_n(M)\rightarrow R_n(M,\emptyset)$ defined by
$$\cY(\text{Trace}(Q_{[\alpha]}))(\rho) = \text{Trace}(\rho([\alpha])) \;\text{where}
\; [\alpha]\in \pi_1(M), \rho\in \chi_n(M,\emptyset),$$
and Ker$\cY = \sqrt{0}$.
\end{rem}

We can simply generalize  definitions for $\chi_n(M,\mathcal{N}),\tilde{\chi}_n(M,\mathcal{N}), R_n(M,\mathcal{N})$
to all marked 3-manifolds by taking product (or tensor product) for disjoint union. 

\begin{proposition} \label{prop3.6}
For any marked 3-manifold  $(M,\mathcal{N})$, we have $\chi_n(M,\mathcal{N})\simeq \tilde{\chi}_n(M,\mathcal{N})$.
\end{proposition}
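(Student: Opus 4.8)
The plan is to realize the isomorphism explicitly, by a sign twist governed by a relative spin structure that converts the condition $\tilde\rho(\vartheta_Y)=d_nI$ into the condition that $\tilde\rho$ descends along the bundle projection $\zeta\colon UM\to M$. Since both $\chi_n(-)$ and $\tilde\chi_n(-)$ are multiplicative under disjoint union (as recorded just before the statement), I would first reduce to connected $M$, and then split into the two cases $\N\neq\emptyset$ and $\N=\emptyset$.

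The key topological input is the structure of the fibration $SO(3)\hookrightarrow UM\xrightarrow{\zeta}M$, whose fiber satisfies $\pi_1(SO(3))=\mathbb{Z}_2=\langle\vartheta\rangle$. Its long exact sequence, together with the fact that $\zeta$ carries each lift $\tilde e$ diffeomorphically onto $e$ (hence induces a bijection on the objects of the two groupoids), shows that $\zeta$ induces a surjective functor $\zeta_*\colon\pi_1(UM,\tilde\N)\to\pi_1(M,\N)$ whose kernel is generated by the central class $\vartheta$; equivalently, every oriented $3$-manifold is parallelizable, so $UM\cong M\times SO(3)$ and $\vartheta$ generates the central $\mathbb{Z}_2$ factor. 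I would then fix a relative spin structure $h\colon H_1(UM,\tilde\N)\to\mathbb{Z}_2$ with $h(\vartheta)=1$, which exists because an oriented $3$-manifold is spin and, by Remark \ref{rrrmmm}, any spin structure extends to a relative one.

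With these in hand I would define mutually inverse maps. For $\rho\in\chi_n(M,\N)$ set $\Phi(\rho)(\tilde\gamma)=d_n^{\,h(\tilde\gamma)}\,\rho(\zeta_*\tilde\gamma)$; for $\tilde\rho\in\tilde\chi_n(M,\N)$ and a morphism $\gamma$ of $\pi_1(M,\N)$ choose a lift $\tilde\gamma$ and set $\Psi(\tilde\rho)(\gamma)=d_n^{\,h(\tilde\gamma)}\,\tilde\rho(\tilde\gamma)$. One then checks that $\Psi$ is independent of the lift, because two lifts differ by $\vartheta^{\epsilon}$ and the twist changes by $d_n^{\,h(\vartheta)\epsilon}\tilde\rho(\vartheta)^{\epsilon}=d_n^{\,2\epsilon}I=I$; that both maps are functors since $d_n$ is a central scalar; that both land in $SL(n,\mathbb{C})$ because $\det\bigl(d_n^{\,h}A\bigr)=d_n^{\,nh}\det A=(-1)^{n(n-1)h}=1$, as $n(n-1)$ is even; and that $\Phi(\rho)(\vartheta)=d_n^{\,1}I=d_nI$ so $\Phi(\rho)\in\tilde\chi_n$, while $\Psi(\tilde\rho)$ descends along $\zeta_*$ because $d_n^{\,h(\vartheta)}\tilde\rho(\vartheta)=d_n^2I=I$. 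Since $\Phi$ and $\Psi$ are given by rescaling matrix entries by the fixed signs $d_n^{\,h(\cdot)}$, they are polynomial and mutually inverse, hence define an isomorphism of the affine algebraic sets. In the case $\N=\emptyset$ the identical twist sends a representation of $\pi_1(UM)$ with $\tilde\rho(\vartheta)=d_nI$ to a representation of $\pi_1(M)$, multiplying every trace by the sign $d_n^{\,h}$ and therefore respecting the trace-equivalence $\simeq$; this yields the corresponding isomorphism of character varieties.

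The step I expect to be the main obstacle is the topological one: verifying, at the level of the fundamental groupoids, that $\zeta_*$ is surjective with kernel exactly the central $\langle\vartheta\rangle$, and checking carefully that the resulting twist is independent of the chosen lift. Everything else is routine compatibility and determinant bookkeeping once the central extension and the relative spin structure are in place.
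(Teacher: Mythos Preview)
Your proposal is correct and follows essentially the same approach as the paper: both fix a relative spin structure $h$ and define mutually inverse maps by the sign twist $\tilde\gamma\mapsto d_n^{\,h(\tilde\gamma)}$, with the well-definedness of the inverse checked via $h(\vartheta)=1$ and $\tilde\rho(\vartheta)=d_nI$. Your additional remarks on the central extension, the determinant check, and the polynomial nature of the maps are useful elaborations the paper leaves implicit, but the core argument is the same.
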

\begin{proof}
We can assume $M$ is connected.
Here we only consider the case when $\N\neq \emptyset$ since we can prove the case when $\N=\emptyset$
by using the same technique.

Let $h$ be a relative spin structure for $(M,\mathcal{N})$. We use $h$ to establish an isomorphism $f_h:
\chi_n(M,\mathcal{N})\rightarrow \tilde{\chi}_n(M,\mathcal{N})$. For any $\rho \in \chi_n(M,\mathcal{N})$, define 
$$f_h(\rho)(\tilde{\alpha})
= d_n^{h(\tilde{\alpha})}\rho(\alpha)\text{\;where\;}\tilde{\alpha} \in \pi_1(UM,\tilde{\N}),\;\alpha = \zeta(\tilde{\alpha}).$$ 
Clearly $f_h(\rho)$ is a homomorphism from $\pi_1(UM,\tilde{\N})$ to $SL(n,\mathbb{C})$, and $f_h(\rho)(\vartheta) = d_n^{h(\vartheta)}\rho(\zeta(\vartheta)) = d_n I$, thus $f_h\in \tilde{\chi}_n(M,\mathcal{N})$. 

Then we try to define 
$g_h : \tilde{\chi}_n(M,\mathcal{N}) \rightarrow \chi_n(M,\mathcal{N})$. For any $\tilde{\rho}\in \tilde{\chi}_n(M,\mathcal{N})$ and 
$\alpha\in \pi_1(M,\mathcal{N})$
$$g_h(\tilde{\rho})(\alpha) = d_n^{h(\tilde{\alpha})} \tilde{\rho}(\tilde{\alpha})
\;\text{where} \;\tilde{\alpha}\in \pi_1(UM, \tilde{\N})\;\text{such that}\; \zeta(\tilde{\alpha}) = \alpha.$$
Suppose $\zeta(\tilde{\alpha}) = \zeta(\tilde{\beta}) = \alpha$. Since 
$\tilde{\rho}(\vartheta) = d_n I$ and $h(\vartheta) = 1$, we have 
$$\tilde{\rho}(\tilde{\alpha}^{-1}\tilde{\beta}) = d_n^{h(\tilde{\alpha}^{-1}) +h(\tilde{\beta})}I
= d_n^{h(\tilde{\alpha})+ h(\tilde{\beta})}I.$$
Then $d_n^{h(\tilde{\beta})} \tilde{\rho}(\tilde{\beta}) = d_n^{h(\tilde{\alpha})} \tilde{\rho}(\tilde{\alpha})$, which shows $g_h(\tilde{\rho})$ is well-defined. Obviously we have $g_h(\tilde{\rho}) \in \chi(M,\mathcal{N})$.

We have 
$$f_h(g_h(\tilde{\rho}))(\tilde{\alpha}) = d_n^{h(\tilde{\alpha})} g_h(\tilde{\rho})(\alpha)
= d_n^{h(\tilde{\alpha})}d_n^{h(\tilde{\alpha})} \tilde{\rho}(\tilde{\alpha}) =\tilde{\rho}(\tilde{\alpha}),$$
$$g_h(f_h(\rho))(\alpha) = d_n^{h(\tilde{\alpha})}f_h(\rho)(\tilde{\alpha}) =
d_n^{h(\tilde{\alpha})}d_n^{h(\tilde{\alpha})}\rho(\alpha) = \rho(\alpha).$$
Thus $g_h$ and $f_h$ are inverse to each other.
\end{proof}

When there is a fixed relative spin structure, we don't have to distinguish between 
$\chi_n(M,\mathcal{N})$ and $\tilde{\chi}_n(M,\mathcal{N})$, and we also regard $R_n(M,\mathcal{N})$ as the coordinate ring 
for $\tilde{\chi}_n(M,\mathcal{N})$ using Proposition \ref{prop3.6}. 

\begin{rem}
For any two relative spin structures $h_1,h_2$, we have $F_{h_2 - h_1}\circ f_{h_1} = f_{h_2}$
where $F_{h_2 - h_1}$ is an isomorphism from $\tilde{\chi}_n(M,\mathcal{N})$ to $\tilde{\chi}_n(M,\mathcal{N})$ defined as
$$F_{h_2-h_1}(\tilde{\rho})(\tilde{\alpha}) = d_n^{h_2(\tilde{\alpha}) - h_1(\tilde{\alpha})}\tilde{\rho}(\tilde{\alpha}).$$
\end{rem}

\begin{rem}\label{rem3.14}
From Remark \ref{rem01}, we know there is a surjective algebra homomorphism 
$\cY: G_n(M) \rightarrow R_n(M,\emptyset)$, and Ker$\cY =\sqrt{ 0}$. When there is a spin structure $h$, we can regard $R_n(M,\emptyset)$ as the coordinate ring for 
$\tilde{\chi}_n(M,\emptyset)$ using Proposition \ref{prop3.6}. Then $\cY$ is given by
\begin{equation}\label{eee3.14}
\cY(\text{Trace}(Q_{[\alpha]}))(\tilde{\rho}) = d_n^{h([\tilde{\alpha}])} \text{Trace}( \tilde{\rho}([\tilde{\alpha}]))
\end{equation}
where $[\alpha]\in \pi_1(M)$ and $[\tilde{\alpha}]\in \pi_1(UM)$ is any lift of $[\alpha]$.
Since  the definition of $\cY$ in equation (\ref{eee3.14}) is related to $h$, we will use $\cY_h$, instead of $\cY$, to denote the map defined by equation (\ref{eee3.14}).
\end{rem}

\subsection{Surjective algebra homomorphism from $S_n(M,\mathcal{N},1)$ to the coordinate ring}\label{subb3.4}
For any marked 3-manifold $(M,\mathcal{N})$, we are trying to define a surjective algebra homomorphism $\Phi_h^{(M,\mathcal{N})} :S_n(M,\mathcal{N},1)\rightarrow R_n(M,\mathcal{N})$ (here we regard $R_n(M,\mathcal{N})$ as the coordinate ring 
for $\tilde{\chi}_n(M,\mathcal{N})$).

 Recall that $\tilde{\N}$ is lifted by $\N$. For any component $e\in \N$, we have 
$\tilde{e} = \{(x, u_x, v_x, w_x) \mid x\in e\}$, where $u_x$ is the unit velocity vector at $x$, $w_x$ is the unit tangent vector at $x$ orthogonal to $\partial M$ pointing into $M$ and  $(u_x, v_x, w_x)$ is the orientation of $M$.

For any $n$-web $l$ and a component $e$ of $\N$, we  can isotope $l$ such that the velocity vector of $l$ at each its end point $x$  contained in $e$ is parallel to $v_x$. Then we say $l$ is in {\bf good position with respect to $e$}. If $l$ is in good position with respect to every component of $\N$, we say it is in good position with respect to $(M,\mathcal{N})$, or just $l$ is in {\bf good position} when there is no confusion with $(M,\mathcal{N})$.

Let $\alpha$ be any stated framed oriented boundary  arc in $(M,\mathcal{N})$,  then we can lift $\alpha$ to an element $\tilde{\alpha}$ in $\pi_1(UM,\tilde{\N})$. We first isotope $\alpha$ such that $\alpha$ is in good position and the framing is normal everywhere. Then $\alpha$ lifts to $\tilde{\alpha}$  where
the first vector is the framing, the second vector is the velocity vector, and the third vector is determined by the orientation of $M$.  
Suppose $s(\alpha(0)) = j$ and $s(\alpha(1)) = i$. 
Then for any
$\tilde{\rho}\in\tilde{\chi}_n(M,\mathcal{N})$, define 
$$tr_{\alpha}(\tilde{\rho}) = [A\tilde{\rho}(\tilde{\alpha})]_{\overline{i},\overline{j}},\;\text{where}\; A_{i,j} = (-1)^{i+1}\delta_{\overline{i},j}, 1\leq i,j\leq n.$$
Note that det$ A =1$ and $A^2 = d_n I$.

Let $\alpha$ be any framed oriented  knot in $S_n(M,\mathcal{N})$, then we lift $\alpha$ to a closed path $\tilde{\alpha}$  in $UM$ as above (first isotope $\alpha$ such that the framing is normal everywhere, then use framing as the first vector and use velocity vector as the second vector). We use a
path to connect $\tilde{\N}$ (respectively the base point for $\pi_1(UM)$) and $\tilde{\alpha}$ when
$\N\neq \emptyset$ (respectively $\N=\emptyset$). This gives an element in $\pi_1(UM,\tilde{\N})$ or $\pi_1(UM)$, which is still denoted as $\tilde{\alpha}$. For any $\tilde{\rho}\in\tilde{\chi}_n(M,\mathcal{N})$ define 
$$tr_{\alpha}(\tilde{\rho}) =  \text{Trace}(\tilde{\rho}(\tilde{\alpha})).$$
%
Since Trace is invariant under the same conjugacy class, we have $tr_{\alpha}(\tilde{\rho})$ is well-defined.

\begin{theorem}\label{thm3.11}

Let $(M,\mathcal{N})$ be a marked 3-manifold with $M$ being connected. 
Then
there exists a surjective algebra homomorphism 
$\Phi^{(M,\mathcal{N})} : S_n(M,\mathcal{N},1)\rightarrow R_n(M,\mathcal{N})$ defined as following: For any stated $n$-web $l$ in $(M,\mathcal{N})$, we use relation (\ref{wzh.five}) to kill all the sinks and sources to obtain $l'$ consisting of acrs and knots if $\N\neq\emptyset$ (we use relation (\ref{wzh.four}) to kill all the sinks and sources to obtain $l'$ consisting of knots if $\N=\emptyset$). Suppose $l' = \cup_{\alpha}\alpha$ where each $\alpha$ is a stated framed oriented boundary arc or a framed oriented knot, then define
$$\Phi^{(M,\mathcal{N})}(l)(\tilde{\rho}) = \prod_{\alpha} tr_{\alpha}(\tilde{\rho})$$
where $\tilde{\rho}\in \tilde{\chi}_n(M,\mathcal{N})$.

\end{theorem}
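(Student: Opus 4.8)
The plan is to build $\Phi^{(M,\mathcal{N})}$ first as a map from the free vector space on isotopy classes of stated $n$-webs into functions on $\tilde{\chi}_n(M,\mathcal{N})$, and then to check that it descends through every defining relation, is multiplicative, and is surjective. I would organise everything around one principle: after specialising $v=1$ (i.e.\ $q=1$), the number $\Phi^{(M,\mathcal{N})}(l)(\tilde{\rho})$ is a classical $SL_n$-invariant tensor contraction read off from $l$. Each oriented edge carries a copy of $\mathbb{C}^n$ transported by the matrix $\tilde{\rho}(\tilde{\alpha})$ along the corresponding path in $UM$; each $n$-valent sink or source carries the fully antisymmetric (Levi--Civita) invariant tensor; the cups and caps of \eqref{wzh.six} and \eqref{wzh.seven} insert the fixed matrix $A$, which satisfies $A^2=d_nI$ and $\det A=1$; and each state is a basis index. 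Since $\tilde{\rho}$ takes values in $SL(n,\mathbb{C})$ it preserves the Levi--Civita tensor, and since $\tilde{\rho}(\vartheta_Y)=d_nI$ the framing data collapses to the scalar $d_n$. The product-of-traces formula in the statement is exactly the value of this contraction once the vertices have been removed by \eqref{wzh.five} (or \eqref{wzh.four} when $\N=\emptyset$), and I may assume $M$ is connected, the general case following by tensor product over components.

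First I would establish that $\tilde{\rho}\mapsto\prod_\alpha tr_\alpha(\tilde{\rho})$ does not depend on the choices entering its definition. Invariance under isotopy is automatic, because $tr_\alpha$ depends on $\tilde{\alpha}$ only through its class in $\pi_1(UM,\tilde{\N})$, and conjugation-invariance of the trace handles the auxiliary path chosen for a knot; the compatibility with the orientation-reversal symmetry of Corollary \ref{c.orient-rev} is where $A^2=d_nI$ and $\tilde{\rho}(\vartheta)=d_nI$ are used. The substantive point is that the linear combination produced by resolving the sinks and sources through \eqref{wzh.five}/\eqref{wzh.four} is independent of the order of resolution: at $q=1$ the weight $(-q)^{\ell(\sigma)}$ equals $\mathrm{sgn}(\sigma)$, so $\sum_{\sigma\in S_n}(-q)^{\ell(\sigma)}$ is the antisymmetrizer, and the determinant relation $\det\tilde{\rho}(\tilde{\alpha})=1$ forces any two legal reductions to agree. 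The summation identity $\sum_{\sigma\in S_k}(q^2)^{\ell(\sigma)}=[k]!\,q^{k(k-1)/2}$ is exactly the combinatorial input that makes the normalising constants $a$ and $[n-2]!$ cancel, as in the computation of Proposition \ref{prop3.1}.

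Next I would verify, relation by relation at $q=1$, that the functional descends. Relation \eqref{w.cross} and its stated form \eqref{wzh.eight} reduce to the equality of over- and under-crossing contractions once $q-q^{-1}=0$, consistent with the commutativity of Corollary \ref{cccc3.2}. Relation \eqref{w.twist} becomes $t|_{q=1}=d_n$, which holds because a kink changes the lift of the arc by the fibre class and $\tilde{\rho}(\vartheta)=d_nI$. The unknot relation \eqref{w.unknot} becomes $\mathrm{Trace}(\tilde{\rho}(\vartheta))=\mathrm{Trace}(d_nI)=(-1)^{n-1}n=(-1)^{n-1}[n]|_{q=1}$. The cup and cap relations \eqref{wzh.six}, \eqref{wzh.seven} are precisely the defining properties of $A$: with $c_i|_{q=1}=(-1)^{n-i}$ one verifies $c_ic_{\bar i}=d_n$ and that $A$ implements the contraction $\delta_{\bar j,i}c_i$ together with its inverse $\sum_i c_{\bar i}^{-1}$. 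Each of these is a short matrix computation once $l$ has been cut open into arcs and knots.

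Finally, multiplicativity is immediate: by Corollary \ref{cccc3.2} the product in $S_n(M,\mathcal{N},1)$ is disjoint union, the reduction of a disjoint union is the disjoint union of the reductions (vertices are removed locally), and disjoint components contribute an honest product of trace functions, so $\Phi^{(M,\mathcal{N})}(l_1l_2)=\Phi^{(M,\mathcal{N})}(l_1)\Phi^{(M,\mathcal{N})}(l_2)$ and the empty web maps to $1$. For surjectivity, when $\N\neq\emptyset$ the ring $R_n(M,\mathcal{N})$ is the coordinate ring of $\chi_n(M,\mathcal{N})=Hom(\pi_1(M,\mathcal{N}),SL(n,\mathbb{C}))$, generated by the matrix-entry functions $\tilde{\rho}\mapsto\tilde{\rho}(\tilde{\alpha})_{i,j}$; a single stated arc realising $\tilde{\alpha}$ maps to $[A\tilde{\rho}(\tilde{\alpha})]_{\bar i,\bar j}$, and since $A$ is a fixed invertible matrix these entries span the same functions, producing every generator. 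When $\N=\emptyset$ the ring $R_n(M,\emptyset)$ is generated by the traces $\mathrm{Trace}(\tilde{\rho}(\tilde{\alpha}))$ (Remark \ref{rem01} and \cite{S2001SLn}), which are the images of the corresponding knots. I expect the main obstacle to be the well-definedness, and within it the verification that the sink/source relations \eqref{wzh.four}, \eqref{wzh.five} together with the cup/cap relations \eqref{wzh.six}, \eqref{wzh.seven} are respected, i.e.\ that the many quantum constants degenerate at $q=1$ to exactly the scalars dictated by $A^2=d_nI$, $\det\tilde{\rho}=1$ and $\tilde{\rho}(\vartheta)=d_nI$; this is where the bookkeeping behind Proposition \ref{prop3.1} is essential.
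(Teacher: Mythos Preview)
Your proposal is correct and follows essentially the same route as the paper's Section~4: first show independence of the choice of how sinks and sources are dragged to $\N$ via the $\det\tilde{\rho}=1$ argument, then verify relations \eqref{w.cross}--\eqref{wzh.eight} one by one at $q=1$, then observe multiplicativity and surjectivity. The only cosmetic difference is that you phrase the vertex-resolution independence as invariance of a Levi--Civita tensor contraction, whereas the paper writes out the explicit column/row determinant identities (Subsection~\ref{sub4.2}); note also that the real ambiguity there is the choice of \emph{path} used to drag a sink or source to $\N$, not merely the order in which vertices are resolved.
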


Although we assume $M$ is connected in Theorem \ref{thm3.11} for simplicity, we can easily generalize  Theorem \ref{thm3.11} to general marked 3-manifolds. 

 When there is no confusion with the marked 3-manifold $(M,\mathcal{N})$, we can  omit the superscript for $\Phi^{(M,\mathcal{N})}$.
We will prove Theorem \ref{thm3.11} in next section. 

\subsection{Compatibility with the construction by Costantino and L{\^e} for essentially bordered pb
 surfaces when $n=2$.
}\label{newsec}

Let $\Sigma$ be an essentailly bordered pb surface, and $(M,\mathcal{N})$ be the thickening of $\Sigma$. 
Recall that for every boundary component $e$, we select a point $x_{e}\in e$, and set $\N=\cup_{e} (\{x_e\}\times (-1,1))$ where $e$ is taken over all the boundary components of $\Sigma$.
The orientation and the Riemannian matric of $M$ are the product orientation and the product Riemannian matric respectively. For simplicity, we can assume $\Sigma$ is connected.

If we regard the state $"2"$ as the state $"+"$, and the state $"1"$ as the state $"-"$. Then 
$S_2(\Sigma,1)$ is the same as the commutative stated skein algebra $\cS_{1}(\Sigma)$, mentioned in Section 8 in \cite{CL2022stated}, as shown in \cite{le2021stated}.
 When $n=2$, Costantino and L{\^e} established an isomorphism $tr : S_2(\Sigma,1)\rightarrow \chi(\Sigma)$ where $\chi(\Sigma)$ is the coordinate ring of the so called flat twisted $SL(2,\mathbb{C})$-bundle.
Here we briefly recall the definition of the  flat twisted $SL(2,\mathbb{C})$-bundle,
 please refer to Section 8 in \cite{CL2022stated} for more details. We follow their notation, and use $\pi_1(U\Sigma,\widetilde{\partial\Sigma})$ to denote their groupoid, where $U\Sigma$ is the  unit tangent bundle over
$\Sigma$ and $\widetilde{\partial\Sigma}$ is the lift of $\partial \Sigma$.
 Then every point in $U\Sigma$ is a pair $(x,v_x)$ where $x\in\Sigma$ and $v_x$ is a unit tangent vector at point $x$.
 For a point $x\in\Sigma$ the fiber $\mathbb{O}$ is a circle, and we orient it according to the  orientation of $\Sigma$. 
Then the flat twisted $SL(2,\mathbb{C})$-bundle is defined to be 
$$\{\rho\in Hom( \pi_1(U\Sigma,\widetilde{\partial\Sigma}), SL(2,\mathbb{C})  )\mid \rho(\mathbb{O}) = -I\}.$$

Let $pr: M\rightarrow \Sigma$ be the projection given by $pr(x,t) = x$ for all $x\in \Sigma,t\in [-1,1]$, and $l:\Sigma \rightarrow M$ be the embedding  given by $l(x) = (x,0)$.
We use $pr_*$ (respectievly $l_*$) to denote the induced map from 
$T(M)$ to $ T(\Sigma)$ (respectively from $ T(\Sigma)$ to $T(M)$), where $T(\cdot)$ is the tangent bundle. 
For every point $y\in M$, we use $u_y$ to denote the unit vertical tangent vector at $y$ such that $u_y$ points from $-1$ to $1$. Then $l$ induces an embedding 
\begin{align*}
l_{\sharp}: U\Sigma&\rightarrow UM\\
(x,v_x) &\mapsto (l(x), u_{l(x)}, l_*(v_x), w_{l(x)})
\end{align*}
where  $w_{l(x)}$ is determined by the orientation of $M$. Let $VM = \{(y,a_y,b_y,c_y)\in UM\mid a_y = u_y\}$ be a submanifold of $UM$ with one dimension less. Then $pr$ induces a projection
\begin{align*}
 pr_{\sharp}: VM&\rightarrow \Sigma\\
(y, a_y, b_y, c_y)&\mapsto (pr(y), pr_*(b_y)).
\end{align*}
Clearly Im$l_{\sharp}\subset VM$, and $pr_{\sharp}\circ l_{\sharp} = Id_{U\Sigma}$.

 For each boundary component $e$ of $\Sigma$, we use $\widetilde{x_e}$ to denote a point in $\tilde{e}$ whose projection on $e$ is the point $x_e$.

Define
\begin{align*}
f_{l} :\pi_1(U\Sigma,\widetilde{\partial \Sigma}) &\rightarrow \pi_1(UM,\tilde{\N})\\
[\alpha]&\mapsto [l_{\sharp}\circ \alpha]
\end{align*}
where $\alpha$ is a representative of $[\alpha]$ such that the two endpoints of $\alpha$ belong to
$\cup_{e}\{\widetilde{x_e}\}$.
And define 
\begin{align*}
f_{pr}:\pi_1(UM,\tilde{\N})&\rightarrow \pi_1(\Sigma,\widetilde{\partial\Sigma})\\
[\beta]&\mapsto [pr_{\sharp}\circ \beta]
\end{align*}
where $\beta$ is a representative of $[\beta]$ such that Im$\beta\subset VM$. It is easy to show $f_{l}$ and 
$f_{pr}$ are inverse to each other, and 
 $f_{l}(\mathbb{O})= \vartheta$. 
Then $f_l$ induces isomorphism from $\tilde{\chi}_2(M,\mathcal{N})$ to $\{\rho\in Hom( \pi_1(U\Sigma,\widetilde{\partial\Sigma}), SL(2,\mathbb{C})  )\mid \rho(\mathbb{O}) = -I\}$, which further induces an isomorphism $f_* :\chi(\Sigma)\rightarrow R_2(M,\mathcal{N})$. Then it is a trivial check that 
$$f_* \circ tr = \Phi .$$

\subsection{Compatibility with the splitting map}\label{sub3.4}
In this subsection we discuss the splitting maps for both $R_n(M,\mathcal{N})$  and $S_n(M,\mathcal{N},1)$ and the commutativity for these two splitting maps. 

Recall that when $D$ is a properly embedded disk in $M$ and $\beta\subset D$ is an embedded oriented open interval, there exists a splitting map $\Theta_{(D,\beta)}:S_n(M,\mathcal{N},1)\rightarrow S_n(\text{Cut}_{(D,\beta)}(M,\mathcal{N}),1).$

\begin{lemma}
The above linear map $\Theta_{(D,\beta)}$ is an algebra homomorphism.
\end{lemma}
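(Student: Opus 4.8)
The plan is to reduce multiplicativity of $\Theta_{(D,\beta)}$ to the elementary fact that the state-sum defining the splitting map factors over a disjoint union of webs. Recall that by Corollary \ref{cccc3.2} the product $x\cdot y$ of two classes in $S_n(M,\mathcal{N},1)$ is computed by representing them by stated $n$-webs $l_1,l_2$, isotoping so that $l_1\cap l_2=\emptyset$, and taking $l_1\cup l_2$. Thus it suffices to check $\Theta_{(D,\beta)}(l_1\cup l_2)=\Theta_{(D,\beta)}(l_1)\cdot\Theta_{(D,\beta)}(l_2)$ for disjoint representatives, together with preservation of the unit.

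First I would arrange the geometry by general position. Since $D$ is a properly embedded disk disjoint from $cl(\mathcal{N})$ and $\beta\subset D$ is an open interval, I can isotope $l_1$ and $l_2$ so that they are simultaneously disjoint from each other and both $(D,\beta)$-transverse, i.e. each meets $D$ only along $\beta$ with framing the velocity vector of $\beta$ and has no vertices on $D$. This is possible because transversality to the codimension-one disk is generic and can be achieved for the two webs independently while keeping them disjoint.

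Next comes the combinatorial core of the argument. Because $l_1$ and $l_2$ are disjoint, their intersections with $\beta$ are disjoint, $(l_1\cup l_2)\cap\beta=(l_1\cap\beta)\sqcup(l_2\cap\beta)$, and hence a map $s:(l_1\cup l_2)\cap\beta\to\{1,\dots,n\}$ is exactly the same datum as a pair $(s_1,s_2)$ with $s_k:l_k\cap\beta\to\{1,\dots,n\}$. Moreover the lift respects this splitting: $(l_1\cup l_2)_s=(l_1)_{s_1}\cup(l_2)_{s_2}$, and since cutting along $(D,\beta)$ creates no new intersection between the two pieces, this union is precisely the algebra product $(l_1)_{s_1}\cdot(l_2)_{s_2}$ in $S_n(\mathrm{Cut}_{(D,\beta)}(M,\mathcal{N}),1)$. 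Summing over all states and using bilinearity of the product then gives
\[
\Theta_{(D,\beta)}(l_1\cup l_2)=\sum_{(s_1,s_2)}(l_1)_{s_1}\cdot(l_2)_{s_2}=\Big(\sum_{s_1}(l_1)_{s_1}\Big)\cdot\Big(\sum_{s_2}(l_2)_{s_2}\Big)=\Theta_{(D,\beta)}(l_1)\cdot\Theta_{(D,\beta)}(l_2).
\]
Finally I would note that the empty web is sent to the empty web, so $\Theta_{(D,\beta)}$ is unital.

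I do not expect a genuine obstacle here: the only thing requiring care is the general-position step guaranteeing that both webs can be made $(D,\beta)$-transverse while remaining disjoint, and the verification that cutting preserves disjointness so the union really represents the product in the target algebra. Once that bookkeeping is in place, the identification of states on a disjoint union with pairs of states makes multiplicativity immediate.
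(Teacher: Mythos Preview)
Your proposal is correct and is precisely the unpacking of what the paper means by ``follows easily from the definition of $\Theta_{(D,\beta)}$'': the state-sum over a disjoint union factors as a product of state-sums, and the disjoint-union product is preserved under cutting. The paper gives no further detail, so your argument is exactly the intended one.
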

\begin{proof}
It followes easily from the definition of $\Theta_{(D,\beta)}$.
\end{proof}

We will use $(M', \N')$ to denote $\text{Cut}_{(D,\beta)}(M,\mathcal{N})$. Then there is a projection
pr $:(M', \N')\rightarrow (M,\mathcal{N}).$
If we  orient $\partial D$, the orientations of $\partial D$ and $M$ give a way to distinguish between $D_1$ and $D_2$ such that the orientation pointing from $D_2$ to $D_1$ and the orientation of $\partial D$ coincide with the orientaion of $M$, see Figure \ref{fig:1}.
\begin{figure}[!h]
\centering
\includegraphics[scale=0.6]{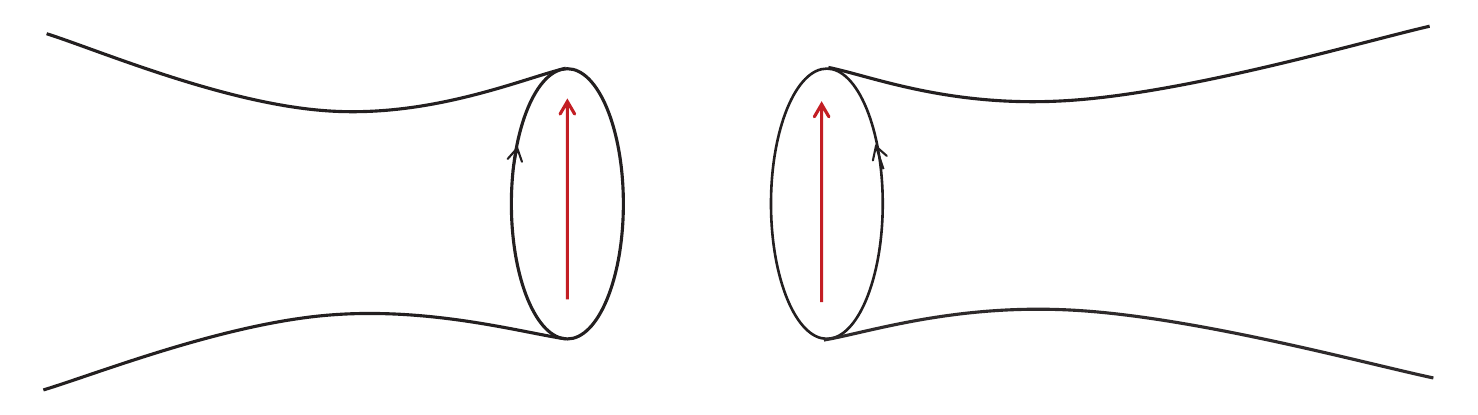}
\caption{The orientation of $D$ is indicated by the arrow, the orientation of $M$ is right handed.
The left (respectively right) disk copy is $D_1$ (respectively $D_2$). The left (respectively right) red arrow is $\beta_1$ (respectively $\beta_2$).}
\label{fig:1}
\end{figure}


In the following discussion, we fix an orientation for $\partial D$. Note that $\beta\in D$ lifts to an element
in $UM$. For every point $P$ in $\beta$, the velocity vector gives the first unit tangent vector, the orientation of $\partial D$ and the orientation of $M$ give the third unit tangent vector (since the orientation of $\partial D$ and the orientation of $M$ give an orientation of $D$, which determines a unit tangent vector at $P$), then the orientation of $M$ gives the second unit tangent vector. We use $\tilde{\beta}$ to denote this lift. Recall that $\zeta : UM\rightarrow M$ is the projection, we define $\tilde{D} = \zeta^{-1}(D)$. The projection pr $:(M', \N')\rightarrow (M,\mathcal{N})$ induces a projection $\tilde{\text{pr}}: UM'\rightarrow UM$. Then 
$\tilde{\text{pr}}^{-1}(\tilde{\beta}) = \tilde{\beta_1}'\cup \tilde{\beta_2}'$
where $\tilde{\beta_1}' \in \widetilde{D_1}, \tilde{\beta_2}' \in \widetilde{D_2}$ ($ \widetilde{D_1}$
and $ \widetilde{D_2}$ are defined in the same way with $\tilde{D}$).  Note that 
$\widetilde{\beta_1} =  \tilde{\beta_1}'$ and $\widetilde{\beta_2} \neq  \tilde{\beta_2}'$. The orientation of $\beta$ determines a path $a_{\beta}$ from $\widetilde{\beta_2}$ to $\tilde{\beta_2}'$ and a path $b_{\beta}$
from $\tilde{\beta_2}'$ to $\widetilde{\beta_2}$ such that both $a_{\beta} *  b_{\beta}$ and 
$b_{\beta}* a_{\beta}$ are in the same homotopy type with $\vartheta$.

\begin{rem}
According to Lemma 8.1 in \cite{CL2022stated},  any $\rho'\in \tilde{\chi}_n(M',\N')$ can be extended to a homomorpism $\rho'':\pi_1(UM',\tilde{\N'}\cup \{\tilde{\beta_2}'\})\rightarrow SL(n,\mathbb{C})$ by setting 
$\rho''(a_{\beta}) = d_n A$.

For any $\alpha\in \pi_1(UM,\tilde{\N})$, we can isotope $\alpha$ such that $\alpha \cap \tilde{D}
= \alpha\cap \tilde{\beta}$ and $\alpha\cap \tilde{\beta}$ consists of finite points. Then 
$\alpha = \alpha_k* \alpha_{k-1}*\dots*\alpha_1$ where each
$\alpha_i\in \pi_1(UM, \tilde{\N}\cup\{\tilde{\beta}\})$  intersects $\tilde{D}$ at most in its endpoints and exactly along $\tilde{\beta}$. For any $\rho'\in \tilde{\chi}_n(M',\N')$, define
 $$\nu^{*}(\rho')(\alpha) = \rho''(\alpha_k')\rho''(\alpha_{k-1}')\dots
\rho''(\alpha_1')$$
where $\alpha_i' = \tilde{\text{pr}}^{-1}(\alpha_i)\in \pi_1(UM', \tilde{\N'}\cup \{\tilde{\beta_2}'\})$.
\end{rem}


\begin{proposition}
Let $(M,\mathcal{N})$ be a marked 3-manifold, $D$ be any properly embedded disk with an embedded  oriented open interval $\beta\in D$. Then there is a surjective homomorphism $\nu^{*}:\tilde{\chi}_n(M',\N')\rightarrow 
\tilde{\chi}_n(M,\mathcal{N})$ where $(M', \N') =$ Cut$_{(D,\beta)}(M, \N )$. Especially $\nu^{*}$ induces
an injective algebra homomorphism $\nu: R_n(M,\mathcal{N})\rightarrow R_n(M',\N')$.
\end{proposition}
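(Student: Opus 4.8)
The plan is to verify that the map $\nu^{*}$ written down in the preceding Remark is a well-defined surjective morphism of affine algebraic sets, and then to obtain the injectivity of $\nu$ by dualizing. First I would fix $\rho'\in\tilde{\chi}_n(M',\mathcal{N}')$ and check that the formula $\nu^{*}(\rho')(\alpha)=\rho''(\alpha_k')\cdots\rho''(\alpha_1')$ is independent of the decomposition $\alpha=\alpha_k*\cdots*\alpha_1$ and of the representative of $\alpha\in\pi_1(UM,\tilde{\mathcal{N}})$. The relevant isotopies either slide an intersection point of $\alpha$ with $\tilde{D}$ along $\tilde{\beta}$, which does not change any $\rho''(\alpha_i')$, or cancel a pair of intersection points by sweeping a strand across $\tilde{D}$ and back. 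In the unit tangent bundle the latter move rotates the framing by a full turn, so it inserts $\vartheta$; comparing the two sides amounts to the identity $\rho''(a_{\beta})\,\rho''(b_{\beta})=\rho''(\vartheta)$, which holds because $\rho''(a_{\beta})=d_nA$, $A^{2}=d_nI$ and $a_{\beta}*b_{\beta}\simeq\vartheta$ force both sides to equal $d_nI$. Granting independence, multiplicativity is immediate from concatenating decompositions, $\det\nu^{*}(\rho')(\alpha)=1$ follows from $\det A=1$ and $\det\rho'=1$, and $\nu^{*}(\rho')(\vartheta_Y)=d_nI$ because $\vartheta_Y$ may be chosen disjoint from $\tilde{D}$ and then lifts directly; hence $\nu^{*}(\rho')\in\tilde{\chi}_n(M,\mathcal{N})$.

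For surjectivity I would start from $\rho\in\tilde{\chi}_n(M,\mathcal{N})$ and reverse the construction. Using Lemma 8.1 of \cite{CL2022stated} I extend $\rho$ to a homomorphism on $\pi_1(UM,\tilde{\mathcal{N}}\cup\{\tilde{\beta}\})$; the only choice is its value on a path reaching $\tilde{\beta}$, which geometrically is the extra free/HNN generator created when $D_1$ is glued to $D_2$. Pulling this datum back along $\tilde{\mathrm{pr}}:UM'\to UM$ and assigning $d_nA$ to $a_{\beta}$ produces a homomorphism $\rho'$ on $\pi_1(UM',\tilde{\mathcal{N}}')$ with $\rho'(\vartheta_{Y'})=d_nI$ on every component, so $\rho'\in\tilde{\chi}_n(M',\mathcal{N}')$. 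Since each piece $\alpha_i'=\tilde{\mathrm{pr}}^{-1}(\alpha_i)$ is then evaluated by $\rho''$ precomposed with $\tilde{\mathrm{pr}}$, the defining formula gives $\nu^{*}(\rho')(\alpha)=\rho(\alpha)$, i.e. $\nu^{*}(\rho')=\rho$. I expect this to be the main obstacle: the delicate part is tracking the twisting matrix $A$ across the cut and confirming that the projected and pulled back paths carry matching framing data in $UM$ and $UM'$, so that $\rho''$ and $\rho'$ are honest groupoid homomorphisms and not merely assignments agreeing on generators.

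Finally I would deduce the injectivity of $\nu$ formally. By its defining formula each entry of the matrix $\nu^{*}(\rho')(\alpha)$ is a polynomial in the coordinate functions of $\rho'$ and in the fixed entries of $A$, so $\nu^{*}$ is a morphism of affine algebraic sets and $\nu$ is its comorphism $f\mapsto f\circ\nu^{*}$, landing in $R_n(M',\mathcal{N}')$. Because $R_n(M,\mathcal{N})$ is the reduced coordinate ring of $\tilde{\chi}_n(M,\mathcal{N})$ and $\nu^{*}$ is surjective, any $f$ with $\nu(f)=f\circ\nu^{*}=0$ must vanish on $\mathrm{Im}\,\nu^{*}=\tilde{\chi}_n(M,\mathcal{N})$ and is therefore $0$. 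Hence $\nu$ is injective, completing the proof.
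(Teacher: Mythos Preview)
Your proposal is correct and follows essentially the same approach as the paper: well-definedness of $\nu^{*}(\rho')$ is checked by verifying invariance under the sweep moves across $\tilde D$ using $\rho''(a_\beta)=d_nA$, $A^2=d_nI$ and $a_\beta*b_\beta\simeq\vartheta$, and surjectivity is obtained by extending $\rho$ via Lemma~8.1 of \cite{CL2022stated} and pulling back along $\tilde{\mathrm{pr}}$ with the value $d_nA$ assigned to the connecting path. The only cosmetic difference is that the paper performs the extension in $UM$ by adding both basepoints $\tilde\beta$ and $-\tilde\beta=\tilde{\mathrm{pr}}(\widetilde{\beta_2})$ before pulling back, whereas you first pull back and then adjoin the value on $a_\beta$ in $UM'$; the two orderings are equivalent.
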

\begin{proof}
First we show $\nu^{*}(\rho')\in \tilde{\chi}_n(M,\mathcal{N})$. Clearly we have $ \rho''( \vartheta_Y) = d_n I$ for  any component 
$Y$ of $M'$.
Since $\rho''$ preserves height exchange and crossing exchange, to show $\nu^{*}(\rho')$ is well-defined,  it suffices to show 
$\nu^{*}(\rho')$ preserves the following two moves:
\begin{equation}\label{e111}
\raisebox{-.30in}{
\begin{tikzpicture}
\tikzset{->-/.style=
{decoration={markings,mark=at position #1 with
{\arrow{latex}}},postaction={decorate}}}
\draw [line width =1pt,color=red,decoration={markings, mark=at position 1 with {\arrow{>}}},postaction={decorate}](0.8,-0.5)--(0.8,1.5);
\draw[line width =1pt] (0,0) arc (-90:90:0.5);
\draw [line width =1pt](-1,0)--(0,0);
\draw [line width =1pt](-1,1)--(0,1);
\filldraw[draw=black,fill=white] (-0.5,0) circle (0.07);
\node [right] at(0.9,1.3) {$\beta$};
\end{tikzpicture}
}
\longleftrightarrow
\raisebox{-.30in}{
\begin{tikzpicture}
\tikzset{->-/.style=
{decoration={markings,mark=at position #1 with
{\arrow{latex}}},postaction={decorate}}}
\draw [line width =1pt,color=red,decoration={markings, mark=at position 1 with {\arrow{>}}},postaction={decorate}](0,-0.5)--(0,1.5);
\draw[line width =1pt] (0,0) arc (-90:90:0.5);
\draw [line width =1pt](-1,0)--(0,0);
\draw [line width =1pt](-1,1)--(0,1);
\filldraw[draw=black,fill=white] (-0.5,0) circle (0.07);
\node [right] at(0.1,1.3) {$\beta$};
\end{tikzpicture}
}
\end{equation}
\begin{equation}\label{e222}
\raisebox{-.30in}{
\begin{tikzpicture}
\tikzset{->-/.style=
{decoration={markings,mark=at position #1 with
{\arrow{latex}}},postaction={decorate}}}
\draw [line width =1pt,color=red,decoration={markings, mark=at position 1 with {\arrow{>}}},postaction={decorate}](-0.8,-0.5)--(-0.8,1.5);
\draw[line width =1pt] (0,1) arc (90:270:0.5);
\draw [line width =1pt](1,0)--(0,0);
\draw [line width =1pt](1,1)--(0,1);
\filldraw[draw=black,fill=white] (0.5,0) circle (0.07);
\node [left] at(-0.9,1.3) {$\beta$};
\end{tikzpicture}
}
\longleftrightarrow
\raisebox{-.30in}{
\begin{tikzpicture}
\tikzset{->-/.style=
{decoration={markings,mark=at position #1 with
{\arrow{latex}}},postaction={decorate}}}
\draw [line width =1pt,color=red,decoration={markings, mark=at position 1 with {\arrow{>}}},postaction={decorate}](-0,-0.5)--(-0,1.5);
\draw[line width =1pt] (0,1) arc (90:270:0.5);
\draw [line width =1pt](1,0)--(0,0);
\draw [line width =1pt](1,1)--(0,1);
\filldraw[draw=black,fill=white] (0.5,0) circle (0.07);
\node [left] at(-0.1,1.3) {$\beta$};
\end{tikzpicture}
}
\end{equation}
 The red arrow in equations (\ref{e111})  and  (\ref{e222}) is the projection of $\beta$, to get the original  $\beta$, we just rotate the
red arrow in equations (\ref{e111})  and  (\ref{e222}) 90 degrees such that it points towards readers. The black line represents part of path in $UM$, and the white dot represents the direction of the path. The first unit tangent vector of the path is the one pointing towards readers  and the second one is given by the velocity vector of the black line.

Here we only prove $\nu^{*}(\rho')$ preserves
$$
\raisebox{-.30in}{
\begin{tikzpicture}
\tikzset{->-/.style=
{decoration={markings,mark=at position #1 with
{\arrow{latex}}},postaction={decorate}}}
\draw [line width =1pt,color=red,decoration={markings, mark=at position 1 with {\arrow{>}}},postaction={decorate}](0.8,-0.5)--(0.8,1.5);
\draw[line width =1pt] (0,0) arc (-90:90:0.5);
\draw [line width =1pt,decoration={markings, mark=at position 0.5 with {\arrow{>}}},postaction={decorate}](-1,0)--(0,0);
\draw [line width =1pt](-1,1)--(0,1);
\node [right] at(0.9,1.3) {$\beta$};
\end{tikzpicture}
}
\longleftrightarrow
\raisebox{-.30in}{
\begin{tikzpicture}
\tikzset{->-/.style=
{decoration={markings,mark=at position #1 with
{\arrow{latex}}},postaction={decorate}}}
\draw [line width =1pt,color=red,decoration={markings, mark=at position 1 with {\arrow{>}}},postaction={decorate}](0,-0.5)--(0,1.5);
\draw[line width =1pt] (0,0) arc (-90:90:0.5);
\draw [line width =1pt,decoration={markings, mark=at position 0.5 with {\arrow{>}}},postaction={decorate}](-1,0)--(0,0);
\draw [line width =1pt](-1,1)--(0,1);
\node [right] at(0.1,1.3) {$\beta$};
\end{tikzpicture}
}.
$$
The same proving technique applies for other three cases.

We isotope 
$\raisebox{-.30in}{
\begin{tikzpicture}
\tikzset{->-/.style=
{decoration={markings,mark=at position #1 with
{\arrow{latex}}},postaction={decorate}}}
\draw [line width =1pt,color=red,decoration={markings, mark=at position 1 with {\arrow{>}}},postaction={decorate}](0,-0.5)--(0,1.5);
\draw[line width =1pt] (0,0) arc (-90:90:0.5);
\draw [line width =1pt,decoration={markings, mark=at position 0.5 with {\arrow{>}}},postaction={decorate}](-1,0)--(0,0);
\draw [line width =1pt](-1,1)--(0,1);
\node [right] at(0.1,1.3) {$\beta$};
\end{tikzpicture}
}$ to 
$\raisebox{-.30in}{
\begin{tikzpicture}
\tikzset{->-/.style=
{decoration={markings,mark=at position #1 with
{\arrow{latex}}},postaction={decorate}}}
%
\draw [line width =1pt,color=red,decoration={markings, mark=at position 1 with {\arrow{>}}},postaction={decorate}](0.2,-0.5)--(0.2,1.5);
\draw [line width =1pt,decoration={markings, mark=at position 0.5 with {\arrow{>}}},postaction={decorate}](-1,0)--(0,0);
\draw [line width =1pt](-1,1)--(0,1);
\draw[line width =1pt] (0,0) arc (-90:0:0.2);
\draw[line width =1pt] (0.2,0.8) arc (0:90:0.2);
\draw[line width =1pt] (0.6,0.2) arc (0:180:0.2);
\draw[line width =1pt] (0.2,0.8) arc (-180:0:0.2);
\draw[line width =1pt] (0.6,0.2) arc (-180:0:0.3);
\draw[line width =1pt] (1.2,0.8) arc (0:180:0.3);
\draw[line width =1pt] (1.2,0.2)--(1.2,0.8);
\node [right] at(0.3,1.3) {$\beta$};
\end{tikzpicture}
}.$ 
From the definition of $\nu^{*}(\rho')$, we know
\begin{align*}
\nu^{*}(\rho')(\raisebox{-.30in}{
\begin{tikzpicture}
\tikzset{->-/.style=
{decoration={markings,mark=at position #1 with
{\arrow{latex}}},postaction={decorate}}}
\draw [line width =1pt,color=red,decoration={markings, mark=at position 1 with {\arrow{>}}},postaction={decorate}](0,-0.5)--(0,1.5);
\draw[line width =1pt] (0,0) arc (-90:90:0.5);
\draw [line width =1pt,decoration={markings, mark=at position 0.5 with {\arrow{>}}},postaction={decorate}](-1,0)--(0,0);
\draw [line width =1pt](-1,1)--(0,1);
\node [right] at(0.1,1.3) {$\beta$};
\end{tikzpicture}
}) =& \rho''(\raisebox{-.30in}{
\begin{tikzpicture}
\tikzset{->-/.style=
{decoration={markings,mark=at position #1 with
{\arrow{latex}}},postaction={decorate}}}
%
\draw [line width =1pt,color=red,decoration={markings, mark=at position 1 with {\arrow{>}}},postaction={decorate}](0.2,-0.5)--(0.2,1.5);
\draw [line width =1pt,decoration={markings, mark=at position 0.5 with {\arrow{<}}},postaction={decorate}](-1,1)--(0,1);
\draw[line width =1pt] (0.2,0.8) arc (0:90:0.2);
\node [right] at(0.3,1.3) {$\beta$};
\end{tikzpicture}
})
\rho''(\raisebox{-.30in}{
\begin{tikzpicture}
\tikzset{->-/.style=
{decoration={markings,mark=at position #1 with
{\arrow{latex}}},postaction={decorate}}}
%
\draw [line width =1pt,color=red,decoration={markings, mark=at position 1 with {\arrow{>}}},postaction={decorate}](0.2,-0.5)--(0.2,1.5);
\draw[line width =1pt] (0.6,0.2) arc (0:180:0.2);
\draw[line width =1pt] (0.2,0.8) arc (-180:0:0.2);
\draw[line width =1pt] (0.6,0.2) arc (-180:0:0.3);
\draw[line width =1pt] (1.2,0.8) arc (0:180:0.3);
\draw[line width =1pt,decoration={markings, mark=at position 0.5 with {\arrow{>}}},postaction={decorate}] (1.2,0.2)--(1.2,0.8);
\node [right] at(0.3,1.3) {$\beta$};
\end{tikzpicture}
})
\rho''(\raisebox{-.30in}{
\begin{tikzpicture}
\tikzset{->-/.style=
{decoration={markings,mark=at position #1 with
{\arrow{latex}}},postaction={decorate}}}
%
\draw [line width =1pt,color=red,decoration={markings, mark=at position 1 with {\arrow{>}}},postaction={decorate}](0.2,-0.5)--(0.2,1.5);
\draw [line width =1pt,decoration={markings, mark=at position 0.5 with {\arrow{>}}},postaction={decorate}](-1,0)--(0,0);
\draw[line width =1pt] (0,0) arc (-90:0:0.2);
\node [right] at(0.3,1.3) {$\beta$};
\end{tikzpicture}
})
\\
 =& \rho''(\raisebox{-.30in}{
\begin{tikzpicture}
\tikzset{->-/.style=
{decoration={markings,mark=at position #1 with
{\arrow{latex}}},postaction={decorate}}}
%
\draw [line width =1pt,color=red,decoration={markings, mark=at position 1 with {\arrow{>}}},postaction={decorate}](0.2,-0.5)--(0.2,1.5);
\draw [line width =1pt,decoration={markings, mark=at position 0.5 with {\arrow{<}}},postaction={decorate}](-1,1)--(0,1);
\draw[line width =1pt] (0.2,0.8) arc (0:90:0.2);
\node [right] at(0.3,1.3) {$\beta$};
\end{tikzpicture}
})
\rho''(\raisebox{-.30in}{
\begin{tikzpicture}
\tikzset{->-/.style=
{decoration={markings,mark=at position #1 with
{\arrow{latex}}},postaction={decorate}}}
%
\draw [line width =1pt,color=red,decoration={markings, mark=at position 1 with {\arrow{>}}},postaction={decorate}](0.2,-0.5)--(0.2,1.5);
\draw [line width =1pt,decoration={markings, mark=at position 0.5 with {\arrow{>}}},postaction={decorate}](-1,0)--(0,0);
\draw[line width =1pt] (0,0) arc (-90:0:0.2);
\node [right] at(0.3,1.3) {$\beta$};
\end{tikzpicture}
}) =
\rho''(\raisebox{-.30in}{
\begin{tikzpicture}
\tikzset{->-/.style=
{decoration={markings,mark=at position #1 with
{\arrow{latex}}},postaction={decorate}}}
\draw [line width =1pt,color=red,decoration={markings, mark=at position 1 with {\arrow{>}}},postaction={decorate}](0.8,-0.5)--(0.8,1.5);
\draw[line width =1pt] (0,0) arc (-90:90:0.5);
\draw [line width =1pt,decoration={markings, mark=at position 0.5 with {\arrow{>}}},postaction={decorate}](-1,0)--(0,0);
\draw [line width =1pt](-1,1)--(0,1);
\node [right] at(0.9,1.3) {$\beta$};
\end{tikzpicture}
})
\\
=& \nu^{*}(\rho')(\raisebox{-.30in}{
\begin{tikzpicture}
\tikzset{->-/.style=
{decoration={markings,mark=at position #1 with
{\arrow{latex}}},postaction={decorate}}}
\draw [line width =1pt,color=red,decoration={markings, mark=at position 1 with {\arrow{>}}},postaction={decorate}](0.8,-0.5)--(0.8,1.5);
\draw[line width =1pt] (0,0) arc (-90:90:0.5);
\draw [line width =1pt,decoration={markings, mark=at position 0.5 with {\arrow{>}}},postaction={decorate}](-1,0)--(0,0);
\draw [line width =1pt](-1,1)--(0,1);
\node [right] at(0.9,1.3) {$\beta$};
\end{tikzpicture}
}).
\end{align*}
Then, trivially we have $\nu^{*}(\rho')\in \tilde{\chi}_n(M,\mathcal{N})$.

Then we want to show $\nu^{*}$ is surjective. We use $-\tilde{\beta}$ to denote $\tilde{\text{pr}}(\widetilde{\beta_2})$, and use
$\overline{a_{\beta}}$ to denote $\tilde{\text{pr}}(a_{\beta})$. Then $\overline{a_{\beta}}$ is a path from $-\tilde{\beta}$ to
$\tilde{\beta}$. For any $\rho\in \tilde{\chi}_n(M,\mathcal{N})$, we use Lemma 8.1 in \cite{CL2022stated} to extend $\rho$ to
$\rho'':\pi_1(UM,\tilde{\N}\cup\{\tilde{\beta},-\tilde{\beta}\})$ setting in particular $\rho''(\overline{a_{\beta}})
= d_n A$. The projection $\tilde{\text{pr}}:UM'\rightarrow UM$ induces a homomorphism $\text{pr}_* :
\pi_1(UM',\tilde{\N'}\cup \{\tilde{\beta_2}'\}) \rightarrow \pi_1(UM,\tilde{\N}\cup\{\tilde{\beta},-\tilde{\beta}\})$. Then 
$\rho''\circ \text{pr}_*$ is a homomorphism from $\pi_1(UM',\tilde{\N'}\cup \{\tilde{\beta_2}'\})$ to $SL(n,\mathbb{C})$. Set $\rho' $ to be the restriction of $\rho''\circ \text{pr}_*$ on $\pi_1(UM',\tilde{\N'})$. Then it is easy to show we have
$\rho'\in \tilde{\chi}_n(M',\N')$ and $\nu^{*}(\rho') = \rho$.

\end{proof}

\begin{theorem}\label{the_split}
Let $(M,\mathcal{N})$ be any marked 3-manifold, $D$ be any properly embedded disk with an embedded  oriented open interval $\beta\in D$. Then we have 
$$\Phi^{(M',\N')}\circ \Theta_{(D,\beta)} = \nu\circ \Phi^{(M,\mathcal{N})}.$$
\end{theorem}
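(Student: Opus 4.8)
The plan is to check the claimed identity of two algebra homomorphisms $S_n(M,\mathcal{N},1)\to R_n(M',\N')$ on a spanning set and then to verify it pointwise on $\tilde{\chi}_n(M',\N')$. Both $\Phi^{(M',\N')}\circ\Theta_{(D,\beta)}$ and $\nu\circ\Phi^{(M,\mathcal{N})}$ are compositions of algebra homomorphisms, hence are themselves algebra homomorphisms and in particular linear. Since the isotopy classes of stated $n$-webs span $S_n(M,\mathcal{N},1)$, it suffices to prove $\Phi^{(M',\N')}(\Theta_{(D,\beta)}(l)) = \nu(\Phi^{(M,\mathcal{N})}(l))$ for a single stated $n$-web $l$, which we first isotope into good position and make $(D,\beta)$-transverse, so that $l$ meets $D$ only along $\beta$, transversally and away from its vertices. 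Because $R_n(M',\N')$ is the (reduced) coordinate ring of the affine set $\tilde{\chi}_n(M',\N')$, two of its elements coincide as soon as they agree as functions; thus it is enough to show that for every $\tilde{\rho}'\in\tilde{\chi}_n(M',\N')$ we have
\[
\sum_{s:\,l\cap\beta\to\{1,\dots,n\}}\Phi^{(M',\N')}(l_s)(\tilde{\rho}')
=\Phi^{(M,\mathcal{N})}(l)(\nu^{*}(\tilde{\rho}')),
\]
using $\Theta_{(D,\beta)}(l)=\sum_s l_s$ and the definition $\nu(f)(\tilde{\rho}')=f(\nu^{*}(\tilde{\rho}'))$.

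Next I would unwind both sides through the definition of $\Phi$ in Theorem \ref{thm3.11}. On each side one first kills all sinks and sources with relation (\ref{wzh.five}) (or (\ref{wzh.four}) if $\N=\emptyset$); since the vertices of $l$ lie off $D$, this resolution is supported away from $\beta$ and so commutes with $\Theta_{(D,\beta)}$ (cutting creates boundary points, not vertices, and the lifts of the vertices of $l$ are exactly the vertices of each $l_s$). Hence we may assume $l=\cup_\alpha\alpha$ is already a union of stated framed oriented boundary arcs and framed oriented knots, each transverse to $\beta$, and compute $\Phi^{(M,\mathcal{N})}(l)(\nu^{*}(\tilde{\rho}'))=\prod_\alpha tr_\alpha(\nu^{*}(\tilde{\rho}'))$ arc by arc. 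The key is the formula $\nu^{*}(\tilde{\rho}')(\alpha)=\rho''(\alpha_k')\cdots\rho''(\alpha_1')$: it decomposes the holonomy of $\tilde{\alpha}$ at its successive crossings with $\tilde{\beta}$ into a product of $\rho''$-values of the cut pieces $\alpha_i'=\tilde{\mathrm{pr}}^{-1}(\alpha_i)$ in $UM'$. Writing this matrix product out entrywise produces, at each crossing, a sum $\sum_{k}[\,\cdots\,]_{\,\cdot,k}[\,\cdots\,]_{k,\cdot}$, i.e. the insertion of a resolution of the identity; this is precisely the state sum $\sum_{s(P)=k}$ that $\Theta_{(D,\beta)}$ performs at the corresponding cut point $P\in l\cap\beta$, placing the crossing-by-crossing contributions on the two sides in bijection.

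The remaining and most delicate point is to reconcile the \emph{conventions} attached to the two boundary points created at each cut. On the $\beta_1$-side the lift is unchanged ($\widetilde{\beta_1}=\tilde{\beta_1}'$), but on the $\beta_2$-side one has $\widetilde{\beta_2}\neq\tilde{\beta_2}'$, the two lifts differing by the path $a_\beta$ with $\rho''(a_\beta)=d_nA$, where $A_{i,j}=(-1)^{i+1}\delta_{\bar i,j}$ and $A^2=d_nI$. I would show that multiplying by this $d_nA$ is exactly the operation that converts the bare matrix entry produced by the resolution of identity into the index convention used in the definition $tr_\alpha(\tilde{\rho}')=[A\tilde{\rho}'(\tilde{\alpha})]_{\bar i,\bar j}$ for a boundary arc of $(M',\N')$: the factor $A$ supplies the correct conjugate-index $\bar{(\cdot)}$ pairing between the two new endpoints, and $d_n$ absorbs the sign coming from $\vartheta$. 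Concretely, for an arc crossing $\beta$ once this becomes the identity $tr_\alpha(\nu^{*}(\tilde{\rho}'))=\sum_k tr_{\alpha_1,\,s(P)=k}(\tilde{\rho}')\,tr_{\alpha_2,\,s(P)=k}(\tilde{\rho}')$, and for a knot crossing once the trace $\mathrm{Trace}(\tilde{\rho}'(\tilde{\alpha}))$ opens into the same state-summed product of the two cut arcs. Iterating over all crossings of all components of $l$ and reassembling the products yields the desired equality of functions, hence of elements of $R_n(M',\N')$.

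I expect the main obstacle to be exactly this last step: the sign and orientation bookkeeping that makes the skein-theoretic state sum $\sum_s$ agree with the matrix product defining $\nu^{*}$, including the $A$-twist. One must check that the orientation rule distinguishing $D_1$ from $D_2$ (Figure \ref{fig:1}), the convention $\rho''(a_\beta)=d_nA$, the matrix $A$ together with the conjugate-index convention $\bar{(\cdot)}$, and any cup/cap constants $c_i$ introduced when resolving vertices all combine consistently. Verifying one representative crossing in full and invoking orientation-reversal symmetry (handled as in the proof of Proposition \ref{prop3.1}) for the remaining local configurations should complete the argument.
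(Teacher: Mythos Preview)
Your proposal is correct and follows essentially the same route as the paper's proof: reduce via the algebra-homomorphism property to stated framed oriented boundary arcs and framed knots, decompose such an $\alpha$ as $\alpha_m*\cdots*\alpha_1$ along its intersections with $\beta$, and then match the state sum from $\Theta_{(D,\beta)}$ against the matrix product defining $\nu^{*}(\tilde{\rho}')(\tilde{\alpha})$, using that the connecting paths $a_\beta^{\pm 1},b_\beta^{\pm 1}$ insert exactly the factor $A$ appearing in the convention $tr_\alpha=[A\tilde{\rho}'(\tilde{\alpha})]_{\bar i,\bar j}$. The paper makes the arc case explicit (showing both sides equal $(A\rho'(\widetilde{\alpha'_m})A\cdots A\rho'(\widetilde{\alpha'_1}))_{\bar j,\bar i}$) and handles the knot case with trace cyclicity after choosing a path $\eta$ to a basepoint; your outline anticipates both of these, so there is no substantive difference.
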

\begin{proof}
We can assume $M$ is connected. Note that $M'$ may not be connected.

Since both $\Phi^{(M',\N')}\circ \Theta_{(D,\beta)}$ and $\nu\circ \Phi^{(M,\mathcal{N})}$ are algebra homomorphisms, it suffices to show $\Phi^{(M',\N')}(\Theta_{(D,\beta)} (\alpha))= \nu( \Phi^{(M,\mathcal{N})}(\alpha))$ for any framed oriented knot or stated framed oriented boundary arc $\alpha$. If there is no intersection between $\alpha$ and $D$, it is obvious. Then we look at the case when $\alpha$ intersects $D$. We isotope $\alpha$ such that $\alpha$ is transverse to $D$, $\alpha\cap D\subset \beta$, and the framing at each point of $\alpha\cap \beta$ is the velocity vector of $\beta$.

If $\alpha$ is a stated framed oriented boundary arc with $s(\alpha(0)) = i$ and $s(\alpha(1)) = j$. Then $\alpha = \alpha_m * \alpha_{m-1}* \dots * \alpha_1$ where all $\alpha_i$ are framed oriented arcs and are parts of $\alpha$ such that each $\alpha_t$ has two ends on $\beta$ for $2\leq t\leq m-1$ and
$\alpha_1(1), \alpha_m(0)\in\beta$ and the interior of each $\alpha_t$ has no intersection with $D$. Let $\alpha_t' = \text{pr}^{-1}(\alpha_t), 1\leq t\leq m$, then
$$\Theta_{(D,\beta)}(\alpha) = \sum_{1\leq k_1,\dots,k_{m-1}\leq n} (\alpha_m')_{j,k_{m-1}}
(\alpha_{m-1}')_{k_{m-1},k_{m-2}}\dots (\alpha_1')_{k_1,i}.$$

For any $\rho'\in \tilde{\chi}_n(M',\N')$, we have 
\begin{equation*}
\begin{split}
\Phi^{(M',\N')}(\Theta_{(D,\beta)} (\alpha))(\rho')
&= \sum_{1\leq k_1,\dots,k_{m-1}\leq n} (A\rho'(\widetilde{\alpha'_{m}}))_{\overline{j},\overline{k_{m-1}}}
 (A\rho'(\widetilde{\alpha'_{m-1}}))_{\overline{k_{m-1}},\overline{k_{m-2}}}\dots(A\rho'(\widetilde{\alpha'_{1}}))_{\overline{k_1},\overline{i}}\\
&=(A\rho'(\widetilde{\alpha'_{m}})A\rho'(\widetilde{\alpha'_{m-1}})\dots A\rho'(\widetilde{\alpha'_{1}}))_{\overline{j},\overline{i}}
\end{split}
\end{equation*}
and
\begin{equation*}
\nu( \Phi^{(M,\mathcal{N})}(\alpha))(\rho') =  \Phi^{(M,\mathcal{N})}(\alpha)(\nu^{*}(\rho')) =(A\;\nu^{*}(\rho')(\tilde{\alpha}))_{\overline{j},\overline{i}}.
\end{equation*}
According to the definition of $\nu^{*}$, we know 
$$\nu^{*}(\rho')(\tilde{\alpha}) = \rho''(\tilde{\alpha}_m')\rho''(\tilde{\alpha}_{k-1}')\dots
\rho''(\tilde{\alpha}_1').$$
It is easy to see 
$$\rho''(\tilde{\alpha}_m')\rho''(\tilde{\alpha}_{k-1}')\dots
\rho''(\tilde{\alpha}_1')
=\rho''(\widetilde{\alpha'_{m}}) \rho''(\gamma_{m-1}) \rho''(\widetilde{\alpha'_{m-1}})
\rho''(\gamma_{m-2})\dots \rho''(\gamma_1) \rho''(\widetilde{\alpha'_{1}})$$
where $\gamma_t = a_{\beta}^{-1}$ or $b_{\beta}^{-1}$ for $1\leq t\leq m-1$. Since 
$\rho''( a_{\beta}^{-1}) =  \rho''( b_{\beta}^{-1}) = A$, we have 
\begin{align*}
\nu^{*}(\rho')(\tilde{\alpha}) &= \rho''( \widetilde{\alpha'_{m}}) \rho''(\gamma_{m-1}) \rho''(\widetilde{\alpha'_{m-1}})
\rho''(\gamma_{m-2})\dots \rho''(\gamma_1) \rho''(\widetilde{\alpha'_{1}})\\
&=  \rho'( \widetilde{\alpha'_{m}}) A \rho'(\widetilde{\alpha'_{m-1}})
A\dots A\rho'(\widetilde{\alpha'_{1}}).
\end{align*}
Thus we get
$$\nu( \Phi^{(M,\mathcal{N})}(\alpha))(\rho') =(A\rho'(\widetilde{\alpha'_{m}})A\rho'(\widetilde{\alpha'_{m-1}})\dots A\rho'(\widetilde{\alpha'_{1}}))_{\overline{j},\overline{i}} = \Phi^{(M',\N')}(\Theta_{(D,\beta)} (\alpha))(\rho').$$

If $\alpha$ is a framed knot. Let $\eta$ be a path in $UM$ connecting the base point of $\pi_1(UM)$
to $\tilde{\alpha}$ when $\N=\emptyset$ or a path connecting $\tilde{\N}$ to $\tilde{\alpha}$ when
$\N\neq \emptyset$ such that $\eta\cap \tilde{D} = \emptyset$. Similarly suppose  $\alpha = \alpha_m * \alpha_{m-1}* \dots * \alpha_1$ where all $\alpha_i$ are framed oriented arcs and parts of $\alpha$ such that each $\alpha_t$ has two ends on $\beta$ and does not intersect with $D$ on its interior. 
Let $\alpha_t' = \text{pr}^{-1}(\alpha_t), 1\leq t\leq m$, then
$$\Theta_{(D,\beta)}(\alpha) = \sum_{1\leq i,k_1,\dots,k_{m-1}\leq n} (\alpha_m')_{i,k_{m-1}}
(\alpha_{m-1}')_{k_{m-1},k_{m-2}}\dots (\alpha_1')_{k_1,i}.$$

For any $\rho'\in \tilde{\chi}_n(M',\N')$, we have 
\begin{equation*}
\begin{split}
\Phi^{(M',\N')}(\Theta_{(D,\beta)} (\alpha))(\rho')
&= \sum_{1\leq i,k_1,\dots,k_{m-1}\leq n} (A\rho'(\widetilde{\alpha'_{m}}))_{\overline{i},\overline{k_{m-1}}}
 (A\rho'(\widetilde{\alpha'_{m-1}}))_{\overline{k_{m-1}},\overline{k_{m-2}}}\dots(A\rho'(\widetilde{\alpha'_{1}}))_{\overline{k_1},\overline{i}}\\
&=\text{Trace}(A\rho'(\widetilde{\alpha'_{m}})A\rho'(\widetilde{\alpha'_{m-1}})\dots A\rho'(\widetilde{\alpha'_{1}}))
\end{split}
\end{equation*}
and
\begin{equation*}
\nu( \Phi^{(M,\mathcal{N})}(\alpha))(\rho') =  \Phi^{(M,\mathcal{N})}(\alpha)(\nu^{*}(\rho')) =\text{Trace}(\nu^{*}(\rho')(\eta^{-1}*\tilde{\alpha} *\eta)).
\end{equation*}
We assume $\eta(1)\in \widetilde{\alpha'_{1}}$, otherwise we can relabel $\alpha_i$ to make this happpen, and $\eta(1)$ divides $\widetilde{\alpha'_{1}}$
into two parts $\widetilde{\alpha'_{1}}', \widetilde{\alpha'_{1}}''$
such that $\widetilde{\alpha'_{1}} = \widetilde{\alpha'_{1}}'* \widetilde{\alpha'_{1}}''$.
Using the same technique as $\alpha$ being an arc, we get
\begin{align*}
\nu^{*}(\rho')(\eta^{-1}*\tilde{\alpha}*\eta) = \rho''(\eta^{-1} *\widetilde{\alpha'_{1}}'')
 A\rho''( \widetilde{\alpha'_{m}}) A \rho''(\widetilde{\alpha'_{m-1}})
A\dots A\rho''(\widetilde{\alpha'_{1}}' * \eta).
\end{align*}
Then we have 
\begin{align*}
\nu( \Phi^{(M,\mathcal{N})}(\alpha))(\rho')
&=\text{Trace}( \rho''(\eta^{-1} *\widetilde{\alpha'_{1}}'')
 A\rho''( \widetilde{\alpha'_{m}}) A \rho''(\widetilde{\alpha'_{m-1}})
A\dots A\rho''(\widetilde{\alpha'_{1}}' * \eta))\\
&= \text{Trace}( 
 A\rho''( \widetilde{\alpha'_{m}}) A \rho''(\widetilde{\alpha'_{m-1}})
A\dots A\rho''(\widetilde{\alpha'_{1}}' * \eta) \rho''(\eta^{-1} *\widetilde{\alpha'_{1}}''))\\
&= \text{Trace}( 
 A\rho''( \widetilde{\alpha'_{m}}) A \rho''(\widetilde{\alpha'_{m-1}})
A\dots A\rho''(\widetilde{\alpha'_{1}}' *\widetilde{\alpha'_{1}}''))\\
&= \text{Trace}( 
 A\rho''( \widetilde{\alpha'_{m}}) A \rho''(\widetilde{\alpha'_{m-1}})
A\dots A\rho''(\widetilde{\alpha'_{1}}))\\
&= \text{Trace}( 
 A\rho'( \widetilde{\alpha'_{m}}) A \rho'(\widetilde{\alpha'_{m-1}})
A\dots A\rho'(\widetilde{\alpha'_{1}})) \\
&= \Phi^{(M',\N')}(\Theta_{(D,\beta)} (\alpha))(\rho').
\end{align*}

\end{proof}

\section{Proof for Theorem \ref{thm3.11}}
For any $m$ by $k$ matrix $A$, $C_t(A),1\leq t\leq k,$ denotes the $t$-th column of $A$, 
$R_t(A),1\leq t\leq m$, denotes the $t$-th row of $A$.

To simplify the notation, we will omit the superscript for $\Phi^{(M,\mathcal{N})}$ when there is no confusion for $\MN$.

\subsection{The case when $\N=\emptyset$}\label{sub4.1}

Sikora proved $S_n(M;\mathbb{C},1)\simeq G_n(M)$ \cite{sikora2005skein}.
 L{\^e} and Sikora proved $S_n(M,\emptyset,1)\simeq S_n(M;\mathbb{C},1)$, which is related to the spin structure $h$ \cite{le2021stated}.
Then it is easy to show the combination of 
$S_n(M,\emptyset,1)\simeq S_n(M;\mathbb{C},1)\simeq G_n(M)\rightarrow R_n(M,\emptyset)$ is $\Phi$
,where the third map $G_n(M)\rightarrow R_n(M,\emptyset)$ is $\cY_h$ in Reamrk \ref{rem3.14}. Thus $\Phi$ is a well-defined surjective algebra homomorphism. Especially Ker$\Phi = \sqrt{0}$ since Ker$\cY_h = \sqrt{0}$.

\subsection{Independence of how to kill sinks and souces ($\mathcal{N}\neq \emptyset$)}\label{sub4.2}
When we try to use  relation (\ref{wzh.five}) to kill all the sinks and sources, we first drag all the sinks and sources close enough to some component of $\N$, then use relation (\ref{wzh.five}). In this subsection, we want to show $\Phi$ is independent of how to kill sinks and sources, that is, to show $\Phi$ is independent of how we drag sinks and sources close to $\N$.

Let $l$ be a stated $n$-web. Suppose $l'$ and $l''$ are obtained from $l$ by killing all the sinks and sources. Note that, for each sink or source of $l$, we may use different ways to kill this sink or source to get $l'$ and $l''$. First assume we kill all the sinks and sources in the same way to obtain $l'$ and $l''$ except one source or sink, which is denoted as $\mathfrak{S}$. Let $l_1$ be obtained from $l$ by first  killing all the sinks and sources except $\mathfrak{S}$ using the same way as $l'$ and $l''$ then eliminating the component containing $\mathfrak{S}$.

Suppose $\mathfrak{S}$ is a sink.
Then
\begin{align*}
l' = [\sum_{\sigma\in S_n}(-1)^{\ell(\sigma)}(\eta_n*\alpha_n)_{\sigma(n),u_n}(\eta_{n-1}*\alpha_{n-1})_{\sigma(n-1),u_{n-1}}\dots (\eta_{1}*\alpha_{1})_{\sigma(1),u_1}]\;l_1\\
l'' = [\sum_{\sigma\in S_n}(-1)^{\ell(\sigma)}(\gamma_n*\alpha_n)_{\sigma(n),u_n}(\gamma_{n-1}*\alpha_{n-1})_{\sigma(n-1),u_{n-1}}\dots (\gamma_{1}*\alpha_{1})_{\sigma(1),u_1}]\;l_1
\end{align*}
where $\alpha_t,\eta_t,\gamma_t,1\leq t\leq n, $ are framed oriented  arcs such that $\eta_t*\alpha_t,
\gamma_t*\alpha_t,1\leq t\leq n$, are well-defined framed oriented  boundary arcs in $(M,\mathcal{N})$. Note that
$\eta_t(1),1\leq t\leq n$, belong to  a same component of $\N$ (the same with $\gamma_t(1)$), and $\eta_t,1\leq t\leq n$ are isotopic to each other (the same with $\gamma_t$).

For any element $\rho\in\tilde{\chi}_n(M,\mathcal{N})$, we have 
\begin{align*}
&(\sum_{\sigma\in S_n}(-1)^{\ell(\sigma)}tr_{(\eta_n*\alpha_n)_{\sigma(n),u_n}}
tr_{(\eta_{n-1}*\alpha_{n-1})_{\sigma(n-1),u_{n-1}}}\dots tr_{(\eta_1*\alpha_1)_{\sigma(1),u_1}})(\rho)\\
= &\sum_{\sigma\in S_n}(-1)^{\ell(\sigma)}[A\rho(\widetilde{\eta_n*\alpha_n})]_{\overline{\sigma(n)},\overline{u_n}}
[A\rho(\widetilde{\eta_{n-1}*\alpha_{n-1}})]_{\overline{\sigma(n-1)},\overline{u_{n-1}}}\dots
[A\rho(\widetilde{\eta_1*\alpha_1})]_{\overline{\sigma(1)},\overline{u_1}}\\
= &(-1)^{\frac{n(n-1)}{2}}\sum_{\sigma\in S_n}(-1)^{\ell(\sigma)}[A\rho(\widetilde{\eta_n}*\widetilde{\alpha_n})]_{\sigma(n),\overline{u_n}}
[A\rho(\widetilde{\eta_{n-1}}*\widetilde{\alpha_{n-1}})]_{\sigma(n-1),\overline{u_{n-1}}}\dots
[A\rho(\widetilde{\eta_1}*\widetilde{\alpha_1})]_{\sigma(1),\overline{u_1}}\\
= &(-1)^{\frac{n(n-1)}{2}}\text{det}
\begin{pmatrix}
C_{\overline{u_1}}(A\rho(\widetilde{\eta_1}*\widetilde{\alpha_1}))&
\dots& C_{\overline{u_{n-1}}}(A\rho(\widetilde{\eta_{n-1}}*\widetilde{\alpha_{n-1}}))&
C_{\overline{u_n}}(A\rho(\widetilde{\eta_n}*\widetilde{\alpha_n}))
\end{pmatrix}\\
= &(-1)^{\frac{n(n-1)}{2}}\text{det}(A) \text{det}
\begin{pmatrix}
C_{\overline{u_1}}(\rho(\widetilde{\eta_1}*\widetilde{\alpha_1}))&\dots&
C_{\overline{u_{n-1}}}(\rho(\widetilde{\eta_{n-1}}*\widetilde{\alpha_{n-1}}))&
C_{\overline{u_n}}(\rho(\widetilde{\eta_n}*\widetilde{\alpha_n}))
\end{pmatrix}\\
= &(-1)^{\frac{n(n-1)}{2}} \text{det}
\begin{pmatrix}
C_{\overline{u_1}}(\rho(\widetilde{\eta_1}*\widetilde{\alpha_1}))&\dots &
C_{\overline{u_{n-1}}}(\rho(\widetilde{\eta_{n-1}}*\widetilde{\alpha_{n-1}})) &
C_{\overline{u_n}}(\rho(\widetilde{\eta_n}*\widetilde{\alpha_n}))
\end{pmatrix}.
\end{align*}
Similarly we have 
\begin{align*}
&(\sum_{\sigma\in S_n}(-1)^{\ell(\sigma)}tr_{(\gamma_n*\alpha_n)_{\sigma(n),u_n}}
tr_{(\gamma_{n-1}*\alpha_{n-1})_{\sigma(n-1),u_{n-1}}}\dots tr_{(\gamma_1*\alpha_1)_{\sigma(1),u_1}})(\rho)\\
= &(-1)^{\frac{n(n-1)}{2}} \text{det}
\begin{pmatrix}
C_{\overline{u_1}}(\rho(\widetilde{\gamma_1}*\widetilde{\alpha_1}))&\dots&
C_{\overline{u_{n-1}}}(\rho(\widetilde{\gamma_{n-1}}*\widetilde{\alpha_{n-1}}))&
C_{\overline{u_n}}(\rho(\widetilde{\gamma_n}*\widetilde{\alpha_n}))
\end{pmatrix}\\
= &(-1)^{\frac{n(n-1)}{2}} \text{det}
\begin{pmatrix}
C_{\overline{u_1}}(\rho(\widetilde{\gamma_1}*\widetilde{\eta_1}^{-1})\rho(\widetilde{\eta_1}*\widetilde{\alpha_1}))&\dots&
C_{\overline{u_n}}(\rho(\widetilde{\gamma_n}*\widetilde{\eta_n}^{-1})\rho(\widetilde{\eta_n}*\widetilde{\alpha_n}))
\end{pmatrix}\\
= &(-1)^{\frac{n(n-1)}{2}} \text{det}
\begin{pmatrix}
C_{\overline{u_1}}(\rho(\widetilde{\eta_1}*\widetilde{\alpha_1}))&\dots&
C_{\overline{u_{n-1}}}(\rho(\widetilde{\eta_{n-1}}*\widetilde{\alpha_{n-1}}))&
C_{\overline{u_n}}(\rho(\widetilde{\eta_n}*\widetilde{\alpha_n}))
\end{pmatrix}.
\end{align*}
The last equality is because $\rho(\widetilde{\gamma_1}*\widetilde{\eta_1}^{-1})
=\dots = \rho(\widetilde{\gamma_n}*\widetilde{\eta_n}^{-1})\in SL(n,\mathbb{C})$.

Suppose $\mathfrak{S}$ is a source. Similarly we have 
\begin{align*}
l' = [\sum_{\sigma\in S_n}(-1)^{\ell(\sigma)}(\beta_n*\epsilon_n)_{v_n, \sigma(n)}(\beta_{n-1}*\epsilon_{n-1})_{v_{n-1}, \sigma(n-1)}\dots (\beta_1*\epsilon_1)_{v_1, \sigma(1)}]\;l_1\\
l'' = [\sum_{\sigma\in S_n}(-1)^{\ell(\sigma)}(\beta_n*\zeta_n)_{v_n, \sigma(n)}(\beta_{n-1}*\zeta_{n-1})_{v_{n-1}, \sigma(n-1)}\dots (\beta_1*\zeta_1)_{v_1, \sigma(1)}]\;l_1\\
\end{align*}
where $\beta_t,\epsilon_t,\zeta_t,1\leq t\leq n$, are framed oriented  arcs such that $\beta_t*\epsilon_t,
\beta_t*\zeta_t,1\leq t\leq n$, are well-defined framed oriented  boundary arcs in $(M,\mathcal{N})$. 

For any element $\rho\in\tilde{\chi}_n(M,\mathcal{N})$, similarly we can get 
\begin{align*}
&(\sum_{\sigma\in S_n}(-1)^{\ell(\sigma)}tr_{(\beta_n*\epsilon_n)_{v_n, \sigma(n)}}
tr_{((\beta_{n-1}*\epsilon_{n-1})_{v_{n-1}, \sigma(n-1)}}\dots tr_{(\beta_1*\epsilon_1)_{v_1, \sigma(1)}})(\rho)\\
= &(-1)^{\frac{n(n-1)}{2}} \text{det}
\begin{pmatrix}
R_{\overline{v_1}}(A\rho(\widetilde{\beta_1}*\widetilde{\epsilon_1}))\\
\vdots\\
R_{\overline{v_{n-1}}}(\rho(\widetilde{\beta_{n-1}}*\widetilde{\epsilon_{n-1}}))\\
R_{\overline{v_n}}(\rho(\widetilde{\beta_n}*\widetilde{\epsilon_n}))
\end{pmatrix}.
\end{align*}
Then we have 
\begin{align*}
&(\sum_{\sigma\in S_n}(-1)^{\ell(\sigma)}tr_{(\beta_n*\zeta_n)_{v_n, \sigma(n)}}
tr_{((\beta_{n-1}*\zeta_{n-1})_{v_{n-1}, \sigma(n-1)}}\dots tr_{(\beta_1*\zeta_1)_{v_1, \sigma(1)}})(\rho)\\
= &(-1)^{\frac{n(n-1)}{2}} \text{det}
\begin{pmatrix}
R_{\overline{v_1}}(A\rho(\widetilde{\beta_1}*\widetilde{\zeta_1}))\\
\vdots\\
R_{\overline{v_{n-1}}}(\rho(\widetilde{\beta_{n-1}}*\widetilde{\zeta_{n-1}}))\\
R_{\overline{v_n}}(\rho(\widetilde{\beta_n}*\widetilde{\zeta_n}))
\end{pmatrix}\\=&
(-1)^{\frac{n(n-1)}{2}} \text{det}
\begin{pmatrix}
R_{\overline{v_1}}(A\rho(\widetilde{\beta_1}*\widetilde{\epsilon_1})\rho(\widetilde{\epsilon_1}^{-1} * \widetilde{\zeta_1}) )\\
\vdots\\
R_{\overline{v_{n-1}}}(\rho(\widetilde{\beta_{n-1}}*\widetilde{\epsilon_{n-1}})\rho(\widetilde{\epsilon_{n-1}}^{-1} * \widetilde{\zeta_{n-1}}))\\
R_{\overline{v_n}}(\rho(\widetilde{\beta_n}*\widetilde{\epsilon_n})\rho(\widetilde{\epsilon_n}^{-1} * \widetilde{\zeta_n}) )\\
\end{pmatrix}\\
=&(-1)^{\frac{n(n-1)}{2}} \text{det}
\begin{pmatrix}
R_{\overline{v_1}}(A\rho(\widetilde{\beta_1}*\widetilde{\epsilon_1}))\\
\vdots\\
R_{\overline{v_{n-1}}}(\rho(\widetilde{\beta_{n-1}}*\widetilde{\epsilon_{n-1}}))\\
R_{\overline{v_n}}(\rho(\widetilde{\beta_n}*\widetilde{\epsilon_n}))
\end{pmatrix}.
\end{align*}

Thus we have $\Phi(l') = \Phi(l'')$. In the general case, we have a sequence $l' = l^{(1)}, l^{(2)}, \dots, l^{(k)} = l''$ such that each $l^{(t)}$ (where $1 \leq t \leq k$) is obtained from $l$ by eliminating all sources and sinks using relation (\ref{wzh.five}). Additionally, each $l^{(t)}$ and $l^{(t+1)}$ (where $1 \leq t \leq k-1$) are obtained from $l$ using the same method to eliminate all but one sink or source.
 Then $\Phi(l') = \Phi(l^{(1)})=\Phi(l^{(2)})=\dots=
\Phi( l^{(k)})= \Phi(l'')$.
Then $\Phi$ is well-defined on the set of framed $n$-webs.

Suppose the stated $n$-webs $l_1$ and $l_2$ are isotopic. From the definition of $\Phi$, we first use relation relation (\ref{wzh.five}) to kill all the sinks and sources to obtain $(l_1)'$. According to the isotopy between $l_1$ and $l_2$ and how we kill sinks and sources in $l_1$,
 we can pick a way to kill all the sinks and sources in $l_2$ to obtain $(l_2)'$ such that  $(l_1)'$ and 
 $(l_2)'$ are homopotic relative to $\N$. Thus $\Phi(l_1) = \Phi(l_2)$, that is, $\Phi$ is well-defined on the set of isotopy classes of stated $n$-webs.

\subsection{Checking for relations ($\N\neq \emptyset$)}\label{sub4.3}
From the definition of $\Phi$, we know $\Phi$ respects relations (\ref{w.cross}),(\ref{wzh.five}) and
 (\ref{wzh.eight}).  Since, for any $\rho \in \tilde{\chi}_n(M,\mathcal{N})$, $\rho(\vartheta) = d_n I$, then $\Phi$ respects relations
(\ref{w.twist})
and
(\ref{w.unknot}).

We use $l'$ (respectively $l''$) to denote the stated $n$-webs on the left (repectively right) hand side of "$=$" in relations (\ref{wzh.four}), (\ref{wzh.six}) and (\ref{wzh.seven}).


Relation  (\ref{wzh.four}): From the definition of $\Phi$ and Subsection \ref{sub4.2}, we can suppose the parts outside of the box are $2n$ framed oriented arcs connecting to the box. We label the framed oriented arcs connecting to the box on the left edge as $\alpha_n, \dots, \alpha_2,\alpha_1$ from top to bottom, 
label the framed oriented arcs connecting to the box on the right edge as $\beta_n, \dots, \beta_2,\beta_1$ from top to bottom. Suppose $s(\alpha_t(0)) = u_t, s(\beta_t(1))=v_t,1\leq t\leq n$. To kill the sink and source in $l'$ using relation (\ref{wzh.five}), we use the same path to drag them close to $\N$. 

For $\rho\in\tilde{\chi}_n(M,\mathcal{N})$, we have
\begin{align*}
\Phi(l')(\rho) =& \text{det}
\begin{pmatrix}
R_{\overline{v_n}}(A\rho(\widetilde{\beta_n}*\widetilde{\epsilon_n}))\\
\vdots\\
R_{\overline{v_1}}(A\rho(\widetilde{\beta_1}*\widetilde{\epsilon_1}))
\end{pmatrix}\text{det}
\begin{pmatrix}
C_{\overline{u_1}}(\rho(\widetilde{\eta_1}*\widetilde{\alpha_1}))&\dots &
C_{\overline{u_n}}(\rho(\widetilde{\eta_n}*\widetilde{\alpha_n}))
\end{pmatrix}\\
=&\text{det}
\begin{pmatrix}
R_{\overline{v_n}}(A\rho(\widetilde{\beta_n}*\widetilde{\epsilon_n}))C_{\overline{u_1}}(\rho(\widetilde{\eta_1}*\widetilde{\alpha_1}))&\dots&
R_{\overline{v_n}}(A\rho(\widetilde{\beta_n}*\widetilde{\epsilon_n}))C_{\overline{u_n}}(\rho(\widetilde{\eta_n}*\widetilde{\alpha_n}))\\
\vdots& &\vdots\\
R_{\overline{v_1}}(A\rho(\widetilde{\beta_1}*\widetilde{\epsilon_1}))C_{\overline{u_1}}(\rho(\widetilde{\eta_1}*\widetilde{\alpha_1}))&\dots&
R_{\overline{v_1}}(A\rho(\widetilde{\beta_1}*\widetilde{\epsilon_1}))C_{\overline{u_n}}(\rho(\widetilde{\eta_n}*\widetilde{\alpha_n}))\\
\end{pmatrix}\\
=&\text{det}
\begin{pmatrix}
[A\rho(\widetilde{\beta_n}*\widetilde{\epsilon_n})\rho(\widetilde{\eta_1}*\widetilde{\alpha_1})]_{\overline{v_n},\overline{u_1}}&\dots&
[A\rho(\widetilde{\beta_n}*\widetilde{\epsilon_n})\rho(\widetilde{\eta_n}*\widetilde{\alpha_n})]_{\overline{v_n},\overline{u_n}}\\
\vdots& &\vdots\\
[A\rho(\widetilde{\beta_1}*\widetilde{\epsilon_1})\rho(\widetilde{\eta_1}*\widetilde{\alpha_1})]_{\overline{v_1},\overline{u_1}}&\dots&
[A\rho(\widetilde{\beta_1}*\widetilde{\epsilon_1})\rho(\widetilde{\eta_n}*\widetilde{\alpha_n})]_{\overline{v_1},\overline{u_n}}\\
\end{pmatrix}\\
=&(-1)^{\frac{n(n-1)}{2}}\text{det}
\begin{pmatrix}
[A\rho(\widetilde{\beta_1}*\widetilde{\epsilon_1}*\widetilde{\eta_1}*\widetilde{\alpha_1})]_{\overline{v_1},\overline{u_1}}&\dots&
[A\rho(\widetilde{\beta_1}*\widetilde{\epsilon_1}*\widetilde{\eta_n}*\widetilde{\alpha_n})]_{\overline{v_1},\overline{u_n}}\\
\vdots& &\vdots\\
[A\rho(\widetilde{\beta_n}*\widetilde{\epsilon_n}*\widetilde{\eta_1}*\widetilde{\alpha_1})]_{\overline{v_n},\overline{u_1}}&\dots&
[A\rho(\widetilde{\beta_n}*\widetilde{\epsilon_n}*\widetilde{\eta_n}*\widetilde{\alpha_n})]_{\overline{v_n},\overline{u_n}}\\
\end{pmatrix}\\
\end{align*}

For each pair $1\leq i,j\leq n$, we use $a_{i,j}$ to denote the oriented straight line in the shaded box (its framing is the one pointing to readers)
connecting  $\alpha_j(1)$  and $\beta_i(0)$  such that $\beta_i * a_{i,j} * \alpha_j$ is a well-defined stated framed oriented boundary arc.
Because we use the same path to drag the source and the sink,
then $$\rho(\widetilde{\beta_i}*\widetilde{\epsilon_i}*\widetilde{\eta_j}*\widetilde{\alpha_j})
= d_n \rho(\widetilde{\beta_i}*\widetilde{a_{i,j}}*\widetilde{\alpha_j})$$ for all $1\leq i,j\leq n$,
or 
$$\rho(\widetilde{\beta_i}*\widetilde{\epsilon_i}*\widetilde{\eta_j}*\widetilde{\alpha_j})
=  \rho(\widetilde{\beta_i}*\widetilde{a_{i,j}}*\widetilde{\alpha_j})$$ for all $1\leq i,j\leq n$.

For any $\rho\in\tilde{\chi}_n(M,\mathcal{N})$, since $(d_n)^{n} = 1$, we have 
\begin{align*}
&\Phi(l'')(\rho) \\= &(-1)^{\frac{n(n-1)}{2}}
\sum_{\sigma\in S_n}(-1)^{l(\sigma)}[A\rho(\widetilde{\beta_{\sigma(1)}}*\widetilde{\epsilon_{\sigma(1)}}*\widetilde{\eta_1}*\widetilde{\alpha_1})]_{\overline{v_{\sigma(1)}},\overline{u_1}}\dots
[A\rho(\widetilde{\beta_{\sigma(n)}}*\widetilde{\epsilon_{\sigma(n)}}*\widetilde{\eta_n}*\widetilde{\alpha_n})]_{\overline{v_{\sigma(n)}},\overline{u_n}}\\
=&(-1)^{\frac{n(n-1)}{2}}\text{det}
\begin{pmatrix}
[A\rho(\widetilde{\beta_1}*\widetilde{\epsilon_1}*\widetilde{\eta_1}*\widetilde{\alpha_1})]_{\overline{v_1},\overline{u_1}}&\dots&
[A\rho(\widetilde{\beta_1}*\widetilde{\epsilon_1}*\widetilde{\eta_n}*\widetilde{\alpha_n})]_{\overline{v_1},\overline{u_n}}\\
\vdots& &\vdots\\
[A\rho(\widetilde{\beta_n}*\widetilde{\epsilon_n}*\widetilde{\eta_1}*\widetilde{\alpha_1})]_{\overline{v_n},\overline{u_1}}&\dots&
[A\rho(\widetilde{\beta_n}*\widetilde{\epsilon_n}*\widetilde{\eta_n}*\widetilde{\alpha_n})]_{\overline{v_n},\overline{u_n}}\\
\end{pmatrix}\\
\end{align*}

Thus $\Phi(l') = \Phi(l'')$.


Relation (\ref{wzh.six}): We need to show
$$\Phi(\raisebox{-.20in}{
\begin{tikzpicture}
\tikzset{->-/.style=
{decoration={markings,mark=at position #1 with
{\arrow{latex}}},postaction={decorate}}}
\filldraw[draw=white,fill=gray!20] (-0.7,-0.7) rectangle (0,0.7);
\draw [line width =1.5pt,decoration={markings, mark=at position 1 with {\arrow{>}}},postaction={decorate}](0,0.7)--(0,-0.7);
\draw [color = black, line width =1pt] (0 ,0.3) arc (90:270:0.5 and 0.3);
\node [right]  at(0,0.3) {$i$};
\node [right] at(0,-0.3){$j$};
\draw [line width =1pt,decoration={markings, mark=at position 0.5 with {\arrow{>}}},postaction={decorate}](-0.5,0.02)--(-0.5,-0.02);
\end{tikzpicture}} ) = \Phi( \raisebox{-.20in}{
\begin{tikzpicture}
\tikzset{->-/.style=
{decoration={markings,mark=at position #1 with
{\arrow{latex}}},postaction={decorate}}}
\filldraw[draw=white,fill=gray!20] (-0.7,-0.7) rectangle (0,0.7);
\draw [line width =1.5pt,decoration={markings, mark=at position 1 with {\arrow{>}}},postaction={decorate}](0,0.7)--(0,-0.7);
\draw [color = black, line width =1pt] (0 ,0.3) arc (90:270:0.5 and 0.3);
\node [right]  at(0,0.3) {$i$};
\node [right] at(0,-0.3){$j$};
\draw [line width =1pt,decoration={markings, mark=at position 0.5 with {\arrow{<}}},postaction={decorate}](-0.5,0.02)--(-0.5,-0.02);
\end{tikzpicture}} ) = \delta_{\bar j,i }\,  (-1)^{n-i}.$$
We use $\alpha_1$ to denote $\raisebox{-.20in}{
\begin{tikzpicture}
\tikzset{->-/.style=
{decoration={markings,mark=at position #1 with
{\arrow{latex}}},postaction={decorate}}}
\filldraw[draw=white,fill=gray!20] (-0.7,-0.7) rectangle (0,0.7);
\draw [line width =1.5pt,decoration={markings, mark=at position 1 with {\arrow{>}}},postaction={decorate}](0,0.7)--(0,-0.7);
\draw [color = black, line width =1pt] (0 ,0.3) arc (90:270:0.5 and 0.3);
\node [right]  at(0,0.3) {$i$};
\node [right] at(0,-0.3){$j$};
\draw [line width =1pt,decoration={markings, mark=at position 0.5 with {\arrow{>}}},postaction={decorate}](-0.5,0.02)--(-0.5,-0.02);
\end{tikzpicture}} $ and $\alpha_2$ to denote 
$\raisebox{-.20in}{
\begin{tikzpicture}
\tikzset{->-/.style=
{decoration={markings,mark=at position #1 with
{\arrow{latex}}},postaction={decorate}}}
\filldraw[draw=white,fill=gray!20] (-0.7,-0.7) rectangle (0,0.7);
\draw [line width =1.5pt,decoration={markings, mark=at position 1 with {\arrow{>}}},postaction={decorate}](0,0.7)--(0,-0.7);
\draw [color = black, line width =1pt] (0 ,0.3) arc (90:270:0.5 and 0.3);
\node [right]  at(0,0.3) {$i$};
\node [right] at(0,-0.3){$j$};
\draw [line width =1pt,decoration={markings, mark=at position 0.5 with {\arrow{<}}},postaction={decorate}](-0.5,0.02)--(-0.5,-0.02);
\end{tikzpicture}} $, we have 
$$\Phi(\alpha_1)(\rho) = [A\rho(\widetilde{\alpha_1})]_{\bar{j},\bar{i}}
= d_n A_{\bar{j},\bar{i}} =  \delta_{\bar j,i }\,  (-1)^{n-i},\Phi(\alpha_2)(\rho) = [A\rho(\widetilde{\alpha_2})]_{\bar{i},\bar{j}}
= A_{\bar{i},\bar{j}} =  \delta_{\bar j,i }\,  (-1)^{n-i}.$$

Relation (\ref{wzh.seven}): We only prove the case when the white dot represents an arrow going from right to left.
From the definition of $\Phi$ and Subsection \ref{sub4.2}, we only have two cases to consider: (1) the left hand side of "$=$" is a knot, (2)
the left hand side of "$=$" is an arc. 

When the left hand side of "$=$" is a knot, the right hand side of "$=$" is a framed oriented boundary arc, which is denoted as $\alpha$. 
Then for $\rho\in\tilde{\chi}_n(M,\mathcal{N})$, we have 
\begin{align*}
\Phi(l'')(\rho) = \sum_{1\leq i\leq n}  (-1)^{i+1}[A\rho(\tilde{\alpha})]_{i,\bar{i}}=
\sum_{1\leq i\leq n}  [\rho(\tilde{\alpha})]_{\bar{i},\bar{i}} = \text{Trace}(\rho(\tilde{\alpha}))
= \Phi(l')(\rho).
\end{align*}

When the left hand side of "$=$" is an arc, the right hand side of "$=$" consists two framed oriented boundary arcs, which are denoted as $\gamma_2$ and $\gamma_1$ such that $\gamma_2$ is above $\gamma_1$ is the box. 
Suppose $s(\gamma_1(0)) = v, s(\gamma_2(1)) = u$.
Then for $\rho\in\tilde{\chi}_n(M,\mathcal{N})$, we have 
\begin{align*}
\Phi(l'')(\rho) &= \sum_{1\leq i\leq n}  (-1)^{i+1}[A\rho(\widetilde{\gamma_2})]_{\bar{u},\bar{i}}
[A\rho(\widetilde{\gamma_1})]_{{i},\bar{v}}=
 \sum_{1\leq i\leq n}  [A\rho(\widetilde{\gamma_2})]_{\bar{u},\bar{i}}
[\rho(\widetilde{\gamma_1})]_{\bar{i},\bar{v}}\\
&=[A\rho(\widetilde{\gamma_2})\rho(\widetilde{\gamma_1})]_{\bar{u},\bar{v}}
=[A\rho(\widetilde{\gamma_2}*\widetilde{\gamma_1})]_{\bar{u},\bar{v}}
= \Phi(l')(\rho).
\end{align*}

\subsection{Algebra homomorphism and surjectivity}
The definition of $\Phi$ implies it is an algebra homormophism. 

When $\N$ is empty, we already proved $\Phi$ is surjective.
Assume $\N\neq \emptyset$.
 For any element $[\alpha]\in \pi_1^{Mor}(M,\mathcal{N})$, we choose a representative $\alpha$ for $[\alpha]$ such that $\alpha$ has no self-intersection and only intersects $\partial M$ at its endpoints. Then we can give a framing for $\alpha$ to make $\alpha$ a framed oriented boundary arc for $(M,\mathcal{N})$. Then $R_n(M,\mathcal{N})$ is generated by $\Phi(\alpha_{j,i}),1\leq j,i\leq n, [\alpha]
\in\pi_1^{Mor}(M,\mathcal{N})$, as an algebra. Thus $\Phi$ is surjective.

In Section \ref{subb5}, we will give a unique way to lift  $[\alpha]$ to a framed oriented boundary arc.

\def \N{\mathcal{N}}

\section{Classical limit  and Ker$\Phi$}\label{subb5}

In this section we try to understand the classical limit of the stated $SL_n$-skein module of the marked 3-manifold. Then we will use the classical limit to show the kernel of $\Phi$ is $\sqrt{0}$. Using Lemma \ref{6688},  we can  reduce the general marked 3-manifold to the  connected marked 3-manifold.

\begin{lemma}[\cite{blyth2018module,przytycki1998fundamentals}]\label{8866}
Suppose $0\rightarrow A_1\rightarrow B_1 \rightarrow C_1\rightarrow 0,$ and 
$0\rightarrow A_2\rightarrow B_2 \rightarrow C_2\rightarrow 0$ are two short exact sequences, then 
$$0\rightarrow A_1\otimes B_2 + B_1\otimes A_2\rightarrow B_1\otimes B_2\rightarrow C_1\otimes C_2\rightarrow 0$$ is an exact sequence. All $A_i,B_i,C_i$ are vector spaces over $\mathbb{C}$ and all the involved maps are linear maps.

\end{lemma}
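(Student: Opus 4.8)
The plan is to exploit the fact that every short exact sequence of $\mathbb{C}$-vector spaces splits, so that tensoring over $\mathbb{C}$ is exact and all the maps involved can be made completely explicit. Write the two sequences as $0\to A_i\xrightarrow{f_i}B_i\xrightarrow{g_i}C_i\to 0$ for $i=1,2$, and read the asserted sequence as follows: the left-hand term $A_1\otimes B_2+B_1\otimes A_2$ is the sum, inside $B_1\otimes B_2$, of the images of $f_1\otimes\mathrm{id}_{B_2}$ and $\mathrm{id}_{B_1}\otimes f_2$, the left map is the inclusion of this subspace, and the right map is $g_1\otimes g_2$.

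First I would dispose of the two easy endpoints. Surjectivity of $g_1\otimes g_2$ is immediate: since $g_1$ and $g_2$ are onto, every elementary tensor $c_1\otimes c_2$ lies in the image, and these span $C_1\otimes C_2$. Injectivity of the left map is automatic because $f_1\otimes\mathrm{id}_{B_2}$ and $\mathrm{id}_{B_1}\otimes f_2$ are themselves injective (tensoring over a field is exact), and the sum of their images carries the subspace structure it inherits from $B_1\otimes B_2$. The whole content of the lemma is therefore the single identification
\[
\ker(g_1\otimes g_2)=\operatorname{im}(f_1\otimes\mathrm{id}_{B_2})+\operatorname{im}(\mathrm{id}_{B_1}\otimes f_2).
\]

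My preferred route for this identification is to choose vector-space splittings $B_1=f_1(A_1)\oplus C_1'$ and $B_2=f_2(A_2)\oplus C_2'$ with $g_i$ restricting to an isomorphism $C_i'\xrightarrow{\sim}C_i$. Distributing the tensor product gives a four-summand decomposition $B_1\otimes B_2=\bigl(f_1(A_1)\otimes f_2(A_2)\bigr)\oplus\bigl(f_1(A_1)\otimes C_2'\bigr)\oplus\bigl(C_1'\otimes f_2(A_2)\bigr)\oplus\bigl(C_1'\otimes C_2'\bigr)$. The map $g_1\otimes g_2$ annihilates the first three summands and restricts to an isomorphism from the last summand onto $C_1\otimes C_2$, so its kernel is exactly the sum of the first three summands. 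A direct check then matches this with the two images: since $f_1(A_1)\otimes B_2=\bigl(f_1(A_1)\otimes f_2(A_2)\bigr)\oplus\bigl(f_1(A_1)\otimes C_2'\bigr)$ and $B_1\otimes f_2(A_2)=\bigl(f_1(A_1)\otimes f_2(A_2)\bigr)\oplus\bigl(C_1'\otimes f_2(A_2)\bigr)$, their sum is precisely the three-summand kernel.

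The argument is essentially formal and I do not expect a genuine obstacle; the only point needing a little care is the bookkeeping that identifies the sum of the two images with the three-summand kernel, and reading the symbol $A_1\otimes B_2+B_1\otimes A_2$ as a sum of subspaces of $B_1\otimes B_2$ (so that the left map really is an inclusion) rather than as an abstract tensor product. As a splitting-free alternative I would factor $g_1\otimes g_2=(\mathrm{id}_{C_1}\otimes g_2)\circ(g_1\otimes\mathrm{id}_{B_2})$ and compute $\ker(g_1\otimes g_2)$ as the preimage under $g_1\otimes\mathrm{id}_{B_2}$ of $\ker(\mathrm{id}_{C_1}\otimes g_2)=C_1\otimes f_2(A_2)$; using that the two one-variable tensored sequences are exact (flatness over $\mathbb{C}$), this preimage equals $A_1\otimes B_2+B_1\otimes A_2$, giving the same conclusion without choosing splittings.
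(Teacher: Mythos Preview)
Your proof is correct and entirely standard. The paper does not actually prove this lemma; it merely cites it from \cite{blyth2018module,przytycki1998fundamentals}, so there is no in-paper argument to compare against. Your splitting argument (and the splitting-free alternative via factoring $g_1\otimes g_2$) are both valid ways to establish the result over a field.
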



\begin{lemma}\label{6688}
Suppose $(M,\mathcal{N})$ is the disjoint union of $(M_1,\mathcal{N}_1)$ and $(M_2,\mathcal{N}_2)$. If Ker$\,\Phi^{(M_i,\N_i)}
=\sqrt{0}_{S_n(M_i,\N_i,1)}$ for $i=1,2$, then  Ker$\,\Phi^{(M,\mathcal{N})}
=\sqrt{0}_{S_n(M,\mathcal{N},1)}$.
\end{lemma}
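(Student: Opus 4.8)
The plan is to reduce the statement to the commutative-algebra fact that over $\mathbb{C}$ the tensor product of two reduced algebras is reduced, together with the tensor-product decompositions already recorded for disjoint unions. Write $B_i = S_n(M_i,\mathcal{N}_i,1)$ and $C_i = R_n(M_i,\mathcal{N}_i)$ for $i=1,2$. From the introduction we have $S_n(M,\mathcal{N},1) = B_1 \otimes B_2$, and from Subsection \ref{sss3.3} the isomorphism $\chi_n(M,\mathcal{N}) \simeq \chi_n(M_1,\mathcal{N}_1) \times \chi_n(M_2,\mathcal{N}_2)$ gives $R_n(M,\mathcal{N}) = C_1 \otimes C_2$. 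Since a stated $n$-web in the disjoint union splits as $l_1 \sqcup l_2$ with $l_i$ a web in $(M_i,\mathcal{N}_i)$, and since each $\tilde{\rho} \in \tilde{\chi}_n(M,\mathcal{N})$ corresponds to a pair $(\tilde{\rho}_1,\tilde{\rho}_2)$, the definition of $\Phi$ in Theorem \ref{thm3.11} factors as $\Phi^{(M,\mathcal{N})} = \Phi^{(M_1,\mathcal{N}_1)} \otimes \Phi^{(M_2,\mathcal{N}_2)}$ under these identifications.

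First I would feed the two short exact sequences
$$0 \to \mathrm{Ker}\,\Phi^{(M_i,\mathcal{N}_i)} \to B_i \xrightarrow{\Phi^{(M_i,\mathcal{N}_i)}} C_i \to 0, \qquad i=1,2,$$
(exact by the surjectivity in Theorem \ref{thm3.11}) into Lemma \ref{8866}, using the hypothesis $\mathrm{Ker}\,\Phi^{(M_i,\mathcal{N}_i)} = \sqrt{0}_{B_i}$. This produces the exact sequence
$$0 \to \sqrt{0}_{B_1} \otimes B_2 + B_1 \otimes \sqrt{0}_{B_2} \to B_1 \otimes B_2 \xrightarrow{\Phi^{(M,\mathcal{N})}} C_1 \otimes C_2 \to 0,$$
whence $\mathrm{Ker}\,\Phi^{(M,\mathcal{N})} = \sqrt{0}_{B_1} \otimes B_2 + B_1 \otimes \sqrt{0}_{B_2}$. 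It then remains to identify this ideal with $\sqrt{0}_{B_1 \otimes B_2} = \sqrt{0}_{S_n(M,\mathcal{N},1)}$.

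The inclusion $\sqrt{0}_{B_1} \otimes B_2 + B_1 \otimes \sqrt{0}_{B_2} \subseteq \sqrt{0}_{B_1 \otimes B_2}$ is the easy direction: each generator $x \otimes 1$ with $x \in \sqrt{0}_{B_1}$ satisfies $(x \otimes 1)^k = x^k \otimes 1 = 0$, and since $B_1 \otimes B_2$ is commutative (Corollary \ref{cccc3.2}) its nilpotents form an ideal, so the ideal generated by such elements lies in the nilradical; symmetrically for the second summand. For the reverse inclusion I would use that the quotient $(B_1 \otimes B_2)/(\sqrt{0}_{B_1} \otimes B_2 + B_1 \otimes \sqrt{0}_{B_2})$ is isomorphic to $C_1 \otimes C_2 = R_n(M,\mathcal{N})$, which is reduced, being the coordinate ring of the affine algebraic set $\chi_n(M,\mathcal{N})$ (equivalently, over the algebraically closed field $\mathbb{C}$ the tensor product of the reduced algebras $C_1$ and $C_2$ is reduced). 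A radical ideal contains the nilradical, so $\sqrt{0}_{B_1 \otimes B_2} \subseteq \sqrt{0}_{B_1} \otimes B_2 + B_1 \otimes \sqrt{0}_{B_2}$, completing the identification.

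The main obstacle is exactly this last point: recognizing that the kernel delivered by Lemma \ref{8866} is the \emph{entire} nilradical of the tensor product, which rests on the reducedness of $C_1 \otimes C_2$. This would fail over an imperfect ground field, where a tensor product of reduced algebras can acquire nilpotents, so the argument genuinely uses that we work over $\mathbb{C}$; everything else is bookkeeping with the disjoint-union decompositions and the exactness statement of Lemma \ref{8866}.
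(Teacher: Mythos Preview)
Your proof is correct and follows essentially the same route as the paper's: both apply Lemma \ref{8866} to the two short exact sequences to identify $\mathrm{Ker}\,\Phi^{(M,\mathcal{N})}$ with $\sqrt{0}_{B_1}\otimes B_2 + B_1\otimes\sqrt{0}_{B_2}$, then sandwich this between the nilradical (easy inclusion, since nilpotents tensor anything are nilpotent) and the kernel (since $R_n(M,\mathcal{N})$ is reduced). You are somewhat more explicit than the paper in verifying that $\Phi^{(M,\mathcal{N})}$ factors as the tensor product of the $\Phi^{(M_i,\mathcal{N}_i)}$ and in justifying why $C_1\otimes C_2$ is reduced, but the argument is the same.
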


\begin{proof}
Since $R_n(M,\mathcal{N})$ contains no nonzero nilpotents, we have  $\sqrt{0}_{S_n(M,\mathcal{N},1)}\subset$\,Ker$\,\Phi^{(M,\mathcal{N})}.$
We know $S_n(M,\mathcal{N},1) = S_n(M_1,\mathcal{N}_1,1)\otimes S_n(M_2,\mathcal{N}_2,1),\;R_n(M,\mathcal{N}) = R_n(M_1,\mathcal{N}_1) \otimes R_n(M_2,\mathcal{N}_2)$. From the assumption, we have the following two exact sequences:
\begin{align*}
0\rightarrow \sqrt{0}_{S_n(M_i,\N_i,1)}\rightarrow S_n(M_i,\N_i,1) \rightarrow R_n(M_i,\N_i)\rightarrow 0
\end{align*}
for $i=1,2$.
Then  Lemma \ref{8866} implies the following exact sequence:
\begin{align*}0\rightarrow \sqrt{0}_{S_n(M_1,\mathcal{N}_1,1)}\otimes S_n(M_2,\mathcal{N}_2,1) + S_n(M_1,\mathcal{N}_1,1)\otimes\sqrt{0}_{S_n(M_2,\mathcal{N}_2,1)}\rightarrow\\ S_n(M_1,\mathcal{N}_1,1)\otimes S_n(M_2,\mathcal{N}_2,1)\rightarrow R_n(M_1,\mathcal{N}_1)\otimes R_n(M_2,\mathcal{N}_2)\rightarrow 0.
\end{align*}
Thus we have 
$$\text{Ker}\Phi^{(M,\mathcal{N})} = \sqrt{0}_{S_n(M_1,\mathcal{N}_1,1)}\otimes S_n(M_2,\mathcal{N}_2,1) + S_n(M_1,\mathcal{N}_1,1)\otimes\sqrt{0}_{S_n(M_2,\mathcal{N}_2,1)}\subset \sqrt{0}_{S_n(M,\mathcal{N},1)}.$$
Then
$$\text{Ker}\Phi^{(M,\mathcal{N})} = \sqrt{0}_{S_n(M_1,\mathcal{N}_1,1)}\otimes S_n(M_2,\mathcal{N}_2,1) + S_n(M_1,\mathcal{N}_1,1)\otimes\sqrt{0}_{S_n(M_2,\mathcal{N}_2,1)}= \sqrt{0}_{S_n(M,\mathcal{N},1)}.$$

\end{proof}

In the remaining of this section, we will assume all the marked 3-manifolds involved are connected.
We also fix a relative spin structure $h$ for $(M,\mathcal{N})$. For any (stated) framed oriented  boundary arc $\alpha$ in $(M,\mathcal{N})$, we consider $[\alpha]$ as an element in $\pi_1^{Mor}(M,\mathcal{N})$ by forgetting the framing of $\alpha$.

\begin{rem}\label{rre5.1}
A morphism $[\alpha] \in \pi_1^{Mor}(M,\mathcal{N})$ and two integers $1\leq i,j\leq n$ uniquely determine an element in $S_n(M,\mathcal{N},1)$ in the following way: We choose a good representative $\alpha$ such that $\alpha$ is a properly embedded arc in $M$. Then we give a framing to $\alpha$ respecting $\N$, that is, the framing at endpoints are given by the velocity vectors of $\N$. We denote this framed oriented boundary arc as $\hat{\alpha}$. We choose the framing such that $h(\widetilde{\hat{\alpha}}) = 0$, then we obtain an element $\hat{\alpha}_{i,j}
\in S_n(M,\mathcal{N},1)$. Suppose we choose a different good representative $\alpha'$. We have $[\alpha] = [\alpha']\in \pi_1^{Mor}(M,\mathcal{N})$ and $h(\widetilde{\hat{\alpha'}}) = h(\widetilde{\hat{\alpha}}) = 0$. Then $\hat{\alpha'}_{i,j} = \hat{\alpha}_{i,j}$ because of relations (\ref{w.cross}), (\ref{w.twist}) , (\ref{wzh.eight}) and Corollary \ref{cccc3.2} (here we  use a  standard fact that two embeddings of a compact graph in $M$ are homotopic if and only if  one can be obtained from the other by crossing changes, height changes, and   isotopies \cite{skeingroup}).

 We use 
$S^{[\alpha]}_{i,j}$ to denote $\hat{\alpha}_{i,j}$, and use $S^{[\alpha]}$ to denote an $n$ by $n$  matrix in $S_n(M,\mathcal{N},1)$ such that $(S^{[\alpha]})_{i,j} = S^{[\alpha]}_{i,j},1\leq i,j\leq n$.

For any two stated oriented framed boundary arcs $\alpha_1,\alpha_2$, suppose $s(\alpha_1(0))
= s(\alpha_2(0))$ and $s(\alpha_1(1))
= s(\alpha_2(1))$. If $h(\widetilde{\alpha_1}) = h(\widetilde{\alpha_2}) $ and $[\alpha_1] = [\alpha_2]\in
\pi_1^{Mor}(M,\mathcal{N})$, then $\alpha_1 =\alpha_2 \in S_n(M,\mathcal{N},1)$ because of relations (\ref{w.cross}), (\ref{w.twist}), (\ref{wzh.eight}) and Corollary \ref{cccc3.2}.

The for any stated oriented framed boundary arc $\alpha_{i,j}$, we have $$\alpha_{i,j} =
d_n^{h(\tilde{\alpha})} S^{[\alpha]}_{i,j}\in S_n(M,\mathcal{N},1).$$
\end{rem}

\begin{proposition}\label{prop5.2}
(a) For any two morphisms $[\alpha],[\beta]\in \pi_1^{Mor}(M,\mathcal{N})$, if $[\beta][\alpha]$ makes sense, then 
$A S^{[\beta*\alpha]} = A S^{[\beta]} A S^{[\alpha]}$.

(b) For any $[\eta]\in \pi_1^{Mor}(M,\mathcal{N})$, we have 
det$(S^{[\eta]}) = 1$. Especially det$(A S^{[\eta]}) = 1$.

(c) Suppose $[o]\in \pi_1(M,\mathcal{N})$ is the identity morphism for an object, then $S^{[o]} = d_n A$.
Especially $A S^{[o]} = I$.
\end{proposition}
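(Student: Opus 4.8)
The plan is to establish the three matrix identities directly inside $S_n(M,\mathcal{N},1)$, reducing each to the local skein relations, rather than reading them off through $\Phi$ (whose kernel is exactly what we are trying to compute). Throughout I would use that at $v=1$ we have $q=1$, so that $c_i=(-1)^{n-i}$, $t=(-1)^{n-1}=d_n$, $a=1$, and the module is commutative by Corollary~\ref{cccc3.2}; I would also invoke Remark~\ref{rre5.1} to pass freely between a stated boundary arc $\alpha_{i,j}$ and $d_n^{h(\tilde\alpha)}S^{[\alpha]}_{i,j}$. Part (c) is the easiest: a good representative of the identity morphism $[o]$ at an object $E$ is a small contractible cap attached to $E$, and stating its top endpoint by $i$ and its bottom by $j$ turns it into the left-hand side of relation \eqref{wzh.six}, giving $S^{[o]}_{i,j}=\delta_{\bar j,i}\,c_i=(-1)^{n-i}\delta_{\bar j,i}$ at $v=1$. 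Comparing with $[d_nA]_{i,j}=d_n(-1)^{i+1}\delta_{\bar i,j}=(-1)^{n-i}\delta_{\bar j,i}$ yields $S^{[o]}=d_nA$, whence $AS^{[o]}=d_nA^2=d_n^2 I=I$ since $A^2=d_nI$.

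For (a) I would expand both sides entrywise. Using $A_{i,k}=(-1)^{i+1}\delta_{\bar i,k}$ one finds $[AS^{[\beta]}AS^{[\alpha]}]_{i,j}=\sum_k(-1)^{i+1}(-1)^{k+1}\hat\beta_{\bar i,k}\hat\alpha_{\bar k,j}$, where $\hat\beta_{\bar i,k}$ carries the state $k$ at the start $\hat\beta(0)$ on the common marking $E_1$ and $\hat\alpha_{\bar k,j}$ carries the state $\bar k$ at the end $\hat\alpha(1)$ on $E_1$. The key numerical identity is $(-1)^{k+1}=(c_{\bar k})^{-1}$, valid at $v=1$ because $c_{\bar k}=(-1)^{k-1}$; it lets me rewrite the internal sum as $\sum_k(c_{\bar k})^{-1}(\cdots)$, which is precisely the right-hand side of relation \eqref{wzh.seven}. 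Reading \eqref{wzh.seven} from right to left then reconnects $\hat\alpha(1)$ to $\hat\beta(0)$ by a turnback on $E_1$, producing the single arc $\widehat{\beta*\alpha}$ with top state $\bar i$ and bottom state $j$. Hence $[AS^{[\beta]}AS^{[\alpha]}]_{i,j}=(-1)^{i+1}\widehat{\beta*\alpha}_{\bar i,j}=[AS^{[\beta*\alpha]}]_{i,j}$, provided the reconnected arc again has $h=0$.

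For (b) I would first observe that (a) and (c) together only show that $AS^{[\eta]}$ is invertible with inverse $AS^{[\eta^{-1}]}$ (apply (a) to $\eta*\eta^{-1}=o$ and use (c)), hence that $\det(S^{[\eta]})$ is a unit, but not that it equals $1$: the determinant is exactly where the $SL_n$-nature of the theory must enter, through the $n$-valent vertex relations \eqref{wzh.four} and \eqref{wzh.five}. The plan is to form the closed web $\Xi$ obtained by capping $n$ good parallel copies of $\hat\eta$ with a source vertex near $E_0$ and a sink vertex near $E_1$. Resolving both vertices by \eqref{wzh.five} converts the two antisymmetrizers into $\sum_{\sigma,\tau}\mathrm{sgn}(\sigma)\mathrm{sgn}(\tau)\prod_i S^{[\eta]}_{\tau(i),\sigma(i)}=n!\,\det(S^{[\eta]})$ at $v=1$. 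On the other hand, since $\hat\eta$ is embedded with good framing, $\Xi$ is isotopic to the standard source--sink web $B_n$ joined by $n$ trivial parallel edges, which is evaluated by \eqref{wzh.four} together with \eqref{w.cross}, \eqref{w.twist} and \eqref{w.unknot}. Comparing the two evaluations gives $\det(S^{[\eta]})=1$, and then $\det(AS^{[\eta]})=\det(A)\det(S^{[\eta]})=1$.

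I expect the main obstacle to lie in the scalar bookkeeping in (b): a naive component count already fails (for $n=2$ the two evaluations reconcile only after all of $a$, $t$, the per-circle factor $(-1)^{n-1}[n]$, the reversal sign $(-1)^{\binom n2}$, and the framing corrections from \eqref{w.twist} are tracked), so the constants of the theory must be combined with care to produce exactly $n!$ on each side. A second, pervasive subtlety across all three parts is the relative spin structure: each geometric move — the cap in (c), the turnback from \eqref{wzh.seven} in (a), and the vertex resolutions and the contraction of $\hat\eta$ in (b) — can a priori introduce a factor $d_n^{h(\cdot)}$, so I must verify via Remark~\ref{rre5.1} that the chosen good representatives keep $h=0$ and that no stray power of $d_n$ survives.
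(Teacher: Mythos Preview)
Your treatments of (a) and (c) are exactly the paper's: for (c) the paper also reads $S^{[o]}_{i,j}$ off the cap via relation~\eqref{wzh.six} and matches it with $d_nA$, and for (a) the paper's one-line computation $(S^{[\beta]}AS^{[\alpha]})_{i,j}=\sum_k(-1)^{k+1}S^{[\beta]}_{i,k}S^{[\alpha]}_{\bar k,j}=S^{[\beta*\alpha]}_{i,j}$ is precisely your use of~\eqref{wzh.seven} with $(c_{\bar k})^{-1}=(-1)^{k+1}$, just without the extra factor of $A$ on the left. Your spin-structure worry in (a) is handled by additivity of $h$: since $h(\tilde{\hat\alpha})=h(\tilde{\hat\beta})=0$ and the turnback of~\eqref{wzh.seven} realizes the groupoid concatenation without an extra kink, the rejoined arc again has $h=0$ and equals $S^{[\beta*\alpha]}$.

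For (b) your route is correct in principle but much heavier than needed. The paper never forms a closed web: it simply recognizes $\sum_\sigma(-1)^{\ell(\sigma)}S^{[\eta]}_{1,\sigma(1)}\cdots S^{[\eta]}_{n,\sigma(n)}$, via relation~\eqref{wzh.five} read backwards at $v=1$, as the single $n$-valent vertex with legs on $\mathcal N$ stated consecutively by $1,\dots,n$, and that stated web is known to equal $1$ (equation~(54) of \cite{le2021stated}). This bypasses the entire \eqref{wzh.four}/\eqref{w.unknot}/\eqref{w.twist} scalar chase you anticipate. If you still prefer the closed-web picture, there is also a shortcut you are missing: once you observe that $\Xi$ sits in a tubular ball around $\hat\eta$ and is therefore isotopic to the same bubble built on the \emph{trivial} arc, resolve that one by~\eqref{wzh.five} to obtain $n!\det(S^{[o]})=n!$ directly from part~(c), and compare. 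Either way no delicate constant-tracking is required.
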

\begin{proof}
(a) We have 
$$(S^{[\beta]} A S^{[\alpha]})_{i,j} = \sum_{1\leq k\leq n} (-1)^{k+1}S^{[\beta]}_{i,k} S^{[\alpha]}_{\bar{k}, j}
= S^{[\beta*\alpha]}_{i,j} = (S^{[\beta*\alpha]})_{i,j}$$
where the second equality is because of relation (\ref{wzh.seven}).
Thus $A S^{[\beta*\alpha]} = A S^{[\beta]} A S^{[\alpha]}$.

%

(b)We have 
$$\text{det}(S^{[\eta]}) = \sum_{\sigma\in S_n} (-1)^{l(\sigma)} S^{[\eta]}_{1,\sigma(1)}
 S^{[\eta]}_{2,\sigma(2)} \dots  S^{[\eta]}_{n,\sigma(n)}
= \raisebox{-.30in}{
\begin{tikzpicture}
\tikzset{->-/.style=
{decoration={markings,mark=at position #1 with
{\arrow{latex}}},postaction={decorate}}}
\filldraw[draw=white,fill=gray!20] (0,-0.7) rectangle (1.2,1.3);
\draw [line width =1.5pt,decoration={markings, mark=at position 1 with {\arrow{>}}},postaction={decorate}](1.2,-0.7)--(1.2,1.3);
\draw [line width =1pt,decoration={markings, mark=at position 0.5 with {\arrow{<}}},postaction={decorate}](1.2,1)  --(0.2,0);
\draw [line width =1pt,decoration={markings, mark=at position 0.5 with {\arrow{<}}},postaction={decorate}](1.2,0)  --(0.2,0);
\draw [line width =1pt,decoration={markings, mark=at position 0.5 with {\arrow{<}}},postaction={decorate}](1.2,-0.4)--(0.2,0);
\node  at(1,0.5) {$\vdots$};
\node [right] at(1.2,1) {$1$};
\node [right] at(1.2,0) {$n-1$};
\node [right] at(1.2,-0.4) {$n$};
\end{tikzpicture}}
 = 1$$
where the second equality is from relation (\ref{wzh.five}) and 
the last equality is because of equation (54) in  \cite{le2021stated}.

(c) For $1\leq i,j\leq n$, we have 
$$S^{[o]}_{i,j} = \raisebox{-.20in}{
\begin{tikzpicture}
\tikzset{->-/.style=
{decoration={markings,mark=at position #1 with
{\arrow{latex}}},postaction={decorate}}}
\filldraw[draw=white,fill=gray!20] (-0.7,-0.7) rectangle (0,0.7);
\draw [line width =1.5pt,decoration={markings, mark=at position 1 with {\arrow{>}}},postaction={decorate}](0,-0.7)--(0,0.7);
\draw [color = black, line width =1pt] (0 ,0.3) arc (90:270:0.5 and 0.3);
\node [right]  at(0,0.3) {$i$};
\node [right] at(0,-0.3){$j$};
\draw [line width =1pt,decoration={markings, mark=at position 0.5 with {\arrow{<}}},postaction={decorate}](-0.5,0.02)--(-0.5,-0.02);
\end{tikzpicture}} = d_n A_{i,j}.$$
Thus $ S^{[o]} = d_n A$.

\end{proof}

\subsection{Isomomorphism between $S_n(M,\mathcal{N},1)$ and $\Gamma_n(M)$ when $\N$ has one component}

In this subsection, $\N$ always containes one component unless specified.
If $\N$ has only one component, then $\pi_1^{Mor}(M,\mathcal{N}) = \pi_1(M)$ (we choose the base point for $\pi_1(M)$ to be a point in $\N$).

\begin{lemma}\label{lmm5.3}

Let $(M,\mathcal{N})$ be a marked 3-manifold with $\N$ consisting of one component. There exists an algebra homomorphism
$F:\Gamma_n(M)\rightarrow S_n(M,\mathcal{N},1)$ defined by 
$$F([\alpha]_{i,j}) = F((Q_{[\alpha]})_{i,j}) = (A S^{[\alpha]})_{i,j}$$
where $[\alpha]\in \pi_1^{Mor}(M,\mathcal{N}),1\leq i,j\leq n.$

\end{lemma}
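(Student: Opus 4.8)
The plan is to invoke the universal property of the presentation algebra. Recall from Definition \ref{df3.4} that $\Gamma_n(M)$ is the quotient of the free commutative algebra $\mathbb{C}[[\alpha]_{i,j}\mid [\alpha]\in\pi_1(M),\,1\leq i,j\leq n]$ by the ideal $J$ generated by the entries of the matrices $Q_{[\alpha]}Q_{[\beta]}-Q_{[\alpha*\beta]}$ (over all composable pairs), by $\det(Q_{[\alpha]})-1$, and by the entries of $Q_{[o]}-I$. Since $\N$ has a single component we have $\pi_1^{Mor}(M,\mathcal{N})=\pi_1(M)$, so Remark \ref{rre5.1} already supplies, for each $[\alpha]\in\pi_1(M)$, a well-defined matrix $S^{[\alpha]}$ over $S_n(M,\mathcal{N},1)$, and hence a well-defined matrix $AS^{[\alpha]}$. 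All the substantive work has therefore been done in Remark \ref{rre5.1} and Proposition \ref{prop5.2}; this lemma is the step that repackages those matrix identities as a map out of the universal object.

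First I would define a candidate homomorphism $\tilde{F}$ on the free algebra by setting $\tilde{F}([\alpha]_{i,j})=(AS^{[\alpha]})_{i,j}$. Because $S_n(M,\mathcal{N},1)$ is commutative by Corollary \ref{cccc3.2}, this prescription on generators extends uniquely to an algebra homomorphism $\tilde{F}:\mathbb{C}[[\alpha]_{i,j}]\rightarrow S_n(M,\mathcal{N},1)$, with no consistency conditions required at this stage. The whole content then reduces to checking $J\subseteq\ker\tilde{F}$, i.e.\ that the images $AS^{[\alpha]}$ satisfy the three defining relations. Each of these is one part of Proposition \ref{prop5.2}: part (a), read with the generators in the appropriate order, gives $\tilde{F}(Q_{[\alpha]})\tilde{F}(Q_{[\beta]})=(AS^{[\alpha]})(AS^{[\beta]})=AS^{[\alpha*\beta]}=\tilde{F}(Q_{[\alpha*\beta]})$, so the entries of $Q_{[\alpha]}Q_{[\beta]}-Q_{[\alpha*\beta]}$ lie in $\ker\tilde{F}$; part (b) gives $\tilde{F}(\det Q_{[\alpha]})=\det(AS^{[\alpha]})=1$, since $\tilde{F}$ is an algebra homomorphism and the determinant is a polynomial in the entries (cf.\ the matrix conventions in the Preliminaries); and part (c) gives $AS^{[o]}=I$, so the entries of $Q_{[o]}-I$ lie in $\ker\tilde{F}$. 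Hence $\tilde{F}$ descends to the quotient, yielding the desired $F:\Gamma_n(M)\rightarrow S_n(M,\mathcal{N},1)$ with $F([\alpha]_{i,j})=(AS^{[\alpha]})_{i,j}$.

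There is no real obstacle once Proposition \ref{prop5.2} is available; the only point that demands a moment of care is the multiplication convention. In $\Gamma_n(M)$ the relation reads $Q_{[\alpha]}Q_{[\beta]}=Q_{[\alpha*\beta]}$ with $\alpha*\beta$ meaning ``$\beta$ first, then $\alpha$'', while Proposition \ref{prop5.2}(a) is phrased as $AS^{[\beta*\alpha]}=AS^{[\beta]}AS^{[\alpha]}$. I would verify that these match after relabeling $[\alpha]\leftrightarrow[\beta]$, so that no orientation or composition mismatch is introduced when transporting the groupoid relation to the matrix identity. Assuming the bookkeeping is consistent (which it is, by the conventions fixed above Definition \ref{df3.4}), the factorization is immediate and the proof is complete.
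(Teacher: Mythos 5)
Your argument is correct and is essentially the paper's own proof: the paper simply observes that Proposition \ref{prop5.2} shows $F$ respects all defining relations of $\Gamma_n(M)$, which is exactly the verification you spell out (defining $\tilde F$ on the free commutative algebra and checking the three relation types via parts (a), (b), (c)). Your extra care about the composition convention $[\alpha][\beta]=[\alpha*\beta]$ matching $AS^{[\alpha]}AS^{[\beta]}=AS^{[\alpha*\beta]}$ is a worthwhile detail the paper leaves implicit, but it does not change the route.
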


\begin{proof}

Lemma \ref{prop5.2} shows $F$ respects all the relations defined for $\Gamma_n(M)$. Thus
$F$ is a well-defined algebra homomorphism.
\end{proof}

Let $\alpha$ be a framed oriented arc in $S_n(M,\mathcal{N},1)$. Then $[\alpha]$ is an element in $\pi_1^{Mor}(M,\mathcal{N})$ by forgetting  the framing of $\alpha$. We define $G(\alpha_{i,j}) = d_n^{h(\tilde{\alpha})+1}(-1)^{i+1}[\alpha]_{\bar{i},j}\in \Gamma_n(M)$.
For a  framed oriented  knot $\alpha$, first we forget the framing of $\alpha$ and then we use a path $\beta$ to connect $\alpha$ and $\N$. Then we obtain an elemnt in $\pi_1^{Mor}(M,\mathcal{N})$, which is denoted as $[\alpha_{\beta}]$. 
We define $G(\alpha) = d_n^{h(\tilde{\alpha})}\text{Trace}(Q_{[\alpha_{\beta}]})\in \Gamma_n(M)$. It  is easy to show $G(\alpha)$ is independent of the choice of $\beta$.

For any stated $n$-web $l$, we use relation (\ref{wzh.five}) to kill all the sinks and sources  to obtain a new stated $n$-web $l'$. Suppose $l' = \cup_{\alpha}\alpha$ where each $\alpha$ is a stated framed oriented boundary arc or a framed oriented knot, define $G(l) = \Pi_{\alpha} G(\alpha)$.

\begin{lemma}\label{lmm5.4}
The above map $G:S_n(M,\mathcal{N},1)\rightarrow \Gamma_n(M)$ is a well-defined algebra homomorphism.
\end{lemma}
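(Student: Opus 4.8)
The plan is to realize $G$ as the composite of the map $\Phi=\Phi^{(M,\mathcal{N})}$ of Theorem \ref{thm3.11} with \emph{evaluation at a universal $\Gamma_n(M)$-valued representation}, thereby reducing the well-definedness of $G$ to the formal matrix identities already verified for $\Phi$ in Section \ref{sub4.2}--\ref{sub4.3}. Since $\mathcal{N}$ has a single component we have $\pi_1^{Mor}(M,\mathcal{N})=\pi_1(M)$ and $\pi_1(UM,\tilde{\mathcal{N}})=\pi_1(UM)$, and the fixed relative spin structure $h$ is an ordinary spin structure. Let $J$ be the antidiagonal permutation matrix $J_{p,q}=\delta_{p,\bar q}$, so that $J^2=I$ and $AJ=\mathrm{diag}\big((-1)^{p+1}\big)_{1\le p\le n}$. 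First I would set, for $\tilde{\alpha}\in\pi_1(UM)$ with $\alpha=\zeta(\tilde{\alpha})\in\pi_1(M)$,
$$\tilde{\sigma}(\tilde{\alpha}) = d_n^{\,h(\tilde{\alpha})}\,J\,Q_{[\alpha]}\,J \in \mathrm{Mat}(n,\Gamma_n(M)),$$
and check that $\tilde{\sigma}$ is a homomorphism $\pi_1(UM)\to SL(n,\Gamma_n(M))$ with $\tilde{\sigma}(\vartheta)=d_nI$. This is immediate from Definition \ref{df3.4}: the relations $Q_{[\alpha]}Q_{[\beta]}=Q_{[\alpha*\beta]}$, together with $J^2=I$ and additivity of $h$, give $\tilde{\sigma}(\tilde{\alpha}\tilde{\beta})=\tilde{\sigma}(\tilde{\alpha})\tilde{\sigma}(\tilde{\beta})$; $\det Q_{[\alpha]}=1$ and $d_n^{\,n}=1$ give $\det\tilde{\sigma}(\tilde{\alpha})=1$; and $Q_{[o]}=I$ with $h(\vartheta)=1$ give $\tilde{\sigma}(\vartheta)=d_nJ^2=d_nI$.

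Next I would match $G$ with $\Phi(\cdot)(\tilde{\sigma})$ on each component of a sink/source-free web. For a stated framed boundary arc $\alpha$ with $s(\alpha(0))=j$, $s(\alpha(1))=i$, the identity $AJ=\mathrm{diag}((-1)^{p+1})$ and $\bar{\bar i}=i$ give $[A\tilde{\sigma}(\tilde{\alpha})]_{\bar i,\bar j}=d_n^{\,h(\tilde{\alpha})}(-1)^{\bar i+1}(Q_{[\alpha]})_{\bar i,j}$; since $(-1)^{\bar i-i}=(-1)^{n+1}=d_n$ this equals $d_n^{\,h(\tilde{\alpha})+1}(-1)^{i+1}(Q_{[\alpha]})_{\bar i,j}=G(\alpha_{i,j})$. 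For a framed oriented knot $\alpha$ closed up by a path $\beta$, conjugation invariance of the trace gives $\text{Trace}(\tilde{\sigma}(\tilde{\alpha}))=d_n^{\,h(\tilde{\alpha})}\text{Trace}(Q_{[\alpha_\beta]})=G(\alpha)$. Hence, for any stated $n$-web $l$ whose sinks and sources are killed by \eqref{wzh.five} to $l'=\cup_\alpha\alpha$, the procedure defining $G$ computes exactly $\prod_\alpha tr_\alpha(\tilde{\sigma})$, i.e. the procedure defining $\Phi$ with the formal matrices $\tilde{\sigma}(\tilde{\alpha})$ substituted for $\rho(\tilde{\alpha})$.

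With this identification in hand, well-definedness of $G$ reduces to re-running the verifications of Subsections \ref{sub4.2} and \ref{sub4.3}. Every argument there --- independence of the way sinks and sources are dragged to $\mathcal{N}$, isotopy invariance, and compatibility with \eqref{w.cross}--\eqref{wzh.eight} --- is a purely formal matrix manipulation using only multiplicativity and $\det=1$ of the representing matrices, the relation $\tilde{\rho}(\vartheta)=d_nI$, and the properties $A^2=d_nI$, $\det A=1$. All of these hold for $\tilde{\sigma}$ over the commutative ring $\Gamma_n(M)$ (by the previous paragraphs and Proposition \ref{prop5.2}), so the identical column/row--determinant computations go through verbatim with $\mathbb{C}$ replaced by $\Gamma_n(M)$, using Remark \ref{rre5.1} to handle the single-component building blocks. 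Thus $G$ descends to $S_n(M,\mathcal{N},1)$. That it is an algebra homomorphism is then automatic: its defining formula is multiplicative over the components of $l'$, disjoint union is the product in $S_n(M,\mathcal{N},1)$ (Corollary \ref{cccc3.2}), $\Gamma_n(M)$ is commutative, and the empty web maps to $1$.

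The main obstacle is conceptual bookkeeping rather than a deep difficulty. One is tempted to argue abstractly that $\tilde{\sigma}$, being a $\Gamma_n(M)$-point of $\tilde{\chi}_n(M,\mathcal{N})$, induces an algebra map $\mathrm{ev}_{\tilde{\sigma}}\colon R_n(M,\mathcal{N})\to\Gamma_n(M)$ and to declare $G=\mathrm{ev}_{\tilde{\sigma}}\circ\Phi$; but $R_n(M,\mathcal{N})$ is the \emph{reduced} coordinate ring, whereas $\Gamma_n(M)$ may carry nilpotents, so one cannot take for granted that relations holding on all $\mathbb{C}$-points of $\tilde{\chi}_n(M,\mathcal{N})$ persist for the formal point $\tilde{\sigma}$. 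I would therefore avoid the functor-of-points shortcut and instead verify the defining relations directly over $\Gamma_n(M)$ as above --- legitimate precisely because those checks never use anything beyond the group-like identities satisfied by $\tilde{\sigma}$. The only other delicate point is keeping the index conventions straight: the passage between $\Phi$'s convention (entries $[A\,\cdot\,]_{\bar i,\bar j}$) and $G$'s convention (entries $(-1)^{i+1}[\,\cdot\,]_{\bar i,j}$) is accounted for exactly by the conjugation by $J$ and the identity $(-1)^{\bar i-i}=d_n$ used above.
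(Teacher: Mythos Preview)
Your proposal is correct and follows essentially the same route as the paper, which simply says that the two steps (independence of how sinks and sources are killed, and compatibility with relations \eqref{w.cross}--\eqref{wzh.eight}) are proved ``by the same techniques used in Subsections \ref{sub4.2} and \ref{sub4.3}'' and omits the details. Your construction of the universal representation $\tilde{\sigma}:\pi_1(UM)\to SL(n,\Gamma_n(M))$ and the identification $G=\Phi(\cdot)(\tilde{\sigma})$ is a clean conceptual packaging of that same argument: it makes precise \emph{why} the formal determinant and trace manipulations of \S\ref{sub4.2}--\ref{sub4.3} transfer verbatim, and your remark about avoiding the shortcut through the reduced ring $R_n(M,\mathcal{N})$ (because $\Gamma_n(M)$ may have nilpotents) correctly isolates the one place where a naive functor-of-points argument could fail.
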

\begin{proof}
We prove $G$ is well-defined in two steps. First we prove the definition of $G$ is independent of the choice of how we kill sinks and sources, then we prove $G$ respects all the relations defined for $S_n(M,\mathcal{N},1)$. Note that these two steps appeared when we tried to prove Theorem \ref{thm3.11}. Actually the proving techniques here are the same with the techniques used in Subsections \ref{sub4.2} and \ref{sub4.3}. So here we  omit all the details. 
\end{proof}

\begin{theorem}\label{thm5.5}
Let $(M,\mathcal{N})$ be a marked 3-manifold with $\N$ consisting of one component. There exist  algebra homomorphisms
$F:\Gamma_n(M)\rightarrow S_n(M,\mathcal{N},1), G:S_n(M,\mathcal{N},1)\rightarrow \Gamma_n(M)$ such that 
$F\circ G = Id_{S_n(M,\mathcal{N},1)}$ and $G\circ F = Id_{\Gamma_n(M)}$. Especially $\Gamma_n(M)\simeq S_n(M,\mathcal{N},1)$.
\end{theorem}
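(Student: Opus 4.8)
The plan is to use that, by Lemma \ref{lmm5.3} and Lemma \ref{lmm5.4}, both $F$ and $G$ are algebra homomorphisms, so it suffices to verify the two composite identities on generating sets. The algebra $\Gamma_n(M)$ is generated by the entries $[\alpha]_{i,j}$ with $[\alpha]\in\pi_1(M)$ and $1\le i,j\le n$, while $S_n(M,\mathcal{N},1)$ is generated, via relation \eqref{wzh.five} together with the commutativity of Corollary \ref{cccc3.2}, by the classes of stated framed oriented boundary arcs and of framed oriented knots. Thus I would check $G\circ F=\mathrm{Id}$ on the $[\alpha]_{i,j}$, and $F\circ G=\mathrm{Id}$ on arcs and on knots. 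Throughout I would use the two elementary identities $\overline{i}=n+1-i$ (so $i+\overline{i}=n+1$) and $d_n=(-1)^{n-1}$, which give the sign relation $(-1)^{i+\overline{i}}=(-1)^{n+1}=d_n$.

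For $G\circ F$, starting from a generator, $F([\alpha]_{i,j})=(AS^{[\alpha]})_{i,j}=(-1)^{i+1}S^{[\alpha]}_{\overline{i},j}$ by the shape of the matrix $A$, and $S^{[\alpha]}_{\overline{i},j}$ is represented by the boundary arc $\hat{\alpha}_{\overline{i},j}$ with $h(\widetilde{\hat{\alpha}})=0$ (Remark \ref{rre5.1}). Applying the definition of $G$ to this arc gives $G(\hat{\alpha}_{\overline{i},j})=d_n^{\,0+1}(-1)^{\overline{i}+1}[\alpha]_{\overline{\overline{i}},j}=d_n(-1)^{\overline{i}+1}[\alpha]_{i,j}$. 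Multiplying the two signs, $G(F([\alpha]_{i,j}))=(-1)^{i+1}d_n(-1)^{\overline{i}+1}[\alpha]_{i,j}=d_n(-1)^{i+\overline{i}}[\alpha]_{i,j}=d_n^2[\alpha]_{i,j}=[\alpha]_{i,j}$, so $G\circ F=\mathrm{Id}_{\Gamma_n(M)}$. The identical bookkeeping handles $F\circ G$ on a boundary arc $\alpha_{i,j}$: one computes $F(G(\alpha_{i,j}))=d_n^{\,h(\tilde\alpha)+1}(-1)^{i+1}(-1)^{\overline{i}+1}S^{[\alpha]}_{i,j}=d_n^{\,h(\tilde\alpha)}S^{[\alpha]}_{i,j}$, which equals $\alpha_{i,j}$ by the last formula of Remark \ref{rre5.1}.

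The remaining and most delicate case is $F\circ G$ on a framed oriented knot $\alpha$. Here $F(G(\alpha))=d_n^{\,h(\tilde\alpha)}\,\mathrm{Trace}(AS^{[\alpha_\beta]})=\sum_{i}(-1)^{i+1}(\alpha_\beta)_{\overline{i},i}$, where $\alpha_\beta$ is the boundary arc obtained by joining $\alpha$ to $\mathcal{N}$ along the auxiliary path $\beta$ and choosing the framing with $h(\widetilde{\alpha_\beta})=h(\tilde\alpha)$. What must be shown is that this linear combination of boundary arcs equals the original knot $\alpha$ in $S_n(M,\mathcal{N},1)$. I would prove this by the standard closure move: push $\alpha$ along $\beta$ up against the marking and cap off the two ends of $\alpha_\beta$ at $\mathcal{N}$ using relations \eqref{wzh.six} and \eqref{wzh.seven}, so that the doubled segment collapses and the capped arc recovers $\alpha$. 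The content is that, at $v=1$ (hence $q=1$, where $c_i=(-1)^{n-i}$ and $(c_{\overline{i}})^{-1}=(-1)^{i+1}$), the cap and cup coefficients in \eqref{wzh.six}--\eqref{wzh.seven} reproduce exactly the signs $(-1)^{i+1}$ appearing above. I expect this sign-and-coefficient matching in the knot closure to be the main obstacle, since it is the one place where the precise values of the $c_i$ at $q=1$, the orientation conventions for the cap, and the compatibility of the framing choice $h(\widetilde{\alpha_\beta})=h(\tilde\alpha)$ must all line up; once it is settled, $F$ and $G$ are mutually inverse and $\Gamma_n(M)\simeq S_n(M,\mathcal{N},1)$ follows.
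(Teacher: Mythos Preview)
Your proof is correct and your computations for $G\circ F$ on $[\alpha]_{i,j}$ and for $F\circ G$ on boundary arcs match the paper exactly. The difference is that you treat the knot case as a separate and ``most delicate'' check, whereas the paper avoids it entirely by observing that stated framed oriented boundary arcs already generate $S_n(M,\mathcal{N},1)$ as an algebra when $\mathcal{N}\neq\emptyset$: any knot can be dragged to the marking and opened up using relation~\eqref{wzh.seven}, so once $F\circ G=\mathrm{Id}$ is verified on arcs there is nothing further to prove. In effect, the capping computation you outline for the knot case is precisely what shows that knots lie in the subalgebra generated by arcs, so you are doing the same work but packaging it as a direct identity $F(G(\alpha))=\alpha$ rather than as a generation statement. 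Both approaches are valid; the paper's is shorter because it removes the need to track the framing compatibility $h(\widetilde{\alpha_\beta})=h(\tilde\alpha)$ and the cap/cup sign-matching you flag as the main obstacle.
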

\begin{proof}
Lemmas \ref{lmm5.3} and \ref{lmm5.4} show the existence of $F$  and $G$. It remains to show they are inverse to each other. 

For any $[\alpha]\in \pi_1^{Mor}(M,\mathcal{N}), 1\leq i,j\leq n$, we have 
$$G(F([\alpha]_{i,j})) = G ((-1)^{i+1} S^{[\alpha]}_{\bar{i}, j})
= (-1)^{i+1}d_n (-1)^{\bar{i}+1}[\alpha]_{i,j} = [\alpha]_{i,j}.$$
Thus $G\circ F = Id_{\Gamma_n(M)}$ since $[\alpha]_{i,j}$,  $[\alpha]\in \pi_1^{Mor}(M,\mathcal{N}), 1\leq i,j\leq n$,  generate $\Gamma_n(M)$ as an algebra.

Obviously all the stated framed oriented  boundary arcs generate $S_n(M,\mathcal{N},1)$ as an algebra. For any stated oriented framed boundary arc $\alpha_{i,j}\in S_n(M,\mathcal{N},1)$, we have
$$F(G(\alpha_{i,j})) = d_n^{h(\tilde{\alpha})+1}(-1)^{i+1}F([\alpha]_{\bar{i},j})
= d_n^{h(\tilde{\alpha})+1}(-1)^{i+1} (-1)^{\bar{i}+1}S^{[\alpha]}_{i,j} = d_n^{h(\tilde{\alpha})}S^{[\alpha]}_{i,j} = \alpha_{i,j}. $$
Thus $F\circ G = Id_{S_n(M,\mathcal{N},1)}.$

\end{proof}

\begin{rem}
Korinman and Murakami proved the isomorphism between $S_2(M,\mathcal{N},1)$ and $\Gamma_2(M)$  using a  different technique
\cite{korinman2022relating}.
\end{rem}

\subsection{Adding one extra marking to marked 3-manifold}

In this subsection, we will investigate the effects on $S_n(M,\mathcal{N},1)$ when we put one extra marking on $\partial M$. 

\begin{definition}\label{dddddddd}
Let $(M,\mathcal{N})$ be a marked 3-manifold. We say that $\N'$  is obtained from $\N$ by adding one extra marking if
$\N'= \N\cup e$ where $e$ is an embedded oriented open interval in $\partial M$ such that $cl(e)\cap cl(\N) =\emptyset$. We call the linear map 
 $S_n(M,\mathcal{N},v)\rightarrow S_n(M,\mathcal{N}',v)$, induced by embedding $(M,\mathcal{N})\rightarrow (M,\mathcal{N}')$, adding marking map.
Obviously this map is an algebra homomorphism when $v=1$. We will use $\lambda_{ad}^{e}$ to denote the adding marking map. We can omit the superscript when there is no confusion with marking $e$.
\end{definition}

\begin{rem}\label{rem5.8}
Suppose $(M,\mathcal{N})$ is a marked 3-manifold with $\N\neq\emptyset$, and $\N'$ is obtained from $\N$ by adding one extra marking $e$.
As in Remark \ref{rrrmmm}, we can extand the relative spin structure $h$ for $\MN$ to a relative spin structure for $(M,\N')$, which is still denoted as $h$.

 Let $\alpha$ be an oriented path connecting $\N$ and $e$. We require $\alpha$ does not intersect itself, $\alpha$ only intersects $\partial M$ at its endpoints, and $\alpha(0)$ belongs to a component $e_1\subset \N$, and $\alpha(1)\in e$.
 Then we give a framing to $\alpha$ to obtain a  framed oriented boundary arc in $(M,\mathcal{N}')$, which is still denoted as $\alpha$, such that $h(\tilde{\alpha}) = 0$. Similarly we give a framing to $\alpha^{-1}$ such that
$h(\widetilde{\alpha^{-1}}) = 0$. Then we have $\alpha_{i,j} = S^{[\alpha]}_{i,j}, \alpha^{-1}_{i,j}
= S^{[\alpha^{-1}]}_{i,j},1\leq i,j\leq n.$ From Proposition \ref{prop5.2}, we know $AS^{[\alpha]}AS^{[\alpha^{-1}]} = AS^{[\alpha^{-1}]}AS^{[\alpha]} = AS^{[o]} = I.$
\end{rem}

We can regard $S_n(M,\mathcal{N}',1)$ as an $S_n(M,\mathcal{N},1)$-algebra because of the adding marking map. 

\begin{lemma}\label{lmm5.8}
Suppose $(M,\mathcal{N})$ is a marked 3-manifold with $\N\neq\emptyset$, and $\N'$ is obtained from $\N$ by adding one extra marking $e$. Then as an $S_n(M,\mathcal{N},1)$-algebra, $S_n(M,\mathcal{N}',1)$ is generated by $\alpha_{i,j},1\leq i,j\leq n$.
\end{lemma}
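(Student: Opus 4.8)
The plan is to show that the $S_n(M,\mathcal{N},1)$-subalgebra $\mathcal{G}$ of $S_n(M,\mathcal{N}',1)$ generated by the $\alpha_{i,j}$ is everything, by producing, for each stated $n$-web, an expression as an $S_n(M,\mathcal{N},1)$-polynomial in the $\alpha_{i,j}$. First I would reduce to generators: using relation \eqref{wzh.five} exactly as in the construction of $\Phi$ in Theorem \ref{thm3.11}, every stated $n$-web in $(M,\N')$ is a linear combination of disjoint unions of framed oriented knots and stated framed oriented boundary arcs. Since the multiplication is disjoint union, $S_n(M,\N',1)$ is generated as an algebra by these, so it suffices to show each framed oriented knot and each stated framed oriented boundary arc lies in $\mathcal{G}$.

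A framed oriented knot lies in $int(M)$, hence is literally the image under $\lambda_{ad}^{e}$ of the same knot viewed in $(M,\mathcal{N})$; likewise a boundary arc with both endpoints on $\N$ can be isotoped off $e$ and so lies in $\lambda_{ad}^{e}(S_n(M,\mathcal{N},1))$, the coefficient ring, which is contained in $\mathcal{G}$. By Remark \ref{rre5.1} every stated boundary arc satisfies $\beta_{i,j}=d_n^{h(\tilde{\beta})}S^{[\beta]}_{i,j}$, so it remains to treat the entries $S^{[\beta]}_{i,j}$ when $[\beta]\in\pi_1^{Mor}(M,\N')$ has at least one endpoint on $e$.

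For the main step I would use the marking-connecting arc $\alpha$ of Remark \ref{rem5.8} to push endpoints from $e$ back to the component $e_1\subset\N$: post-composing a morphism ending at $e$ with $[\alpha^{-1}]$, or pre-composing a morphism starting at $e$ with $[\alpha]$, lands it in $\pi_1^{Mor}(M,\mathcal{N})$. Concretely, when both endpoints of $\beta$ lie on $e$, the class $[\alpha^{-1}*\beta*\alpha]$ lies in $\pi_1^{Mor}(M,\mathcal{N})$, and Proposition \ref{prop5.2}(a) together with the identity $AS^{[\alpha]}AS^{[\alpha^{-1}]}=I$ from Remark \ref{rem5.8} gives
\[
AS^{[\beta]} = AS^{[\alpha]}\,AS^{[\alpha^{-1}*\beta*\alpha]}\,AS^{[\alpha^{-1}]},
\]
with the analogous one-sided identity ($AS^{[\beta]}=AS^{[\alpha]}AS^{[\alpha^{-1}*\beta]}$, or $AS^{[\beta*\alpha]}AS^{[\alpha^{-1}]}$) in the mixed case. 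Since $A$ is a fixed scalar-type matrix with $A^2=d_nI$, the entries of $AS^{[\alpha]}$ are $\pm\alpha_{\bar{i},j}$, and the entries of $AS^{[\alpha^{-1}*\beta*\alpha]}$ lie in $\lambda_{ad}^{e}(S_n(M,\mathcal{N},1))$ because the class is supported in $\N$; applying $A^{-1}=d_nA$ then shows $S^{[\beta]}_{i,j}$ is such a polynomial, provided the entries of $AS^{[\alpha^{-1}]}$ also lie in $\mathcal{G}$.

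The main obstacle is precisely controlling the reversed arc $\alpha^{-1}$, since the generating set is only the $\alpha_{i,j}$. Here I would invoke Proposition \ref{prop5.2}(b): $\det(AS^{[\alpha]})=1$, so $AS^{[\alpha^{-1}]}=(AS^{[\alpha]})^{-1}$ equals the adjugate $\mathrm{adj}(AS^{[\alpha]})$, whose entries are signed cofactor polynomials in the $\alpha_{i,j}$ and hence lie in $\mathcal{G}$. Combining the three cases shows every stated boundary arc lies in $\mathcal{G}$, whence $\mathcal{G}=S_n(M,\N',1)$. Throughout I would keep a fixed relative spin structure and use its extension from Remark \ref{rem5.8}, so that the identities $\beta_{i,j}=d_n^{h(\tilde{\beta})}S^{[\beta]}_{i,j}$ remain consistent in $(M,\N')$ and agree with $\lambda_{ad}^{e}$ on arcs supported in $\N$.
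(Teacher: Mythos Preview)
Your proposal is correct and follows essentially the same approach as the paper: reduce to stated boundary arcs via relation \eqref{wzh.five}, push endpoints on $e$ back to $\N$ by factoring $AS^{[\beta]}$ through $AS^{[\alpha]}$ and $AS^{[\alpha^{-1}]}$ using Proposition \ref{prop5.2}(a), and then use $\det(AS^{[\alpha]})=1$ (equivalently $\det(S^{[\alpha]})=1$) to express the entries of $AS^{[\alpha^{-1}]}$ as polynomials in the $\alpha_{i,j}$. The paper's version is slightly terser (it absorbs knots and $\N$-supported arcs directly into the statement that $S_n(M,\N',1)$ is generated over $S_n(M,\N,1)$ by arcs touching $e$), but the substance is the same.
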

\begin{proof}
Let $T$ be the $S_n(M,\mathcal{N},1)$-subalgebra of $S_n(M,\mathcal{N}',1)$ generated by $\alpha_{i,j},1\leq i,j\leq n$. 
Since det$(S^{[\alpha]}) = 1\in S_n(M,\mathcal{N}',1)$, we have $(S^{[\alpha]})^{-1}$ is well-defined and 
$[(S^{[\alpha]})^{-1}]_{i,j}, 1\leq i,j\leq n,$  are polynomials in $\alpha_{i,j},1\leq i,j\leq n$. Especially  $[(S^{[\alpha]})^{-1}]_{i,j}\in T , 1\leq i,j\leq n$.
We know $S^{[\alpha^{-1}]}= A^{-1} (S^{[\alpha]})^{-1} A^{-1} $, which implies 
$\alpha^{-1}_{i,j}
= S^{[\alpha^{-1}]}_{i,j}\in T,1\leq i,j\leq n.$

From relation (\ref{wzh.five}), we know, as an $S_n(M,\mathcal{N},1)$-algebra, $S_n(M,\mathcal{N}',1)$ is generated by stated framed oriented boundary arcs with at least one end point in $e$. Suppose $\beta_{i,j}$ is  such  an arc in $S_n(M,\mathcal{N}',1)$. Recall that $\beta_{i,j} = d_n^{h(\tilde{\beta})}S^{[\beta]}_{i,j}$.

For the case when  $\beta(0),\beta(1)\in e$, we have 
$$A S^{[\beta]} = AS^{[\alpha]} AS^{[\alpha^{-1}*\beta*\alpha]} AS^{[\alpha^{-1}]}$$
where $[\alpha^{-1}*\beta*\alpha]$ is a path with two end points in $\N$. Especially we get
$$S^{[\beta]} = S^{[\alpha]} AS^{[\alpha^{-1}*\beta*\alpha]} AS^{[\alpha^{-1}]}.$$ Then 
$S^{[\beta]}_{i,j}\in T,1\leq i,j\leq n$, because $S^{[\alpha]}_{i,j}, S^{[\alpha^{-1}*\beta*\alpha]}_{i,j},
S^{[\alpha^{-1}]}_{i,j}\in T ,1\leq i,j\leq n.$ Thus $\beta_{i,j} = d_n^{h(\tilde{\beta})}S^{[\beta]}_{i,j}\in T$.

For the other cases, we can use the same way to show $\beta_{i,j} = d_n^{h(\tilde{\beta})}S^{[\beta]}_{i,j}\in T$.
Thus $T = S_n(M,\mathcal{N}',1).$

\end{proof}

Recall that
$$O(SL_n)=\mathbb{C}[x_{i,j}\mid 1\leq i,j\leq n]/(\text{det}(X) = 1)$$
where $X$ is an $n$ by $n$ matrix such that $X_{i,j} = x_{i,j},1\leq i,j\leq n.$ 
We have $X^{-1}$ makes sense and is an $n$ be $n$ matrix in $O(SL_n)$ because det$(X) = 1$. For $1\leq i,j\leq n$, We use $x^{-1}_{i,j}$ to denote $(X^{-1})_{i,j}$.
Obviously $S_n(M,\mathcal{N},1)\otimes O(SL_n)$ is an $S_n(M,\mathcal{N},1)$-algebra, and as an $S_n(M,\mathcal{N},1)$-algebra,
$$S_n(M,\mathcal{N},1)\otimes O(SL_n) = S_n(M,\mathcal{N},1)[x_{i,j}\mid 1\leq i,j\leq n]/(\text{det}(X) = 1)$$ 
by regarding $1\otimes x_{i,j}$ as $x_{i,j}$.

\begin{lemma}\label{lmm5.9}
Suppose $(M,\mathcal{N})$ is a marked 3-manifold with $\N\neq\emptyset$, and $\N'$ is obtained from $\N$ by adding one extra marking $e$. Then there exists an $S_n(M,\mathcal{N},1)$-algebra homomorphism
\begin{align*}
 \imath:S_n(M,\mathcal{N},1)\otimes O(SL_n)&\rightarrow S_n(M,\mathcal{N}',1)\\
 1\otimes x_{i,j}&\mapsto (A S^{[\alpha]})_{i,j}.
\end{align*}
\end{lemma}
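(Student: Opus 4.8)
The plan is to define the map $\imath$ on generators and verify it respects the single defining relation of $O(SL_n)$, namely $\det(X)=1$. I would begin by recalling that $S_n(M,\mathcal{N},1)\otimes O(SL_n)$, viewed as an $S_n(M,\mathcal{N},1)$-algebra, is the polynomial algebra $S_n(M,\mathcal{N},1)[x_{i,j}\mid 1\leq i,j\leq n]$ quotiented by the ideal generated by $\det(X)-1$. By the universal property of this quotient of a polynomial algebra, to produce an $S_n(M,\mathcal{N},1)$-algebra homomorphism out of it, it suffices to (i) specify the images of the generators $x_{i,j}$ in $S_n(M,\mathcal{N}',1)$, and (ii) check that the single relation $\det(X)=1$ is sent to the relation $1=1$, i.e.\ that the matrix of images has determinant $1$.

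**The two things to verify.**

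First I would observe that the adding-marking map $\lambda_{ad}^e:S_n(M,\mathcal{N},1)\to S_n(M,\mathcal{N}',1)$ from Definition~\ref{dddddddd} gives $S_n(M,\mathcal{N}',1)$ its structure of $S_n(M,\mathcal{N},1)$-algebra, so the requirement that $\imath$ be an $S_n(M,\mathcal{N},1)$-algebra homomorphism is automatically built into sending $1\otimes x_{i,j}$ to elements of $S_n(M,\mathcal{N}',1)$ while fixing the image of $S_n(M,\mathcal{N},1)$. Then the only substantive point is to confirm $\det(A S^{[\alpha]})=1$ in $S_n(M,\mathcal{N}',1)$. But this is exactly Proposition~\ref{prop5.2}(b): for any $[\eta]\in\pi_1^{Mor}$ we have $\det(A S^{[\eta]})=1$, and here $[\alpha]$ is the chosen framed oriented boundary arc connecting $\N$ to the new marking $e$ introduced in Remark~\ref{rem5.8}. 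Applying Proposition~\ref{prop5.2}(b) with $[\eta]=[\alpha]$ yields $\det(A S^{[\alpha]})=1$ directly.

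**Assembling the argument.**

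Thus I would define the ring map $S_n(M,\mathcal{N},1)[x_{i,j}]\to S_n(M,\mathcal{N}',1)$ on the free polynomial generators by $x_{i,j}\mapsto (A S^{[\alpha]})_{i,j}$ and extending as an $S_n(M,\mathcal{N},1)$-algebra map using $\lambda_{ad}^e$ on coefficients; since the target is commutative (Corollary~\ref{cccc3.2}), this is well-defined on the polynomial algebra with no ordering ambiguity. The relation $\det(X)-1$ maps to $\det(A S^{[\alpha]})-1=0$ by Proposition~\ref{prop5.2}(b), so the map descends to the quotient $S_n(M,\mathcal{N},1)\otimes O(SL_n)$, giving the desired $\imath$ with $\imath(1\otimes x_{i,j})=(A S^{[\alpha]})_{i,j}$.

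**The main obstacle.**

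I do not expect any serious obstacle in this lemma itself: the entire content is the determinant computation, which is already isolated as Proposition~\ref{prop5.2}(b), together with the observation that $\det$ is the unique relation of $O(SL_n)$. The only points requiring care are making sure the chosen arc $\alpha$ of Remark~\ref{rem5.8} genuinely has an endpoint on $e$ (so that its matrix $A S^{[\alpha]}$ lives in $S_n(M,\mathcal{N}',1)$ but not in the image of $S_n(M,\mathcal{N},1)$), and that the framing is normalized so $h(\tilde{\alpha})=0$, so that $\alpha_{i,j}=S^{[\alpha]}_{i,j}$ and the formula is stated cleanly; both of these are fixed in Remark~\ref{rem5.8}. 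The real work—showing $\imath$ is moreover an isomorphism—will come in subsequent results (using Lemma~\ref{lmm5.8} for surjectivity and a dimension or inverse-map argument for injectivity), but that lies beyond the statement of this lemma.
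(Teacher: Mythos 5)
Your proposal is correct and takes essentially the same route as the paper: the paper's one-line proof likewise invokes commutativity of $S_n(M,\mathcal{N}',1)$ together with $\det(AS^{[\alpha]})=1$ from Proposition~\ref{prop5.2}(b) to conclude that $\imath$ is a well-defined $S_n(M,\mathcal{N},1)$-algebra homomorphism; you have merely spelled out the universal-property bookkeeping that the paper leaves implicit.
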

\begin{proof}
Since  $S_n(M,\mathcal{N}',1)$ is a commutative $S_n(M,\mathcal{N},1)$-algebra and det$(AS^{[\alpha]}) = 1\in S_n(M,\mathcal{N}',1)$, then
$\imath$ is a well-defined $S_n(M,\mathcal{N},1)$-algebra homomorphism. 
\end{proof}

Next we try to define an $S_n(M,\mathcal{N},1)$-algebra homormorphism $$\jmath:S_n(M,\mathcal{N}',1)\rightarrow 
S_n(M,\mathcal{N},1)\otimes O(SL_n).$$

Let $l$ be a stated $n$-web in $(M,\mathcal{N}')$, and $s_{l}$ be the state of $l$. If $l\cap e =\emptyset$, define 
$\jmath(l) = l\otimes 1\in S_n(M,\mathcal{N},1)\otimes O(SL_n)$.

 If $l\cap e\neq \emptyset$, we 
suppose $|l\cap e| = m$,  and label the ends of $l$ on $e$ from $1$ to $m$. We use $E_k$ to denote the end of $l$ at $e$ labeled by the number $k, 1\leq k\leq m.$
Define
\begin{align*}
\begin{split}
 f_k&= \left \{
 \begin{array}{ll}
     -1,                    & \text{if }E_k\text{ points towards }e,\\
     1,     & \text{if }E_k\text{ points out of }e,\\
 \end{array}
 \right.\\
 g_k&= \left \{
 \begin{array}{ll}
     Id\in S_n,                    & \text{if }E_k\text{ points towards }e,\\
     \delta\in S_n,     & \text{if }E_k\text{ points out of }e,\\
 \end{array}
 \right.\\
 h_k (i,j)&= \left \{
 \begin{array}{ll}
     i,j,                    & \text{if }E_k\text{ points towards }e,\\
     j,i,     & \text{if }E_k\text{ points out of }e,\\
 \end{array}
 \right.
 \end{split}
 \end{align*}
where $\delta(\lambda) = \bar{\lambda},1\leq \lambda\leq n, 1\leq i,j\leq n, 1\leq k\leq m.$

We can connect $E_k$ with $\alpha^{-1}$ or $\alpha$ by the following way:
Suppose $E_k$ points towards $e$. First we isotope $\alpha^{-1}$ by moving $\alpha^{-1}(0)$ along $e$ to meet the end $E_k$. Then we isotope $l,  \alpha^{-1}$ nearby their endpoints at $e$ such that they are both in good position with respect to $e$.  Then we  connect $E_k$ with $\alpha^{-1}$. When $E_k$ points out of $e$, we can use the  same way to connect $E_k$ with $\alpha$.


Then we try to define an element $l(\alpha_{j_1}^{f_1},\alpha_{j_2}^{f_2},\dots, \alpha_{j_m}^{f_m})\in S_n(M,\mathcal{N},1)$ by the following way: For each $1\leq k\leq m$ we connect $E_k$ with $\alpha^{f_k}$, and assign the state
$j_k$ to the other end of $\alpha^{f_k}$ that is not used to connect $E_k$. During the process of connecting each $E_k$ and $\alpha^{f_k}$, we can isotope $\alpha^{f_k}$ such that 
$l(\alpha_{j_1}^{f_1},\alpha_{j_2}^{f_2},\dots, \alpha_{j_m}^{f_m})$ does not intersect itself. After connecting each   $E_k$ and $\alpha^{f_k}$, we can isotope the parts nearby the connecting points  such that $l(\alpha_{j_1}^{f_1},\alpha_{j_2}^{f_2},\dots, \alpha_{j_m}^{f_m})$ only intersects $\partial M$ at its endpoints. Then
  $l(\alpha_{j_1}^{f_1},\alpha_{j_2}^{f_2},\dots, \alpha_{j_m}^{f_m})\in S_n(M,\mathcal{N},1)$. Obviously 
$l(\alpha_{j_1}^{f_1},\alpha_{j_2}^{f_2},\dots, \alpha_{j_m}^{f_m})$ is a well-defined element in $S_n(M,\mathcal{N},1)$. 
We define 
$$\jmath(l) = \sum_{1\leq j_1,\dots,j_m\leq n}c_{g_1(j_1)}\dots
c_{g_m(j_m)}\; l(\alpha_{j_1}^{f_1},\dots, \alpha_{j_m}^{f_m})\otimes
\mu(\alpha^{-f_1}_{h_1(i_1,\overline{j_1})})\dots
\mu(\alpha^{-f_m}_{h_m(i_m,\overline{j_m})})$$
where $\mu(\alpha_{i,j}) = d_n (-1)^{i+1} x_{\bar{i},j}, \mu(\alpha^{-1}_{i,j}) = d_n (-1)^{i+1} x^{-1}_{\bar{i},j}$, $i_k = s_l(E_k)$, $1\leq k\leq m,$ $c_{t} = (-1)^{n-t}, 1\leq t\leq n$.

Note that if  $l_1$ and $l_2$ are isotopic to each other, we have $$l_1(\alpha_{j_1}^{f_1},\alpha_{j_2}^{f_2},\dots, \alpha_{j_m}^{f_m})= l_2(\alpha_{j_1}^{f_1},\alpha_{j_2}^{f_2},\dots, \alpha_{j_m}^{f_m})\in S_n(M,\mathcal{N},1)$$ where the labelings of endpoints of 
$l_i$, $i=1,2$, on $e$ are preserved by the isotopy. Then $\jmath$ respects isotopy classes, that is, $\jmath$
is defined on the set of  isotopy classes of  stated $n$-webs. For any two stated $n$-webs $l_1,l_2$, we isotope $l_t,t=1,2,$ such that $l_1\cap l_2 =\emptyset$, then we have $\jmath(l_1\cup l_2) = \jmath(l_1)\jmath(l_2)$.

\begin{lemma}\label{lmm5.10}
Suppose $(M,\mathcal{N})$ is a marked 3-manifold with $\N\neq\emptyset$, and $\N'$ is obtained from $\N$ by adding one extra marking $e$. Then $\jmath:S_n(M,\mathcal{N}',1)\rightarrow 
S_n(M,\mathcal{N},1)\otimes O(SL_n)$ is a well-defined 
$S_n(M,\mathcal{N},1)$-algebra homormorphism.
\end{lemma}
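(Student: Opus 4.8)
The plan is to verify the two defining properties separately, observing that the bulk of the work is the well-definedness. The paragraph preceding the statement already shows that $\jmath$ is insensitive to isotopy and that $\jmath(l_1\cup l_2)=\jmath(l_1)\jmath(l_2)$ for disjoint $l_1,l_2$. Since the product in these commutative algebras at $v=1$ is disjoint union, multiplicativity will follow automatically once $\jmath$ is known to descend to $S_n(M,\mathcal{N}',1)$; and the $S_n(M,\mathcal{N},1)$-linearity will follow from the special case $\jmath(l)=l\otimes 1$ for webs $l$ disjoint from $e$, which is exactly the definition of $\jmath$ on such webs and computes $\jmath(\lambda_{ad}(x))=x\otimes 1$. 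Thus the entire content of the lemma is to check that $\jmath$ respects the defining relations \eqref{w.cross}--\eqref{wzh.eight}.

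I would organize this check according to where the local move takes place. If the shaded cube defining a relation is disjoint from the new marking $e$, then the endpoints of the web on $e$, and hence every $\alpha^{\pm}$-connection and every $\mu$-factor landing in the $O(SL_n)$-tensorand, are left unchanged by the move; on the $S_n(M,\mathcal{N},1)$-tensorand the move is literally one of the defining relations of the skein module of $(M,\mathcal{N})$, so the two sides of $\jmath$ coincide. This settles \eqref{w.cross}, \eqref{w.twist}, \eqref{w.unknot}, \eqref{wzh.four}, together with the boundary relations \eqref{wzh.five}--\eqref{wzh.eight} whenever they are applied at a component of $\mathcal{N}$ rather than at $e$.

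The substantive case is when a boundary relation is applied at $e$ itself, which can only happen for \eqref{wzh.five}, \eqref{wzh.six}, \eqref{wzh.seven}, \eqref{wzh.eight}. For each of these I would compute $\jmath$ of both sides directly from the definition, expanding the $\mu$-factors via $\mu(\alpha_{i,j})=d_n(-1)^{i+1}x_{\bar i,j}$ and $\mu(\alpha^{-1}_{i,j})=d_n(-1)^{i+1}x^{-1}_{\bar i,j}$, and then simplifying in $S_n(M,\mathcal{N},1)\otimes O(SL_n)$ using the identities $XX^{-1}=X^{-1}X=I$ and $\det X=1$ in $O(SL_n)$, the facts $A_{i,j}=(-1)^{i+1}\delta_{\bar i,j}$, $A^2=d_nI$, $c_i=(-1)^{n-i}$ (recall $q=1$ when $v=1$), and Proposition \ref{prop5.2}, which supplies $AS^{[o]}=I$, $AS^{[\alpha]}AS^{[\alpha^{-1}]}=I$ and $\det(AS^{[\eta]})=1$. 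The cap relation \eqref{wzh.six} should collapse because its two endpoints feed one $\alpha$ and one $\alpha^{-1}$ into adjacent $\mu$-factors and the internal state-sum contracts $x^{-1}x=I$, leaving the scalar $\delta_{\bar j,i}c_i$; the cup relation \eqref{wzh.seven} is dual and uses $\sum_i c_{\bar i}^{-1}$; and since $q=1$ forces the crossing in \eqref{wzh.eight} to become the bare transposition of the two strands, that relation reduces to commutativity of the two tensor factors, which holds by Corollary \ref{cccc3.2} and commutativity of $O(SL_n)$.

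I expect the principal obstacle to be the resolution relation \eqref{wzh.five} applied at $e$. There one must handle the sum over $S_n$ together with the simultaneous replacement of all $n$ endpoints on $e$ by $\alpha^{\pm}$-arcs, and show that the accompanying product of $\mu(\alpha^{\mp})$-factors assembles into a determinant in the $x_{i,j}$ that is pinned to $1$ by $\det X=1$; keeping the signs $(-1)^{i+1}$, the constants $c_{g_k(j_k)}$, the powers of $d_n$, and the orientation data $f_k,g_k,h_k$ mutually consistent throughout this calculation is the delicate bookkeeping. As the author notes for Lemmas \ref{lmm5.3} and \ref{lmm5.4}, the underlying technique is identical to that of Subsections \ref{sub4.2} and \ref{sub4.3}, so once the extra $O(SL_n)$-factors are tracked the verification, though lengthy, is routine.
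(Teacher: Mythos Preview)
Your proposal is correct and follows essentially the same route as the paper: reduce to checking the defining relations, note that the interior relations \eqref{w.cross}--\eqref{wzh.four} and \eqref{wzh.eight} are automatic (the latter collapsing at $q=1$), observe that the boundary relations \eqref{wzh.five}--\eqref{wzh.seven} are trivial when applied at a component of $\mathcal{N}$, and then verify \eqref{wzh.five}--\eqref{wzh.seven} at $e$ by direct expansion, with the key step for \eqref{wzh.five} being that the product of $\mu$-factors assembles into a determinant forced to $1$ by $\det X=1$. The one small difference is that the paper does not invoke Proposition~\ref{prop5.2}; instead it simplifies the $S_n(M,\mathcal{N},1)$-tensorand directly by reapplying the skein relations \eqref{wzh.five}, \eqref{wzh.six}, \eqref{wzh.seven} in $(M,\mathcal{N})$ to the web $l(\alpha_{j_1}^{f_1},\dots)$, which is slightly more economical but amounts to the same thing.
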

\begin{proof}
From the above discussion, it suffices to show $\jmath$ preserves  relations 
(\ref{w.cross})-(\ref{wzh.eight}) for well-definedness. 

It is obvious that $\jmath$ preserves relations (\ref{w.cross})-(\ref{wzh.four}), and  (\ref{wzh.eight}).

It is obvious that $\jmath$ preserves relations (\ref{wzh.five})-(\ref{wzh.seven}) if the boundary component in the picture is not $e$. Then we suppose the boundary component in  these pictures is $e$.
We use $l$ (respectively $l'$) to denote the left handside (respectively right handside) of "=" in these relations.

Relation (\ref{wzh.five}):  We only prove the case where the white dot represents an arrow going from left to right, that is, all the arrows point towards $e$.
We choose a labeling for endpoints of $l$ on $e$.
From bottom to top, we label the endpoints in  the right picture from $m+1$ to $m+n$. The other endpoints of $l'$ not in the picture are labeled in the same way as $l$.

Then we have
\begin{eqnarray*}
&&\jmath(l') = \sum_{\sigma\in S_n}(-1)^{\ell(\sigma)}
\sum_{\substack{1\leq j_1,\dots,j_m\leq n\\1\leq k_1,\dots,k_n\leq n}}c_{g_1(j_1)}\dots
c_{g_m(j_m)} c_{k_1}\dots c_{k_n}\\ 
&&l'(\alpha_{j_1}^{f_1},\dots, \alpha_{j_m}^{f_m},\alpha^{-1}_{k_1},\dots,\alpha^{-1}_{k_n}))\otimes
\mu(\alpha^{-f_1}_{h_1(i_1,\overline{j_1})}, \dots
\mu(\alpha^{-f_m}_{h_m(i_m,\overline{j_m})}) x_{\sigma(1),\overline{k_1}}\dots x_{\sigma(n),\overline{k_n}}\\
&&= 
\sum_{\substack{1\leq j_1,\dots,j_m\leq n\\1\leq k_1,\dots,k_n\leq n}}c_{g_1(j_1)}\dots
c_{g_m(j_m)} c_{k_1}\dots c_{k_n}\\ 
&&l'(\alpha_{j_1}^{f_1},\dots, \alpha_{j_m}^{f_m},\alpha^{-1}_{k_1},\dots,\alpha^{-1}_{k_n}))\otimes
\mu(\alpha^{-f_1}_{h_1(i_1,\overline{j_1})}, \dots
\mu(\alpha^{-f_m}_{h_m(i_m,\overline{j_m})})
\sum_{\sigma\in S_n}(-1)^{\ell(\sigma)} x_{\sigma(1),\overline{k_1}}\dots x_{\sigma(n),\overline{k_n}}\\
&&= 
\sum_{\substack{1\leq j_1,\dots,j_m\leq n\\1\leq k_1,\dots,k_n\leq n}}c_{g_1(j_1)}\dots
c_{g_m(j_m)} c_{k_1}\dots c_{k_n}\\ 
&&l'(\alpha_{j_1}^{f_1},\dots, \alpha_{j_m}^{f_m},\alpha^{-1}_{k_1},\dots,\alpha^{-1}_{k_n}))\otimes
\mu(\alpha^{-f_1}_{h_1(i_1,\overline{j_1})}, \dots
\mu(\alpha^{-f_m}_{h_m(i_m,\overline{j_m})})\text{det}
\begin{pmatrix}
x_{1,\overline{k_1}}&\dots&x_{1,\overline{k_n}}\\
\vdots& &\vdots\\
x_{n,\overline{k_1}}&\dots&x_{n,\overline{k_n}}\\
\end{pmatrix}\\
&&= (-1)^{\frac{n(n-1)}{2}}
\sum_{\substack{1\leq j_1,\dots,j_m\leq n\\\tau\in S_n}}c_{g_1(j_1)}\dots
c_{g_m(j_m)} \\ 
&&l'(\alpha_{j_1}^{f_1},\dots, \alpha_{j_m}^{f_m},\alpha^{-1}_{\tau(1)},\dots,\alpha^{-1}_{\tau(n)}))\otimes
\mu(\alpha^{-f_1}_{h_1(i_1,\overline{j_1})}, \dots
\mu(\alpha^{-f_m}_{h_m(i_m,\overline{j_m})})\text{det}
\begin{pmatrix}
x_{1,\overline{\tau(1)}}&\dots&x_{1,\overline{\tau(n)}}\\
\vdots& &\vdots\\
x_{n,\overline{\tau(1)}}&\dots&x_{n,\overline{\tau(n)}}\\
\end{pmatrix}\\
&&= 
\sum_{\substack{1\leq j_1,\dots,j_m\leq n\\\tau\in S_n}} (-1)^{\ell(\tau)} c_{g_1(j_1)}\dots
c_{g_m(j_m)} \\
&&l'(\alpha_{j_1}^{f_1},\dots, \alpha_{j_m}^{f_m},\alpha^{-1}_{\tau(1)},\dots,\alpha^{-1}_{\tau(n)}))\otimes
\mu(\alpha^{-f_1}_{h_1(i_1,\overline{j_1})}, \dots
\mu(\alpha^{-f_m}_{h_m(i_m,\overline{j_m})})\\
&&= 
\sum_{\substack{1\leq j_1,\dots,j_m\leq n}} c_{g_1(j_1)}\dots
c_{g_m(j_m)} 
l(\alpha_{j_1}^{f_1},\dots, \alpha_{j_m}^{f_m})\otimes
\mu(\alpha^{-f_1}_{h_1(i_1,\overline{j_1})}, \dots
\mu(\alpha^{-f_m}_{h_m(i_m,\overline{j_m})}) = \jmath(l).\\
\end{eqnarray*}

Relation (\ref{wzh.six}): Here we only prove $\jmath$
preserves 
$\raisebox{-.20in}{
\begin{tikzpicture}
\tikzset{->-/.style=
{decoration={markings,mark=at position #1 with
{\arrow{latex}}},postaction={decorate}}}
\filldraw[draw=white,fill=gray!20] (-0.7,-0.7) rectangle (0,0.7);
\draw [line width =1.5pt,decoration={markings, mark=at position 1 with {\arrow{>}}},postaction={decorate}](0,0.7)--(0,-0.7);
\draw [color = black, line width =1pt] (0 ,0.3) arc (90:270:0.5 and 0.3);
\node [right]  at(0,0.3) {$i$};
\node [right] at(0,-0.3){$j$};
\draw [line width =1pt,decoration={markings, mark=at position 0.5 with {\arrow{>}}},postaction={decorate}](-0.5,0.02)--(-0.5,-0.02);
\end{tikzpicture}}   = \delta_{\bar j,i }\,  (-1)^{n-i}$. We label the top endpoint by $1$ and the other one by 2.  
Then we  have
\begin{align*}
\jmath( \raisebox{-.20in}{
\begin{tikzpicture}
\tikzset{->-/.style=
{decoration={markings,mark=at position #1 with
{\arrow{latex}}},postaction={decorate}}}
\filldraw[draw=white,fill=gray!20] (-0.7,-0.7) rectangle (0,0.7);
\draw [line width =1.5pt,decoration={markings, mark=at position 1 with {\arrow{>}}},postaction={decorate}](0,0.7)--(0,-0.7);
\draw [color = black, line width =1pt] (0 ,0.3) arc (90:270:0.5 and 0.3);
\node [right]  at(0,0.3) {$i$};
\node [right] at(0,-0.3){$j$};
\draw [line width =1pt,decoration={markings, mark=at position 0.5 with {\arrow{>}}},postaction={decorate}](-0.5,0.02)--(-0.5,-0.02);
\end{tikzpicture}})&=\sum_{1\leq j_1,j_2\leq n}(-1)^{j+\overline{j_1}}c_{\overline{j_1}}c_{j_2} l(\alpha_{j_1},\alpha_{j_2}^{-1})\otimes x^{-1}_{j_1,i} x_{\overline{j},\overline{j_2}}\\
&=\sum_{1\leq j_1,j_2\leq n}(-1)^{j+\overline{j_1}}c_{\overline{j_1}}c_{j_2}
 \raisebox{-.20in}{
\begin{tikzpicture}
\tikzset{->-/.style=
{decoration={markings,mark=at position #1 with
{\arrow{latex}}},postaction={decorate}}}
\filldraw[draw=white,fill=gray!20] (-0.7,-0.7) rectangle (0,0.7);
\draw [line width =1.5pt,decoration={markings, mark=at position 1 with {\arrow{>}}},postaction={decorate}](0,0.7)--(0,-0.7);
\draw [color = black, line width =1pt] (0 ,0.3) arc (90:270:0.5 and 0.3);
\node [right]  at(0,0.3) {$j_1$};
\node [right] at(0,-0.3){$j_2$};
\draw [line width =1pt,decoration={markings, mark=at position 0.5 with {\arrow{>}}},postaction={decorate}](-0.5,0.02)--(-0.5,-0.02);
\end{tikzpicture}}
\otimes x^{-1}_{j_1,i} x_{\overline{j},\overline{j_2}}\\
&=(-1)^{j+1}\sum_{1\leq j_1\leq n}1\otimes x^{-1}_{j_1,i} x_{\overline{j},j_1}
=(-1)^{j+1}\delta_{\bar{j},i}1\otimes 1\\&= \delta_{\bar j,i }\,  (-1)^{n-i} 1\otimes 1.\\
\end{align*}

Relation (\ref{wzh.seven}): Here we only prove the case where the white dot represents an arrow going from right to left. We choose a labeling for endpoints of $l$ on $e$.
We label the top (respectively bottom) endpoint in the right picture by $m+1$ (respectively $m+2$). The other endpoints of $l'$ not in the picture are labeled in the same way as $l$. Then we have 
\begin{align*}
\jmath(l') &= \sum_{1\leq i\leq n}(-1)^{i+1}
\sum_{\substack{1\leq j_1,\dots,j_m\leq n\\1\leq k_1,k_2\leq n}}c_{g_1(j_1)}\dots
c_{g_m(j_m)} c_{\overline{k_1}} c_{k_2} (-1)^{\bar{i}+\overline{k_1}}\\ 
&l'(\alpha_{j_1}^{f_1},\dots, \alpha_{j_m}^{f_m},\alpha_{k_1},\alpha^{-1}_{k_2}))\otimes
\mu(\alpha^{-f_1}_{h_1(i_1,\overline{j_1})}) \dots
\mu(\alpha^{-f_m}_{h_m(i_m,\overline{j_m})}) x_{i,\overline{k_2}} x^{-1}_{k_1,i}\\
& = 
\sum_{\substack{1\leq j_1,\dots,j_m\leq n\\1\leq k_1,k_2\leq n}}c_{g_1(j_1)}\dots
c_{g_m(j_m)}  c_{k_2}\\ 
&l'(\alpha_{j_1}^{f_1},\dots, \alpha_{j_m}^{f_m},\alpha_{k_1},\alpha^{-1}_{k_2}))\otimes
\mu(\alpha^{-f_1}_{h_1(i_1,\overline{j_1})}) \dots
\mu(\alpha^{-f_m}_{h_m(i_m,\overline{j_m})}) \sum_{1\leq i\leq n} x_{i,\overline{k_2}} x^{-1}_{k_1,i}\\
& = 
\sum_{\substack{1\leq j_1,\dots,j_m\leq n\\1\leq k_1\leq n}}c_{g_1(j_1)}\dots
c_{g_m(j_m)}  (-1)^{k_1+1}\\
& l'(\alpha_{j_1}^{f_1},\dots, \alpha_{j_m}^{f_m},\alpha_{k_1},\alpha^{-1}_{\overline{k_1}}))\otimes
\mu(\alpha^{-f_1}_{h_1(i_1,\overline{j_1})}) \dots
\mu(\alpha^{-f_m}_{h_m(i_m,\overline{j_m})}) \\
& = 
\sum_{\substack{1\leq j_1,\dots,j_m\leq n}}c_{g_1(j_1)}\dots
c_{g_m(j_m)} 
 l'(\alpha_{j_1}^{f_1},\dots, \alpha_{j_m}^{f_m})\otimes
\mu(\alpha^{-f_1}_{h_1(i_1,\overline{j_1})}) \dots
\mu(\alpha^{-f_m}_{h_m(i_m,\overline{j_m})}) 
\\
& =\jmath(l).
\end{align*}

Then $\jmath$ is well-defined. Trivially it is an algebra homomorphism. For any $\alpha\in S_n(M,\mathcal{N},1)$, we have $\jmath(\lambda_{ad}(\alpha)) = \alpha\otimes 1$. Thus $\jmath$ is an $S_n(M,\mathcal{N},1)$-algebra homomorphism.


\end{proof}

\begin{theorem}\label{thm5.11}
Suppose $(M,\mathcal{N})$ is a marked 3-manifold with $\N\neq\emptyset$, and $\N'$ is obtained from $\N$ by adding one extra marking. Then there exist $S_n(M,\mathcal{N},1)$-algebra homomorphisms $\imath: S_n(M,\mathcal{N},1)\otimes
O(SL_n)\rightarrow S_n(M,\mathcal{N}',1)$ and $\jmath:S_n(M,\mathcal{N}',1)\rightarrow 
S_n(M,\mathcal{N},1)\otimes O(SL_n)$ such that $$\jmath\circ\imath =Id_{S_n(M,\mathcal{N},1)\otimes
O(SL_n)} ,\; \imath\circ\jmath = Id_{S_n(M,\mathcal{N}',1)}.$$ Especially $S_n(M,\mathcal{N}',1)\simeq
S_n(M,\mathcal{N},1)\otimes O(SL_n)$.
\end{theorem}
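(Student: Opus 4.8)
The plan is to prove that the two $S_n(M,\mathcal{N},1)$-algebra homomorphisms $\imath$ and $\jmath$ from Lemmas \ref{lmm5.9} and \ref{lmm5.10} are mutually inverse, which gives the isomorphism at once. Since $\jmath\circ\imath$ and $\imath\circ\jmath$ are both $S_n(M,\mathcal{N},1)$-algebra endomorphisms, each automatically restricts to the identity on the base ring $S_n(M,\mathcal{N},1)$; hence it suffices to check them on a generating set over the base. For $\jmath\circ\imath$ the target $S_n(M,\mathcal{N},1)\otimes O(SL_n)$ is generated over the base by the elements $1\otimes x_{i,j}$, and for $\imath\circ\jmath$ the target $S_n(M,\mathcal{N}',1)$ is generated over the base by the arcs $\alpha_{i,j}$ by Lemma \ref{lmm5.8}. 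So the whole argument reduces to verifying $\jmath(\imath(1\otimes x_{i,j}))=1\otimes x_{i,j}$ and $\imath(\jmath(\alpha_{i,j}))=\alpha_{i,j}$ for all $1\le i,j\le n$.

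First I would evaluate $\imath(1\otimes x_{i,j})=(AS^{[\alpha]})_{i,j}=(-1)^{i+1}S^{[\alpha]}_{\bar i,j}=(-1)^{i+1}\alpha_{\bar i,j}$, a single stated arc meeting $e$ in exactly one endpoint. Applying the definition of $\jmath$ with $m=1$ (the end on $e$ points into $e$, so $f_1=-1$, $g_1=\mathrm{Id}$, $h_1(i_1,\overline{j_1})=(i_1,\overline{j_1})$), the sum collapses once I recognize that connecting the copy of $\alpha$ to a copy of $\alpha^{-1}$ produces the trivial morphism at the object $e_1\subset\N$. By Proposition \ref{prop5.2}(c) this trivial loop equals $S^{[o]}=d_nA$, so $\alpha_{\bar i,j}(\alpha_{j_1}^{-1})=d_n(-1)^{j_1+1}\delta_{\overline{j_1},j}$, forcing $j_1=\bar j$ and leaving a single scalar times $1\otimes x_{i,j}$. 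The computation for $\imath(\jmath(\alpha_{i,j}))$ runs identically: $\jmath(\alpha_{i,j})$ collapses to a scalar multiple of $1\otimes x_{\bar i,j}$, and applying $\imath$ returns a scalar multiple of $\alpha_{i,j}$ through $(AS^{[\alpha]})_{\bar i,j}=(-1)^{\bar i+1}\alpha_{i,j}$.

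The main obstacle is the constant bookkeeping: one must check that the accumulated scalar in each composite is exactly $1$. The relevant factors are the quantum constants $c_{\bar j}$ coming from $\jmath$, the signs $(-1)^{\,\cdot\,+1}$ built into $A$ and into $\mu$, and the spin twists $d_n^{h(\tilde\cdot)}$ (all the arcs $\alpha,\alpha^{-1}$ being chosen with $h=0$ as in Remark \ref{rem5.8}, so the concatenated trivial loop again has $h=0$). At $v=1$ we have $q=1$ and hence $c_i=(-1)^{n-i}$, while $d_n^2=1$ and the involution satisfies $i+\bar i=n+1$. Feeding these into the collapsed expressions, the scalar in $\jmath(\imath(1\otimes x_{i,j}))$ is $(-1)^{i+1}c_{\bar j}(-1)^{\bar j+\bar i}=(-1)^{\,i+1+n-\bar j+\bar j+\bar i}=(-1)^{2n+2}=1$, and similarly the scalar in $\imath(\jmath(\alpha_{i,j}))$ is $c_{\bar j}(-1)^{\bar j+i+\bar i+1}=(-1)^{\,n-\bar j+\bar j+i+\bar i+1}=(-1)^{2n+2}=1$; checking that these sign and constant contributions cancel precisely is the only genuine difficulty. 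Establishing both identities on generators then yields $\jmath\circ\imath=\mathrm{Id}_{S_n(M,\mathcal{N},1)\otimes O(SL_n)}$ and $\imath\circ\jmath=\mathrm{Id}_{S_n(M,\mathcal{N}',1)}$, and therefore $S_n(M,\mathcal{N}',1)\simeq S_n(M,\mathcal{N},1)\otimes O(SL_n)$.
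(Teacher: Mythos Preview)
Your proposal is correct and follows essentially the same route as the paper: invoke Lemmas \ref{lmm5.9} and \ref{lmm5.10} for the existence of $\imath$ and $\jmath$, then verify both composites on $S_n(M,\mathcal N,1)$-algebra generators ($1\otimes x_{i,j}$ for $\jmath\circ\imath$, and $\alpha_{i,j}$ via Lemma \ref{lmm5.8} for $\imath\circ\jmath$), collapsing the concatenation $\alpha^{-1}*\alpha$ to the trivial cap and tracking the resulting signs. The only cosmetic difference is that the paper evaluates the trivial cap by drawing the picture and quoting the computation inside the proof of Proposition \ref{prop5.2}(c), whereas you phrase the same step as ``$S^{[o]}=d_nA$''; the sign bookkeeping you carry out explicitly is exactly what the paper does more tersely in its displayed chain of equalities.
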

\begin{proof}
The existence of $\imath$ and $\jmath$ are given by Lemmas \ref{lmm5.9} and \ref{lmm5.10}.

Let $i,j\in\{1,2,\cdots,n\}$. Then we have 
\begin{align*}
\jmath(\imath(1\otimes x_{i,j})) &= (-1)^{i+1}\jmath(\alpha_{\bar{i},j})=(-1)^{i+1}\sum_{1\leq k\leq n}C_kd_n (-1)^{\bar{i}+1} \alpha(\alpha^{-1}_{k})\otimes x_{i,\bar{k}}\\
&= \sum_{1\leq k\leq n}(-1)^{n-k}
\raisebox{-.20in}{
\begin{tikzpicture}
\tikzset{->-/.style=
{decoration={markings,mark=at position #1 with
{\arrow{latex}}},postaction={decorate}}}
\filldraw[draw=white,fill=gray!20] (-0.7,-0.7) rectangle (0,0.7);
\draw [line width =1.5pt,decoration={markings, mark=at position 1 with {\arrow{>}}},postaction={decorate}](0,-0.7)--(0,0.7);
\draw [color = black, line width =1pt] (0 ,0.3) arc (90:270:0.5 and 0.3);
\node [right]  at(0,0.3) {$k$};
\node [right] at(0,-0.3){$j$};
\draw [line width =1pt,decoration={markings, mark=at position 0.5 with {\arrow{<}}},postaction={decorate}](-0.5,0.02)--(-0.5,-0.02);
\end{tikzpicture}}
\otimes x_{i,\bar{k}} = 1\otimes x_{i,j}.
\end{align*}
Since $1\otimes x_{i,j}$ are $S_n(M,\mathcal{N},1)$-algebra generators, we have $\jmath\circ\imath =Id_{S_n(M,\mathcal{N},1)\otimes
O(SL_n)}$.

We also have 
$$\imath(\jmath(\alpha_{i,j})) = (-1)^{\bar{i}+1}\imath(1\otimes x_{\bar{i},j}) = \alpha_{i,j}.$$
From Lemma \ref{lmm5.8}, we get $\imath\circ\jmath = Id_{S_n(M,\mathcal{N}',1)}$.
\end{proof}

\begin{theorem}\label{thm5.13}
Let $(M,\mathcal{N})$ be a marked 3-manifold. If $\N=\emptyset$, we have $S_n(M,\mathcal{N},1)\simeq G_n(M)$.
If $\N\neq \emptyset$, we have $S_n(M,\mathcal{N},1)\simeq \Gamma_n(M)\otimes O(SL_n)^{\otimes(\sharp \N-1)}$.
\end{theorem}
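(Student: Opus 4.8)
The plan is to recognize this theorem as an assembly of results already proved in this section, handling the cases $\N=\emptyset$ and $\N\neq\emptyset$ separately and, in the second case, inducting on the number of components of $\N$; throughout I work under the connectedness assumption in force in this section, which is what makes the $\N\neq\emptyset$ formula meaningful, since for a disconnected $M$ carrying markings on several components the exponent of $O(SL_n)$ would fail to be $\sharp\N-1$. For $\N=\emptyset$ there is nothing new to do: Subsection \ref{sub4.1} already records the chain $S_n(M,\emptyset,1)\simeq S_n(M;\mathbb{C},1)\simeq G_n(M)$, the first isomorphism from \cite{le2021stated} and the second from \cite{sikora2005skein}, and composing them is the whole argument.

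For $\N\neq\emptyset$ I would set $k=\sharp\N\geq 1$ and induct on $k$. The base case $k=1$ is precisely Theorem \ref{thm5.5}, giving $S_n(M,\mathcal{N},1)\simeq\Gamma_n(M)=\Gamma_n(M)\otimes O(SL_n)^{\otimes 0}$; here connectedness enters through the identification $\pi_1^{Mor}(M,\mathcal{N})=\pi_1(M)$. For the inductive step I would list the components of $\N$ as $e_0,\dots,e_{k-1}$, put $\N_j=e_0\cup\dots\cup e_j$, and observe that each $\N_j$ arises from $\N_{j-1}$ by adding the single extra marking $e_j$ in the sense of Definition \ref{dddddddd}; the required condition $cl(e_j)\cap cl(\N_{j-1})=\emptyset$ is guaranteed by the standing hypothesis that the closures of distinct components of a marking are disjoint. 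Theorem \ref{thm5.11} then supplies algebra isomorphisms
\begin{equation*}
S_n(M,\N_j,1)\simeq S_n(M,\N_{j-1},1)\otimes O(SL_n),\qquad 1\leq j\leq k-1,
\end{equation*}
and iterating these from the base isomorphism $S_n(M,\N_0,1)\simeq\Gamma_n(M)$, together with associativity of $\otimes$, yields
\begin{equation*}
S_n(M,\mathcal{N},1)\simeq\Gamma_n(M)\otimes O(SL_n)^{\otimes(k-1)}.
\end{equation*}

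I do not expect a genuine obstacle, since Theorems \ref{thm5.5} and \ref{thm5.11} carry all the real content; the delicate points are purely organizational. The first is to confirm that the iterated marking additions are all legitimate, i.e.\ that at each stage the interval being adjoined has closure disjoint from those already present, which the definition of a marked 3-manifold provides. The second is to note that every isomorphism used is an isomorphism of algebras (indeed of $S_n(M,\N_{j-1},1)$-algebras), so the composite is again an algebra isomorphism, and that the permutation-symmetry of $O(SL_n)^{\otimes(k-1)}$ makes the final result independent of the order in which $e_1,\dots,e_{k-1}$ are adjoined, so the induction is consistent.
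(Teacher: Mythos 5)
Your proposal is correct and follows exactly the route the paper takes: its entire proof of this theorem is the citation ``Subsection \ref{sub4.1}, Theorems \ref{thm5.5} and \ref{thm5.11},'' i.e.\ the $\N=\emptyset$ case from the classical-limit discussion, the one-component base case from Theorem \ref{thm5.5}, and iterated application of Theorem \ref{thm5.11} for each additional marking. Your write-up merely makes explicit the induction and the organizational checks that the paper leaves implicit.
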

\begin{proof}
Subsection \ref{sub4.1}, Theorems \ref{thm5.5} and \ref{thm5.11}.
\end{proof}

\begin{corollary}\label{cli}
Let $(M,\mathcal{N})$ be a marked 3-manifold with $\N\neq \emptyset$. Suppose $\pi_1(M)$ is a free group generated by $m$ elements. Then we have 
 $S_n(M,\mathcal{N},1)\simeq  O(SL_n)^{\otimes(m + \sharp \N-1)}$.
\end{corollary}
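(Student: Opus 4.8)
The plan is to reduce the statement to an isomorphism $\Gamma_n(M)\simeq O(SL_n)^{\otimes m}$ and then to identify $\Gamma_n(M)$ with the coordinate ring of $\mathrm{Hom}(\pi_1(M),SL(n,\mathbb{C}))=SL(n,\mathbb{C})^m$. First I would invoke Theorem \ref{thm5.13}: since $\mathcal{N}\neq\emptyset$ we already have $S_n(M,\mathcal{N},1)\simeq \Gamma_n(M)\otimes O(SL_n)^{\otimes(\sharp\mathcal{N}-1)}$. Because $O(SL_n)^{\otimes a}\otimes O(SL_n)^{\otimes b}\simeq O(SL_n)^{\otimes(a+b)}$, it suffices to establish $\Gamma_n(M)\simeq O(SL_n)^{\otimes m}$, after which tensoring with the $\sharp\mathcal{N}-1$ extra copies yields $O(SL_n)^{\otimes(m+\sharp\mathcal{N}-1)}$.

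For this core step my preferred approach is the functor-of-points description built into Definition \ref{df3.4}. For any commutative $\mathbb{C}$-algebra $A$, giving an algebra homomorphism $\Gamma_n(M)\to A$ is the same as giving a family of matrices $Q_{[\alpha]}\in \mathrm{Mat}_n(A)$, indexed by $[\alpha]\in\pi_1(M)$, subject to $Q_{[\alpha]}Q_{[\beta]}=Q_{[\alpha*\beta]}$, $\det Q_{[\alpha]}=1$ and $Q_{[o]}=I$; this is precisely a group homomorphism $\pi_1(M)\to SL(n,A)$. Hence $\Gamma_n(M)$ represents the functor $A\mapsto \mathrm{Hom}_{\mathrm{grp}}(\pi_1(M),SL(n,A))$. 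When $\pi_1(M)$ is free on generators $g_1,\dots,g_m$, a homomorphism to $SL(n,A)$ is determined by, and may be prescribed arbitrarily on, the images of the $g_i$, so this functor is naturally isomorphic to $A\mapsto SL(n,A)^m$, which is represented by $O(SL_n)^{\otimes m}$. By Yoneda the two representing algebras are canonically isomorphic, giving $\Gamma_n(M)\simeq O(SL_n)^{\otimes m}$.

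Concretely, I would realize this isomorphism by the mutually inverse maps one expects. Fixing free generators $g_1,\dots,g_m$, define $\Psi\colon O(SL_n)^{\otimes m}\to\Gamma_n(M)$ by sending the generic matrix $X$ in the $i$-th tensor factor to $Q_{g_i}$; this is well defined since $\det Q_{g_i}=1$ in $\Gamma_n(M)$. It is surjective because every $Q_{[\alpha]}$ is a matrix product of the $Q_{g_i}^{\pm 1}$: writing $[\alpha]$ as a reduced word and using $Q_{[\alpha*\beta]}=Q_{[\alpha]}Q_{[\beta]}$ together with $Q_{g_i^{-1}}=Q_{g_i}^{-1}$, whose entries are cofactor polynomials in those of $Q_{g_i}$, one sees that the subalgebra generated by the entries of the $Q_{g_i}$ is all of $\Gamma_n(M)$. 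For the inverse $\Phi\colon\Gamma_n(M)\to O(SL_n)^{\otimes m}$ I would send each $Q_{[\alpha]}$, for the unique reduced word $[\alpha]=g_{i_1}^{\epsilon_1}\cdots g_{i_r}^{\epsilon_r}$, to the matrix $(X^{(i_1)})^{\epsilon_1}\cdots(X^{(i_r)})^{\epsilon_r}$, where $X^{(i)}$ is the generic matrix in the $i$-th factor.

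The main obstacle, and the only nonformal point, is checking that $\Phi$ respects the multiplicativity relation $Q_{[\alpha]}Q_{[\beta]}=Q_{[\alpha*\beta]}$: here the reduced word for $[\alpha*\beta]$ is obtained from the concatenation of the reduced words for $[\alpha]$ and $[\beta]$ by cancelling adjacent inverse pairs $g_i^{\epsilon}g_i^{-\epsilon}$, and each such cancellation matches the identity $X^{(i)}(X^{(i)})^{-1}=I$ in $O(SL_n)$; thus freeness of $\pi_1(M)$, i.e. uniqueness of reduced words, is exactly what guarantees $\Phi$ is well defined. The determinant and identity relations go to $\det$ of a product of $SL_n$-matrices and to $I$ respectively, so they hold automatically, and $\Phi\circ\Psi$, $\Psi\circ\Phi$ are the identity on algebra generators. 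I would keep the write-up short by citing the universal property of $\Gamma_n(M)$ and relegating the bookkeeping for $\Phi$ to the reduced-word cancellation just described.
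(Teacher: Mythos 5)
Your proposal is correct and follows the same route the paper intends: Corollary \ref{cli} is stated as an immediate consequence of Theorem \ref{thm5.13}, with the identification $\Gamma_n(M)\simeq O(SL_n)^{\otimes m}$ for a free group $\pi_1(M)$ left implicit, and your functor-of-points/Yoneda argument (together with the explicit reduced-word maps) is precisely the standard justification of that implicit step. Nothing is missing; you have simply written out the detail the paper omits.
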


The second conclusion in Theorem 7.13 in \cite{le2021stated} indicates the classical limit for essentially bordered pb surfaces, which coincides with Corollary \ref{cli}.


\begin{corollary}\label{Cor5.13}
Suppose $(M,\mathcal{N})$ is a marked 3-manifold, and $\N'$ is obtained from $\N$ by adding one extra marking.
Then the adding mark map $\lambda_{ad}:S_n(M,\mathcal{N},1)\rightarrow S_n(M,\mathcal{N}',1)$ is injective.
\end{corollary}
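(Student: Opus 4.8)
The plan is to reduce to the connected case and then split according to whether $\N$ is empty. Adding the marking $e$ only changes the component $M_0$ of $M$ containing $e$; on every other component the adding marking map is the identity. Since $S_n(M,\N,1)$ is the tensor product over the components of $M$, and a tensor product over $\mathbb{C}$ of injective linear maps is again injective, it suffices to prove injectivity when $M$ is connected. From now on $\N$ denotes the marking on this connected $M$, which may still be empty.

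When $\N\neq\emptyset$ the statement is almost immediate from Theorem \ref{thm5.11}. That theorem produces mutually inverse $S_n(M,\N,1)$-algebra isomorphisms $\imath,\jmath$ between $S_n(M,\N,1)\otimes O(SL_n)$ and $S_n(M,\N',1)$, and in the course of proving Lemma \ref{lmm5.10} it was shown that $\jmath(\lambda_{ad}(\alpha))=\alpha\otimes 1$ for every $\alpha\in S_n(M,\N,1)$. Thus $\jmath\circ\lambda_{ad}$ is the map $\alpha\mapsto\alpha\otimes 1$, which is injective because $1$ is part of a $\mathbb{C}$-basis of $O(SL_n)$, so that $-\otimes 1$ is an injection of vector spaces. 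Hence $\lambda_{ad}$ is injective.

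The remaining case is $\N=\emptyset$, where $\N'$ has exactly one component; here I would use Theorems \ref{thm5.5} and \ref{thm5.13}. By Theorem \ref{thm5.13} there is an isomorphism $S_n(M,\emptyset,1)\simeq G_n(M)$, and by Theorem \ref{thm5.5} the map $G$ gives an isomorphism $S_n(M,\N',1)\simeq\Gamma_n(M)$; recall $G_n(M)\subset\Gamma_n(M)$ by Definition \ref{df3.4}. The idea is to check that the algebra homomorphism $G\circ\lambda_{ad}\colon S_n(M,\emptyset,1)\to\Gamma_n(M)$ coincides with the isomorphism $S_n(M,\emptyset,1)\simeq G_n(M)$ followed by the inclusion $G_n(M)\hookrightarrow\Gamma_n(M)$. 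Since $S_n(M,\emptyset,1)$ is generated as an algebra by framed oriented knots (relation \eqref{wzh.four} kills all sinks and sources, leaving knots, and the product is disjoint union by Corollary \ref{cccc3.2}), it is enough to compare the two homomorphisms on a knot $\alpha$. Both send $\alpha$ to $d_n^{h(\tilde\alpha)}\,\mathrm{Trace}(Q_{[\alpha]})$: for $G\circ\lambda_{ad}$ this is the definition of $G$ on a knot, using that $\mathrm{Trace}(Q_{[\alpha_\beta]})=\mathrm{Trace}(Q_{[\alpha]})$ is independent of the connecting path $\beta$ via the relations of $\Gamma_n(M)$; for the inclusion one verifies, using $\cY_h$ from Remark \ref{rem3.14} together with $d_n^2=1$, that the isomorphism $S_n(M,\emptyset,1)\simeq G_n(M)$ of Subsection \ref{sub4.1} has the same effect on a knot. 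As the two algebra homomorphisms agree on generators they are equal, so $G\circ\lambda_{ad}$ is a composite of two injections, hence injective; therefore $\lambda_{ad}$ is injective.

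I expect the main obstacle to be precisely the $\N=\emptyset$ case: unlike the situation $\N\neq\emptyset$, where Theorem \ref{thm5.11} does essentially all the work, here one must carefully track the spin-structure signs $d_n^{h(\tilde\alpha)}$ and confirm that the isomorphism $S_n(M,\emptyset,1)\simeq G_n(M)$ built from Sikora's theorem and the spin correction really matches $G$ on knots. Everything else in the argument is a formal consequence of results already established.
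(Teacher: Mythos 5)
Your proposal is correct and follows essentially the same route as the paper: for $\N\neq\emptyset$ the paper also observes $\jmath\circ\lambda_{ad}(\alpha)=\alpha\otimes 1$ and concludes from Theorem \ref{thm5.11}, and for $\N=\emptyset$ it uses exactly the commutative square $G\circ\lambda_{ad}=\lambda\circ(\simeq)$ with $\lambda\colon G_n(M)\hookrightarrow\Gamma_n(M)$ the inclusion. The only differences are that you make the reduction to connected $M$ explicit and spell out the verification of commutativity on knots, both of which the paper leaves as "easy to check."
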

\begin{proof}
If $\N$ is empty. We look at the following diagram:
$$\begin{tikzcd}
S_n(M,\emptyset,1)  \arrow[r, "\lambda_{ad}"]
\arrow[d, "\simeq"]  
&  S_n(M,\mathcal{N}',1) \arrow[d, "G"] \\
G_n(M) \arrow[r, "\lambda "] 
&  \Gamma_n(M)\\
\end{tikzcd}$$
where the isomorphism from $S_n(M,\emptyset,1)$ to $G_n(M)$ is the one introduced in Subsection \ref{sub4.1} (the spin structure used for this isomorphism is the restriction of the relative spin structure for $(M,\mathcal{N}')$), and $\lambda$ is the embedding. It is easy to check the above diagram is commutative. Then $\lambda_{ad}$ is injective because $G$ is an isomorphism.

If $\N$ is not empty. For any $\alpha\in S_n(M,\mathcal{N},1)$, we have $\jmath(\lambda_{ad}(\alpha)) = \alpha\otimes 1$. Then $\lambda_{ad}$ is injective because $\jmath$ is an isomorphism and the map from 
$S_n(M,\mathcal{N},1)$ to $S_n(M,\mathcal{N},1)\otimes O(SL_n)$ given by $\alpha\mapsto \alpha\otimes 1$ is injective.
\end{proof}

We can regard $S_n(M,\mathcal{N},1)$
as a subalgebra of $S_n(M,\mathcal{N}',1)$ because of
 Corollary \ref{Cor5.13} and the adding marking map.

Suppose  the components of $\N$ consist of $e_0,e_1,\dots,e_{k-1}$ where $k$ is a positive integer.
If $k\geq 2$,
for each $1\leq t\leq k-1$, let $\alpha_t$ be an oriented path connecting $e_0$ and $e_t$ with $\alpha_t(0)\in e_0$ and $\alpha_t(1)\in e_t$. We use $[o]$ to denote the identity element in $\pi_1(M,e_0)$. 
The following Theorem offers algebraic generators and relations among these generators for the commutative algebra $S_n(M,\mathcal{N},1)$.
\begin{theorem}
The commutative algebra $S_n(M,\mathcal{N},1)$ is generated by $$S^{[\alpha]}_{i,j},\;[\alpha]\in \pi_1(M,e_0)
\cup \{[\alpha_1],\dots,[\alpha_{k-1}]\}, 1\leq i,j\leq n,$$ subject to the following relations.
\begin{equation}\label{eqqq}
	\begin{split}
		det(S^{[\alpha]}) = 1&\text{ for all } [\alpha] \in \pi_1(M,e_0)
		\cup \{[\alpha_1],\dots,[\alpha_{k-1}]\}, \;A S^{[o]} = I, \\
		&AS^{[\beta]}A S^{[\eta]} =A S^{[\beta*\eta]}\text{ for all } [\beta],[\eta]\in\pi_1(M,e_0).
	\end{split}
\end{equation}
Note that if $k=1$, the set $\{[\alpha_1],\dots,[\alpha_{k-1}]\}$ is empty.
\end{theorem}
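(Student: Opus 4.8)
The plan is to introduce the commutative algebra $P$ presented by exactly the generators and relations of \eqref{eqqq}, and to prove that the tautological map $\Psi\colon P\to S_n(M,\mathcal{N},1)$, sending each abstract symbol $S^{[\alpha]}_{i,j}$ to the genuine element of $S_n(M,\mathcal{N},1)$ carrying that name, is an isomorphism. That $\Psi$ is a well-defined algebra homomorphism is immediate from Proposition \ref{prop5.2}, which shows that $\det(S^{[\alpha]})=1$, $AS^{[o]}=I$, and $AS^{[\beta]}AS^{[\eta]}=AS^{[\beta*\eta]}$ all hold in $S_n(M,\mathcal{N},1)$; hence every relation of $P$ is respected.

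First I would check surjectivity. By Remark \ref{rre5.1} together with relation \eqref{wzh.five}, which removes all sinks and sources, $S_n(M,\mathcal{N},1)$ is generated as an algebra by the elements $S^{[\gamma]}_{i,j}$ as $[\gamma]$ ranges over all morphisms of $\pi_1^{Mor}(M,\mathcal{N})$. A morphism from $e_s$ to $e_t$ factors as $[\gamma]=[\alpha_t]\,[\eta]\,[\alpha_s]^{-1}$ with $[\eta]=[\alpha_t^{-1}*\gamma*\alpha_s]\in\pi_1(M,e_0)$, so Proposition \ref{prop5.2}(a) and (c) give $AS^{[\gamma]}=AS^{[\alpha_t]}\,AS^{[\eta]}\,(AS^{[\alpha_s]})^{-1}$. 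Since $\det(AS^{[\alpha_s]})=1$, the inverse has entries polynomial in the entries of $AS^{[\alpha_s]}$, so every $S^{[\gamma]}_{i,j}$ lies in the subalgebra generated by the distinguished generators indexed by $\pi_1(M,e_0)\cup\{[\alpha_1],\dots,[\alpha_{k-1}]\}$. Thus $\Psi$ is onto.

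For injectivity I would identify $P$ structurally and compare it with Theorem \ref{thm5.13}. Writing $M_{[\alpha]}:=AS^{[\alpha]}$, the entries of $M_{[\alpha]}$ and of $S^{[\alpha]}$ generate the same subalgebra (as $A$ is a fixed invertible matrix with scalar entries and $\det A=1$), and in these variables the $\pi_1(M,e_0)$-relations of \eqref{eqqq} become $\det M_{[\alpha]}=1$, $M_{[o]}=I$, $M_{[\beta]}M_{[\eta]}=M_{[\beta*\eta]}$, which is precisely the defining presentation of $\Gamma_n(M)$ in Definition \ref{df3.4} under $Q_{[\alpha]}\leftrightarrow M_{[\alpha]}$. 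Each remaining matrix $S^{[\alpha_t]}$ is constrained only by $\det(S^{[\alpha_t]})=1$ and so contributes a free $O(SL_n)$ factor. As every relation of \eqref{eqqq} involves the generators of just one block, the presentation splits as a tensor product, giving an isomorphism $\Xi\colon \Gamma_n(M)\otimes O(SL_n)^{\otimes(k-1)}\xrightarrow{\ \sim\ }P$ with $Q_{[\alpha]}\mapsto AS^{[\alpha]}$ and the $t$-th generic matrix $\mapsto AS^{[\alpha_t]}$. On the other hand, composing the isomorphism $F$ of Theorem \ref{thm5.5} for $(M,e_0)$ with the iterated adding-marking isomorphisms of Theorem \ref{thm5.11}, adding $e_1,\dots,e_{k-1}$ in turn and using $\alpha_t$ as the connecting path at stage $t$, yields an isomorphism $\Theta\colon \Gamma_n(M)\otimes O(SL_n)^{\otimes(k-1)}\xrightarrow{\ \sim\ }S_n(M,\mathcal{N},1)$. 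By Lemma \ref{lmm5.9} the $t$-th $O(SL_n)$-generator goes to $AS^{[\alpha_t]}$, and by Lemma \ref{lmm5.3} the factor $\Gamma_n(M)$ goes via $Q_{[\alpha]}\mapsto AS^{[\alpha]}$. Comparing images on generators gives $\Psi\circ\Xi=\Theta$, whence $\Psi$ is an isomorphism, as asserted.

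The step I expect to be the main obstacle is this final comparison $\Psi\circ\Xi=\Theta$: it requires keeping careful track of how the successive adding-marking isomorphisms of Theorem \ref{thm5.11} assemble the factor $O(SL_n)^{\otimes(k-1)}$, confirming that the $t$-th factor is governed precisely by the distinguished path $\alpha_t$ (from $e_0$ to $e_t$, the base point being always present), and verifying that the symbols $S^{[\alpha]}$ are natural under the embeddings $(M,\{e_0\})\hookrightarrow(M,\mathcal{N})$ with a compatibly restricted relative spin structure (Remark \ref{rem5.8}, Corollary \ref{Cor5.13}). Once these compatibilities are in place the identity $\Psi\circ\Xi=\Theta$ is a formal check on generators, and together with surjectivity it forces $\Psi$ to be bijective.
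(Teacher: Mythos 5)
Your proposal is correct and follows the same route as the paper: the paper's entire proof is the citation of Theorems \ref{thm5.5} and \ref{thm5.11}, i.e., the isomorphism $S_n(M,\mathcal{N},1)\simeq\Gamma_n(M)\otimes O(SL_n)^{\otimes(k-1)}$, and your argument makes explicit that the presentation \eqref{eqqq} is exactly the presentation of that tensor product transported through $Q_{[\alpha]}\mapsto AS^{[\alpha]}$ and the $t$-th generic matrix $\mapsto AS^{[\alpha_t]}$. The compatibility checks you flag (naturality of the symbols $S^{[\alpha]}$ under the adding-marking maps with compatibly restricted relative spin structures, and the identification of the $t$-th $O(SL_n)$ factor via Lemma \ref{lmm5.9}) are precisely the details the paper leaves implicit, and they go through as you describe.
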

\begin{proof}
	Theorems \ref{thm5.5} and \ref{thm5.11}.
\end{proof}

\subsection{Ker$\Phi$ = $\sqrt{0}$}

Suppose $\N$ has only one component.
Then we define an alegbra isomorphism $H:\Gamma_n(M)\rightarrow \Gamma_n(M)$ and 
a surjective algebra homormorphism $\tau:\Gamma_n(M)\rightarrow R_n(M,\mathcal{N})$.
Let $[\alpha]$ be an element in  $\pi_1^{Mor}(M,\mathcal{N})$ and 
$i,j$ be two integers between $1$ and $n$. Define $H([\alpha]_{i,j})
= [\alpha]_{\bar{i},\bar{j}}$. It is easy to show $H$ is a well-defined algebra isomorphism. For any $\rho\in\tilde{\chi}_n(M,\mathcal{N})$, define  $\tau([\alpha]_{i,j})(\rho) = [\rho(\tilde{\alpha})]_{i,j}$
where $\tilde{\alpha}\in \pi_1(UM, \tilde{\N})$ is a lift for $\alpha$ such 
that $h(\tilde{\alpha}) = 0$. From the proof of Proposition \ref{prop3.6}, we know the definition of $\tau$ is independent of the choice of the lift for $\alpha$. It is also obvious to show $\tau$ is a well-defined surjective algebra homomorphism. Especially from Proposition \ref{prop3.6} and definitions for $\Gamma_n(M)$ and $ R_n(M,\mathcal{N})$, we have Ker$\tau = \sqrt{0}_{\Gamma_n(M)}.$

\begin{lemma}\label{lmm5.15}
Let $(M,\mathcal{N})$ be marked 3-manifold with $\N$ consisting of one open oriented interval. Then we have the following commutative diagram:
$$\begin{tikzcd}
\Gamma_n(M)  \arrow[r, "F"]
\arrow[d, "H"]  
&  S_n(M,\mathcal{N},1) \arrow[d, "\Phi"] \\
\Gamma_n(M) \arrow[r, "\tau "] 
&  R_n(M,\mathcal{N})\\
\end{tikzcd}.$$ Especially Ker\,$\Phi=\sqrt{0}_{S_n(M,\mathcal{N},1)}.$
\end{lemma}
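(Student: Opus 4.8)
The plan is to verify the commutativity $\Phi\circ F=\tau\circ H$ on a generating set of $\Gamma_n(M)$ and then extract the kernel statement formally. Since all four maps $F,H,\tau,\Phi$ are algebra homomorphisms and $\Gamma_n(M)$ is generated by the entries $[\alpha]_{i,j}$ with $[\alpha]\in\pi_1^{Mor}(M,\mathcal{N})=\pi_1(M)$ and $1\le i,j\le n$, it suffices to check that $\Phi(F([\alpha]_{i,j}))$ and $\tau(H([\alpha]_{i,j}))$ agree as regular functions on $\tilde\chi_n(M,\mathcal{N})$ for each such generator.

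The first step is to unwind $F([\alpha]_{i,j})=(AS^{[\alpha]})_{i,j}$. Using $A_{i,k}=(-1)^{i+1}\delta_{\bar i,k}$ this equals $(-1)^{i+1}S^{[\alpha]}_{\bar i,j}$, that is, $(-1)^{i+1}$ times the stated boundary arc $\hat\alpha$ (a good representative of $[\alpha]$ with $h(\widetilde{\hat\alpha})=0$, by Remark \ref{rre5.1}) stated by $s(\hat\alpha(0))=j$ and $s(\hat\alpha(1))=\bar i$. Applying $\Phi$ to this single arc through the defining formula $tr_{\hat\alpha}(\tilde\rho)=[A\tilde\rho(\widetilde{\hat\alpha})]_{\overline{\bar i},\bar j}=[A\tilde\rho(\widetilde{\hat\alpha})]_{i,\bar j}$ and again unfolding the leftmost $A$, I expect $\Phi(F([\alpha]_{i,j}))(\tilde\rho)=(-1)^{i+1}(-1)^{i+1}[\tilde\rho(\widetilde{\hat\alpha})]_{\bar i,\bar j}=[\tilde\rho(\widetilde{\hat\alpha})]_{\bar i,\bar j}$, the two sign factors cancelling. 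On the other side, $\tau(H([\alpha]_{i,j}))=\tau([\alpha]_{\bar i,\bar j})$ evaluates on $\tilde\rho$ to $[\tilde\rho(\tilde\alpha)]_{\bar i,\bar j}$, where $\tilde\alpha$ is the lift of $\alpha$ with $h(\tilde\alpha)=0$. Since $\widetilde{\hat\alpha}$ and $\tilde\alpha$ are both lifts of $\alpha$ with vanishing $h$-value, and $\tau$ is independent of the choice of such a lift (this is exactly the well-definedness extracted from the proof of Proposition \ref{prop3.6}), the two expressions coincide, yielding $\Phi\circ F=\tau\circ H$.

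The last step derives $\mathrm{Ker}\,\Phi=\sqrt 0$. Because $F$ is an isomorphism (Theorem \ref{thm5.5}) and $H$ is an algebra automorphism, the identity $\Phi\circ F=\tau\circ H$ rearranges to $\Phi=\tau\circ H\circ F^{-1}$, whence $\mathrm{Ker}\,\Phi=F\bigl(H^{-1}(\mathrm{Ker}\,\tau)\bigr)$. We are given $\mathrm{Ker}\,\tau=\sqrt 0_{\Gamma_n(M)}$; since $H$ is an automorphism it fixes the nilradical, and since $F$ is an isomorphism it carries the nilradical of $\Gamma_n(M)$ onto that of $S_n(M,\mathcal{N},1)$. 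Hence $\mathrm{Ker}\,\Phi=\sqrt 0_{S_n(M,\mathcal{N},1)}$.

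The only genuinely delicate point is the bookkeeping of the bar-conjugations $i\mapsto\bar i$ and the matching of the two $(-1)^{i+1}$ sign factors so that they cancel, together with the justification that the two chosen lifts $\widetilde{\hat\alpha}$ and $\tilde\alpha$ may be used interchangeably inside $\tau$; once these are pinned down the remainder is formal.
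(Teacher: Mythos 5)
Your proposal is correct and follows essentially the same route as the paper: both verify $\Phi\circ F=\tau\circ H$ on the generators $[\alpha]_{i,j}$ by computing $\Phi(F([\alpha]_{i,j}))(\rho)=(-1)^{i+1}[A\rho(\widetilde{\hat\alpha})]_{i,\bar j}=[\rho(\widetilde{\hat\alpha})]_{\bar i,\bar j}$ and matching it with $\tau([\alpha]_{\bar i,\bar j})(\rho)$ via the independence of $\tau$ from the choice of lift with vanishing $h$-value, then deduce $\mathrm{Ker}\,\Phi=\sqrt{0}$ from $F$ and $H$ being isomorphisms and $\mathrm{Ker}\,\tau=\sqrt{0}_{\Gamma_n(M)}$. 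The sign bookkeeping and the treatment of the lifts in your write-up agree with the paper's computation.
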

\begin{proof}
For any $[\alpha]\in\pi_1^{Mor}(M,\mathcal{N}),1\leq i,j\leq n$, we know $F([\alpha]_{i,j}) = (-1)^{i+1} \hat{\alpha}_{\bar{i},j}$
where $\hat{\alpha}$ is a framed oriented boundary arc such that $h(\widetilde{\hat{\alpha}})
= 0$ and $[\hat{\alpha}] = [\alpha]\in\pi_1^{Mor}(M,\mathcal{N})$. Then for any $\rho\in\tilde{\chi}_n(M,\mathcal{N})$, we have
$$\Phi(F([\alpha]_{i,j}))(\rho) =(-1)^{i+1} \Phi(\hat{\alpha}_{\bar{i},j})(\rho) =
 (-1)^{i+1} [A\rho(\widetilde{\hat{\alpha}})]_{i,\bar{j}} = [\rho(\widetilde{\hat{\alpha}})]_{\bar{i},\bar{j}}.$$
Since $\widetilde{\hat{\alpha}}\in \pi_1(UM,\tilde{\N})$ is a lift for $\alpha$ and $h(\widetilde{\hat{\alpha}})=0$,
we have 
$$\tau(H([\alpha]_{i,j}))(\rho) = \tau([\alpha]_{\bar{i},\bar{j}})(\rho)
= [\rho(\widetilde{\hat{\alpha}})]_{\bar{i},\bar{j}}.$$
Thus the diagram commutes.

Since both $F$ and $H$ are isomorphisms and Ker$\tau=\sqrt{0}_{\Gamma_n(M)}$, we get Ker$\Phi=\sqrt{0}_{S_n(M,\mathcal{N},1)}$.

\end{proof}

\begin{lemma}\label{lmm5.16}
Suppose $(M,\mathcal{N})$ is a marked 3-manifold with $\N\neq\emptyset$, and $\N'$ is obtained from $\N$ by adding one extra marking. 
Then Ker\,$\Phi^{(M,\mathcal{N}')}$ is the ideal generated by Ker\,$\Phi^{(M,\mathcal{N})}$ (here we regard $S_n(M,\mathcal{N},1)$ as a subalgebra of $S_n(M,\mathcal{N}',1)$). 
\end{lemma}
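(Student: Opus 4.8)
The plan is to transport the kernel computation through the tensor-product description of adding a marking provided by Theorem \ref{thm5.11}. That theorem gives an $S_n(M,\mathcal{N},1)$-algebra isomorphism $\jmath:S_n(M,\mathcal{N}',1)\xrightarrow{\sim}S_n(M,\mathcal{N},1)\otimes O(SL_n)$ (with inverse $\imath$) under which the subalgebra $S_n(M,\mathcal{N},1)$, sitting inside $S_n(M,\mathcal{N}',1)$ via the adding-marking map, becomes $S_n(M,\mathcal{N},1)\otimes 1$. On the coordinate-ring side, the isomorphism $\chi_n(M,\mathcal{N}')\simeq\chi_n(M,\mathcal{N})\times SL(n,\mathbb{C})$ from Lemma 8.1 in \cite{CL2022stated}, whose last factor records the holonomy of a lift of the path $\alpha$ to the new marking $e$, yields $R_n(M,\mathcal{N}')\simeq R_n(M,\mathcal{N})\otimes O(SL_n)$. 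The whole statement will follow once I show that under these two identifications $\Phi^{(M,\mathcal{N}')}$ becomes $\Phi^{(M,\mathcal{N})}\otimes\psi$ for some algebra isomorphism $\psi:O(SL_n)\to O(SL_n)$.

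To establish this factorization I would check the behaviour of $\Phi^{(M,\mathcal{N}')}$ on the two tensor factors separately. On $S_n(M,\mathcal{N},1)\otimes 1$, a web coming from $(M,\mathcal{N})$ never meets $e$ and its holonomies are insensitive to the extra $SL(n,\mathbb{C})$-factor, so the definition of $\Phi$ in Theorem \ref{thm3.11} gives $\Phi^{(M,\mathcal{N}')}(\lambda_{ad}(a))(\tilde\rho')=\Phi^{(M,\mathcal{N})}(a)(\tilde\rho)$, where $\tilde\rho$ is the restriction of $\tilde\rho'$ obtained by forgetting the new marking; thus $\Phi^{(M,\mathcal{N}')}$ restricted to $S_n(M,\mathcal{N},1)\otimes 1$ equals $\Phi^{(M,\mathcal{N})}$ followed by the inclusion $b\mapsto b\otimes 1$. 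On $1\otimes O(SL_n)$, recall $\imath(1\otimes x_{i,j})=(AS^{[\alpha]})_{i,j}=(-1)^{i+1}S^{[\alpha]}_{\bar i,j}$ (Lemma \ref{lmm5.9} together with $A_{i,k}=(-1)^{i+1}\delta_{\bar i,k}$), so the trace formula for $\Phi$ yields
$$\Phi^{(M,\mathcal{N}')}\big((AS^{[\alpha]})_{i,j}\big)(\tilde\rho')=(-1)^{i+1}[A\tilde\rho'(\widetilde{\hat\alpha})]_{i,\bar j}=[\tilde\rho'(\widetilde{\hat\alpha})]_{\bar i,\bar j},$$
which is the $(\bar i,\bar j)$-entry of the matrix recording the new $SL(n,\mathbb{C})$-factor of $\chi_n(M,\mathcal{N}')$. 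Hence $\Phi^{(M,\mathcal{N}')}$ carries $1\otimes O(SL_n)$ isomorphically onto the new coordinate factor via the relabeling automorphism $\psi:x_{i,j}\mapsto y_{\bar i,\bar j}$, where $y_{i,j}$ are the natural coordinates on that factor. Since both $\Phi^{(M,\mathcal{N}')}\circ\imath$ and $\Phi^{(M,\mathcal{N})}\otimes\psi$ are algebra homomorphisms agreeing on the generating sets $S_n(M,\mathcal{N},1)\otimes 1$ and $1\otimes O(SL_n)$, these two computations prove $\Phi^{(M,\mathcal{N}')}\circ\imath=\Phi^{(M,\mathcal{N})}\otimes\psi$.

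Granting the factorization, the conclusion is formal. Writing $K=\mathrm{Ker}\,\Phi^{(M,\mathcal{N})}$ and using that $O(SL_n)$ is free, hence flat, over $\mathbb{C}$, tensoring the exact sequence $0\to K\to S_n(M,\mathcal{N},1)\to R_n(M,\mathcal{N})\to 0$ with $O(SL_n)$ shows $\mathrm{Ker}(\Phi^{(M,\mathcal{N})}\otimes\mathrm{id})=K\otimes O(SL_n)$; composing with the injective map $\mathrm{id}\otimes\psi$ does not change the kernel, so $\mathrm{Ker}(\Phi^{(M,\mathcal{N})}\otimes\psi)=K\otimes O(SL_n)$. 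Transporting this back through $\imath$, the ideal $\mathrm{Ker}\,\Phi^{(M,\mathcal{N}')}$ corresponds to $K\otimes O(SL_n)$, which is exactly the ideal generated by $K\otimes 1$ in $S_n(M,\mathcal{N},1)\otimes O(SL_n)$: every $\sum_t k_t\otimes c_t=\sum_t(k_t\otimes 1)(1\otimes c_t)$ lies in this ideal, and conversely the ideal stays inside $K\otimes O(SL_n)$ since $K$ is an ideal of $S_n(M,\mathcal{N},1)$. This is precisely the assertion that $\mathrm{Ker}\,\Phi^{(M,\mathcal{N}')}$ is the ideal generated by $\mathrm{Ker}\,\Phi^{(M,\mathcal{N})}$.

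The main obstacle is the second computation of the middle paragraph: one must pin down precisely how $R_n(M,\mathcal{N}')$ factors as $R_n(M,\mathcal{N})\otimes O(SL_n)$ — this rests on the choice of relative spin structure and of the path $\alpha$ entering Lemma 8.1 in \cite{CL2022stated} and Proposition \ref{prop3.6} — and then verify that $\Phi^{(M,\mathcal{N}')}$ sends the $O(SL_n)$-factor of $S_n(M,\mathcal{N}',1)$ \emph{isomorphically} onto this new coordinate factor rather than merely into it. Everything else, namely the naturality statement on the old subalgebra and the flat-base-change kernel argument, is routine once this identification is secured.
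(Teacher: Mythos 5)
Your proposal is correct and follows essentially the same route as the paper: both establish the commutative square identifying $\Phi^{(M,\mathcal{N}')}\circ\imath$ with $\Phi^{(M,\mathcal{N})}\otimes f$ (your $\psi$ is the paper's relabeling $x_{i,j}\mapsto x_{\bar i,\bar j}$) by checking the same two generator computations, and then read off the kernel as the ideal generated by $\mathrm{Ker}\,\Phi^{(M,\mathcal{N})}\otimes 1$. Your flat-base-change justification of $\mathrm{Ker}(\Phi^{(M,\mathcal{N})}\otimes\mathrm{id})=K\otimes O(SL_n)$ only spells out a step the paper states without comment.
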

\begin{proof}
Here we  use the notations in Remark \ref{rem5.8}.

From Proposition \ref{prop3.6} and Lemma 8.1 in \cite{CL2022stated}, we know there is an algebra isomorphism 
$h: R_n(M,\mathcal{N})\otimes O(SL_n)\rightarrow R_n(M,\mathcal{N}')$ defined by
$$h(r\otimes x_{i,j})(\rho) = r(\rho|_{\pi_1(UM,\tilde{\N})})[\rho(\tilde{\alpha})]_{i,j}$$
where $r\in R_n(M,\mathcal{N}), 1\leq i,j\leq n,$ $\rho\in \tilde{\chi}_n(M,\mathcal{N}')$ and 
$\rho|_{\pi_1(UM,\tilde{\N})} $ is the restriction of $\rho$ on $\pi_1(UM,\tilde{\N})$.
We  have another algebra isomorphism $f:O(SL_n)\rightarrow O(SL_n)$ given by $x_{i,j}\rightarrow x_{\bar{i},\bar{j}}$.

Then we want to show the following diagram is commutative:
$$\begin{tikzcd}
S_n(M,\mathcal{N},1)\otimes O(SL_n)  \arrow[r, "\imath"]
\arrow[d, "\Phi^{(M,\mathcal{N})}\otimes f"]  
&  S_n(M,\mathcal{N}',1) \arrow[d, "\Phi^{(M,\mathcal{N}')}"] \\
R_n(M,\mathcal{N})\otimes O(SL_n) \arrow[r, "h"] 
&  R_n(M,\mathcal{N}')\\
\end{tikzcd}.$$
Let $\rho$ be element in $\tilde{\chi}_n(M,\mathcal{N}')$, $i,j$ be two integers between $1$ and $n$,
$\beta_{k,t}$ be a stated framed oriented boundary arc in $(M,\mathcal{N})$. We have 
$$
\Phi^{(M,\mathcal{N}')}(\imath(\beta_{k,t}\otimes 1))(\rho) = \Phi^{(M,\mathcal{N}')}(\beta_{k,t})(\rho)
= [A\rho(\tilde{\beta})]_{\bar{k},\bar{t}}\;,$$
and
\begin{align*}
(h\circ (\Phi^{(M,\mathcal{N})}\otimes f)) (\beta_{k,t}\otimes 1)(\rho)
&= h(\Phi^{(M,\mathcal{N})}(\beta_{k,t})\otimes 1)(\rho)  = \Phi^{(M,\mathcal{N})}(\beta_{k,t}) (\rho|_{\pi_1(UM,\tilde{\N})})\\
&= [A \rho|_{\pi_1(UM,\tilde{\N})} (\tilde{\beta})]_{\bar{k},\bar{t}} = [A\rho(\tilde{\beta})]_{\bar{k},\bar{t}}\, .
\end{align*}
We also have 
$$
\Phi^{(M,\mathcal{N}')}(\imath(1\otimes x_{i,j}))(\rho) = (-1)^{i+1} \Phi^{(M,\mathcal{N}')}(\alpha_{\bar{i},j})(\rho)
= (-1)^{i+1} [A \rho(\tilde{\alpha})]_{i,\bar{j}} =  [ \rho(\tilde{\alpha})]_{\bar{i},\bar{j}}\;,
$$
and 
$$(h\circ (\Phi^{(M,\mathcal{N})}\otimes f)) (1\otimes x_{i,j})(\rho) =
h(1\otimes x_{\bar{i},\bar{j}})(\rho) = [\rho(\tilde{\alpha})]_{\bar{i},\bar{j}}\, .$$
Thus the above diagram is commutative because $\beta_{k,t}\otimes 1, 1\otimes x_{i,j}$ generate
$S_n(M,\mathcal{N},1)\otimes O(SL_n)$ as an algebra and all the maps in the diagram are algebra homomorphisms.

We have Ker$(\Phi^{(M,\mathcal{N})}\otimes f) =$Ker($\Phi^{(M,\mathcal{N})}\otimes Id_{O(SL_n)}$)
$=(\text{Ker}\,\Phi^{(M,\mathcal{N})})\otimes O(SL_n)$, where $(\text{Ker}\,\Phi^{(M,\mathcal{N})})\otimes O(SL_n)$ is an ideal generated by $(\text{Ker}\,\Phi^{(M,\mathcal{N})})\otimes1$. Then Ker\,$\Phi^{(M,\mathcal{N}')}$is the ideal generated by
Ker\,$\Phi^{(M,\mathcal{N})}$ since $\imath((\text{Ker}\,\Phi^{(M,\mathcal{N})})\otimes1) =$Ker\,$\Phi^{(M,\mathcal{N})}$.

\end{proof}

\begin{lemma}\label{lmm5.17}
Let $(M,\mathcal{N})$ be a marked 3-manifold with $\N\neq \emptyset$. We have (a) Ker\,$\Phi^{(M,\mathcal{N})}
=\sqrt{0}_{S_n(M,\mathcal{N},1)}$, and (b) $\sqrt{0}_{S_n(M,\mathcal{N},1)}$ is the ideal generated by
$\sqrt{0}_{S_n(M,e,1)}$ where $e$ is a component of $\N$ (here we regard $S_n(M,e,1)$ as a subalgebra of $S_n(M,\mathcal{N},1)$).
\end{lemma}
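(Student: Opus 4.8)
The plan is to reduce both parts to a single commutative-algebra fact and then feed in the structural results already proved, so that no new diagrammatic work is needed. The fact I would isolate is: for any commutative $\mathbb{C}$-algebra $A$ and any integer $m\geq 0$,
$$\sqrt{0}_{A\otimes O(SL_n)^{\otimes m}} = \sqrt{0}_A\otimes O(SL_n)^{\otimes m},$$
where the right-hand side is identified with its image in $A\otimes O(SL_n)^{\otimes m}$, and this image is exactly the ideal generated by $\sqrt{0}_A\otimes 1$. Everything else is bookkeeping through Theorems \ref{thm5.5} and \ref{thm5.11} and Lemmas \ref{lmm5.15} and \ref{lmm5.16}.

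To justify this fact I would argue as follows. The algebra $O(SL_n)^{\otimes m}$ is the coordinate ring of the irreducible affine variety $SL(n,\mathbb{C})^m$, hence an integral domain; in particular it is reduced, and since $\mathbb{C}$ is algebraically closed it is geometrically reduced over $\mathbb{C}$. Because $O(SL_n)^{\otimes m}$ is a free $\mathbb{C}$-module, tensoring the inclusion $\sqrt{0}_A\hookrightarrow A$ by it gives an injection $\sqrt{0}_A\otimes O(SL_n)^{\otimes m}\hookrightarrow A\otimes O(SL_n)^{\otimes m}$ with quotient $(A/\sqrt{0}_A)\otimes O(SL_n)^{\otimes m}$; as $A/\sqrt{0}_A$ is reduced and the second factor is geometrically reduced, this quotient is reduced, which yields the inclusion $\sqrt{0}_{A\otimes O(SL_n)^{\otimes m}}\subseteq \sqrt{0}_A\otimes O(SL_n)^{\otimes m}$. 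For the reverse inclusion, any simple tensor $x\otimes\xi$ with $x\in\sqrt{0}_A$ satisfies $(x\otimes\xi)^{k}=x^{k}\otimes\xi^{k}=0$ for $k$ large, so it is nilpotent; since the nilpotent elements of a commutative ring form an ideal, every element of $\sqrt{0}_A\otimes O(SL_n)^{\otimes m}$ is nilpotent. With this fact in hand, part (b) is immediate: iterating Theorems \ref{thm5.5} and \ref{thm5.11} gives an algebra isomorphism $S_n(M,\mathcal{N},1)\simeq S_n(M,e,1)\otimes O(SL_n)^{\otimes(\sharp\mathcal{N}-1)}$ carrying the subalgebra $S_n(M,e,1)$ onto $S_n(M,e,1)\otimes 1$, and applying the fact with $A=S_n(M,e,1)$ and $m=\sharp\mathcal{N}-1$ identifies $\sqrt{0}_{S_n(M,\mathcal{N},1)}$ with the ideal generated by $\sqrt{0}_{S_n(M,e,1)}$.

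For part (a) I would deduce it from (b) together with the single-component case. By Lemma \ref{lmm5.15} applied to the marked $3$-manifold $(M,e)$ with the one component $e$, we have $\mathrm{Ker}\,\Phi^{(M,e)}=\sqrt{0}_{S_n(M,e,1)}$. Adding the remaining $\sharp\mathcal{N}-1$ markings one at a time and invoking Lemma \ref{lmm5.16} at each step shows that $\mathrm{Ker}\,\Phi^{(M,\mathcal{N})}$ is the ideal generated by $\mathrm{Ker}\,\Phi^{(M,e)}=\sqrt{0}_{S_n(M,e,1)}$ (using that the ideal generated by an ideal generated by $I$ is again the ideal generated by $I$). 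By part (b) that ideal equals $\sqrt{0}_{S_n(M,\mathcal{N},1)}$, which proves (a).

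I expect the main obstacle to be the commutative-algebra fact, and within it specifically the inclusion $\sqrt{0}_{A\otimes B}\subseteq\sqrt{0}_A\otimes B$: this genuinely requires that $B=O(SL_n)^{\otimes m}$ be \emph{geometrically} reduced over the ground field, since in general the tensor product of two reduced algebras over a field can acquire nilpotents. The point that rescues us is that $\mathbb{C}$ is algebraically closed and $SL(n,\mathbb{C})^m$ is irreducible, so $O(SL_n)^{\otimes m}$ is an integral domain and the geometric reducedness is automatic. Everything of a skein-theoretic or topological nature has already been packaged into Lemmas \ref{lmm5.15} and \ref{lmm5.16} and the tensor decomposition of $S_n(M,\mathcal{N},1)$, so the remaining argument is purely algebraic.
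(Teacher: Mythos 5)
Your proposal is correct, but it inverts the logical order of the paper's argument and rests on a different key input. The paper proves (a) first, by induction on $\sharp\N$: the base case is Lemma \ref{lmm5.15}, and the inductive step combines Lemma \ref{lmm5.16} with two elementary observations --- the ideal generated by a set of nilpotents in a commutative ring lies in the nilradical, and $\sqrt{0}\subseteq \mathrm{Ker}\,\Phi$ because the coordinate ring $R_n(M,\N)$ is reduced. Part (b) is then read off from (a) together with the iterated statement of Lemma \ref{lmm5.16}, exactly as in the last step of your argument for (a). You instead prove (b) first, as a purely structural statement about nilradicals, using the tensor decomposition $S_n(M,\N,1)\simeq S_n(M,e,1)\otimes O(SL_n)^{\otimes(\sharp\N-1)}$ from Theorem \ref{thm5.11} together with the identity $\sqrt{0}_{A\otimes O(SL_n)^{\otimes m}}=\sqrt{0}_A\otimes O(SL_n)^{\otimes m}$, and then deduce (a) from (b), Lemma \ref{lmm5.15}, and Lemma \ref{lmm5.16}. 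The trade-off is that your route needs the nontrivial (though standard) fact that a tensor product of reduced $\mathbb{C}$-algebras is reduced --- you correctly flag that geometric reducedness of $O(SL_n)^{\otimes m}$ is the point, and over the perfect field $\mathbb{C}$ this is automatic --- whereas the paper's induction avoids any such input; in exchange, your version of (b) is independent of the map $\Phi$ and identifies the nilradical of $S_n(M,\N,1)$ purely from the tensor decomposition. Both arguments are complete and rely on the same previously established lemmas.
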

\begin{proof}
 
For any marked 3-manifold $(M_1,\mathcal{N}_1)$ with $\N_1\neq \emptyset$, suppose $\N_1'$ is obtained from $\N_1$ by adding one extra marking. Then Ker\,$\Phi^{(M_1,\mathcal{N}_1')} = (\text{Ker\,}\Phi^{(M_1,\mathcal{N}_1)})$ from Lemma \ref{lmm5.16}. Thus if Ker\,$\Phi^{(M_1,\mathcal{N}_1)} =\sqrt{0}_{S_n(M_1,\mathcal{N}_1,1)}$, then 
$\text{Ker\,}\Phi^{(M_1,\mathcal{N}_1')} = (\text{Ker\,}\Phi^{(M_1,\mathcal{N}_1)})\subset \sqrt{0}_{S_n(M_1,\mathcal{N}_1',1)}$. We also have $\sqrt{0}_{S_n(M_1,\mathcal{N}_1',1)}\subset \text{Ker\,}\Phi^{(M_1,\mathcal{N}_1')}$ since the coordinate ring has no nonzero nilponents. Then $\text{Ker\,}\Phi^{(M_1,\mathcal{N}_1')}=\sqrt{0}_{S_n(M_1,\mathcal{N}_1',1)}$. Thus 
Ker\,$\Phi^{(M_1,\mathcal{N}_1)} =\sqrt{0}_{S_n(M_1,\mathcal{N}_1,1)}$ implies $\text{Ker\,}\Phi^{(M_1,\mathcal{N}_1')}=\sqrt{0}_{S_n(M_1,\mathcal{N}_1',1)}$. Combine with the fact that (a) is true if $\N$ consists of only one oriented open interval (Lemma \ref{lmm5.15}),  we get (a) is true for general marked 3-manifold $(M,\mathcal{N})$ with $\N\neq \emptyset$.

If $\N$ consists of one component, clearly (b) holds. If $\sharp \N>1$, suppose $Com(N)=\{e_1,e_2,\dots,e_m\}$.
For any $1\leq i\leq m$, define $\N_{(i)}=e_1\cup\dots\cup e_i$. Then we have 
$$S_n(M,\mathcal{N}_{(1)},1)\subset S_n(M,\mathcal{N}_{(2)},1)\subset,\dots,\subset S_n(M,\mathcal{N}_{(m)},1).$$
Since Ker\,$\Phi^{(M,\mathcal{N}_{(i+1)})}$ is an ideal of $S_n(M,\mathcal{N}_{(i+1)},1)$ generated by
Ker\,$\Phi^{(M,\mathcal{N}_{(i)})}$, then we have 
$$\text{Ker}\,\Phi^{(M,\mathcal{N})} = \text{Ker\,}\Phi^{(M,\mathcal{N}_{(m)})}$$
is an ideal of $S_n(M,\mathcal{N},1)$
generated by Ker\,$\Phi_{(M,\mathcal{N}_{(1)})}$, which actually is $\text{Ker\,}\Phi_{(M,e_1)}$. From (a), we know 
Ker\,$\Phi^{(M,\mathcal{N})}= \sqrt{0}_{S_n(M,\mathcal{N},1)}$ and Ker\,$\Phi^{(M,\{e_1\})}= \sqrt{0}_{S_n(M,e_1,1)}$. Since we can label any component of $\N$ as $e_1$, then (b) is true.
\end{proof}

\begin{theorem}
For any marked 3-manifold $(M,\mathcal{N})$,
 we have Ker\,$\Phi^{(M,\mathcal{N})}
=\sqrt{0}_{S_n(M,\mathcal{N},1)}$. 
\end{theorem}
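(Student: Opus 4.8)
The plan is to assemble the statement from the three cases already handled, using the disjoint-union reduction. First I would reduce to connected $M$. A general marked 3-manifold $(M,\mathcal{N})$ is a disjoint union of finitely many connected marked 3-manifolds $(M_1,\mathcal{N}_1),\dots,(M_r,\mathcal{N}_r)$, so by iterating Lemma \ref{6688} it suffices to prove $\mathrm{Ker}\,\Phi^{(M_i,\mathcal{N}_i)} = \sqrt{0}_{S_n(M_i,\mathcal{N}_i,1)}$ for each connected piece. Concretely, I would group the components as $(M_1,\mathcal{N}_1)\sqcup\big((M_2,\mathcal{N}_2)\sqcup\cdots\sqcup(M_r,\mathcal{N}_r)\big)$ and argue by induction on $r$: the base case $r=1$ is the connected case below, and the inductive step feeds the two kernel hypotheses into Lemma \ref{6688} to conclude for the full union.

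For connected $M$ there are two subcases. When $\mathcal{N}=\emptyset$, the computation recorded in Subsection \ref{sub4.1} identifies $\Phi$ with the composite $S_n(M,\emptyset,1)\simeq S_n(M;\mathbb{C},1)\simeq G_n(M)\xrightarrow{\ \cY_h\ } R_n(M,\emptyset)$; since the first two maps are isomorphisms and $\mathrm{Ker}\,\cY_h=\sqrt{0}$ by Remark \ref{rem3.14}, we obtain $\mathrm{Ker}\,\Phi^{(M,\emptyset)}=\sqrt{0}$ at once. When $\mathcal{N}\neq\emptyset$ and $M$ is connected, the desired equality is precisely part (a) of Lemma \ref{lmm5.17}.

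Combining these, every connected piece satisfies the kernel equality, and the inductive application of Lemma \ref{6688} then yields $\mathrm{Ker}\,\Phi^{(M,\mathcal{N})}=\sqrt{0}_{S_n(M,\mathcal{N},1)}$ in full generality. I expect essentially no obstacle beyond bookkeeping: all the substantive content already lives in Lemma \ref{lmm5.17}(a)---which itself rests on the single-marking isomorphism $S_n(M,\mathcal{N},1)\simeq\Gamma_n(M)$ of Theorem \ref{thm5.5}, the commuting square of Lemma \ref{lmm5.15}, and the add-a-marking ideal-generation statement of Lemma \ref{lmm5.16}---and in the $\mathcal{N}=\emptyset$ identification of Subsection \ref{sub4.1}. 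The only mild point to verify is that Lemma \ref{6688} is stated for a two-fold disjoint union, so passing to an arbitrary finite disjoint union requires the short induction indicated above, which is routine.
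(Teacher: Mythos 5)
Your proposal is correct and follows essentially the same route as the paper, whose proof of this theorem is exactly the citation of Subsection \ref{sub4.1} for the $\mathcal{N}=\emptyset$ case and Lemma \ref{lmm5.17}(a) for the $\mathcal{N}\neq\emptyset$ case, with the reduction to connected $M$ via Lemma \ref{6688} stated at the opening of Section \ref{subb5}. Your only addition is to spell out the routine induction needed to pass from the two-factor statement of Lemma \ref{6688} to an arbitrary finite disjoint union, which the paper leaves implicit.
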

\begin{proof}
Subsection \ref{sub4.1} and (a) in Lemma \ref{lmm5.17}.
\end{proof}

\section{Generalized marked 3-manifold}

Costantino and L{\^e} defined the generalized marked 3-manifold in \cite{CL2022TQFT}, in which they allow $\N$ contains oriented closed circles. For a generalized marked 3-manifold $\MN$, obviously $S_n(M,\N,1)$ has a commutative algebra structure, given by taking the disjoint union of the stated $n$-webs in $\MN$.
In this section, we will focus on the classical limit of the stated $SL_n$-skein module of the generalized marked 3-manifold.

\subsection{Cutting out the closure of a small open interval from $\N$}
Let $(M,\mathcal{N})$ be a generalized marked 3-manifold with $\N\neq \emptyset$. Suppose $U$ is a small open interval contained in $e$ such that $cl(U)\subset e$, where $e$ is a component of $\N$. Let $\N' = (\N\setminus e)\cup e'$ where $e' = e\setminus cl(U)$.
Let $l_{U}: S_n(M,\mathcal{N}',v)\rightarrow S_n(M,\mathcal{N},v)$ be the linear map induced by the embedding $(M,\mathcal{N}')\rightarrow (M,\mathcal{N})$. Clearly $l_U$ is surjective, and is an algebra homomorphism when $v=1$.

\begin{proposition}\label{pro7.1}
The  above map $l_U$ induces an isomorphism 
$$\bar{l}_U:S_n(M,\mathcal{N}',v)/\simeq\; \rightarrow S_n(M,\mathcal{N},v),$$
 where $\simeq$ is the equivalence relation  given by  the following picture:

\begin{align}\label{eq10}
\raisebox{-.35in}{
\begin{tikzpicture}
\tikzset{->-/.style=
{decoration={markings,mark=at position #1 with
{\arrow{latex}}},postaction={decorate}}}
\filldraw[draw=white,fill=gray!20] (-0.3,0) rectangle (2, 2);
\draw[line width =1.5pt,decoration={markings, mark=at position 1.0 with {\arrow{>}}},postaction={decorate}](2,0) --(2,0.5);
\draw [color = black, line width =1.5pt](2,0.5) --(2,0.9);
\draw[line width =1.5pt,decoration={markings, mark=at position 1.0 with {\arrow{>}}},postaction={decorate}](2,1.1) --(2,1.75);
\draw [color = black, line width =1.5pt](2,1.75) --(2,2);
\node[right] at(2,0.6) {\small $i$};
\filldraw[draw=black!80,fill=white!20] (1,0.8) circle(0.1);
\draw[color = black, line width =1pt] (0,1)--(0.9,0.82);
\draw[color = black, line width =1pt] (1.1,0.78)--(2,0.6);
\end{tikzpicture}}
\simeq
\raisebox{-.35in}{
\begin{tikzpicture}
\tikzset{->-/.style=
{decoration={markings,mark=at position #1 with
{\arrow{latex}}},postaction={decorate}}}
\filldraw[draw=white,fill=gray!20] (-0.3,0) rectangle (2, 2);
\draw[line width =1.5pt,decoration={markings, mark=at position 1.0 with {\arrow{>}}},postaction={decorate}](2,0) --(2,0.5);
\draw [color = black, line width =1.5pt](2,0.5) --(2,0.9);
\draw[line width =1.5pt,decoration={markings, mark=at position 1.0 with {\arrow{>}}},postaction={decorate}](2,1.1) --(2,1.75);
\draw [color = black, line width =1.5pt](2,1.75) --(2,2);
\node[right] at(2,1.4) {\small $i$};
\filldraw[draw=black!80,fill=white!20] (1,1.2) circle(0.1);
\draw[color = black, line width =1pt] (0,1)--(0.9,1.18);
\draw[color = black, line width =1pt] (1.1,1.22)--(2,1.4);
\end{tikzpicture}}\;.
\end{align}
The missing part between two arrows is $cl(U)$.

\end{proposition}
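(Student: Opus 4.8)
The plan is to exhibit $\bar l_U$ as an isomorphism by constructing its inverse explicitly. First I would check that $l_U$ factors through the equivalence relation $\simeq$, so that $\bar l_U$ is at least well defined. Applying $l_U$ to the two stated $n$-webs appearing in \eqref{eq10} produces two webs in $(M,\mathcal{N})$ that agree outside a neighbourhood of $cl(U)$ and each carry a single endpoint of state $i$ on the component $e$, one lying just below and one just above the region that was the gap. Since in $(M,\mathcal{N})$ this region is an honest part of $e$ carrying no other endpoint, the two webs are related by an isotopy sliding that endpoint along $e$ across $cl(U)$; hence they represent the same element of $S_n(M,\mathcal{N},v)$, and $l_U$ respects $\simeq$. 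Surjectivity of $\bar l_U$ is immediate from that of $l_U$: any stated $n$-web in $(M,\mathcal{N})$ may be isotoped so that none of its endpoints lie on $cl(U)$, after which it is a web in $(M,\mathcal{N}')$.

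The remaining and essential point is injectivity, for which I would build a linear map $\psi\colon S_n(M,\mathcal{N},v)\to S_n(M,\mathcal{N}',v)/\!\simeq$ and prove that $\psi\circ\bar l_U$ and $\bar l_U\circ\psi$ are the identities. Given a stated $n$-web $l$ in $(M,\mathcal{N})$, I isotope it within $(M,\mathcal{N})$ to a web $\tilde l$ whose endpoints avoid $cl(U)$; then $\tilde l$ is a stated $n$-web in $(M,\mathcal{N}')$, and I set $\psi(l)=[\tilde l]_{\simeq}$. To see that $\psi$ descends to $S_n(M,\mathcal{N},v)$ I must verify three things: that the class $[\tilde l]_{\simeq}$ is independent of the chosen isotopy pushing endpoints off $cl(U)$, that $\psi$ is invariant under isotopy of $l$ in $(M,\mathcal{N})$, and that $\psi$ kills the defining skein relations \eqref{w.cross}--\eqref{wzh.eight}.

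For the first two I would use the standard fact that any ambient isotopy is a composition of local moves, so it suffices to track what happens when a single endpoint is dragged along $e$ across $cl(U)$. Because the gap is kept clear of all other endpoints, only one strand ever crosses it at a time, and such a crossing is precisely the local replacement displayed in \eqref{eq10}; thus any two admissible positions of $\tilde l$ differ, in $S_n(M,\mathcal{N}',v)/\!\simeq$, only by applications of $\simeq$ together with genuine isotopies in $(M,\mathcal{N}')$, so $[\tilde l]_{\simeq}$ is well defined. For the third, each skein relation is supported in a small ball, and after isotoping the endpoints of $l$ off $cl(U)$ I may perform the relation in a ball disjoint from $cl(U)$; since the relations of $S_n(M,\mathcal{N},v)$ and $S_n(M,\mathcal{N}',v)$ are identical away from the markings, $\psi$ respects them. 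By construction $\psi(l_U(l'))=[l']_{\simeq}$ for every web $l'$ in $(M,\mathcal{N}')$ and $\bar l_U(\psi(l))=l$ for every web $l$ in $(M,\mathcal{N})$, so $\psi$ and $\bar l_U$ are mutually inverse.

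The main obstacle I anticipate is the bookkeeping in the well-definedness of $\psi$: one must argue carefully that dragging an endpoint across $cl(U)$ never forces a strand to pass another endpoint inside the gap (which is exactly why keeping $cl(U)$ free of endpoints is used), so that the single-strand relation \eqref{eq10} really suffices, and that the argument runs uniformly whether $e$ is an open interval, in which case $e'$ has two components, or an oriented circle, in which case $e'$ is a single interval whose two ends abut $cl(U)$. Handling the circle case is precisely what allows this proposition to convert closed-circle markings into interval markings, which is the purpose of the section.
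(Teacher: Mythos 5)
Your proposal is correct and follows essentially the same route as the paper: the paper also defines the inverse $j_U$ by isotoping a web in $(M,\mathcal{N})$ off $cl(U)$ and taking its class in $S_n(M,\mathcal{N}',v)/\!\simeq$, with well-definedness resting exactly on relation \eqref{eq10} and on the skein relations being supported away from $cl(U)$. Your write-up is in fact more explicit than the paper's (which dispatches the well-definedness of $\bar l_U$ and the mutual-inverse check with ``clearly'' and ``it is easy to check''), and your attention to the single-strand crossing of the gap and to the circle versus interval cases fills in precisely the details the paper leaves implicit.
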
\label{prop7.1}
\begin{proof}
Clearly $l_U$ induces a linear map $\bar{l}_U: S_n(M,\mathcal{N}',v)/\simeq\; \rightarrow S_n(M,\mathcal{N},v)$.
For a stated $n$-web $\alpha$ in $(M,\mathcal{N}')$, we use cls$(\alpha)$ to denote the element in $S_n(M,\mathcal{N}',v)/\simeq$ determined by $\alpha$. Let $\beta$ be any stated $n$-web in $(M,\mathcal{N})$. We can isotope $\beta$ such that $cl(U)\cap  \beta = \emptyset$, and define $j_U(\beta) =\text{cls}(\beta)\in S_n(M,\mathcal{N}',v)/\simeq$.
We have $j_U(\beta)$ is independent of  how we isotope $\beta$ because of relation (\ref{eq10}).
 If $\beta$ and $\beta'$ are isotopic stated $n$-webs in $(M,\mathcal{N})$, clearly we have $j_U(\beta)
=j_U(\beta')$ because of relation (\ref{eq10}). Trivially $j_U$ preserves the defining skein relations for $S_n(M,\mathcal{N},v)$. Thus $j_U$ is a well-defined linear map from $S_n(M,\mathcal{N},v)$ to $S_n(M,\mathcal{N}',v)/\simeq$. It is easy to check $l_U$ and $j_U$ are inverse to each other.
\end{proof}

\subsection{Classical limit for stated $SL_n$-skein modules for
 generalized marked 3-manifolds} In this subsection, we will try to find out the classical limit for generalized marked 3-manifolds  using results in Section \ref{subb5} and Proposition \ref{pro7.1}. In this subsection, we will  assume the 3-manifold 
is connected (the corresponding results can be easily  generalized to general  3-manifolds).
\begin{rem}\label{rem7.2}
Let $(M,\mathcal{N})$ be a generalized marked 3-manifold. Suppose $\N$ contains $k$ ($k\geq 1$) closed oriented circles, which are denoted as $e_0,e_1,\dots,e_{k-1}$. We denote other oriented open intervals in $\N$, if any, as $e_k,\dots,e_{m-1}$. For each $0\leq i\leq k-1$, we pick a small open interval $U_i$ contained in $e_i$ such that $cl(U_i)\subset e_i$, and set $e_i' = e_i\setminus cl(U_i)$.  
Set $e_i' = e_i$ for $k\leq i\leq m-1$.
Let
$\N' =\{e_0',e_1',\dots, e_{m-1}'\}$, then $(M,\mathcal{N}')$ is a circle free marked 3-manifold. We choose a relative spin structure $h$ for $(M,\mathcal{N}')$. For each $1\leq i\leq m-1$,
 let $\alpha_i$ be an oriented path connecting $e_0'$ and $e_i'$ such that $\alpha_i(0)\in e_0'$ and $\alpha_i(1)\in e_i'$.  We still use $l_U$ to denote the algebra homomorphism from $S_n(M,\mathcal{N}',1)$ to $S_n(M,\mathcal{N},1)$ induced by the embedding from $(M,\mathcal{N}')$ to $(M,\mathcal{N}).$
\end{rem}

 We know there is an isomorphism $L$ from $\Gamma_n(M)\otimes
O(SL_n)^{\otimes(m-1)}$ to $S_n(M,\mathcal{N}',1)$. For any element $y\in \Gamma_n(M)$, we will use
$y_{\otimes}$ to denote  $y\otimes 1\otimes\dots\otimes 1\in  \Gamma_n(M)\otimes
O(SL_n)^{\otimes(m-1)}$. For any $1\leq i,j\leq n, 1\leq t\leq m-1,$ we use $x_{i,j}^{t}$ to denote 
$1\otimes1\otimes\dots \otimes x_{i,j}\otimes \dots\otimes 1\in \Gamma_n(M)\otimes O(SL_n)^{\otimes(m-1)}$ where $x_{i,j}$ is in the $t$-th tensor factor for $O(SL_n)^{\otimes(m-1)}$. Then the isomorphism $L$ is given by:
$$([\alpha]_{i,j})_{\otimes}\rightarrow (A S^{[\alpha]})_{i,j}\text{\;and\;} x^{t}_{i,j}\rightarrow (A S^{[\alpha_t]})_{i,j}$$
where $[\alpha]\in \pi_1(M,e'_0),1\leq i,j\leq n, 1\leq t\leq m-1.$ For each $0\leq t\leq m-1$, 
set $X_t = (x^{t}_{i,j})_{n\times n}$. For each element $[\alpha]\in\pi_1(M,e_0')$, set 
$ Q_{[\alpha],\otimes} = (([\alpha]_{i,j})_{\otimes})_{n\times n}$. Then $L(X_t) = A S^{[\alpha_t]}, 
L(Q_{[\alpha],\otimes}) = A S^{[\alpha]}$.

As in Subsection \ref{sss3.3}, any component $e\in N$ can be lifted to $\tilde{e}\subset UM$. Note that when $e$ is an oriented closed circle, the lifting $\tilde{e}$ is also an oriented closed circle in $UM$, which means $\tilde{e}$ is an element in $H_1(UM)$.

\begin{definition}\label{df7.3}
Let $(M,\mathcal{N})$ be a generalized marked 3-manifold, and $h_{s}$ be a spin structure for $M$. Suppose $\N$ contains $k$ oriented closed circles. 

If $k=0$, we define $\Gamma_n(M,\mathcal{N}) = \Gamma_n(M)$.

 If $k\geq1$,   we denote all the oriented closed circles in $\N$  as $e_0,\dots, e_{k-1}$. For each $e_t$, we  use a path $\beta_t$ ($\beta_t(0)$ is the base point for $\pi_1(M)$ and $\beta_t(1)\in e_t$) to connect the base point for $\pi_1(M)$ and $e_t$ to obtain an element in $\pi_1(M)$,  denoted as $[e_t^{\beta_t}]$.
Define $\Gamma_n(M,\mathcal{N}) = \Gamma_n(M)/(D)$ where
$D = \{(Q_{[e_t^{\beta_t}]} - d_n^{h_s(\widetilde{e_t})}I)_{i,j}\mid 0\leq t\leq k-1, 1\leq i,j\leq n\}$ and $(D)$ is the ideal of $\Gamma_n(M)$ generated by $D$. For any element $x\in \Gamma_n(M)$, we use $\bar{x}$ to denote
$x+(D)\in \Gamma_n(M,\mathcal{N})$.
\end{definition}

Note that the definition of $\Gamma_n(M,\mathcal{N})$ is independent of the choice of $\beta_t,0\leq t\leq k-1$.
Suppose for each 
$0\leq t\leq k-1$, we make another choice $\gamma_t$. Then the relation $Q_{[e_t^{\beta_t}]} = d^{h_s(\widetilde{e_t})} I$ becomes $Q_{[e_t^{\gamma_t}]} = d^{h_s(\widetilde{e_t})} I$. Since $Q_{[e_t^{\beta_t}]}$  and $Q_{[e_t^{\gamma_t}]}$ are conjugate to each other, then the relation $Q_{[e_t^{\beta_t}]} = d^{h_s(\widetilde{e_t})} I$ is the same with the relation $Q_{[e_t^{\gamma_t}]} = d^{h_s(\widetilde{e_t})} I$. 

Note that we do not distinguish  $\pi_1(M)$ and $\pi_1^{Mor}(M,e_0)$ where $e_0$ is an embedded open interval in $\partial M$.
The definition for $\Gamma_n(M,\mathcal{N})$ is related to the spin structure $h_s$ for $M$.
Here we make a convention that the spin structure used for the definition of $\Gamma_n(M,\mathcal{N})$ is obtained by restricting the relative spin structure when the relative spin structure is given.

\begin{lemma}\label{lmm7.4}
With  the conventions and notations in Remark \ref{rem7.2}, we have 
$$\begin{tikzcd}
\Gamma_n(M)\otimes
O(SL_n)^{\otimes(m-1)} \arrow[r, "L"] & S_n(M,\mathcal{N}',1)\arrow[r, "l_U"] & S_n(M,\mathcal{N},1)
\end{tikzcd}$$
induces a surjective algebra homomorphism $\overline{L}:\Gamma_n(M,\mathcal{N})\otimes
O(SL_n)^{\otimes(m-1)} \rightarrow S_n(M,\mathcal{N},1)$. Here we regard $\pi_1(M)$ as $\pi_1^{Mor}(M,e_0')$.
\end{lemma}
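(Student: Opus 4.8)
The plan is to leverage the fact that $L$ is already an isomorphism, so that the entire statement collapses to a single geometric identity in $S_n(M,\mathcal{N},1)$. First I would record that $\Gamma_n(M,\mathcal{N})\otimes O(SL_n)^{\otimes(m-1)}$ is, by flatness of the free $\mathbb{C}$-module $O(SL_n)^{\otimes(m-1)}$, the quotient of $\Gamma_n(M)\otimes O(SL_n)^{\otimes(m-1)}$ by the ideal $J$ generated by $\{d\otimes 1\mid d\in D\}$, where $D=\{(Q_{[e_t^{\beta_t}]}-d_n^{h_s(\widetilde{e_t})}I)_{i,j}\}$. Since $l_U\circ L$ is an algebra homomorphism, to produce the induced map $\overline{L}$ it suffices to show $l_U\circ L$ annihilates each generator of $J$; and surjectivity of $\overline{L}$ is immediate, because $L$ is an isomorphism and $l_U$ is surjective, so $l_U\circ L$ is surjective and hence so is the map it induces on the quotient.

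Because $L(Q_{[e_t^{\beta_t}],\otimes})=A S^{[e_t^{\beta_t}]}$, the only thing to verify is the relation
$$l_U\big(A S^{[e_t^{\beta_t}]}\big)=d_n^{h_s(\widetilde{e_t})}\,I \quad\text{in } S_n(M,\mathcal{N},1).$$
Writing the class $[e_t^{\beta_t}]$ as a conjugate $[\beta_t]\,[\ell]\,[\beta_t]^{-1}$ of the class $[\ell]$ of a loop running once around $e_t$, Proposition~\ref{prop5.2}(a) and (c) give $l_U(AS^{[e_t^{\beta_t}]})=l_U(AS^{[\beta_t]})\,l_U(AS^{[\ell]})\,l_U(AS^{[\beta_t^{-1}]})$ with $l_U(AS^{[\beta_t]})\,l_U(AS^{[\beta_t^{-1}]})=l_U(AS^{[o]})=I$, so it is enough to prove $l_U(AS^{[\ell]})=d_n^{h_s(\widetilde{e_t})}I$. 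After applying $l_U$ the gap $U_t$ is filled and $e_t$ becomes a genuine closed marking circle, so the representative of $\ell$ is an arc whose two endpoints lie on $e_t$ and which runs once around $e_t$ parallel to it. The key geometric move is that, precisely because $e_t$ is now closed, this arc can be unwound: sliding one endpoint all the way around $e_t$ (an isotopy in $(M,\mathcal{N})$, the global form of relation~\eqref{eq10} underlying Proposition~\ref{pro7.1}) carries it to the trivial capping arc representing $[o]$. Tracking framings through this isotopy via the conversion rule $\alpha_{i,j}=d_n^{h(\tilde\alpha)}S^{[\alpha]}_{i,j}$ of Remark~\ref{rre5.1}, the unit-tangent lift changes exactly by the closed class $\widetilde{e_t}\in H_1(UM)$, which contributes the factor $d_n^{h_s(\widetilde{e_t})}$. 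Hence $l_U(S^{[\ell]})=d_n^{h_s(\widetilde{e_t})}S^{[o]}=d_n^{h_s(\widetilde{e_t})}d_nA$, and since $A^2=d_nI$ and $d_n^2=1$ this yields $l_U(AS^{[\ell]})=d_n^{h_s(\widetilde{e_t})}I$, as required.

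The main obstacle is the framing bookkeeping in the unwinding step: I must verify that sliding the endpoint once around the closed circle $e_t$ changes the lift by exactly $\widetilde{e_t}$, so that the accumulated factor is $d_n^{h_s(\widetilde{e_t})}$ and not some other power, and I must confirm that $h_s$ in Definition~\ref{df7.3} is the restriction to $M$ of the relative spin structure $h$ fixed on $(M,\mathcal{N}')$, so the two occurrences of this exponent agree. I would also emphasize that the unwinding genuinely \emph{fails} before filling the gap — this is exactly what makes $Q_{[e_t^{\beta_t}]}=d_n^{h_s(\widetilde{e_t})}I$ a new relation that appears only after passing to the circle marking — and note that conjugation by $\beta_t$ cannot change the spin value, since $\widetilde{e_t}$ is a conjugation-invariant homology class. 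With these points settled, the three defining relations of $\Gamma_n(M)\otimes O(SL_n)^{\otimes(m-1)}$ are automatically respected (as $L$ respects them and $l_U$ is an algebra map), and $\overline{L}$ is a well-defined surjective algebra homomorphism.
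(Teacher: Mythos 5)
Your argument is correct and follows essentially the same route as the paper: reduce to showing that $l_U\circ L$ kills the generators $d\otimes 1$, $d\in D$, of the relevant ideal, deduce surjectivity from that of $l_U\circ L$, and establish the key identity $l_U(AS^{[e_t^{\beta_t}]})=d_n^{h_s(\widetilde{e_t})}I$ from the fact that the arc winding once around $e_t$ can be unwound across the filled gap at the cost of the spin factor $d_n^{h_s(\widetilde{e_t})}$. The only (immaterial) difference is organizational: you conjugate down to the bare loop $\ell$ based at $e_t'$ and use $S^{[o]}=d_nA$, whereas the paper left-multiplies by $A\,l_U(S^{[\beta_t]})$, uses $l_U(S^{[e_t*\beta_t]})=d_n^{h(\widetilde{e_t})}l_U(S^{[\beta_t]})$, and cancels by invertibility.
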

\begin{proof}
We have the exact sequence: 
$$\begin{tikzcd}
(D) \arrow[r, tail] & \Gamma_n(M)\arrow[r,two heads] & \Gamma_n(M,\mathcal{N}) 
\end{tikzcd}$$
where $D$ and $(D)$ are defined in Definition \ref{df7.3} (the arrow with two heads means the corresponding map is surjective). After using functor $-\otimes O(SL_n)^{\otimes(m-1)}$ acting on the above exact sequence, we get the following new exact sequence:
$$\begin{tikzcd}
(D)\otimes O(SL_n)^{\otimes(m-1)} \arrow[r, tail] & \Gamma_n(M)\otimes O(SL_n)^{\otimes(m-1)}\arrow[r,two heads] & \Gamma_n(M,\mathcal{N})\otimes O(SL_n)^{\otimes(m-1)}
\end{tikzcd}.$$
Note that $(D)\otimes O(SL_n)^{\otimes(m-1)}$ is the ideal of 
$\Gamma_n(M)\otimes O(SL_n)^{\otimes(m-1)}$ generated by $d_{\otimes},d\in D$.
Thus to show $l_U\circ L$ induces $\overline{L}$, it suffices to show $l_U(L(d_{\otimes})) = 0$ for all $d\in D$.

Let $i,j$ be any two integers between $1$ and $n$, and $t$ be an integer between $0$ and $k-1$. Then we have 
$$l_U(L(([e_t^{\beta_t}]_{i,j})_{\otimes}))= (-1)^{i+1}l_U(S^{[e_t^{\beta_t}]}_{\bar{i},j}).$$
Thus we need to show $(-1)^{i+1}l_U(S^{[e_t^{\beta_t}]}_{\bar{i},j}) = d_n^{h(\widetilde{e_t})} \delta_{i,j}$, that is, to show $A\,l_U(S^{[e_t^{\beta_t}]}) =  d_n^{h(\widetilde{e_t})} I$. From the definition of $[e_t^{\beta_t}]$, we know $[e_t^{\beta_t}] = [\beta_t^{-1}*e_t*\beta_t]$. Then we have 
$$A\,l_U(S^{[\beta_t]} )A\,l_U(S^{[e_t^{\beta_t}]}) = l_U(A S^{[\beta_t]} A 
S^{[\beta_t^{-1}*e_t*\beta_t]}) = l_U(A S^{[e_t*\beta_t]}) =A\, l_U(S^{[e_t*\beta_t]}).$$
Note that in $S_n(M,\mathcal{N},1)$, we have $l_U(S^{[e_t*\beta_t]}) = d_n^{h(\widetilde{e_t})} l_U(S^{[\beta_t]})$. Then we get $$l_U(S^{[\beta_t]} )A\,l_U(S^{[e_t^{\beta_t}]}) = l_U(S^{[e_t*\beta_t]}) = d_n^{h(\widetilde{e_t})} l_U(S^{[\beta_t]}).$$
Then we have $A\,l_U(S^{[e_t^{\beta_t}]})= d_n^{h(\widetilde{e_t})} I$ because 
$l_U(S^{[\beta_t]} )$ is invertible.

The above discussion shows $l_U\circ L$ induces $\overline{L}$. The algebra homomorphism $\overline{L}$ is  surjective  since $l_U\circ L$ is  surjective.
\end{proof}

Note that for any $x\in \Gamma_n(M), y\in 
O(SL_n)^{\otimes(m-1)}$, we have $\overline{L}(\bar{x}\otimes y) =l_U(L(x\otimes y))$. 
We use $\pi$ to denote the projection from $\Gamma_n(M)\otimes
O(SL_n)^{\otimes(m-1)}$ to $\Gamma_n(M,\mathcal{N})\otimes
O(SL_n)^{\otimes(m-1)}$. Then $\overline{L}\circ \pi = l_U\circ L$.

\begin{lemma}\label{lmm7.5}
With  the conventions and notations in Remark \ref{rem7.2}, we have 
$$\begin{tikzcd}
S_n(M,\mathcal{N}',1)
 \arrow[r, "L^{-1}"] & \Gamma_n(M)\otimes
O(SL_n)^{\otimes(m-1)} \arrow[r, "\pi"] & \Gamma_n(M,\mathcal{N})\otimes
O(SL_n)^{\otimes(m-1)}
\end{tikzcd}$$
induces a surjective algebra homomorphism $\overline{L^{-1}}:S_n(M,\mathcal{N},1) \rightarrow \Gamma_n(M,\mathcal{N})\otimes
O(SL_n)^{\otimes(m-1)}$. Here we regard $\pi_1(M)$ as $\pi_1^{Mor}(M,e_0')$.
\end{lemma}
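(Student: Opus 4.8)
The plan is to realize $\overline{L^{-1}}$ as the factorization of the surjective algebra homomorphism $\pi\circ L^{-1}$ through the quotient map $l_U$. By Proposition \ref{pro7.1}, $l_U$ is a surjective algebra homomorphism exhibiting $S_n(M,\mathcal{N},1)$ as $S_n(M,\mathcal{N}',1)/\!\simeq$, so its kernel is the subspace spanned by the differences of the two stated $n$-webs in relation \eqref{eq10}. Thus it suffices to check that $\pi\circ L^{-1}$ agrees on the two sides of \eqref{eq10}; once this is done, setting $\overline{L^{-1}}(l_U(x))=\pi(L^{-1}(x))$ gives a well-defined map, which is an algebra homomorphism because $l_U$ is a surjective algebra homomorphism, and is surjective because $\pi\circ L^{-1}=\overline{L^{-1}}\circ l_U$ is.

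Because $\pi\circ L^{-1}$ is an algebra homomorphism and the move in \eqref{eq10} is supported in a ball meeting $\mathcal{N}'$ only along the cut circle, I can reduce to a single local instance, and further to the only markings for which \eqref{eq10} applies, namely the cut circles $e_t$ with $0\le t\le k-1$. So let $w$ and $w'$ be two webs that agree away from a neighborhood of $e_t$ and near $e_t$ carry a single strand, stated by $i$, attached to $e'_t$ just below, respectively just above, the removed interval $cl(U)$.

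The heart of the argument is to identify $L^{-1}(w)$ and $L^{-1}(w')$. Passing the attachment point across $cl(U)$ forces the strand to be dragged once along the whole arc $e'_t$, i.e.\ once around the closed circle $e_t$ in $M$; hence the morphism class attached to $w'$ is that of $w$ composed with the loop $[e_t^{\beta_t}]$, and by Proposition \ref{prop5.2}(a) this inserts a factor of the holonomy $AS^{[e_t^{\beta_t}]}$, while Remark \ref{rre5.1} accounts for the change of framing by a spin sign. Applying the isomorphism $L^{-1}$ turns this holonomy factor into $A\,Q_{[e_t^{\beta_t}],\otimes}$ in the $\Gamma_n(M)$ tensor factor. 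Finally, applying $\pi$ and invoking the defining relation of $\Gamma_n(M,\mathcal{N})$ in Definition \ref{df7.3}, namely $Q_{[e_t^{\beta_t}]}=d_n^{\,h_s(\widetilde{e_t})}I$ with $h_s=h|_{H_1(UM)}$, collapses the holonomy factor to the scalar $d_n^{\,h(\widetilde{e_t})}$, which exactly cancels the spin sign and yields $\pi(L^{-1}(w'))=\pi(L^{-1}(w))$.

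The main obstacle is the bookkeeping in the previous paragraph: verifying that the spin sign picked up by transporting the framing once around $e_t$ is precisely $d_n^{\,h(\widetilde{e_t})}$, the same exponent appearing in the relation $Q_{[e_t^{\beta_t}]}=d_n^{\,h_s(\widetilde{e_t})}I$. This is exactly the computation already performed, in the opposite direction, inside the proof of Lemma \ref{lmm7.4}, where the identity $l_U(S^{[e_t*\beta_t]})=d_n^{\,h(\widetilde{e_t})}l_U(S^{[\beta_t]})$ was established; I expect to reuse it verbatim, after which matching the signs makes the remaining verification purely formal.
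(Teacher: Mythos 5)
Your proposal is correct and follows essentially the same route as the paper: by Proposition \ref{pro7.1} one reduces to checking that $\pi\circ L^{-1}$ is invariant under relation (\ref{eq10}), then to a single stated boundary arc, and the key point is that sliding the endpoint across $cl(U_t)$ composes the morphism with a conjugate of $[e_t^{\beta_t}]$ while changing the spin evaluation by $h(\widetilde{e_t})$, and these two contributions cancel after imposing $Q_{[e_t^{\beta_t}]}=d_n^{h_s(\widetilde{e_t})}I$ in $\Gamma_n(M,\mathcal{N})$. The paper only differs in making the holonomy insertion explicit through the matrices $X_t$ and $Q_{[\cdot],\otimes}$ in a four-case analysis depending on which markings carry the endpoints.
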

\begin{proof}
From Proposition \ref{pro7.1}, it suffices to show $\pi\circ L^{-1}$ preserves the equivalence relation (\ref{eq10}) for every $U_t$, $0\leq t\leq k-1$. Let $\alpha$ be any stated $n$-web for $(M,\mathcal{N}')$. Suppose there exists $0\leq t\leq k-1$ such that, nearby $U_t$, the stated $n$-web $\alpha$ looks like the left picture in the equivalence relation (\ref{eq10}). Let $\alpha'$ be the same stated $n$-web as $\alpha$ except, nearby  $U_t$, $\alpha'$ looks like the right picture in equivalence relation (\ref{eq10}). Then we want to show $\pi(L^{-1}(\alpha)) = \pi(L^{-1}(\alpha'))$.

We can use the same way  
 to kill all the sinks and sources in $\alpha$ and $\alpha'$. Then the resulting two stated $n$-webs only differ on a single stated arc. Since $\pi\circ L^{-1}$ is an algebra homomorphism, we can just assume $\alpha$ is a stated framed oriented boundary arc. Without loss of generality, we assume the white dot in equivalence relation (\ref{eq10}) represents an arrow pointing from left to right, that is, pointing towards the boundary. It is easy to show $h(\widetilde{\alpha'}) = h(\tilde{\alpha})+h(\widetilde{e_t})$. Suppose $s(\alpha(0)) = s(\alpha'(0)) = j$. We have $\alpha = d_n^{ h(\tilde{\alpha})} S^{[\alpha]}_{i,j}$ and 
$\alpha' = d_n^{ h(\widetilde{\alpha'}) }S^{[\alpha']}_{i,j}$.

Suppose $\alpha(0)\in e'_{t_1}$ where $0\leq t_1\leq m-1$.
We have four cases to consider: (1) $t=t_1=0,$ (2) $t=0$ and $t_1\neq 0$, (3) $t\neq 0$ and $t_1=0$, (4) $t\neq 0$ and $t_1\neq 0$.

 Here we only prove the case when $t\neq 0$ and $t_1\neq 0$. We have
\begin{align*} 
\pi(L^{-1}(S^{[\alpha]})) &=  A^{-1}\pi(L^{-1}(AS^{[\alpha_t][\alpha_t^{-1}*\alpha*\alpha_{t_1}][\alpha_{t_1}^{-1}]} )) = A^{-1}\pi(L^{-1}(AS^{[\alpha_t]} AS^{[\alpha_t^{-1}*\alpha*\alpha_{t_1}]} AS^{[\alpha_{t_1}^{-1}]} )) \\
&= A^{-1} \pi( X_t\, Q_{[\alpha_t^{-1}*\alpha *\alpha_{t_1}],\otimes} \,X_{t_1}^{-1})
=  A^{-1} \pi( X_t)\pi( Q_{[\alpha_t^{-1}*\alpha *\alpha_{t_1}],\otimes}) \pi( X_{t_1}^{-1}).
\end{align*}
Similarly we have 
$$\pi(L^{-1}(S^{[\alpha']}))
=  A^{-1} \pi( X_t)\pi( Q_{[\alpha_t^{-1}*\alpha' *\alpha_{t_1}],\otimes})  \pi(X_{t_1}^{-1}).$$
We also have 
$$Q_{[\alpha_t^{-1}*\alpha' *\alpha_{t_1}],\otimes}
= Q_{[\alpha_t^{-1}*\alpha *\alpha_{t_1}],\otimes}
Q_{[\alpha_{t_1}^{-1}*\alpha^{-1}*\alpha' *\alpha_{t_1}],\otimes}$$
where $[\alpha_{t_1}^{-1}*\alpha^{-1}*\alpha' *\alpha_{t_1}] = 
[(\alpha' *\alpha_{t_1})^{-1}*e_t*\alpha' *\alpha_{t_1}]$. Thus 
$$\pi(Q_{[\alpha_t^{-1}*\alpha' *\alpha_{t_1}],\otimes})=
\pi( Q_{[\alpha_t^{-1}*\alpha *\alpha_{t_1}],\otimes}) \pi(Q_{[\alpha_{t_1}^{-1}*\alpha^{-1}*\alpha' *\alpha_{t_1}],\otimes}) = d_n^{h(\widetilde{e_t})}\pi( Q_{[\alpha_t^{-1}*\alpha *\alpha_{t_1}],\otimes}).$$
Then 
\begin{align*}
\pi(L^{-1}(\alpha'))& = d_n^{ h(\widetilde{\alpha'}) } \pi L^{-1}(S^{[\alpha']}_{i,j})
= d_n^{  h(\tilde{\alpha})+h(\widetilde{e_t})} [A^{-1} \pi (X_t)\pi( Q_{[\alpha_t^{-1}*\alpha' *\alpha_{t_1}],\otimes})\pi( X_{t_1}^{-1})]_{i,j}\\
& = d_n^{  h(\tilde{\alpha})+h(\widetilde{e_t})} d_n^{h(\widetilde{e_t})} [A^{-1} \pi( X_t)\pi( Q_{[\alpha_t^{-1}*\alpha *\alpha_{t_1}],\otimes})\pi( X_{t_1}^{-1})]_{i,j}\\&=
d_n^{  h(\tilde{\alpha})} [A^{-1} \pi( X_t)\pi( Q_{[\alpha_t^{-1}*\alpha *\alpha_{t_1}],\otimes}) \pi(X_{t_1}^{-1})]_{i,j}= \pi(L^{-1}(\alpha)).
\end{align*}

\end{proof}

For any stated $n$-web $\alpha$ in $S_n(M,\mathcal{N},1)$, we can isotope $\alpha$ such that $cl(U_t)\cap \alpha = \emptyset$ for all
$0\leq t\leq k-1$. Then $\alpha$ is also a stated $n$-web $\alpha$ in $S_n(M,\mathcal{N}',1)$, we still use $\alpha$ to denote this element in  $S_n(M,\mathcal{N}',1)$. Then $\overline{L^{-1}}(\alpha) = \pi (L^{-1}(\alpha))$, that is, we have $\overline{L^{-1}}\circ l_U
= \pi\circ L^{-1}$.

\begin{lemma}\label{lmm7.6}
The algebra homomorphism $\overline{L}$ obtained in Lemma \ref{lmm7.4} and 
the algebra homomorphism $\overline{L^{-1}}$ obtained in Lemma \ref{lmm7.5} are inverse to each other. Especially for any generalized marked 3-manifold $(M,\mathcal{N})$ with $\N$ containing at least one closed oriented circle, we have $\Gamma_n(M,\mathcal{N})\otimes
O(SL_n)^{\otimes(\sharp \N-1)} \simeq S_n(M,\mathcal{N},1)$.
\end{lemma}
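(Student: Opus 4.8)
The plan is to prove the two composites are mutually inverse by a direct diagram chase, exploiting the surjectivity of $l_U$ and $\pi$ together with the two intertwining relations already recorded just after Lemmas \ref{lmm7.4} and \ref{lmm7.5}, namely $\overline{L}\circ \pi = l_U\circ L$ and $\overline{L^{-1}}\circ l_U = \pi\circ L^{-1}$. Since $L$ is the isomorphism from Theorem \ref{thm5.13} (with two-sided inverse $L^{-1}$), $l_U$ is surjective by Proposition \ref{pro7.1}, and $\pi$ is surjective as a quotient projection, these four facts are all the argument requires.

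First I would show $\overline{L}\circ\overline{L^{-1}} = \Id_{S_n(M,\mathcal{N},1)}$. By surjectivity of $l_U$, every element of $S_n(M,\mathcal{N},1)$ has the form $l_U(z)$ for some $z\in S_n(M,\mathcal{N}',1)$, and then
$$\overline{L}\bigl(\overline{L^{-1}}(l_U(z))\bigr) = \overline{L}\bigl(\pi(L^{-1}(z))\bigr) = l_U\bigl(L(L^{-1}(z))\bigr) = l_U(z),$$
where the first equality uses $\overline{L^{-1}}\circ l_U = \pi\circ L^{-1}$ and the second uses $\overline{L}\circ \pi = l_U\circ L$. Hence $\overline{L}\circ\overline{L^{-1}}$ is the identity on the image of $l_U$, which is all of $S_n(M,\mathcal{N},1)$.

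Symmetrically, to prove $\overline{L^{-1}}\circ\overline{L} = \Id$ on $\Gamma_n(M,\mathcal{N})\otimes O(SL_n)^{\otimes(m-1)}$, I would write an arbitrary element as $\pi(w)$ with $w\in\Gamma_n(M)\otimes O(SL_n)^{\otimes(m-1)}$, which is possible since $\pi$ is surjective, and compute
$$\overline{L^{-1}}\bigl(\overline{L}(\pi(w))\bigr) = \overline{L^{-1}}\bigl(l_U(L(w))\bigr) = \pi\bigl(L^{-1}(L(w))\bigr) = \pi(w),$$
again invoking the two intertwining relations in turn. Surjectivity of $\pi$ then yields $\overline{L^{-1}}\circ\overline{L} = \Id$. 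Combining the two computations shows $\overline{L}$ and $\overline{L^{-1}}$ are mutually inverse algebra homomorphisms, so $\Gamma_n(M,\mathcal{N})\otimes O(SL_n)^{\otimes(\sharp\N-1)}\simeq S_n(M,\mathcal{N},1)$, recalling that $m-1 = \sharp\N-1$.

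I expect no serious obstacle here: the genuine content has been front-loaded into Lemmas \ref{lmm7.4} and \ref{lmm7.5}, where the real work—checking that $l_U\circ L$ annihilates the defining ideal $(D)$, and that $\pi\circ L^{-1}$ respects the equivalence relation \eqref{eq10}—was already carried out. The only point requiring a little care is that each identity must be verified on representatives ($l_U(z)$ and $\pi(w)$) and then propagated by surjectivity, rather than attempting to define the inverses directly on the quotients; but since both identities hold on the nose at the level of representatives, the well-definedness of $\overline{L}$ and $\overline{L^{-1}}$ guarantees that the conclusion descends.
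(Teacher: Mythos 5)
Your proposal is correct and follows essentially the same route as the paper: both arguments reduce the claim to the two intertwining identities $\overline{L}\circ\pi = l_U\circ L$ and $\overline{L^{-1}}\circ l_U = \pi\circ L^{-1}$ recorded after Lemmas \ref{lmm7.4} and \ref{lmm7.5}, verify each composite on representatives (the paper uses a stated $n$-web isotoped off the cut intervals and an element $\bar{x}\otimes y$, which is just your surjectivity of $l_U$ and $\pi$ made explicit), and conclude via $L\circ L^{-1}=L^{-1}\circ L=\mathrm{Id}$.
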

\begin{proof}
For any stated $n$-web $\alpha$ in $S_n(M,\mathcal{N},1)$, we isotope $\alpha$ such that $cl(U_t)\cap \alpha = \emptyset$ for all
$0\leq t\leq k-1$. Then 
$$\overline{L}(\overline{L^{-1}}(\alpha)) = \overline{L} (\pi (L^{-1}(\alpha)))=
(l_U\circ L)(L^{-1}(\alpha)) = l_U(\alpha) = \alpha .$$

For any $x\in \Gamma_n(M), y\in 
O(SL_n)^{\otimes(m-1)}$, we have 
$$\overline{L^{-1}}(\overline{L}(\bar{x}\otimes y)) =
\overline{L^{-1}}(l_U(L(x\otimes y))) = (\pi\circ L^{-1}) (L(x\otimes y))
= \pi(x\otimes y) =\bar{x}\otimes y.$$ 
\end{proof}

\begin{theorem}
Let $(M,\mathcal{N})$ be a generalized marked 3-manifold with $\N\neq \emptyset$.
Then $S_n(M,\mathcal{N},1)\simeq \Gamma_n(M,\mathcal{N})\otimes O(SL_n)^{\otimes(\sharp \N - 1)}$.
\end{theorem}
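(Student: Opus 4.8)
The plan is to deduce the statement by a short case analysis, since essentially all the work has already been done in Theorem \ref{thm5.13} and in Lemmas \ref{lmm7.4}--\ref{lmm7.6} of this section. First I would reduce to the connected case. If $(M,\mathcal{N})$ splits as a disjoint union of connected pieces, then $S_n(M,\mathcal{N},1)$ is the tensor product of the skein modules of the pieces, and both $\Gamma_n(M,\mathcal{N})$ and the factors $O(SL_n)$ split compatibly (the numbers of $O(SL_n)$ tensor factors over the pieces adding up to $\sharp\mathcal{N}$ minus the number of marked components). Hence it suffices to prove the isomorphism for a connected $(M,\mathcal{N})$ with $\mathcal{N}\neq\emptyset$.

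Assume now $M$ is connected and $\mathcal{N}\neq\emptyset$, and split according to whether $\mathcal{N}$ contains a closed oriented circle. If $\mathcal{N}$ has no closed circle, then $(M,\mathcal{N})$ is an ordinary (circle-free) marked 3-manifold, so by Definition \ref{df7.3} in the case $k=0$ we have $\Gamma_n(M,\mathcal{N})=\Gamma_n(M)$. Theorem \ref{thm5.13} then yields directly
$$S_n(M,\mathcal{N},1)\simeq \Gamma_n(M)\otimes O(SL_n)^{\otimes(\sharp\mathcal{N}-1)}=\Gamma_n(M,\mathcal{N})\otimes O(SL_n)^{\otimes(\sharp\mathcal{N}-1)},$$
which is the assertion. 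If instead $\mathcal{N}$ contains at least one closed oriented circle, the isomorphism is exactly the content of Lemma \ref{lmm7.6}: the algebra homomorphisms $\overline{L}$ and $\overline{L^{-1}}$ constructed in Lemmas \ref{lmm7.4} and \ref{lmm7.5} are mutually inverse, giving $S_n(M,\mathcal{N},1)\simeq \Gamma_n(M,\mathcal{N})\otimes O(SL_n)^{\otimes(\sharp\mathcal{N}-1)}$. Combining the two cases with the disjoint-union reduction finishes the proof.

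The only real bookkeeping --- and the step where I would be most careful --- is matching the exponent of $O(SL_n)$ across the two constructions and through the connectedness reduction. In Remark \ref{rem7.2} the $k$ circles $e_0,\dots,e_{k-1}$ together with the open intervals $e_k,\dots,e_{m-1}$ give $m=\sharp\mathcal{N}$ markings, so the $(m-1)$ copies of $O(SL_n)$ occurring in Lemmas \ref{lmm7.4}--\ref{lmm7.6} are exactly $O(SL_n)^{\otimes(\sharp\mathcal{N}-1)}$, in agreement with the circle-free case. No new computation or estimate is needed beyond invoking the already-established isomorphisms, so the proof is a one- or two-line assembly of the preceding results.
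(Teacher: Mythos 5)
Your proposal is correct and follows essentially the same route as the paper: the paper's proof is exactly the two-case split, invoking Theorem \ref{thm5.13} when $\N$ is circle-free (so that $\Gamma_n(M,\mathcal{N})=\Gamma_n(M)$) and Lemma \ref{lmm7.6} when $\N$ contains a closed circle. Your added remarks on the disjoint-union reduction and the count $m=\sharp\N$ are consistent with the conventions the paper sets up in Remark \ref{rem7.2} and at the start of the section.
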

\begin{proof}
If $\N$ is circle free, then $\Gamma_n(M,\mathcal{N}) = \Gamma_n(M)$. From Theorem \ref{thm5.13}, we have 
$S_n(M,\mathcal{N},1)\simeq \Gamma_n(M,\mathcal{N})\otimes O(SL_n)^{\otimes(\sharp \N - 1)}$.

If $\N$ containes at least one oriented closed circle, then Lemma \ref{lmm7.6} shows 
$S_n(M,\mathcal{N},1)\simeq \Gamma_n(M,\mathcal{N})\otimes O(SL_n)^{\otimes(\sharp \N - 1)}$.
\end{proof}

For generalized marked 3-manifold $(M,\mathcal{N})$, we can also define the corresponding adding marking map. Suppose $\N_{ad}=\N\cup e$ where $e$ is an oriented open interval or an oriented closed circle such that there is no intersection between the closure of $\N$ and the closure of $e$. We also say $\N_{ad}$ is obtained from $\N$ by adding one extra marking. The linear map from $S_n(M,\mathcal{N},v)$ to $S_n(M,\mathcal{N}_{ad},v)$ induced by the embedding $(M,\mathcal{N})\rightarrow (M,\mathcal{N}_{ad})$ is also denoted as $\lambda_{ad}$. Clearly when $v=1$, we have $\lambda_{ad}$ is an algebra homomorphism.

\begin{corollary}
Let $(M,\mathcal{N})$ be a generalized marked 3-manifold. Suppose $\N_{ad}$ is obtained from $\N$ by adding one extra oriented open interval. Then $\lambda_{ad} :S_n(M,\mathcal{N},1)\rightarrow S_n(M,\mathcal{N}_{ad},1) $ is injective.

\end{corollary}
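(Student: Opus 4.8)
The plan is to reduce to the circle-free situation already settled in Corollary \ref{Cor5.13}. If $\N$ contains no closed circles, then $\N_{ad}$ is again circle-free and the statement is literally Corollary \ref{Cor5.13}, so I may assume $\N$ has closed circles $e_0,\dots,e_{k-1}$. As in Remark \ref{rem7.2} I would choose small open intervals $U_i\subset e_i$ and cut out their closures, producing a circle-free marked $3$-manifold $(M,\N')$. Since the added interval $e$ is itself an open interval disjoint from every $e_i$, cutting the circles of $\N_{ad}$ with the \emph{same} $U_i$ yields $(M,\N'_{ad})$ with $\N'_{ad}=\N'\cup e$, again circle-free. Writing $l_U:S_n(M,\N',1)\to S_n(M,\N,1)$ and $l_U^{ad}:S_n(M,\N'_{ad},1)\to S_n(M,\N_{ad},1)$ for the (embedding-induced, surjective) cutting maps of Proposition \ref{pro7.1}, and $\lambda_{ad}':S_n(M,\N',1)\to S_n(M,\N'_{ad},1)$ for the circle-free adding-marking map, functoriality of embeddings gives the commuting square $l_U^{ad}\circ\lambda_{ad}'=\lambda_{ad}\circ l_U$, because both composites are induced by the same embedding $(M,\N')\hookrightarrow(M,\N_{ad})$.

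Next I would run the following diagram chase. Let $x\in S_n(M,\N,1)$ with $\lambda_{ad}(x)=0$. Since $l_U$ is surjective, pick $y$ with $l_U(y)=x$; then $l_U^{ad}(\lambda_{ad}'(y))=\lambda_{ad}(x)=0$, so $\lambda_{ad}'(y)\in\ker l_U^{ad}$. As $\lambda_{ad}'$ is injective by Corollary \ref{Cor5.13}, it suffices to establish the kernel-compatibility statement
$$(\lambda_{ad}')^{-1}\big(\ker l_U^{ad}\big)=\ker l_U,$$
for then $y\in\ker l_U$ and hence $x=l_U(y)=0$. The inclusion $\ker l_U\subseteq(\lambda_{ad}')^{-1}(\ker l_U^{ad})$ is immediate from the commuting square, so the actual content is the reverse inclusion.

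For the reverse inclusion I would invoke the isomorphism $\jmath:S_n(M,\N'_{ad},1)\to S_n(M,\N',1)\otimes O(SL_n)$ of Theorem \ref{thm5.11}, which satisfies $\jmath\circ\lambda_{ad}'=(\,\cdot\,\otimes 1)$. By Proposition \ref{pro7.1}, $\ker l_U^{ad}$ is spanned by the sliding differences of relation (\ref{eq10}) taken at the cut intervals $U_i\subset e_i'$, and likewise for $\ker l_U$. The key geometric observation is locality: relation (\ref{eq10}) is supported near $e_i'$, which is disjoint from the new marking $e$, whereas the definition of $\jmath$ (Lemma \ref{lmm5.10}) only modifies a web near its endpoints on $e$ by attaching copies of $\alpha^{\pm1}$ and recording the corresponding $O(SL_n)$-factors there. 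Hence attaching the $\alpha^{\pm1}$ arcs commutes with the slide across $U_i$, and applying $\jmath$ to a generator $\beta-\beta'$ of $\ker l_U^{ad}$ yields a sum of terms $\big[\beta(\cdots)-\beta'(\cdots)\big]\otimes(\text{common }O(SL_n)\text{ factor})$ with $\beta(\cdots)-\beta'(\cdots)\in\ker l_U$, giving $\jmath(\ker l_U^{ad})\subseteq\ker l_U\otimes O(SL_n)$. Since $1$ can be completed to a $\mathbb{C}$-basis of $O(SL_n)$, the element $y\otimes 1=\jmath(\lambda_{ad}'(y))$ lies in $\ker l_U\otimes O(SL_n)$ only if $y\in\ker l_U$, which is precisely the reverse inclusion. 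The main obstacle will be making this locality argument rigorous, i.e.\ verifying that the web surgery defining $\jmath$ genuinely factors through the slide relation at each $U_i$; everything else is functoriality and the diagram chase.
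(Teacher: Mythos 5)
Your argument is correct, but it follows a genuinely different route from the paper. The paper does not cut at all in this proof: it invokes the already-established isomorphisms $\overline{L}_{(M,\mathcal{N})}$ and $\overline{L}_{(M,\mathcal{N}_{ad})}$ of Lemma \ref{lmm7.6}, notes that $\Gamma_n(M,\mathcal{N}_{ad})=\Gamma_n(M,\mathcal{N})$ because the added marking is an interval and hence contributes no new circle relation in Definition \ref{df7.3}, and then reduces the injectivity of $\lambda_{ad}$ to the injectivity of the tautological inclusion $J:\Gamma_n(M,\mathcal{N})\otimes O(SL_n)^{\otimes(\sharp\mathcal{N}-1)}\hookrightarrow\Gamma_n(M,\mathcal{N})\otimes O(SL_n)^{\otimes\sharp\mathcal{N}}$ via a commutative square. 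You instead reduce to the circle-free Corollary \ref{Cor5.13} through the cutting maps of Proposition \ref{pro7.1} and a diagram chase, the real content being your kernel-compatibility claim $\jmath(\ker l_U^{ad})\subseteq\ker l_U\otimes O(SL_n)$, proved by locality: the slide of relation (\ref{eq10}) takes place on $e_i'$, disjoint from the new marking $e$, while the surgery defining $\jmath$ only touches endpoints on $e$, so the two operations commute term by term (the coefficients $c_{g_k(j_k)}$ and the $O(SL_n)$ factors depend only on the data at $e$, which the slide does not change). That locality check is sound, and note that you do not actually need the injectivity of $\lambda_{ad}'$ anywhere --- the identity $\jmath\circ\lambda_{ad}'=(\,\cdot\,\otimes 1)$ together with $y\otimes 1\in\ker l_U\otimes O(SL_n)\Rightarrow y\in\ker l_U$ already closes the chase. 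What each approach buys: the paper's proof is essentially immediate once Lemma \ref{lmm7.6} is in hand, whereas yours bypasses $\Gamma_n(M,\mathcal{N})$ and the structure theorem for generalized marked 3-manifolds entirely, at the cost of having to verify carefully that the isotopies in the definition of $\jmath$ (Lemma \ref{lmm5.10}) can be performed away from the cut intervals $cl(U_i)$.
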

\begin{proof}
We already proved the injectivity for $\lambda_{ad}$ when $\N$ is circle free in Corollary \ref{Cor5.13}.

Then we suppose $\N$ contains at least one oriented circle.
When we cut the closure of small open intervals as in Remark \ref{rem7.2}, we choose the same way to cut them for $(M,\mathcal{N})$ and $(M,\mathcal{N}_{ad})$,
and the choices for $\alpha_i$ as in Remark \ref{rem7.2} are compatible between $(M,\mathcal{N}')$ and $(M,(\N_{ad})')$.  The relative spin structure used for $(M,\mathcal{N}')$ is the restriction of the relative spin structure used for $(M,(\N_{ad})')$.
Since $\N_{ad}$ is obtained from $\N$ by adding one extra oriented open interval, we have $\Gamma_n(M,\mathcal{N}_{ad})
= \Gamma_n(M,\mathcal{N})$. Then it is easy to check we have the following commutative diagram:
$$\begin{tikzcd}
\Gamma_n(M,\mathcal{N})\otimes O(SL_n)^{\otimes(\sharp \N -1)}  \arrow[r, "\overline{L}_{(M,\mathcal{N})}"]
\arrow[d, "J"]  
&  S_n(M,\mathcal{N},1)  \arrow[d, "\lambda_{ad}"] \\
\Gamma_n(M,\mathcal{N})\otimes O(SL_n)^{\otimes(\sharp \N )}  \arrow[r, "\overline{L}_{(M,\mathcal{N}_{ad})}"] 
&  S_n(M,\mathcal{N}_{ad},1)\\
\end{tikzcd}$$
where $J$ is the obvious embedding. Since both $\overline{L}_{(M,\mathcal{N}_{ad})}$ and 
$\overline{L}_{(M,\mathcal{N})}$ are isomorphisms, we have $\lambda_{ad}$ is injective.
\end{proof}


%
%
%
%
%
%
%
%

\bibliographystyle{plain}

\bibliography{ref.bib}

\hspace*{\fill} \\

School of Physical and Mathematical Sciences, Nanyang Technological University, 21 Nanyang Link Singapore 637371

$\emph{Email address}$: zhihao003@e.ntu.edu.sg

\end{document}